\documentclass{article}
\usepackage{amssymb,amsmath,amsfonts,amsthm,latexsym}
\usepackage{multicol}
\usepackage{graphicx}
\usepackage[latin1]{inputenc}

\setlength{\textwidth}{15cm}
\setlength{\textheight}{24cm}
\setlength{\oddsidemargin}{0.4cm}
\setlength{\evensidemargin}{0.4cm}
\setlength{\voffset}{-2.5cm}
\setlength{\leftmargini}{12pt}

\newtheorem{theo}{Theorem}

\newtheorem{ques}{Question}

\newtheorem{lemm}{Lemma}
\newtheorem{coro}[lemm]{Corollary}
\newtheorem{prop}[lemm]{Proposition}

\newtheorem{rema}[lemm]{Remark}
\newtheorem{defi}[lemm]{Definition}

\newtheorem*{lemm*}{Lemma}
\newtheorem*{clai*}{Claim}

 \def\NN{{\mathbb N}}  \def\PP{{\mathbb P}}
 \def\RR{{\mathbb R}}

\def\La{\Lambda}

\def\cA{{\cal A}}  \def\cG{{\cal G}}  
\def\cB{{\cal B}}   \def\cN{{\cal N}} \def\cT{{\cal T}}
\def\cC{{\cal C}}  \def\cI{{\cal I}}  \def\cU{{\cal U}}
    
\def\cE{{\cal E}}    
   \def\cR{{\cal R}} \def\cX{{\cal X}}

\def\set#1{\left\{\, #1 \,\right\}}

\def\norm #1{\Vert \,#1\, \Vert\,}

\newcommand{\flow}[1][t]{\phi_{#1}}

%%%%%%%%%%%%%%%%%%%%%%%%%%%%%%%%%%%%%%%%%%%%%%%%%

\title{Star flows and multisingular hyperbolicity}
\author{Christian Bonatti, Adriana da Luz}

%%%%%%%%%%%%%%%%%%%%%%%%%%%%%%%%%%%%%%%%%%%%%%%%%

\begin{document}

\maketitle
\begin{abstract} A vector field $X$ is called a star flow if every  periodic
orbit, of any vector field $C^1$-close to $X$, is hyperbolic.
It is known that the chain recurrence classes of a generic star flow $X$ on a $3$ or $4$
manifold are either  hyperbolic, or \emph{singular hyperbolic}  (see \cite{MPP} for $3$-manifolds  and \cite{GLW} on $4$-manifolds).

As it is defined, the notion of singular hyperbolicity forces the singularities in a same class to have the same index.
However in higher dimension (i.e $\geq 5$) \cite{BdL} shows that  singularities of different indices may be robustly in the same chain recurrence class of a star flow.
Therefore the usual notion of singular hyperbolicity is not enough for characterizing the star flows.

We present a  form of
hyperbolicity (called \emph{multisingular hyperbolic}) which makes compatible the hyperbolic structure of regular orbits together
with the one of singularities  even if they have different indices. We show that  multisingular hyperbolicity implies that the flow is  star,
and conversely we prove that  there is a $C^1$-open and dense subset of the open set of star flows
which are multisingular hyperbolic.

More generally, for most of the hyperbolic structures (dominated splitting, partial hyperbolicity etc...) well defined on regular orbits,
we propose a way for generalizing it to a compact set containing singular points.
\end{abstract}

\textbf{Mathematics Subject Classification:} AMS   37D30,    37D50

\textbf{Keywords}  singular hyperbolicity, dominated splitting, linear Poincaré flow, star flows.

%%%%%%%%%%%%%%%%%%%%%%
%%%%%%%%%%%%%%%%%%%%%%%
\section{Introduction}
%%%%%%%%%%%%%%%%%%%%%%
%%%%%%%%%%%%%%%%%%%%%%

%%%%%%%%%%%%%%%%%%%%%%%%%%%%%%%%%%%%%%%%%%%%%%%%%%%%%%%
\subsection{General setting and historical presentation}
%%%%%%%%%%%%%%%%%%%%%%%%%%%%%%%%%%%%%%%%%%%%%%%%%%%%%%%%%

Considering the infinite diversity of  the dynamical behaviors, it is natural to have a   special interest on the \emph{robust properties} that is, properties that are
impossible to break  by small perturbations of a system; in other words,
a dynamical property is robust if its holds on a (non-empty) open set of diffeomorphisms or flows.

One important starting point of the dynamical systems as a mathematical field has been the characterization of the structural stability (i.e. systems whose topological dynamics are
unchanged under small perturbations) by the hyperbolicity (i.e. a global structure expressed in terms of transversality and of uniform expansion and contraction).
This characterization, first stated by the stability conjecture \cite{PaSm}, was proven for diffeomorphisms in the $C^1$ topology by Robin and Robinson in \cite{R1}, \cite{R2}
(hyperbolic systems are structurally stable) and Ma\~{n}\'e \cite{Ma2} (structurally stable systems are hyperbolic).
The equivalent result for flows (also for the $C^1$ topology) leads to extra difficulties and was proven by \cite{H2}.

 We can see in this case how the robustness of the properties is related with the structure in the tangent space: in this case, a very strong robust property
 is related to a very strong uniform structure. However, hyperbolic systems are not dense in the set of diffeomorphisms or flows; instability and non-hyperbolicity may be robust.
 In order to describe a larger set of systems, one is lead to consider less rigid robust properties, and to try to characterize them by (weaker)
structures that limit the effect of the small perturbations.

In this spirit there are several results for diffeomorphisms in the $C^1$ topology:
\begin{enumerate}
          \item A system is \emph{robustly transitive} if every $C^1$-close system is transitive.
          \cite{Ma} proves that robustly transitive surface
          diffeomorphisms are globally hyperbolic (i.e. are Anosov diffeomorphism). This is no more true in higher dimensions (see examples in \cite{Sh,Ma1}). \cite{DPU, BDP} show that robustly transitive
          diffeomorphisms admits a structure called \emph{dominated splitting}, and their  \emph{finest dominated splitting} is \emph{volume partially hyperbolic}.  This result extends
          to \emph{robustly transitive sets}, and to \emph{robustly chain recurrent sets}.
\item One says that a system is \emph{star} if  all periodic orbits are hyperbolic in a robust fashion: every periodic orbit of every $C^1$-close system is hyperbolic.
For a diffeomorphism, to be star  is equivalent to be hyperbolic
(an important step is done in \cite{Ma}  and has been completed in \cite{H}\label{star}).
\end{enumerate}

Now, what is the situation of these results for flows?
The dynamics of flows seems to be very related with the dynamics of diffeomorphisms. Even more, the dynamics of vector fields in
dimension $n$ looks like the one of diffeomorphisms in dimension $n-1$. Several results can be translated from one setting to the other, for instance by considering the suspension.
For example, \cite{D} proved that robustly transitive flows on $3$-manifolds are Anosov flows, generalizing Ma\~ n\'e's result for surface diffeomorphisms. More generally, in any dimension,
if a vector field is robustly transitive (or chain recurrent) then \cite{Vi} shows that it is non singular, and its \emph{linear Poincar\'e flow} (that is, the natural action of the differential on the
normal bundle) admits a dominated splitting which is volume partially hyperbolic. On the other hand, if one consider the suspension of robustly transitive diffeomorphism without partially hyperbolic splitting
(as built in \cite{BV}) one gets a robustly transitive vector field $X$ whose flow $\{\phi^t\}$ does not admit any dominated splitting. This leads to the fundamental observation that,

\vskip 2mm
\emph{for flows, the hyperbolic structures are living on the normal bundle for the linear Poincar\'e flow, and not on the tangent bundle.}
\vskip 2mm

However, there is a phenomenon which is really specific to vector fields: the existence of singularities (zero of the vector field) accumulated, in a robust way, by regular recurrent orbits.
Then, some of the previously mentioned results may fail to be translated to the flow setting.

The first example with this behavior has been indicated by Lorenz in \cite{Lo} under  numerical evidences.  Then \cite{GW} constructs
a $C^1$-open set of vector fields in a $3$-manifold, having a topological transitive attractor containing  periodic orbits (that are all hyperbolic) and one
singularity.  The examples in \cite{GW} are known as the geometric Lorenz attractors.

The Lorenz attractor is also an example of a robustly non-hyperbolic  star flow,  showing that the result in \cite{H} is no more true for flows.
In dimension $3$ the difficulties introduced by the robust coexistence of singularities and periodic orbits is now almost fully understood.
In particular, Morales, Pacifico and Pujals (see \cite{MPP} ) defined the notion of \emph{singular hyperbolicity}, which requires some compatibility
between the hyperbolicity of the singularity and the hyperbolicity of the regular orbits. They prove that,
\begin{itemize}
 \item for  $C^1$-generic star flows on $3$-manifolds,
every chain recurrence class is singular hyperbolic. %\marginpar{Checar}
It was conjectured in \cite{GWZ} that the same result could hold without the generic assumption.
However \cite{BaMo} built a star flow on a $3$-manifold having a chain recurrence class which is not singular hyperbolic, contradicting the conjecture.
We exhibit a very simple such example in Section~\ref{s.example}.
\item any robustly transitive set containing a singular point of a flow on a $3$-manifold is either a
singular hyperbolic attractor or a singular hyperbolic repeller.
\end{itemize}

The singular hyperbolic structure for a compact invariant set $K$ of a vector field $X$ on a $3$-manifold
is equivalent to the existence of a volume partially hyperbolic splitting
of the tangent bundle for the flow $\phi^t$, for $t\neq 0$.

Let us make two observations
\begin{itemize}
 \item The singular hyperbolic structure is living on the tangent bundle (and not on the normal bundle) contradicting our fundamental observation above.
 \item When the compact set is singular, the splitting has only $2$ bundles, one of dimension $1$  and the other of dimension $2$;  this asymmetry forces the index of the
 singularities contained in $K$.  In other words, all the singularities contained in a singular hyperbolic compact set have the same index, by definition of the singular hyperbolicity.
 The examples of star flows in  \cite{BaMo}, as well as the examples presented in Section~\ref{s.example}, contain singular points with distinct indices, and therefore are not singular hyperbolic.
\end{itemize}

%%%%%%%%%%%%%%%%%%%%%%%%%%%%%%%%%%%%%%%%%%%%%%%%%%%%%%%%%%%%%%%%%%%%%%%%%%%%%%%%%
\subsection{Discussion on the notion of singular hyperbolicity in dimensions $>3$}
%%%%%%%%%%%%%%%%%%%%%%%%%%%%%%%%%%%%%%%%%%%%%%%%%%%%%%%%%%%%%%%%%%%%%%%%%%%%%%%%%%

\subsubsection{The natural generalization of the singular hyperbolicity}
In higher dimensions, we are far from understanding the relation between robust properties and hyperbolic structures for  singular flows. Nevertheless,
the notion of singular hyperbolicity defined by \cite{MPP} in dimension $3$ admits a straightforward generalization in higher dimension:
following \cite{MM,GWZ}, a chain recurrence class  is called \emph{singular hyperbolic} if the tangent bundle over this class admits a dominated, partially hyperbolic splitting
in two bundles, one being uniformly contracted (resp. expanded) and the other is sectionally area expanded (resp. sectionally area contracted).

\begin{rema}Far from the singular points, the singular hyperbolicity is equivalent to the hyperbolicity,
as the direction spanned by the vector field is neither contracted nor expanded: the uniform area expansion means the uniform expansion of the normal directions.
\end{rema}

This notion has been very helpful for the study of singular star flows.
If the chain recurrence set of a vector field $X$  can be covered by filtrating sets $U_i$ in which the maximal invariant set $\La_i$ is singular hyperbolic, then
$X$ is a star flow.  Conversely, \cite{GLW} and \cite{GWZ}  prove that this property characterizes the  generic star flows on  $4$-manifolds.
In \cite{GSW} the authors prove the singular hyperbolicity of generic star flows in any dimensions assuming
an extra property:  if two singularities are in the same chain recurrence class then
they must have the same \emph{$s$-index} (dimension of the stable manifold). Indeed, as already mentioned above,
the  singular hyperbolicity implies directly this extra property.

However, in dimension $\geq 4$, singularities of different indices may coexist $C^1$-robustly in the same class, and these class may have a robust property which requires
a notion of hyperbolicity.  For instance:

\begin{itemize}\item  In dimension $4$,  \cite{BLY} build  a flow  having a robustly chain recurrent attractor
containing saddles of different indices. In particular, this attractor is not singular hyperbolic in the meaning of \cite{GSW}.
\item in \cite{BdL}, an example is announced ,of a star flow  in  dimension $5$ admitting singularities of different
indices which belong robustly to the same chain recurrence class. This example cannot satisfy the singular hyperbolicity used in \cite{GSW}.
\end{itemize}

\subsubsection{A local solution for a local problem}

If we want to explain robust properties of chain recurrent classes containing singularity, one needs to understand how to turn compatible the hyperbolic structure on the regular orbits
with the local dynamics in  the  neighborhood of the singular point:

\emph{why the regular orbits do not loose their hyperbolic structure when crossing a small neighborhood of the singularity ?}

That is a local problem.

The singular hyperbolicity, as defined in \cite{MM,GWZ,GSW}, is a global way for fixing this local problem.  As a consequence, if several singularities coexist in the same class,
the global solution needs to solve the local problem corresponding to each singularity: it is therefore natural that the singular
hyperbolicity assume that the singularities are of the same kind.
This explains why the singular hyperbolicity could not characterize all the star flows but only those for which singular points of distinct
indices are assumed to belong in distinct chain recurrence classes.

This paper provides a local answer to these local problem:
the way for fixing the hyperbolic structure of the regular orbits with the one of a
given singular point needs to be independent to what we do in the neighborhood of the
other singular points.  For that:
\begin{itemize}\item the main new tool will be Theorem~\ref{t.reparametrization} which associates a cocycle
to any singularity of a vector field.
\item Another important tool built in \cite{GLW} is the generalized linear Poincar\'e flow, and we need to recall its construction for presenting our results. .
\item the last tool will be the notion of extended maximal invariant set. Such a notion has been already defined and used in \cite{GLW,GSW};
we propose  here a slightly different notion and we compare it (see Theorem~\ref{t.nioki})  with the one in \cite{GLW,GSW}
\end{itemize}

Given any usual notion of hyperbolic structure  (hyperbolicity, partial hyperbolicity, volume hyperbolicity, etc..),
well defined on compact invariant sets far from the singularities,
we propose a notion of \emph{(multi)singular hyperbolic structure}
generalizing it to compact invariant sets containing singular points.

Then we will illustrate the power of this notion by paying a special attention to star flows. In this particular setting, the usual structure (for regular orbits)
one wants to generalize to the singular
setting is the uniform hyperbolicity. In order to avoid confusion with the singular structure defined in \cite{GLW}, we will call \emph{multi-singular hyperbolicity}
our way for generalizing the uniform hyperbolicity to singular sets. Then,
Theorem~\ref{t.converse} proves that this multisingular hyperbolicity characterizes the star flows in any dimensions:
\begin{itemize}
\item the multi-singular hyperbolic flows are star flows,
\item conversely, a  $C^1$-open and dense subset of the star flows consists in multi-singular hyperbolic flows.
\end{itemize}

We notice that the example in Section~\ref{s.example} as well as the ones in \cite{BaMo} are multisingular hyperbolic.

In the same spirit, generalizing the results in \cite{BDP}, the second author announces in \cite{dL} that every $C^1$-robustly chain recurrent class of a singular flow
is \emph{singular volume partially hyperbolic}: in particular the example of robustly chain recurrent attractors in \cite{BLY} are singular volume partially hyperbolic.

%%%%%%%%%%%%%%%%%%%%%%%%%%%%%%%%%%%%%%%%%
%%%%%%%%%%%%%%%%%%%%%%%%%%%%%%%%%%%%%%%%%
\section{Presentation of our results}
%%%%%%%%%%%%%%%%%%%%%%%%%%%%%%%%%%%%%%%%
%%%%%%%%%%%%%%%%%%%%%%%%%%%%%%%%%%%%%%%%%

%%%%%%%%%%%%%%%%%%%%%%%%%%%%%%%%%%%%%%%%%%%%%%%%%%
\subsection{The extended linear Poincar\'e flow}
%%%%%%%%%%%%%%%%%%%%%%%%%%%%%%%%%%%%%%%%%%%%%%%%

The hyperbolic structure we will define does not lie on the tangent bundle, but on the normal bundle.
However the flows we consider are singular and so that the normal bundle (and therefore the linear Poincar\'e flow)
is not defined at the singularities. In \cite{GLW},  the authors  define the notion of \emph{extended linear Poincar\'e flow} defined on some sort of blow-up of the singularities.
Our notion of \emph{multisingular hyperbolicity} will be expressed in term  of this extended linear Poincar\'e flow (see the precise definition in Section~\ref{s.extended});
we present it roughly below.

\begin{itemize}
\item The linear Poincar\'e flow is the natural linear cocycle over the flow,
on the normal bundle to the flow. The dynamics on the fibers is the quotient of the derivative of the flow by the direction of the flow: this is possible as the direction of the flow is invariant.

\item We denote by $\PP M$ the projective tangent bundle of $M$, that is a point $L$ of $\PP M$ corresponds to a line of the tangent space at a point of $M$. The flow $\phi^t$ of $X$ induces (by the action of its derivative $D\phi^t$) a
topological flow $\phi^t_\PP$ on $\PP M$ which extends the flow of $X$.

\item The projective tangent bundle $\PP M$  admits a natural
bundle called the \emph{normal bundle $\cN$}: the fiber over  $L\in\PP M$ (corresponding to a line $L\subset T_xM$)  is the quotient $N_L=T_xM/L$.
The derivative of the flow $D \phi^t$ of $X$  passes to the quotient on the normal bundle $\cN$
in a linear cocycle over $\phi^t_\PP$, called the
\emph{extended linear Poincar\'e flow} and we note it  as $\psi^t_{\cN}$.

\item The vector field $X$ provides an embedding of  $M\setminus Sing(X)$ into the projective tangent bundle $\PP M$: to a point $x$ one associates the line directed by $X(x)$.
We denote by $\La_X\subset \PP M$ the compact subset defined by the union
$$ \La_X=\{<X(x)>\in \PP T_xM, x\in M\setminus Sing(X)\}\cup \bigcup_{y\in Sing(X)} \PP T_yM.$$
$\La_X$ is a compact set, invariant under the topological flow $\phi^t_\PP$.

\item The restriction of $\psi^t_\cN$ to the fibers of $\PP M$   over $\{<X(x)>\in \PP T_xM, x\in M\setminus Sing(X)\}$
is naturally conjugated to the linear Poincar\'e flow over $M\setminus Sing(X)$;
thus the restriction of $\psi^t_\cN$ to $\La_X$ is a natural extension, over the singular points, of the linear Poincar\'e flow.

The hyperbolic structures we will defined are defined  for $\psi^t_\cN$ over $\phi^t_\PP$-invariant compact subsets of $\La_X$.
\end{itemize}

%%%%%%%%%%%%%%%%%%%%%%%%%%%%%%%%%%%%%%%%%%%%%%%%%%%%%%%%%%%%
\subsection{A local cocycle associated to a singular point}
%%%%%%%%%%%%%%%%%%%%%%%%%%%%%%%%%%%%%%%%%%%%%%%%%%%%%%%%%%%%

Let $X$ be a vector field and $\phi^t$ its flow.  A (multiplicative) cocycle over $X$ is
a continuous map $h\colon \La_X\times \RR\to\RR$, $h(L,t)=h^t(L)$,  satisfying the cocycle relation
$$h^{r+s}(L)= h^{r}(\phi^s_\PP(L))\cdot h^s(L)$$

\begin{rema}
For instance, fix a Riemannian metric $\| \cdot \|$ on $M$.  For $L\in\PP T_xM$ and $t\in \RR$ one denote
$$h^t_X(L)= \|(D_x\phi^t)|_L\|,$$
where $(D_x\phi^t)|_L$ is the restriction to $L$ of the derivative at $x$ of the flow $\phi^t$.

The map $h_X\colon \La_X\times \RR\to \RR$,  $(L,t)\mapsto h^t_X(L)$,  is a cocycle that we will call \emph{the expansion in the direction of the flow}.

This cocycle depends on the choice of the metric $\|\cdot \|$.  However if $h'_X$ is obtained by another choice $\|\cdot \|'$ of the metric then
$$\frac{h^t_X(L)}{{h'_X}^t(L)}$$
is uniformly bounded on $\La_X\times \RR$.  Thus the cocycle of expansion in the flow direction is \emph{unique up to bounded cocycles}.
\end{rema}

Let $\sigma\in Sing(X)$ be an isolated singular point. We will say that $h^t$ is a  \emph{local cocycle} at $\sigma$
if for any neighborhood $U$ of $\PP T_\sigma M$ in $\La_X$ there is a constant $C>1$ so that
$$\frac 1C< h^t(L)< C$$
for every $(L,t)\in \La_X$  with $L\notin U$ and $\phi^t_\PP(L)\notin U$.

\begin{defi} Let $\sigma\in Sing(X)$ be an isolated singular point.  A \emph{local reparametrization cocycle associated to $\sigma$} is
a cocycle $h^t\colon \La_X\times \RR\to \RR$ so that
\begin{itemize}
 \item  $h^t$ is a local cocycle at $\sigma$;
 \item there is a neighborhood $U$ of $\sigma$ and $C>1$  so that
 $$ \frac 1C <\frac{h^t(L)}{h_X^t(L)}<C$$
 for any $t\in \RR$ and $L\in \La_X$ so that $L\in \PP T_xM$ with  $x\in U$ and $\phi^t(x)\in U$,  where $h^t_X$ is the cocycle of expansion in the flow direction.
\end{itemize}
\end{defi}

The main idea of this paper is based the following result :

\begin{theo}\label{t.reparametrization} $X$ be a vector field on a closed manifold and let $\sigma$ be a simple singularity of $X$ (that is the derivative of $X$ at
$\sigma$ is invertible). Then
\begin{itemize}
 \item There is  a local reparametrization cocycle $h_{\sigma}\colon\La_X\to \RR$ associated to $\sigma$
 \item If $h'_\sigma$ is another reparametrization cocycle then $\frac{h_\sigma^t}{(h'_\sigma)^t}$ is uniformly bounded (thus $h_\sigma$ is unique up to bounded cocycles)
 \item There is a $C^1$-neighborhood $\cU$ of $X$ and a continuous map $Y\in\cU\mapsto h_{Y,\sigma_Y}$
 where $h_{Y,\sigma_Y}$ is a reparametrization cocycle for $Y$ and the continuation $\sigma_Y$ of $\sigma$ for $Y$.
\end{itemize}
\end{theo}
The proof of Theorem~\ref{t.reparametrization} is the aim of  Section~\ref{ss.reparametrization}.

Notice that the product of two cocycles is a cocycle, and the power of a cocycle is a cocycle.
\begin{defi} Let $X$ be a vector field on a compact manifold so that the zeros of $X$ are all simple.
We call \emph{reparametrization cocycle} every cocycle $h$ so that there are numbers $\alpha(\sigma)\in\RR$, $\sigma\in Sing(X)$ so that
$$h^t=\Pi_{\sigma\in Sing(X)} (h^t_\sigma)^{\alpha(\sigma)}$$
where $h_\sigma$ is a local reparametrization cocycle associated to $\sigma\in Sing(X)$.
\end{defi}

%%%%%%%%%%%%%%%%%%%%%%%%%%%%%%%%%%%
\subsection{Hyperbolic structures}
%%%%%%%%%%%%%%%%%%%%%%%%%%%%%%%%%%%

\subsubsection{The usual hyperbolic structures on non-singular sets}\label{ss.non-singular}

We start by recalling what are the usual notions of hyperbolic structures on non-singular compact sets (see Section~\ref{ss.hyp} and~\ref{ss.robust} for precise definitions).
A \emph{hyperbolic structure} for a flow on a non-singular compact $\phi^t$-invariant set $K$ is
\begin{itemize}
 \item a dominated splitting  $N=E_1\oplus_{_<}\cdots\oplus_{_<} E_k$ of the normal bundle over $K$ for the linear Poincar\'e flow.
 \item for some of the bundles $E_i$,and a number $1\leq d_i\leq  dimE_i$ then $Det (J(\psi^t_\cN|_{D_i}))$
 presents  a uniform contraction or expansion for any subspace $D_i\subset E_i$ of dimension $d_i$.
\end{itemize}

For instance:
\begin{itemize}
 \item $K$ is \emph{hyperbolic} if there is a dominated splitting $N=E\oplus_{_<} F$ so that the vectors in $E$ are uniformly contracted and the ones in $F$ are uniformly expanded.
 \item $K$ is \emph{volume partially hyperbolic} if there is a dominated splitting $N=E_1\oplus_{_<}\cdots\oplus_{_<} E_k$ so that the volume in $E_1$ is uniformly contracted
 and the one in $E_k$ is uniformly expanded.
\end{itemize}

\subsubsection{Hyperbolic structures over compact subsets of $\PP M$}\label{ss.hyperbolicstructures}

Consider now a vector-field $X$ on a compact manifold $M$ and $\La_X\subset \PP M$.  We assume that every singularity of $X$ is simple.
Let $K\subset \La_X$ be a $\phi^t_\PP$-invariant compact set.  A \emph{singular hyperbolic structure on $K$} is

\begin{itemize}
 \item a dominated splitting  $\cN=E_1\oplus_{_<}\cdots\oplus_{_<} E_k$ of the normal bundle over $K$ for the extended linear Poincar\'e flow.

 \item for some of the bundles $E_i$,and a number $1\leq d_i\leq  dimE_i$ there is a reparametrization cocycle $h_i^t=\Pi_{\sigma\in Sing(X)} (h^t_\sigma)^{\alpha_i(\sigma)}$ so that $$Det (J(h_i^t\cdot (\psi^t_\cN|_{D_i}))$$
 presents  a uniform contraction or expansion for any subspace $D_i\subset E_i$ of dimension $d_i$.
\end{itemize}

For instance

\begin{itemize}
 \item $K$ is  \emph{multi-singular hyperbolic} if there is a dominated splitting $\cN=E\oplus_{_<} F$ for $\psi^t_\cN$  and there are two reparametrization cocycles $h_s^t$ and $h_u^t$
 so that the vectors in $E$ are uniformly contracted by the flow $h^t_s\cdot \psi^t_\cN$  and the ones in $F$ are uniformly expanded by the flow $h_u^t\cdot \psi^t_\cN$.

 \item $K$ is \emph{volume partially hyperbolic} if there is a dominated splitting $\cN=E_1\oplus_{_<}\cdots\oplus_{_<} E_k$  for $\psi^t_\cN$
 and two reparametrization cocycles $h_1^t$ and $h_k^t$ so that the volume in $E_1$ is uniformly contracted   by the flow $h_1^t\cdot \psi^t_\cN$
 and the one in $F_k$ is uniformly expanded by the flow $h_k^t\cdot \psi^t_\cN$.
\end{itemize}

We show that singular hyperbolic structures are robust in the following  sense:

\begin{lemm}Let $X$ be a vector field on a compact manifold. If $K\subset \La_X\subset \PP M$ is a $\phi_\PP$-invariant compact set admitting a singular hyperbolic structure, then
there is a $C^1$-neighborhood of $X$ and a neighborhood $U$ of $K$ in $\PP M$ so that, for any $Y$ in $\cU$ the maximal invariant set of $\phi_{Y,\PP}$ in $\La_Y\cap U$ admits
the same singular hyperbolic structure.
\end{lemm}
This lemma is a straightforward consequence of the fact that the reparametrization cocycles used for defining the singular hyperbolic
structures admit a continuous choice with respect to the vector field (last item of Theorem~\ref{t.reparametrization}).

%%%%%%%%%%%%%%%%%%%%%%%%%%%%%%%%%%%%%%%%%%%%%%%%
\subsection{The extended maximal invariant set}\label{MIS}
%%%%%%%%%%%%%%%%%%%%%%%%%%%%%%%%%%%%%%%%%%%%%%%%

The next difficulty is to define the set on which we would like to define an hyperbolic structure.

We are interested in the dynamics of $X$ in  a compact region $U$ on $M$, that is, to describe the maximal
invariant set $\La(X,U)$ in $U$. An important property is that the maximal invariant set
depends upper-semi continuously on the vector field $X$.  This property is fundamental in the fact that ``having a hyperbolic structure''
is a robust property.

Therefore we needs to consider a compact part of $\PP M$, as small as possible, such that
\begin{itemize}
\item it is invariant under the flow $\phi^t_\PP$
\item it contains all the direction spanned by $X(x)$ for  $x\in\La(X,U)\setminus Sing(X)$,
\item  it varies upper semi-continuously with $X$
\end{itemize}

We denote by $\La_{U,\PP}(X)$ the closure in $\PP M$ of $\{<X(x)>, x\in \La(X,U)\setminus Sing(X)\}$: it is a $\phi^t_\PP$-invariant compact set, but in general it fails to
upper semicontinuously with $X$.

The smallest compact set satisfying all the required properties is

$$\widetilde{\La(X,U)}=\limsup_{Y\underrightarrow{C^1} X} \La_{U,\PP}(Y). $$

\begin{defi} We will say that \emph{$X$ has a singular hyperbolic structure}  in a compact region $U$ if the compact set $\widetilde{\La(X,U)}\subset \La_X\subset \PP M$
satisfies this singular hyperbolic structure, as defined in
Section~\ref{ss.hyperbolicstructures}.
\end{defi}

As a straightforward consequence of the upper semi-continuous dependence of $\widetilde{\La(X,U)}$ on the vector field $X$ one gets the robustness of the singular hyperbolic structure of
$X$ in  $U$

\begin{lemm} If $X$ has a singular hyperbolic structure in a compact region $U$ then the same singular hyperbolic
structure holds for every vector fields $Y$ $C^1$-close to $X$.
\end{lemm}

\begin{rema} If $X$ in non-singular on $U$ then the singular hyperbolic structure of $X$ is equivalent to the corresponding (non-singular) hyperbolic structure.

More generally, if $X$ has a singular hyperbolic structure on $U$ then every $\phi_t$-invariant compact set $K\subset U\setminus Sing(X)$
has the corresponding (non-singular) hyperbolic structure.
\end{rema}

The set $\widetilde{\La(X,U)}$  is a fundamental tool for defining the singular hyperbolic structures.
However, it may be hard to calculate  because it depends not only on $X$ but
also on every $C^1$-small perturbations of $X$. That is a little bit unsatisfactory: hyperbolic structures are invented for controlling the effect of small perturbations, however,
for knowing if $\widetilde{\La(X,U)}$ admits some hyperbolic structure, we need to understand the effect of the perturbations of $X$.

\vskip 2mm

In what follows, we propose another set, much simpler to compute, since it does not depend on perturbations of $X$.

In Section~\ref{ss.centerspace} we define the notion of \emph{central space $E^c_{\sigma,U}$ of a singular point $\sigma\in U$}.
Then we call \emph{extended maximal invariant set } and we  denote
by $B(X,U)\subset \PP M$ the set  of all the lines $L$ contained
\begin{itemize}\item either in the central
space of a singular point in $\bar U$
\item or directed by the vector $X(x)$ at a regular point $x\in \La(X,U)\setminus Sing(X)$.
\end{itemize}

Proposition~\ref{p.extended} proves that $B(X,U)$ varies upper semi-continuously with the vector field $X$. In particular, once again, the existence of a dominated splitting
$\cN_L=E_L\oplus F_L$ of the normal bundle $\cN$ over $B(X,U)$ is a robust property,
as well as the existence of a singular hyperbolic structure.  Further  more remark \ref{r.inclusion} shows it is larger than  $\widetilde{\La(X,U)}$:
$$\widetilde{\La(X,U)}\subset B(X,U).$$

%%%%%%%%%%%%%%%%%%%%%%%%%%%%%%%%%%%%%%%%%%%%%%%%%%%%%%%%%%%%%%%%%%
\subsection{Hyperbolic structures over a chain recurrence class}.
%%%%%%%%%%%%%%%%%%%%%%%%%%%%%%%%%%%%%%%%%%%%%%%%%%%%%%%%%%%%%%%%%%%%

Another set on which one is interested to define hyperbolic structures are the chain recurrence classes.  For the non-singular classes that is the classical notions.
So we are interested to define hyperbolic structure on the chain recurrence class $C(\sigma)$  of a singular point $\sigma$.

Conley theory asserts that  any chain recurrent  $C$ class admits a basis of neighborhood which are nested filtrating neighborhood $U_{n+1}\subset U_n$, $C=\bigcap U_n$
(see Section~\ref{ss.chainrecurrent} for the definitions).  We define

$$\widetilde{\La(C)}=\bigcap_n\widetilde{\La(X,U_n)}  \mbox{ and }B(C)=\bigcap_n B(X,U_n).$$
These two sets are independent of the choice of the sequence $U_n$.  Clearly $\widetilde{\La(C)}\subset B(C)$.

\begin{defi}
We say that a chain recurrence class $C$ has a given singular hyperbolic structure if $\widetilde{\La(C)}$ carries this singular hyperbolic structure.
\end{defi}

\begin{rema}
  If $C$ is a chain recurrence class which has a singular hyperbolic structure then $X$ has this singular hyperbolic structure  on a small filtrating neighborhood of $C$.
\end{rema}

If $\sigma\in Sing(X)$ is an hyperbolic singular point we define $E^c_\sigma=\bigcap_n E^c_{\sigma,U_n}$ and we call it the center space of $\sigma$.
We denote $\PP^c_\sigma =\PP E^c_\sigma$, its projective space.

\begin{rema}Consider the open and dense set of vector fields whose singular points are all hyperbolic.  In this open set the singularities depend continuously on the field.
Then for every singular point $\sigma$, the projective center space $\PP^c_\sigma$ varies upper semi continuously, and in particular the dimension  $dim E^c_\sigma$ varies upper semi-continuously.
As it is a non-negative integer, it is locally minimal and locally constant on a  open and dense subset.

We will say that such a singular point has \emph{locally minimal center space}.
\end{rema}

We prove
\begin{theo}\label{t.nioki} Let $X$ be a vector field on a closed manifold, whose singular points are
\begin{itemize}
\item  hyperbolic \item with locally minimal center spaces \item the finest dominated splitting of the center spaces is in one or two dimensional subspaces.
\end{itemize} Then for every
$\sigma\in Sing X$, every hyperbolic structure on $\widetilde{\La(C(\sigma))}$ extends on $B(C(\sigma))$.
\end{theo}

%%%%%%%%%%%%%%%%%%%%%%%%%%%%%%%%%%%%%%%%%%%%%%%%%%%%%%%%%%%%%%%%%%%
\subsection{The (multi)singular hyperbolicity and the star flows}
%%%%%%%%%%%%%%%%%%%%%%%%%%%%%%%%%%%%%%%%%%%%%%%%%%%%%%%%%%%%%%%%%%%

We say that a vector field $X$, whose singularities are all hyperbolic is \emph{multisingular hyperbolic} if every chain recurrence class is multisingular hyperbolic.
Recall that a vector field is a \emph{star flow} if it belongs to the $C^1$-interior of the vector fields whose periodic orbits are all hyperbolic.

\begin{rema}
 \begin{itemize}
  \item If $C$ is a non-singular chain recurrence class which is multisingular hyperbolic then it is uniformly hyperbolic and therefore is a hyperbolic basic set and a  homoclinic class.
  \item If $C$ is a chain recurrence class which is multisingular hyperbolic then $X$ is multisingular hyperbolic on a small filtrating neighborhood of $C$.
 \end{itemize}

\end{rema}

One check easily:
\begin{lemm} If $X$ is multisingular hyperbolic, then $X$ is a star flow.
\end{lemm}

We will show that, conversely

\begin{theo}\label{t.converse} There is a $C^1$-open and dense subset $\cU$ of $\cX^1(M)$ so that is $X\in \cU$ is a star flow then the chain recurrent set $\cR(X)$ is contained in the
union of finitely many pairwise disjoint filtrating regions in which $X$ is multisingular hyperbolic.
\end{theo}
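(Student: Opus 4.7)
The plan is to combine the standard generic theory of chain recurrence with the Liao--Mañé style analysis of the linear Poincaré flow on star flows, so as to reduce the theorem to a verification of the multisingular hyperbolicity axioms on each chain recurrence class separately. First I would pass to the residual subset $\cG \subset \cX^1(M)$ on which a Kupka--Smale / Bonatti--Crovisier type property holds: the chain recurrent set $\cR(X)$ consists of finitely many chain recurrence classes, each admitting arbitrarily small isolating filtrating neighborhoods, and depending continuously (in the Hausdorff sense, on each class) on $X$. Intersecting $\cG$ with the open set of star flows and taking the interior gives the candidate open-and-dense set $\cU$. On $\cU$, the theorem reduces to showing that any chain recurrence class $C$ of $X \in \cU$ admits a filtrating neighborhood $U$ in which $X$ is multisingular hyperbolic.

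Fix such a class $C$ and a small filtrating neighborhood $U$. By Proposition~\ref{p.extended} the extended maximal invariant set $B(X,U) \subset \PP M$ is compact and varies upper semi-continuously with $X$. The first substantive step is to produce a dominated splitting $\cN = E \oplus_{\prec} F$ for the extended linear Poincaré flow $\psi^t_{\cN}$ over $B(X,U)$. Away from $Sing(X)$, the existence of such a dominated splitting for the (ordinary) linear Poincaré flow is the content of the star-flow analog of Mañé's ergodic closing lemma: uniform hyperbolicity of all periodic orbits of nearby flows, combined with Liao's selection arguments, forces a dominated splitting on the closure of periodic orbits and hence on $C\setminus Sing(X)$. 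At each $\sigma \in Sing(X) \cap U$, the central space in the sense of Section~\ref{ss.centerspace} is by construction contained in an invariant subspace on which $D\phi^t|_\sigma$ is hyperbolic; continuity of the dominated splitting together with genericity allow one to match it with the stable/unstable decomposition of $\sigma$ and extend $E \oplus F$ to the whole of $B(X,U)$.

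Once $E \oplus F$ exists, the stable dimension of each singularity in $U$ must be either $\dim E$ or $\dim F + 1$, yielding the disjoint partition $Sing(X) \cap U = S_E \sqcup S_F$ required by the definition (this uses genericity to rule out singularities whose index is incompatible with the splitting, via a small perturbation removing them from $C$). I then invoke the construction of Section~\ref{ss.reparametrization} to produce the center-stable cocycle $h_E^t$ and center-unstable cocycle $h_F^t$, depending continuously on $X$.

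The main obstacle is verifying the two uniformity conditions: that $h_E^t \psi^t_{\cN}$ is uniformly contracting on $E$ and $h_F^t \psi^t_{\cN}$ is uniformly expanding on $F$. Away from singularities the reparametrization by $h_E^t$ (resp.\ $h_F^t$) is bounded, and the required contraction on $E$ (resp.\ expansion on $F$) reduces to uniform contraction/expansion of the ordinary reparametrized linear Poincaré flow, which is the standard Liao--Gan--Yang output for generic star flows. The delicate case is when an orbit segment spends a long time near a singularity $\sigma$: if $\sigma \in S_E$ then $\psi^t_{\cN}$ may fail to contract $E$ near $\sigma$ because of the flow-line direction inside the stable space of $\sigma$, but by the very defining property of the center-stable cocycle, $h_E^t$ multiplies $\psi^t_{\cN}$ on $E$ by exactly the expansion in the line direction, restoring contraction; near $\sigma \in S_F$, the cocycle $h_E^t$ is bounded while $\psi^t_{\cN}$ contracts $E$ by hyperbolicity of $\sigma$ itself. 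A shadowing / orbit-decomposition argument à la Liao, splitting any long orbit into regular segments and singular excursions and applying the two estimates alternately, then yields the uniform contraction on $E$. The analogous argument gives uniform expansion on $F$ using $h_F^t$, completing the verification of multisingular hyperbolicity on $U$ and hence Theorem~\ref{t.converse}.
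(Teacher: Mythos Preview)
Your overall architecture matches the paper's: reduce to a single chain class $C$, produce a dominated splitting $E\oplus_\prec F$ for the extended linear Poincar\'e flow on $B(X,U)$, partition the singularities into $S_E\sqcup S_F$, and then verify the uniform contraction/expansion of the reparametrized flow. But two steps are either glossed over or handled by a route that does not obviously close.

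\textbf{Extension of the dominated splitting over the singularities.} You write that the central space at $\sigma$ is ``by construction contained in an invariant subspace on which $D\phi^t|_\sigma$ is hyperbolic'' and that ``continuity of the dominated splitting together with genericity'' matches it with the stable/unstable decomposition. This is not what happens, and the statement as written is false: the central space $E^c_{\sigma,U}$ always contains both stable and unstable eigendirections of $\sigma$, so $D\phi^t|_\sigma$ is never hyperbolic on it. What the paper uses is the Lorenz-like analysis of \cite{GSW} (their Lemmas~4.2, 4.5, 4.7): for a generic star flow each singularity in a nontrivial class has either $sv(\sigma)>0$ and index $s+1$, or $sv(\sigma)<0$ and index $s$; in the first case $W^{ss}(\sigma)$ is escaping and hence $E^c_\sigma\subset G^{cu}_\sigma$, in the second $E^c_\sigma\subset G^{cs}_\sigma$. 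One then checks directly (Corollary~\ref{c.Lorenz-like} in the paper) that $\psi_\cN$ has a dominated splitting with $\dim E=s$ over $\PP G^{cu}_\sigma$ resp.\ $\PP G^{cs}_\sigma$, and that this splitting agrees on the overlap with the splitting coming from periodic orbits. Your partition $S_E\sqcup S_F$ and the index constraint are a \emph{consequence} of this analysis, not something that can be arranged by a perturbation.

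\textbf{Uniform contraction of $h_E\cdot\psi_\cN$ on $E$.} Here you take a genuinely different route from the paper, and it has a gap. The paper argues by contradiction through ergodic theory: if $E$ were not uniformly contracted, one finds for each $T$ an ergodic measure $\mu_T$ on $\tilde C\cup\bigcup\PP^c_{\sigma_i}$ with $\int\log\|h_E^T\psi_\cN^T|_E\|\,d\mu_T\ge 0$; if $\mu_T$ is not supported on some $\PP^c_{\sigma_i}$ it projects to a regular measure on $C$, which by the ergodic closing lemma is a weak$*$ limit of periodic measures $\nu_n$; since $h_E$ integrates to zero along periodic orbits, Lemma~\ref{l.uniformlyattheperiod} forces $\int\log\|h_E^T\psi_\cN^T|_E\|\,d\nu_n\le-\eta$, a contradiction. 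Hence $\mu_T$ is supported on some $\PP^c_{\sigma_i}$, where Corollary~\ref{c.Lorenz-like} gives uniform contraction directly, another contradiction. Your orbit-decomposition proposal instead appeals to ``standard Liao--Gan--Yang output'' for uniform contraction of the linear Poincar\'e flow on regular segments, but the star hypothesis only provides uniform hyperbolicity \emph{at the period} for periodic orbits, not uniform contraction on arbitrary orbit segments bounded away from $Sing(X)$; and the cocycle $h_E^t$ is only bounded when \emph{both} endpoints of the segment are away from $S_E$, so concatenating estimates over an alternation of regular and singular pieces requires a nontrivial bookkeeping that you do not supply. The paper's measure-theoretic argument sidesteps precisely this difficulty by reducing everything to the period of an approximating periodic orbit, where $h_E$ is automatically trivial.
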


Indeed we will get a more precise result: our notion of singular hyperbolic structures allows \emph{a priori} many possible choices of
reparametriztions cocycles.
Here one can fix \emph{a priori} the reparametrizations cocycle, by using the works of  \cite{GSW} on star flow.

More precisely, according to \cite{GSW} for an open and dense subset in the set of star flows, one has the following properties:
\begin{enumerate}
\item any chain recurrence class $C$ admits a (unique) dominated splitting $\cN= E\oplus F$ for the extended linear Poincar\'e flow on $\widetilde{\La(C)}$ which is the limit of
the hyperbolic splitting of the periodic orbits for the $C^1$-nearby flows.

\item every  singularity $\sigma$ with non-trivial
chain recurrence class has only two possibilities
\begin{itemize}
 \item either the stable space  $E^s_\sigma$ has the same dimension as the bundle $E$ of the dominated splitting of the extended linear Poincar\'e flow over
 $\widetilde{\La(C(\sigma))}$
 (and thus $dim E^u_\sigma = dim(F)+1$).  We denote by $S_{E}\subset Sing(X)$ the set of such singular points.

 \item or  $dim E^u_\sigma = dim F$ and  $dim E^s_\sigma=dim E+ 1$.  We denote by $S_F$ the set of singular points satisfying this second possibility.
\end{itemize}
\end{enumerate}

In particular, the indices of the singularities in a  same chain recurrence classes may differ at most of $1$.

Then one consider the reparametrization cocycles $h^t_E$ and $h^t_F$ defined as

$$ h^t_E= \prod_{\sigma\in S_E} h^t_\sigma \mbox{ and } h^t_F= \prod_{\sigma\in S_F} h^t_\sigma .$$

Now, Theorem~\ref{t.converse} is a straightforward corollary of:

\begin{theo}\label{t.star-multi}There is a $C^1$  open and dense subset $\cU$ of the open set of star flows so that, for any $X$ in $\cU$ every chain
recurrence class admits a dominated splitting $\cN=E\oplus_{_\prec}F$ for the extended linear Poincar\'e flow $\psi^t_\cN$ over $B(C)$ and so that  the reparametrized
flow
$$(h^t_E \psi^t_\cN|_E, h^t_F \psi^t_\cN|_F)$$
is uniformly hyperbolic.

In other words,  $X$ is multisingular hyperbolic and its reparametrization cocycles are $(h^t_E,h^t_F)$.

\end{theo}

\begin{rema} If all the singular points in a chain recurrence class $C$  have the same index, that is, if $S_E\cap C$ or $S_F\cap C $ is empty, then the multisingular
hyperbolicity is  the singular hyperbolicity as in \cite{GSW}.
\end{rema}

The proof of Theorem~\ref{t.star-multi} follows closely the proof in \cite{GSW} that star flows with only singular points of the same index are singular hyperbolic.

\begin{ques} Can we remove the generic assumption, at least in dimension $3$, in Theorem~\ref{t.converse}?  In other word, is it true that, given any star $X$ flow
(for instance on a $3$-manifold)
every chain recurrence class of $X$ is multisingular hyperbolic?
\end{ques}

%%%%%%%%%%%%%%%%%%%%%%%%%%%%%%%%%%%%%%%%%%%%%%%%%%
%%%%%%%%%%%%%%%%%%%%%%%%%%%%%%%%%%%%%%%%%%%%%%%%
\section{Basic definitions and preliminaries}
%%%%%%%%%%%%%%%%%%%%%%%%%%%%%%%%%%%%%%%%%%%%%%%
%%%%%%%%%%%%%%%%%%%%%%%%%%%%%%%%%%%%%%%%%%%%%%%%%

%%%%%%%%%%%%%%%%%%%%%%%%%%%%%%%%%%%%%%%%%%%%%%
\subsection{Chain recurrent set}\label{ss.chainrecurrent}
%%%%%%%%%%%%%%%%%%%%%%%%%%%%%%%%%%%%%%%%%%%%
%The aim of this section is to introduce  the notion of chain recurrent set and of filtrating neighborhood and show that given a neighborhood of a chain recurrent class
%we can always find a smaller filtrating neighborhood of that  class.
The following notions and theorems are due to Conley \cite{Co} and they can be found in several other references (for example \cite{AN}).
\begin{itemize}
  \item We say that a pair of sequences $\set{x_i}_{0\leq i\leq k}$ and  $\set{t_i}_{0\leq i\leq k-1}$, $k\geq 1$,  are an \emph{ $\varepsilon$-pseudo orbit from $x_0$ to $x_k$} for a flow $\phi$,
  if for every $0\leq i \leq k-1$ one has
  $$ t_i-t_{i-1}\geq 1 \mbox{ and }d(x_{i+1},\phi^{t_i}(x_i))<\varepsilon.$$

  \item A compact invariant set $\Lambda$ is called \emph{chain transitive} if  for any $\varepsilon > 0$, and for any $x, y \in\Lambda$
there is an $\varepsilon$-pseudo orbit from $x$ to $y$.
\item  We say that $x\in M$ is \emph{chain recurrent} if  for every $\varepsilon>0$, there is an $\varepsilon$-pseudo orbit from $x$ to $x$. We call the set of chain recurrent points, the\emph{ chain recurrent set}
and we note it $\mathfrak{R}(M)$.
  \item We say that   $x, y \in \mathfrak{R}(M)$ are chain related if,  for every $\varepsilon>0$, there are $\varepsilon$-pseudo orbits form $x$ to $y$ and from $y$ to $x$. This is an equivalence relation.
 The equivalent classes of this equivalence relation are called \emph{ chain recurrence classes}.
\end{itemize}
\begin{figure}[htb]
\begin{center}
\includegraphics[width=0.63\linewidth]{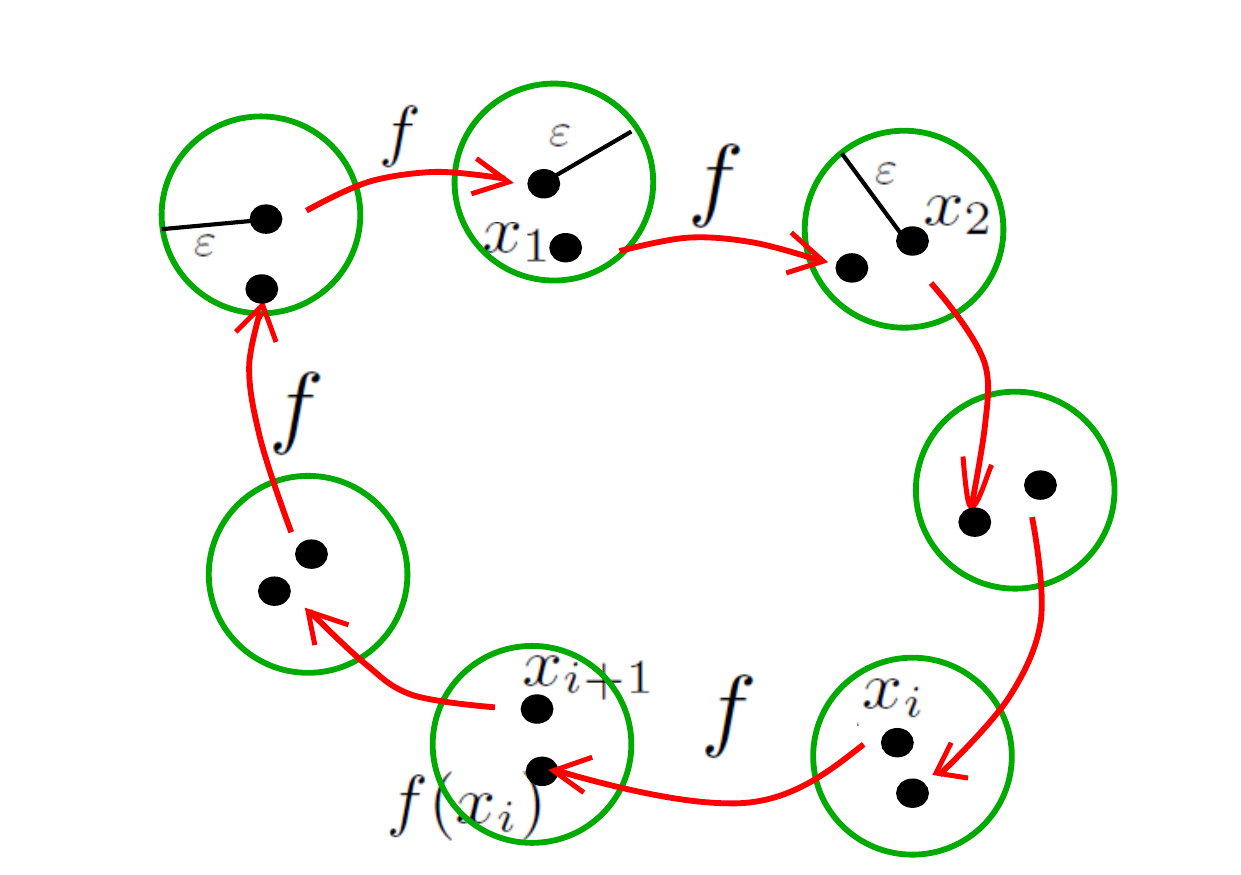}
\end{center}
\caption{an $\varepsilon$-pseudo orbit }
\end{figure}
\begin{defi}
\begin{itemize} \item An \emph{attracting region} (also called \emph{trapping region} ) is a compact set $U$ so that $\phi^t(U)$ is contained in the interior
of $U$ for every $t>0$. The maximal invariant set in an attracting region is called an \emph{attracting  set}.  A repelling region is an attracting region for $-X$, and the maximal invariant set is called a repeller.

\item A \emph{filtrating region} is the intersection of an attracting region with a repelling region.

\item Let $C$ be a chain recurrent class of $M$ for the flow $\phi$.
A \emph{filtrating neighborhood } of $C$ is a (compact) neighborhood which is a filtrating region.
%\begin{itemize}
%\item An \emph{attracting set} for $\phi$ is a compact invariant set $A$ such that there is an open neighborhood  $U$ containing $A$ such that
%\begin{itemize}
%  \item  $ \phi^t(\overline{U})\subset U $  for all $t>0$,
%  \item  and $A=\bigcap_{t\in\RR^{+}}\phi(\overline{U})$
%\end{itemize}
%We call $U$ an \emph{attracting neighborhood}.
%\item %For an attracting set $A$, we define the dual repeller of $A$ to the set
%$R=\bigcap_{t\in\RR^{-}}\phi(\overline{M\setminus \overline{U}})$.
%A \emph{repeller}  $R$ is an attractor for the reverse flow. The attracting neighborhoods of $R$ for the reverse flow are called \emph{repelling neighborhoods}.
\end{itemize}
\end{defi}
\begin{figure}[htb]
\begin{center}
\includegraphics[width=0.63\linewidth]{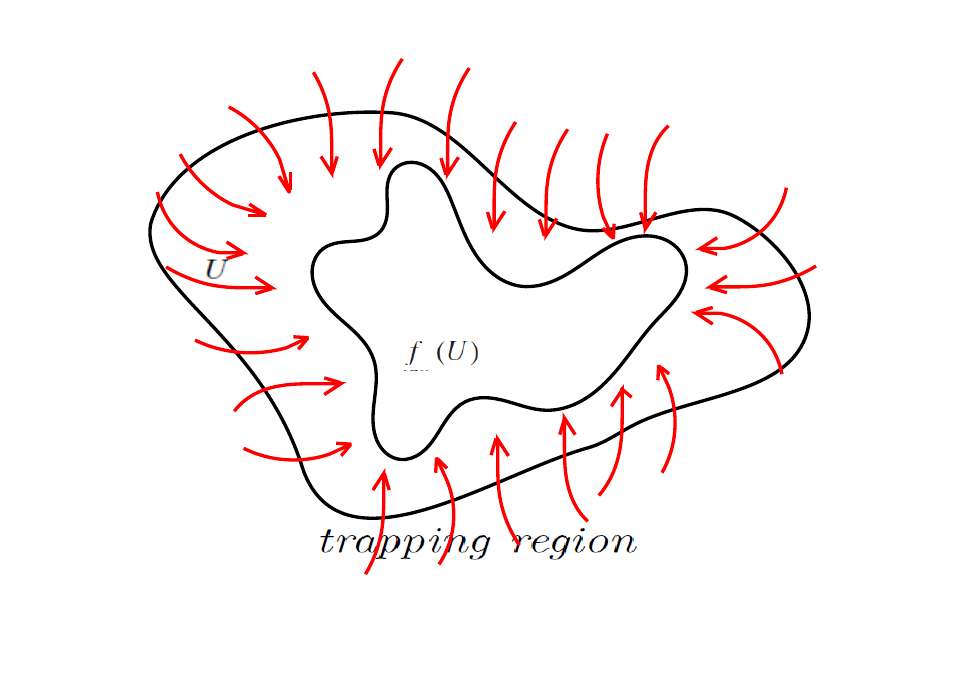}
\end{center}
\caption{ A trapping region or attracting region. }
\end{figure}
%\begin{defi}
%Let $C$ be a Chain recurrent class of $M$ for the flow $\phi$.
%We say that $U$ is a \emph{filtrating neighborhood } if there exist an attracting neighborhood $V$ and a repelling neighborhood $W$ such that $$U= V\cap \,W$$
%\end{defi}
\begin{defi}
Let $\{\phi^t\}$  by a flow on a Riemannian manifold $M$.
A \emph{complete Lyapunov function} is a continuous function $\mathcal{L}:M\to\RR$ such that
\begin{itemize}
  \item $\mathcal{L}(\phi^t(x))$ is decreasing for $t$ if $x\in M\setminus\mathfrak{R}(M)$
  \item Two points $x, y \in M$ are chain related if and only if $\mathcal{L}(x)=\mathcal{L}(y)$
  \item  $\mathcal{L}(\mathfrak{R}(M))$ is nowhere dense.
\end{itemize}
\end{defi}

Next result is called the fundamental theorem of dynamical systems by some authors:

\begin{theo}[Conley \cite{Co}]

Let $X$  be a $C^1$ vector field on a compact manifold $M$. Then  its flow $\{\phi^t\}$ admits a complete Lyapunov function.
\end{theo}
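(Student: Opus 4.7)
The plan is to follow Conley's classical strategy: produce $\mathcal{L}$ as a weighted sum of Lyapunov functions associated to individual attractor--repeller decompositions of $M$. I would begin by recalling that to each attracting set $A$ (the maximal invariant set inside an attracting region $U$) one attaches the \emph{dual repeller} $A^{*}=\{x\in M:\omega(x)\cap A=\emptyset\}$, and that $M=B(A)\sqcup A^{*}$ with $B(A)$ the open basin of attraction. The technical engine that makes everything work is the characterization
$$\mathfrak{R}(M)=\bigcap_{A}\,(A\cup A^{*}),$$
the intersection ranging over all attracting sets of $\{\phi^{t}\}$; one inclusion is clear, and the other amounts to noting that a non-chain-recurrent point can be separated from itself by some open attracting region built via an $\varepsilon$-pseudo-orbit argument.

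The second step is to build, for each attracting region $U$ with attractor $A$ and dual repeller $A^{*}$, a continuous Lyapunov function $g_{U}\colon M\to[0,1]$ with $g_{U}^{-1}(0)=A$, $g_{U}^{-1}(1)=A^{*}$, and such that $t\mapsto g_{U}(\phi^{t}(x))$ is \emph{strictly} decreasing whenever $x\notin A\cup A^{*}$. The natural starting point is the separating function $h_{U}(x)=d(x,A)/(d(x,A)+d(x,A^{*}))$. An exponentially weighted time average such as
$$g_{U}(x)=\int_{-\infty}^{0}e^{s}\,h_{U}(\phi^{s}(x))\,ds$$
(suitably normalized) produces a monotone function. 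The main local obstacle is achieving continuity on all of $M$, the correct boundary values on $A$ and on $A^{*}$, and strict monotonicity on the transient region simultaneously; this is why an averaging construction is preferable to a plain forward supremum of $h_{U}$.

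Next I would extract a countable family $\{U_{n}\}_{n\in\NN}$ of attracting regions such that $\mathfrak{R}(M)=\bigcap_{n}(A_{n}\cup A_{n}^{*})$. Compactness and second-countability of $M$ make this possible: attracting sets are determined by their open basins, so a countable basis of open sets produces a countable cofinal family of attractor--repeller pairs. I then set
$$\mathcal{L}(x)=\sum_{n=1}^{\infty}3^{-n}\,g_{U_{n}}(x),$$
a uniformly convergent series of continuous functions, hence continuous on $M$.

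Finally I would verify the three required properties. Strict decrease of $\mathcal{L}$ along orbits outside $\mathfrak{R}(M)$: any $x\notin\mathfrak{R}(M)$ is missed by some $A_{n}\cup A_{n}^{*}$, so $g_{U_{n}}$ strictly drops along $\phi^{t}(x)$ while every other $g_{U_{k}}$ is non-increasing. Characterization of chain classes by level sets: two chain-related points lie on the same side of every decomposition $(A_{n},A_{n}^{*})$, forcing $g_{U_{n}}(x)=g_{U_{n}}(y)$ for all $n$; conversely, if $\mathcal{L}(x)=\mathcal{L}(y)$ with $x,y\in\mathfrak{R}(M)$, then since each $g_{U_{n}}$ restricted to $\mathfrak{R}(M)$ takes only the values $0$ or $1$, uniqueness of ternary expansions with digits in $\{0,1\}$ forces $g_{U_{n}}(x)=g_{U_{n}}(y)$ for every $n$, so $x$ and $y$ share every attractor--repeller side and therefore the same chain class. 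Nowhere density of $\mathcal{L}(\mathfrak{R}(M))$: the image is contained in the Cantor-like set $\bigl\{\sum_{n}3^{-n}\varepsilon_{n}:\varepsilon_{n}\in\{0,1\}\bigr\}$, which has empty interior in $\RR$. The use of base $3$ rather than $2$ is precisely what yields this Cantor property and is the delicate point that makes the whole weighting scheme succeed.
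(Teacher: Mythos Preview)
The paper does not prove this theorem at all: it is stated as a result of Conley with a reference to \cite{Co}, and is then used only through Corollary~\ref{c.filtrating}. So there is no ``paper's own proof'' to compare against.

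Your sketch follows Conley's classical argument (attractor--repeller pairs, a Lyapunov function for each pair, then a ternary-weighted series), and the overall architecture is correct. One point to be careful with: the specific exponential average you write down,
\[
g_{U}(x)=\int_{-\infty}^{0}e^{s}\,h_{U}(\phi^{s}(x))\,ds,
\]
does \emph{not} by itself give monotonicity along orbits. A short computation yields $\frac{d}{dt}g_{U}(\phi^{t}(x))=h_{U}(\phi^{t}(x))-g_{U}(\phi^{t}(x))$, which has no reason to be negative. The usual fix is to first force monotonicity by passing to an orbit-supremum, say $\tilde h_{U}(x)=\sup_{t\ge 0}h_{U}(\phi^{t}(x))$, and only then average in time to restore continuity and obtain strict decrease off $A\cup A^{*}$. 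You hint at this difficulty (``main local obstacle''), but the formula as written would need to be replaced. Apart from this technical point, the remaining steps (countable cofinal family of pairs, the series $\sum 3^{-n}g_{U_{n}}$, uniqueness of ternary expansions with digits $\{0,1\}$, and the Cantor-set argument for nowhere density) are the standard ones and are sound.
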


Next corollary will be often used in this paper:

\begin{coro}\label{c.filtrating}
Let $\phi$  be a $C^1$-vector field on a compact manifold $M$. Every chain class $C$  of $X$ admits a  basis of filtrating neighborhoods, that is, every neighborhood
of $C$ contains a filtrating neighborhood of $C$.
\end{coro}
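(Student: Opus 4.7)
The plan is to construct filtrating neighborhoods of $C$ as intersections of Lyapunov sub- and super-level sets of the complete Lyapunov function guaranteed by Conley's theorem, refined if necessary by additional separating regions.

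First, I invoke the preceding theorem to obtain a complete Lyapunov function $\mathcal{L}\colon M\to\RR$, and set $c := \mathcal{L}(C)$, which is a single value since $\mathcal{L}$ is constant on each chain class. The pivotal observation is that whenever $\alpha\notin\mathcal{L}(\mathfrak{R}(M))$, the sublevel set $A_\alpha := \mathcal{L}^{-1}((-\infty,\alpha])$ is an attracting region and the superlevel set $R_\alpha := \mathcal{L}^{-1}([\alpha,+\infty))$ is a repelling region. To verify the claim for $A_\alpha$: given $x\in A_\alpha$ and $t>0$, if $x\in\mathfrak{R}(M)$ then $\mathcal{L}(x)\in\mathcal{L}(\mathfrak{R}(M))$ is not equal to $\alpha$, forcing $\mathcal{L}(x)<\alpha$ strictly and hence $\mathcal{L}(\phi^t(x))=\mathcal{L}(x)<\alpha$; if $x\notin\mathfrak{R}(M)$ then $\mathcal{L}(\phi^t(x))<\mathcal{L}(x)\leq\alpha$. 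In both cases $\phi^t(x)\in\mathcal{L}^{-1}((-\infty,\alpha))\subset\mathrm{int}(A_\alpha)$, establishing the required $\phi^t(A_\alpha)\subset\mathrm{int}(A_\alpha)$. The mirror argument handles $R_\alpha$.

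Second, since $\mathcal{L}(\mathfrak{R}(M))$ is nowhere dense I may pick $\alpha,\beta\notin\mathcal{L}(\mathfrak{R}(M))$ arbitrarily close to $c$ with $\alpha<c<\beta$. The intersection $W(\alpha,\beta) := A_\beta\cap R_\alpha = \mathcal{L}^{-1}([\alpha,\beta])$ is a filtrating region containing $C$, and these sets contract to $\mathcal{L}^{-1}(c)$ in Hausdorff distance as $\alpha,\beta\to c$ (by continuity of $\mathcal{L}$ and compactness of $M$).

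The main obstacle is that $\mathcal{L}^{-1}(c)$ may strictly contain $C$, through non-recurrent orbits transiting the level $c$ between chain classes at strictly higher and lower Lyapunov values; such transit points might lie outside the given neighborhood $V$. To handle this, I would construct, for each $y\in M\setminus V$, a filtrating region $F_y\supset C$ with $y\notin F_y$. When $\mathcal{L}(y)\neq c$, a single level set $A_\gamma$ or $R_\gamma$ (with $\gamma\notin\mathcal{L}(\mathfrak{R}(M))$ strictly between $c$ and $\mathcal{L}(y)$) suffices. When $\mathcal{L}(y)=c$ but $y\notin C$ (necessarily $y\notin\mathfrak{R}(M)$, since $\mathcal{L}^{-1}(c)\cap\mathfrak{R}(M)=C$), I would invoke the classical fact from Conley's theory that the chain class $C$ equals the intersection of all attracting and repelling regions containing it, producing the needed separating region. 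Finally, compactness of $M\setminus V$ together with openness of each $M\setminus F_y$ yields a finite subcover $y_1,\ldots,y_n$, and the finite intersection $F = \bigcap_i F_{y_i}$ remains a filtrating region (since attracting and repelling regions are each closed under finite intersection) providing the required filtrating neighborhood of $C$ inside $V$.
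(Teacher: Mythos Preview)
Your proof is correct and follows the standard route from Conley's theorem. The paper itself offers no proof for this corollary, simply presenting it as an immediate consequence of the existence of a complete Lyapunov function; you have filled in exactly the expected details. One minor remark: the step where you invoke ``the classical fact from Conley's theory that the chain class $C$ equals the intersection of all attracting and repelling regions containing it'' is indeed standard (it follows from the characterization of chain recurrence via attractor--repeller pairs), and since the paper already takes Conley's framework for granted, citing it here is in keeping with the level of detail assumed.
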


\begin{lemm}[Connecting lemma]\label{l.contecting} \cite{BC}.
Given $\flow$ induced by a vector field $X\in \mathcal{X}^1(M)$ such that all periodic orbits of $X$ are hyperbolic. For any $C^1$ neighborhood $\cU$ of $X$ and $x,y\in M$ if $y$ is chain attainable from $x$, then there exists $Y\in U$ and $t>0$ such that $\flow^Y(x)= y$. Moreover the following holds: For any $k\geq 1$, let $\{x_{i,k},t_{i,k}\}_{i=0}^{n_k}$ be an $(1/k,T)$-pseudo orbit from $x$ to $y$ and denote by
$$\Delta_k = \bigcup_{i=0}^{n_k-1} \phi_{[0,t_{i,k}]}(x_{i,k}).$$
Let $\Delta$ be the upper Hausdorff limit of $\Delta_k$. Then for any neighborhood $U$ of  $\Delta$, there exists $Y\in U$ with $Y = X$ on $M\setminus U$ and $t>0$ such that $\flow^Y(x) = y$.
\end{lemm}

For a generic vector field $X\in \mathcal{X}^1(M)$ we also have:

\begin{theo}\label{ConConLem} \cite{C}
There exists a $G_{approx}\subset \mathcal{X}^1(M)$ a generic set such that for every $X\in G_{approx}$ and for every $C$ a chain recurrence class there exists a sequence of periodic orbits $\gamma_n$ which converges to $C$ in the Hausdorff topology.
\end{theo}
%%%%%%%%%%%%%%%%%%%%%%%%%%%%%%%%%%%%%%%%%%%%%%%%%%%%%%%%%%%%%%%%%%
%%%%%%%%%%%%%%%%%%%%%%%%%%%%%%%%%%%%%%%%%%%%%%%%%%%%%%%%%%%%
%\section{Linear cocycles and hyperbolic structures}
%%%%%%%%%%%%%%%%%%%%%%%%%%%%%%%%%%%%%%%%%%%%%%%%%%%%%%%%%%%%%
%%%%%%%%%%%%%%%%%%%%%%%%%%%%%%%%%%%%%%%%%%%%%%%%%%%%%%%%%%%%%%

%%%%%%%%%%%%%%%%%%%%%%%%%%%%%%%%%%%%
\subsection{Linear cocycle}
%%%%%%%%%%%%%%%%%%%%%%%%%%%%%%%%%%%%%
Let  $\phi=\{\phi^t\}_{t\in\RR}$ be a topological flow on a compact metric space  $K$.
A \emph{linear cocycle over   $(K,\phi)$}   is a continuous map   $A^t\colon E\times \RR\to E$
defined by $$A^t(x, v) = (\phi^t(x), A_t(x)v)\,,$$ where
\begin{itemize}
\item  $\pi\colon E\to K$ is a $d$ dimensional  linear bundle over $K$;
\item  $A_t:(x,t)\in K\times \RR \mapsto GL(E_x,E_{\phi^t(x)})$ is a continuous map  that satisfies the \emph{cocycle relation } :
 $$A_{t+s}(x)=A_t(\phi^s(x))A_s(x),\quad  \mbox{for any } x\in K \mbox{ and }t,s\in\RR  $$

\end{itemize}

Note that $\cA=\{A^t\}_{t\in\RR}$ is a flow on the space $E$ which projects on $\phi^t$.
$$\begin{array}[c]{ccc}
E &\stackrel{A^t}{\longrightarrow}&E\\
\downarrow&&\downarrow\\
K&\stackrel{\phi^t}{\longrightarrow}&K
\end{array}$$

If $\Lambda\subset K$ is a $\phi$-invariant subset,  then $\pi^{-1}(\Lambda)\subset E$ is $\cA$-invariant, and  we  call
\emph{the restriction of $\cA$ to $\Lambda$}  the restriction of $\{A^t\}$ to $\pi^{-1}(\Lambda)$.

%%%%%%%%%%%%%%%%%%%%%%%%%%%%%%%%%%%%%%%%%%%%%%%%%%%%%%%%%%%%%%%%%%%%
 \subsection{Hyperbolicity, dominated splitting on linear cocycles}\label{ss.hyp}
%%%%%%%%%%%%%%%%%%%%%%%%%%%%%%%%%%%%%%%%%%%%%%%%%%%%%%%%%%%%%%%%%%%%%%%

 \begin{defi}
 Let $\phi$ be a topological flow on a compact metric space $\La$. We consider a vector bundle  $\pi\colon E\to \La$ and
   a linear cocycle $\cA=\{ A^t\}$ over $(\La,X)$.

 We say that $\cA$ admits a \emph{dominated splitting over $\Lambda$} if
 \begin{itemize}\item there exists a splitting $E=E^1\oplus\dots\oplus E^k$ over $\lambda$ into $k$ subbundles
 \item  the dimension of the subbundles is constant, i.e. $dim(E^i_x)=dim(E^i_y)$ for all $x,y\in\Lambda$ and $i\in \set{1\dots k}$,
     \item the splitting is invariant, i.e. $A^t(x)(E^i_x)=E^i_{\phi^t(x)}$ for all $i\in \{1\dots k\}$,

\item there exists a $t>0$ such that for every $x\in \Lambda$ and any pair of non vanishing vectors $v\in E^i_x$ and $u\in E^j_x$, $i<j$ one has
 \begin{equation}\label{e.dom}
 \frac{\norm{A^t(u)}}{\norm u}\leq \frac{1}{2}\frac{\norm{ A^t(v)}}{\norm v}
 \end{equation}

 We denote $E^1\oplus_{_\prec}\dots\oplus_{_\prec} E^k$.% or $E^1\oplus_{{_\prec}_t}\dots\oplus_{{_\prec}_t} E^k$ if one wants to emphasis the role of $t$: in that case one says that
 the splitting is \emph{$t$-dominated}.
\end{itemize}
 \end{defi}

 A classical result (see for instance \cite[Appendix B]{BDV})  asserts that the bundles of a dominated splitting are always continuous.
 A given cocycle may admit several dominated
  splittings.  However, the dominated splitting is unique if one prescribes the dimensions $dim(E^i)$.

Associated to the dominated splitting we define a family of cone fields $C^{iu}_a$ around each space $E^i\oplus\dots\oplus E^k$ as follows.
Let  us write the vectors $v\in E$ as $v=(v_1,v_2)$ with $v_1\in E^1\oplus\dots\oplus E^{i-1}$ and $v_2\in E^i\oplus\dots\oplus E^k$.
Then the conefield $C^{iu}_a$ is the set $$C^{iu}_a=\set{v=(v_1,v_2) \text{ such that } \norm{v_1}<a\norm{v_2} }.$$
These  are called the\emph{ family of unstable conefields} and the domination gives us that they are strictly invariant for times larger than $t$: i.e. the cone $C^{iu}_a$ at $T_xM$ is taken by $A^t$ to the interior of the cone $C^{iu}_a$ at $T_{\phi^t x}M$.

Analogously we define the \emph{stable family of conefields $C^{is}_a$} around
$E^1\oplus\dots\oplus E^i$ and the domination gives us that they are strictly invariant for negative times smaller than $-t$.
%Since the domination is associated to a splitting into a direct sum of spaces,
%the intersection between the family of conefields is either void or transversal.

\begin{figure}[htb]\label{f.cone}
\begin{center}
\includegraphics[width=0.8\linewidth]{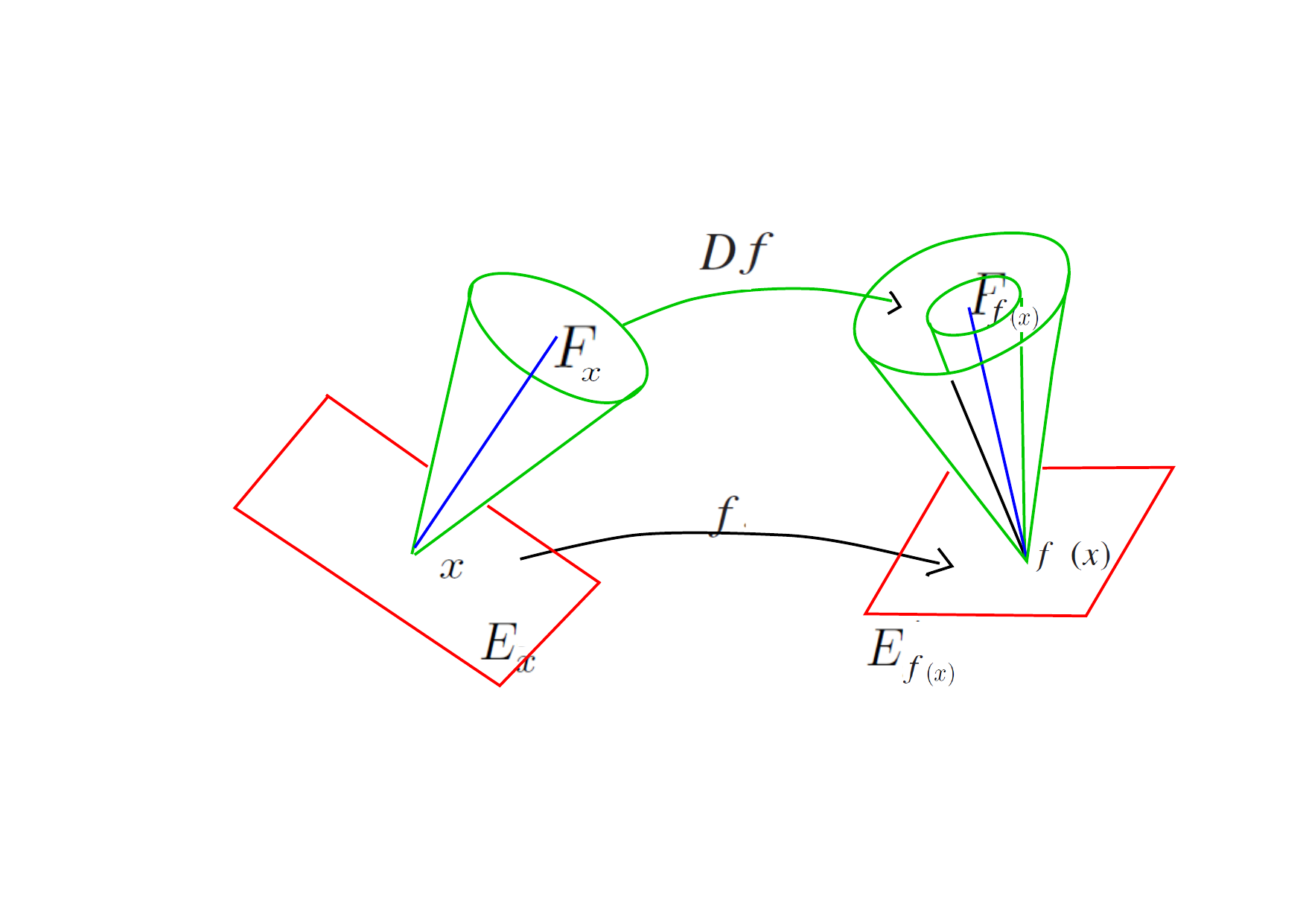}
\end{center}
%\caption{the local cocycle $h^t_\sigma$ associated to the singularity $\sigma=\sigma_0$}
\end{figure}

 One says that one of the bundle $E^i$ is \emph{(uniformly) contracting} (resp. \emph{expanding}) if there is $t>0$ so that for every
 $x\in\La$ and every non vanishing vector   $u\in E^i_x$ one has $\frac{\norm{A^t(u)}}{\norm u}<\frac 12$
 (resp. $\frac{\norm{A^{-t}(u)}}{\norm u}<\frac 12$). In both cases one says that $E^i$ is \emph{hyperbolic}.

 Notice that if $E^j$ is contracting (resp. expanding) then the same holds for any $E^i$, $i<j$  (reps. $j<i$) as a consequence of the domination.

\begin{defi}
 We say that the linear cocycle  $\cA$ is \emph{hyperbolic over $\Lambda$} if
  there is a dominated splitting $E=E^s\oplus_{_\prec} E^u$ over $\Lambda$ into $2$  hyperbolic subbundles so that $E^s$ is uniformly contracting and $E^u$ is
  uniformly expanding.

  One says that $E^s$ is the \emph{stable bundle}, and $E^u$ is the \emph{unstable bundle}.
 \end{defi}

 The existence of a dominated splitting or of a hyperbolic structure is an open property in the following sense,

 \begin{prop}\label{p.robustcocycle} Let $K$ be a compact metric space, $\pi\colon E\to K$ be a $d$-dimensional  vector bundle, and $\cA$ be a linear cocycle over $K$.
 Let $\La_0$ be a $\phi$-invariant compact set. Assume that the restriction of $\cA$ to $\La_0$ admits a dominated splitting
 $E^1\oplus_{{_\prec}_t}\dots\oplus_{{_\prec}_t} E^k$, for some $t>0$.

 Then there is a compact neighborhood $U$ of $\La_0$ with the following property. Let $\La=\bigcap_{t\in\RR} \phi^t(U)$ be the maximal invariant set  of $\phi$ in $U$.
 Then the dominated splitting admits a unique extension as a $2t$-dominated splitting over $\La$.  Furthermore if one of the subbundle $E^i$ is hyperbolic over $\La_0$ it is still hyperbolic
 over $\La$.

 As a consequence, if $\cA$ has a hyperbolic structure over $\La_0$ then (up to shrink $U$ if necessary) it also has a hyperbolic structure over $\La$.
 \end{prop}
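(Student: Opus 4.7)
My approach is the standard cone-field method. I will extend the partial-sum bundles $F^i = E^1\oplus\cdots\oplus E^i$ from $\La_0$ to a compact neighborhood $V$ of $\La_0$ in $K$ as continuous (and not necessarily $\cA$-invariant) subbundles $\tilde F^i$ of $E|_V$; this uses local trivializations of the vector bundle $E$ together with a partition of unity on $V$, and is possible because the bundles of a dominated splitting are continuous on $\La_0$. For a suitable aperture $a_0 > 0$, I form around $\tilde F^i$ and its complement the two cone fields $C^{is}_{a_0}$ and $C^{(i+1)u}_{a_0}$ on $E|_V$.

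Over $\La_0$ the $t$-dominated estimate \eqref{e.dom} is equivalent to strict cone invariance: $A^t$ maps each unstable cone $\overline{C^{iu}_{a_0}(x)}$ into the interior of $C^{iu}_{a_0/2}(\phi^t(x))$, with a uniform angular gap, and symmetrically for the stable cones. By uniform continuity of $A^t$ and of the extended bundles, and by compactness of $\La_0$, this strict invariance persists on a smaller compact neighborhood $V' \subset V$, with a quantitative improvement of the domination ratio sufficient to give, after two iterations, the factor $\tfrac12$ of the $2t$-dominated bound (e.g.\ keeping a ratio $\tfrac 32$ at time $t$ yields $(\tfrac 32)^2 = \tfrac 94 > 2$ at time $2t$). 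I then take $U$ to be any compact neighborhood of $\La_0$ with $\phi^{[-t,t]}(U) \subset V'$; every point of $\La = \bigcap_{s\in\RR}\phi^s(U)$ has its whole orbit in $V'$, so the cone invariance may be iterated indefinitely along orbits in $\La$.

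On $\La$, for each $i$, I set
$$\hat E^{\geq i}_x \;=\; \bigcap_{n\geq 0} A^{nt}\bigl(C^{iu}_{a_0}(\phi^{-nt}(x))\bigr), \qquad \hat E^{\leq i}_x \;=\; \bigcap_{n\geq 0} A^{-nt}\bigl(C^{is}_{a_0}(\phi^{nt}(x))\bigr).$$
The classical cone-intersection lemma, using the uniform angular gap obtained above, guarantees that each of these intersections is a linear subspace of the prescribed dimension, varies continuously with $x$, and is $\cA$-invariant. Defining $\hat E^i_x = \hat E^{\geq i}_x \cap \hat E^{\leq i}_x$ yields the desired splitting on $\La$; it is $2t$-dominated by the quantitative estimate above, and it extends the original splitting on $\La_0$ since dominated splittings with prescribed dimensions are unique. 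If some $E^i$ was uniformly contracted over $\La_0$, then $\hat E^i$ on $\La$ is $C^0$-close to the original $E^i$ on $\La_0$, so by continuity of $A^t$ and compactness the contraction estimate $\|A^t v\|/\|v\| \leq \lambda < 1$ survives with a slightly weaker constant on $\La$ and is then restored to $<\tfrac12$ after iteration; hyperbolicity of $\cA$ over $\La$ follows by applying this simultaneously to $E^s$ and $E^u$.

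I expect the main technical obstacle to be the very first step, the continuous extension of the subbundles $\tilde F^i$ from $\La_0$ to a neighborhood $V$ inside the abstract bundle $E$ over a general compact metric space $K$. This is handled by covering $\La_0$ by finitely many open sets $W_\alpha \subset K$ over which $E$ is trivial, extending each continuous section of the Grassmannian bundle of $E|_{\La_0 \cap W_\alpha}$ given by $F^i$ to a continuous section of the Grassmannian bundle of $E|_{W_\alpha}$, and gluing with a partition of unity. The aperture $a_0$ must be taken large enough to absorb the patching errors yet small enough that cone invariance on $V'$ is preserved and the rate improvement leading to $2t$-domination still holds; the compactness of $\La_0$ ensures that a simultaneous choice works.
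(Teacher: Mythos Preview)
The paper does not actually prove this proposition: it is stated as a classical fact, the cone fields $C^{iu}_a$ and $C^{is}_a$ having been introduced in the paragraphs just before, with an implicit appeal to \cite[Appendix~B]{BDV} for the general theory. Your cone-field argument is precisely the standard proof and is correct in outline and in its quantitative bookkeeping.

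One step does need to be said more carefully. You propose to patch local extensions of the Grassmannian section $x\mapsto F^i_x$ by a partition of unity, but the Grassmannian is not a vector space and convex combinations of subspaces are not defined. The usual remedy is to extend instead the orthogonal projections $P^i_x\colon E_x\to F^i_x$, which lie in the vector bundle $\operatorname{End}(E)$; Tietze extension and partition-of-unity averaging apply there, and for $x$ near $\La_0$ the averaged operator is $C^0$-close to a rank-$\dim F^i$ projection, so its image is a well-defined continuous subbundle of the correct dimension. Alternatively, extend a local orthonormal frame of $F^i$ in each trivializing chart and take spans, using the partition of unity on the frame vectors rather than on the subspaces. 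With either fix the rest of your argument goes through unchanged, and the aperture $a_0$ can indeed be chosen as you describe.
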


%%%%%%%%%%%%%%%%%%%%%%%%%%%%%%%%%%%%%%%%%%%%%%%%%%%%%%%%%%%%%%%%%%%%%%
 \subsection{Robustness of hyperbolic structures}\label{ss.robust}
%%%%%%%%%%%%%%%%%%%%%%%%%%%%%%%%%%%%%%%%%%%%%%%%%%%%%%%%%%%%%%%%%%%%%%%

 The aim of this section is to explain that Proposition~\ref{p.robustcocycle} can be seen as a robustness property.

 Let $M$ be a  manifold and $\phi_n$ be a sequence of flows in $M$ tending to $\phi_0$ as $n\to +\infty$, in the $C^0$-topology on compact subsets: for any compact set
 $K\subset M$  and any $T>0$, the restriction of $\phi^t_n$ to $K$, $t\in[-T,T]$, tends uniformly (in $x\in K$ and $t\in[-T,T]$) to $\phi^t_0$.

 Let $\La_n$ be compact $\phi_n$-invariant subsets of $M$, and assume that the upper limit of the $\La_n$ for the Hausdorff topology is contained in $\La_0$:
 more precisely,  any neighborhood of $\La_0$ contains all but finitely many of the $\La_n$.  Let us present another way to see this property:

 Consider the subset $\cI=\{0\}\cup\{\frac 1n,n\in\NN\setminus\{0\}\}\subset \RR$ endowed with the induced topology.
 Consider $M_\infty= M\times \cI$.  Let $\La_\infty$ denote
 $$\La_\infty=\La_0\times \{0\}\cup\bigcup_{n>0}\La_n\times\{\frac1n\}\subset M_\infty.$$
 With this notation, the upper limit of the $\La_n$ is contained in $\La_0$ if and only if $\La_\infty$ is a compact subset.

 Let $\pi\colon E\to M$ be a vector bundle. We denote $E_\infty=E\times \cI$ the vector bundle $\pi_\infty\colon E\times \cI\to M\times \cI$.
 We denote by $E_\infty|_{\La_\infty}$ the restriction of $E_\infty$ on the compact subset $\La_\infty$.

Assume now  that $\cA_n$ are linear cocycles over the restriction of $E$ to $\La_n$.
 We denote by $\cA_\infty$ the map defined on the restriction  $E_\infty|_{\La_\infty}$ by:
 \begin{eqnarray*}
 A^t_\infty (x,0)&=&(A^t_0(x), 0), \mbox{ for } (x,0)\in \La_0\times\{0\} \mbox{ and }\\A^t_\infty (x,\frac 1n)&=&(A^t_n(x), \frac1n),
 \mbox{ for } (x,\frac1n)\in \La_n\times\{\frac1n\}.\end{eqnarray*}

\begin{defi}\label{d.robust} With the notation above, we say that the family of cocycles $\cA_n$ tends to $\cA_0$ as $n\to \infty$ if the map $\cA_\infty$ is continuous, and therefore is
a linear cocycle.
\end{defi}

As a consequence of Proposition~\ref{p.robustcocycle} we get:
\begin{coro}\label{c.robust} Let $\pi\colon E\to M$ be a linear cocycle over a manifold $M$ and let $\phi_n$ be a sequence of flows on $M$
converging to $\phi_0$ as $n\to \infty$. Let $\La_n$ be a sequence of $\phi_n$-invariant compact subsets so that the upper limit of the $\La_n$, as $n\to\infty$,
is contained in $\La_0$.

Let $\cA_n$ be a sequence of linear cocycles over $\phi_n$ defined  on the restriction of $E$ to $\La_n$. Assume that $\cA_n$ tend to $\cA_0$ as $n\to\infty$.

Assume that $\cA_0$ admits a dominated splitting $E=E^1\oplus_{_\prec}\dots\oplus_{_\prec} E^k$ over $\La_0$.  Then, for any $n$ large enough, $\cA_n$ admits a dominated splitting with
the same number of sub-bundles and the same dimensions of the sub-bundles.  Furthermore, if $E^i$ was hyperbolic (contracting or expanding) over $\La_0$ it is still hyperbolic
(contracting or expanding, respectively) for $\cA_n$ over $\La_n$.
\end{coro}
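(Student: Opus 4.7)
The plan is to collapse the sequence of cocycles into a single linear cocycle on the compact product space $M_\infty = M\times\cI$ introduced just above the statement, and then invoke Proposition~\ref{p.robustcocycle} once. First, I would check that the data actually assemble into a topological flow on $M_\infty$: the uniform-on-compacts convergence $\phi_n\to\phi_0$ implies that the map $\phi^t_\infty$ defined by $\phi^t_\infty(x,0)=(\phi^t_0(x),0)$ and $\phi^t_\infty(x,\tfrac1n)=(\phi^t_n(x),\tfrac1n)$ is continuous on $M_\infty\times\RR$, hence a topological flow. The assumption that the upper limit of the $\La_n$ lies in $\La_0$ is precisely the statement that
\[
\La_\infty \;=\; \bigl(\La_0\times\{0\}\bigr)\;\cup\;\bigcup_{n\geq 1}\bigl(\La_n\times\{\tfrac1n\}\bigr)
\]
is a compact $\phi_\infty$-invariant subset of $M_\infty$, and Definition~\ref{d.robust} gives that $\cA_\infty$ is a genuine linear cocycle over the restriction of $\phi_\infty$ to $\La_\infty$.

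Next, I would transport the given dominated splitting of $\cA_0$ over $\La_0$ to a dominated splitting $E_\infty = E^1\oplus_{_\prec}\cdots\oplus_{_\prec} E^k$ of $\cA_\infty$ over the closed invariant subset $\La_0\times\{0\}\subset\La_\infty$, with the same dimensions and the same domination constant. Applying Proposition~\ref{p.robustcocycle} to $\cA_\infty$ on $M_\infty$, with $\La_0\times\{0\}$ playing the role of the ``$\La_0$'' of that proposition, yields a compact neighborhood $U\subset M_\infty$ of $\La_0\times\{0\}$ such that the maximal $\phi_\infty$-invariant subset of $U$ carries a dominated splitting extending the one above, with the same dimensions, and each sub-bundle that was hyperbolic over $\La_0\times\{0\}$ is still hyperbolic over this extension.

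Finally, I would check that $\La_n\times\{\tfrac1n\}\subset U$ for all $n$ large enough. If not, one extracts a subsequence with points $(x_{n_k},\tfrac1{n_k})\in\La_\infty\setminus U$; by compactness of $\La_\infty$ these have an accumulation point, necessarily of the form $(x_*,0)\in\La_0\times\{0\}\subset\mathrm{int}(U)$, a contradiction. Since each $\La_n\times\{\tfrac1n\}$ is $\phi_\infty$-invariant and contained in $U$ for large $n$, it is contained in the maximal $\phi_\infty$-invariant subset of $U$. Restricting the dominated splitting there to $\La_n\times\{\tfrac1n\}$ is exactly a dominated splitting of $\cA_n$ over $\La_n$ with the required dimensions, and hyperbolicity of any sub-bundle is inherited directly from the hyperbolicity statement in Proposition~\ref{p.robustcocycle}. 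The only step that is not formal is the verification that $\phi_\infty$ and $\cA_\infty$ are continuous on their respective spaces; this is essentially forced by the hypotheses (uniform-on-compacts convergence of flows, and the definition of convergence of cocycles), so I do not anticipate real difficulty there, and the corollary then reduces to a one-shot application of the previous proposition.
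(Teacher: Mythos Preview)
Your proposal is correct and matches the paper's proof, which consists of the single sentence ``apply Proposition~\ref{p.robustcocycle} to a neighborhood of $\La_0\times\{0\}$ in $\La_\infty$''; you have simply unpacked that sentence in detail. One small point of care: the ambient compact metric space in the proposition should be $\La_\infty$ (where $\cA_\infty$ is actually defined) rather than $M_\infty$, but your argument already implicitly works there and the conclusion is unaffected.
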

The proof just consist in applying Proposition~\ref{p.robustcocycle} to a neighborhood of $\La_0\times\{0\}$ in $\La_\infty$.

%%%%%%%%%%%%%%%%%%%%%%%%%%%%%%%%%%%%%%%%%%%%%%%%%%%%%%%%%%%%%%%%%%%%%%%%
\subsection{Reparametrized cocycles, and hyperbolic structures}
%%%%%%%%%%%%%%%%%%%%%%%%%%%%%%%%%%%%%%%%%%%%%%%%%%%%%%%%%%%%%%%%%%%%%%%
Let $\cA=\{A^t(x)\}$ and $\cB=\{B^t(x)\}$ be two linear cocycles on the same linear bundle $\pi\colon \cE\to \La$ and over the  same flow $\phi^t$ on a compact invariant set $\La$ of a manifold $M$. We say that
$\cB$ is  a \emph{reparametrization} of $\cA$ if there is a continuous map $h=\{h^t\}\colon \La\times \RR\to (0,+\infty)$ so that for every $x\in\La$ and $t\in\RR$ one has
$$B^t(x)=h^t(x) A^t(x).$$
The reparametrizing map $h^t$  satisfies the cocycle relation
$$h^{r+s}(x)=h^r(x)h^s(\phi^r(x)),$$
and is called a \emph{cocycle}.

One easily check the following lemma:
\begin{lemm}
Let $\cA$ be a linear cocycle and $\cB$ be a reparametrization of $\cA$.  Then  any dominated splitting for $\cA$ is a dominated splitting for $\cB$.
\end{lemm}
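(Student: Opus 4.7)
The plan is to unpack the definition of dominated splitting given in the excerpt and verify its three ingredients (constant dimension, invariance, and the exponential gap inequality) for $\cB$, using only the fact that $h^t(x)$ is a positive scalar depending on the basepoint $x$ but not on the vector in the fiber.

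The constant-dimension condition involves only the subbundles $E^i$, not the cocycle, so there is nothing to check. For invariance, I would note that for each fixed $x\in\La$ the map $B^t(x)\colon \cE_x\to \cE_{\phi^t(x)}$ equals $h^t(x)\cdot A^t(x)$, i.e.\ a nonzero scalar multiple of a linear isomorphism. Since any linear subspace $E^i_x$ is invariant under multiplication by the nonzero scalar $h^t(x)$, one immediately gets
\[
B^t(x)(E^i_x) \;=\; h^t(x)\cdot A^t(x)(E^i_x) \;=\; h^t(x)\cdot E^i_{\phi^t(x)} \;=\; E^i_{\phi^t(x)},
\]
so the splitting is still $\cB$-invariant.

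The heart of the matter is the domination inequality \eqref{e.dom}. Take any $x\in\La$, any $i<j$, and non-vanishing $v\in E^i_x$, $u\in E^j_x$. Because $h^t(x)>0$ is a scalar, one has $\|B^t(w)\|=h^t(x)\|A^t(w)\|$ for every $w\in \cE_x$, so
\[
\frac{\|B^t(u)\|}{\|u\|}\cdot\frac{\|v\|}{\|B^t(v)\|}
\;=\;\frac{h^t(x)\|A^t(u)\|}{\|u\|}\cdot\frac{\|v\|}{h^t(x)\|A^t(v)\|}
\;=\;\frac{\|A^t(u)\|}{\|u\|}\cdot\frac{\|v\|}{\|A^t(v)\|}.
\]
The scalar $h^t(x)$ cancels, so whatever $t$ witnesses $t$-domination for $\cA$ witnesses the same $t$-domination for $\cB$ with the same constant $\tfrac12$.

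There is really no obstacle here: the only subtlety worth flagging is to emphasize that $h^t$ is a \emph{scalar} cocycle (i.e.\ its value at $x$ is a positive real number acting as a homothety on the fiber), not a bundle automorphism that could mix directions; this is exactly what makes both invariance and the cancellation of $h^t(x)$ in the ratio above automatic.
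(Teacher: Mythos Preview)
Your proof is correct and is exactly the natural verification the paper has in mind: the paper does not actually prove this lemma, it just introduces it with ``One easily check[s] the following lemma'' and leaves the details to the reader. Your argument --- that the scalar $h^t(x)$ preserves each subspace and cancels in the domination ratio --- is precisely that easy check.
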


\begin{rema} \begin{itemize}\item If $h^t$ is a cocycle, then for any $\alpha\in\RR$  the power $(h^t)^\alpha: x\mapsto (h^t(x))^\alpha$ is a cocycle.
\item If $f^t$ and $g^t$ are cocycles then $h^t=f^t\cdot g^t$ is a cocycle.
\end{itemize}
\end{rema}

A cocycle $h^t$ is called a \emph{coboundary} if there is a continuous function $h\colon \La\to (0,+\infty)$ so that

$$h^t(x)= \frac{h(\phi^t(x))}{h(x)}.$$

A coboundary cocycle in uniformly bounded.
Two cocycles $g^t$, $h^t$ are called \emph{cohomological} if $\frac{g^t}{h^t}$ is a coboundary.

\begin{rema}\label{r.cohomological}
The cohomological relation is an equivalence relation among the cocycle and is compatible with the product: if $g^t_1$ and $g^t_2$ are cohomological
and $h^t_1$ and $h^t_2$ are cohomological then $g^t_1h^t_1$ and $g^t_2 h^t_2$ are cohomological.
\end{rema}

\begin{lemm}Let $\cA=\cA^t$ be a linear cocycle, and $h=h^t$ be a cocycle which is bounded.  Then
$\cA$ is uniformly contracted (resp. expanded) if and only if the  cocycle $\cB=h\cdot \cA$ is uniformly contracted (resp. expanded).
\end{lemm}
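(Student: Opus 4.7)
The plan is to exploit the mismatch between the exponential nature of the hyperbolic estimate for $\cA$ and the uniform boundedness of the reparametrizing cocycle $h^t$. Since the statement is symmetric (contraction vs.\ expansion, and the two implications) and $1/h^t$ is a bounded cocycle whenever $h^t$ is, it suffices to prove one direction in the contracting case.

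First, I would unwind the hypothesis: ``$h^t$ bounded'' means there exists a constant $C\geq 1$ with $C^{-1}\leq h^t(x)\leq C$ for every $x\in\La$ and every $t\in\RR$. Next, assume $\cA$ is uniformly contracted. By definition there is $t_0>0$ such that $\norm{A^{t_0}(x)u}\leq\tfrac12\norm{u}$ for all $x$ and all nonzero $u\in\cE_x$. The cocycle relation
\[
A^{nt_0}(x)=A^{t_0}\!\bigl(\phi^{(n-1)t_0}(x)\bigr)\circ\cdots\circ A^{t_0}(x)
\]
then yields $\norm{A^{nt_0}(x)u}\leq 2^{-n}\norm{u}$ for every $n\geq 1$.

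Combining this with the cocycle relation for $h$ and the bound on $h^t$, I would write
\[
\norm{B^{nt_0}(x)u}=h^{nt_0}(x)\,\norm{A^{nt_0}(x)u}\leq C\cdot 2^{-n}\norm{u}.
\]
Choose $n$ large enough that $C\cdot 2^{-n}<\tfrac12$; then $\cB$ is uniformly contracted, with contraction realized at time $T=nt_0$. The converse implication is identical, using that $\cA=(1/h)\cdot\cB$ and that $1/h^t$ is bounded by the same constant $C$.

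For the expansion case, the same argument applies verbatim: the hypothesis $\norm{A^{t_0}(x)u}\geq 2\norm{u}$ iterates to $\norm{A^{nt_0}(x)u}\geq 2^n\norm{u}$, and then $\norm{B^{nt_0}(x)u}\geq C^{-1}\cdot 2^n\norm{u}\geq 2$ for $n$ large. There is no genuine obstacle here; the only point to watch is that one must pass to an iterate $nt_0$ rather than working with a single time $t_0$, since the bounded perturbation $h^{t_0}$ could in principle undo the contraction at time $t_0$ but cannot survive exponential iteration.
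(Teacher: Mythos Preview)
Your proof is correct. The paper states this lemma without proof, treating it as an elementary observation; your argument is exactly the standard one the authors have in mind. The only slip is a typo in the expansion case: you wrote $\norm{B^{nt_0}(x)u}\geq C^{-1}\cdot 2^n\norm{u}\geq 2$ where you meant $\geq 2\norm{u}$.
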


As a consequence one gets that the hyperbolicity of a reparametrized cocycle  only depends on the cohomology class of the reparametrizing cocycle:
\begin{coro}
if $g$ and $h$ are cohomological then $g\cdot \cA$ is hyperbolic if and only if  $h\cdot \cA$ is hyperbolic.
\end{coro}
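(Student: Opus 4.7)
The plan is to unwind the definition of hyperbolicity and then combine, in a bookkeeping way, the two preceding lemmas. Recall that hyperbolicity of a cocycle $\cB$ over $\La$ means the existence of an invariant dominated splitting $E = E^s\oplus_{\prec} E^u$ in which $E^s$ is uniformly contracted and $E^u$ uniformly expanded by $\cB$. So I need two things: that $g\cdot\cA$ and $h\cdot\cA$ admit the same candidate splittings, and that uniform contraction/expansion of each bundle survives passing from one to the other.

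The first point follows immediately from the lemma asserting that any dominated splitting of $\cA$ is a dominated splitting of any reparametrization of $\cA$: applying this to $\cA$ and to its two reparametrizations $g\cdot\cA$ and $h\cdot\cA$, all three cocycles share exactly the same dominated splittings. For the second point, I would write
\[
g\cdot\cA \;=\; k\cdot(h\cdot\cA), \qquad k^t(x) := \frac{g^t(x)}{h^t(x)},
\]
and observe that $k = \{k^t\}$ is itself a cocycle (a quotient of cocycles is a cocycle) and, by the hypothesis that $g$ and $h$ are cohomological, a coboundary: $k^t(x) = f(\phi^t(x))/f(x)$ for some continuous $f\colon\La\to(0,+\infty)$. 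Since $\La$ is compact and $f$ is continuous and positive, $f$ is pinched between two positive constants, hence $k^t$ is uniformly bounded above and below independently of $t$ and $x$.

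With $k$ bounded, the lemma on bounded reparametrizations yields that, given any invariant subbundle $E^i$ of the common dominated splitting, $h\cdot\cA$ is uniformly contracted on $E^i$ if and only if $k\cdot(h\cdot\cA) = g\cdot\cA$ is; the same holds for uniform expansion. Hence the stable-unstable splitting witnesses hyperbolicity for $h\cdot\cA$ exactly when it witnesses it for $g\cdot\cA$. The reverse direction is symmetric because $k^{-1} = h/g$ is likewise a coboundary (the function $1/f$ works) and hence bounded.

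There is no real obstacle here; the only thing to be a touch careful about is that one must invoke \emph{both} lemmas, not just the second: the first is what guarantees that the splitting one starts with (obtained from one reparametrization) is actually available as an invariant splitting for the other reparametrization, so that the contraction/expansion statement of the second lemma applies bundle-wise. Once those are assembled, the corollary drops out.
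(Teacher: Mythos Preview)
Your argument is correct and is exactly the reasoning the paper has in mind: the corollary is stated there without proof, merely as a consequence of the two preceding lemmas (dominated splittings are shared by all reparametrizations, and a bounded reparametrizing cocycle preserves uniform contraction/expansion), together with the observation that a coboundary is uniformly bounded on the compact set $\La$. Your write-up simply makes this explicit.
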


%%%%%%%%%%%%%%%%%%%%%%%%%%%%%%%%%%%%%%%%%%%%%%%%%%%%%%%%%%%%%%%%%%%
%%%%%%%%%%%%%%%%%%%%%%%%%%%%%%%%%%%%%%%%%%%%%%%%%%%%%%%%%%%%%%%%%%%
\section{The extended linear Poincar\'e flow}\label{s.extended}
%%%%%%%%%%%%%%%%%%%%%%%%%%%%%%%%%%%%%%%%%%%%%%%%%%%%%%%%%%%%%%%%%%%%%%
%%%%%%%%%%%%%%%%%%%%%%%%%%%%%%%%%%%%%%%%%%%%%%%%%%%%%%%%%%%%%%%%%%%%
%%%%%%%%%%%%%%%%%%%%%%%%%%%%%%%%%%%%%%%%
 \subsection{The linear Poincar\'e flow}\label{ss.Poincare}
%%%%%%%%%%%%%%%%%%%%%%%%%%%%%%%%%%%%%%%%%

 Let $X$ be a $C^1$ vector field on a compact manifold $M$.  We denote by $\phi^t$ the flow of $X$.

 \begin{defi} The \emph{normal bundle} of $X$ is the vector sub-bundle $N_X $ over $M\setminus Sing(X)$ defined as follows: the fiber $N_X(x)$ of $x\in M\setminus Sing(X)$ is
 the quotient space of $T_xM$ by the vector line $\RR.X(x)$.
 \end{defi}
 Note that, if $M$ is endowed with a Riemannian metric, then $N_X(x)$ is canonically identified with the orthogonal space of $X(x)$:
 $$N_X=\{(x,v)\in TM, v\perp X(x)\} $$

Consider $x\in M\setminus Sing(X)$ and $t\in \RR$.  Thus $D\phi^t(x):T_xM\to T_{\phi^t(x)}M$ is a linear automorphism mapping $X(x)$ onto $X(\phi^t(x))$. Therefore
$D\phi^t(x)$ passes to the quotient as an linear automorphism $\psi^t(x)\colon N_X(x)\to N_X(\phi^t(x))$:

$$\begin{array}[c]{ccc}
T_xM&\stackrel{D\phi^t}{\longrightarrow}&T_{\phi^t(x)}M\\
\downarrow&&\downarrow\\
N_X(x)&\stackrel{\psi^t}{\longrightarrow}&N_X(\phi^t(x))
\end{array}$$
where the vertical arrow are the canonical projection of the tangent space to the normal space parallel to $X$.
\begin{figure}[htb]
\begin{center}
\includegraphics[width=0.63\linewidth]{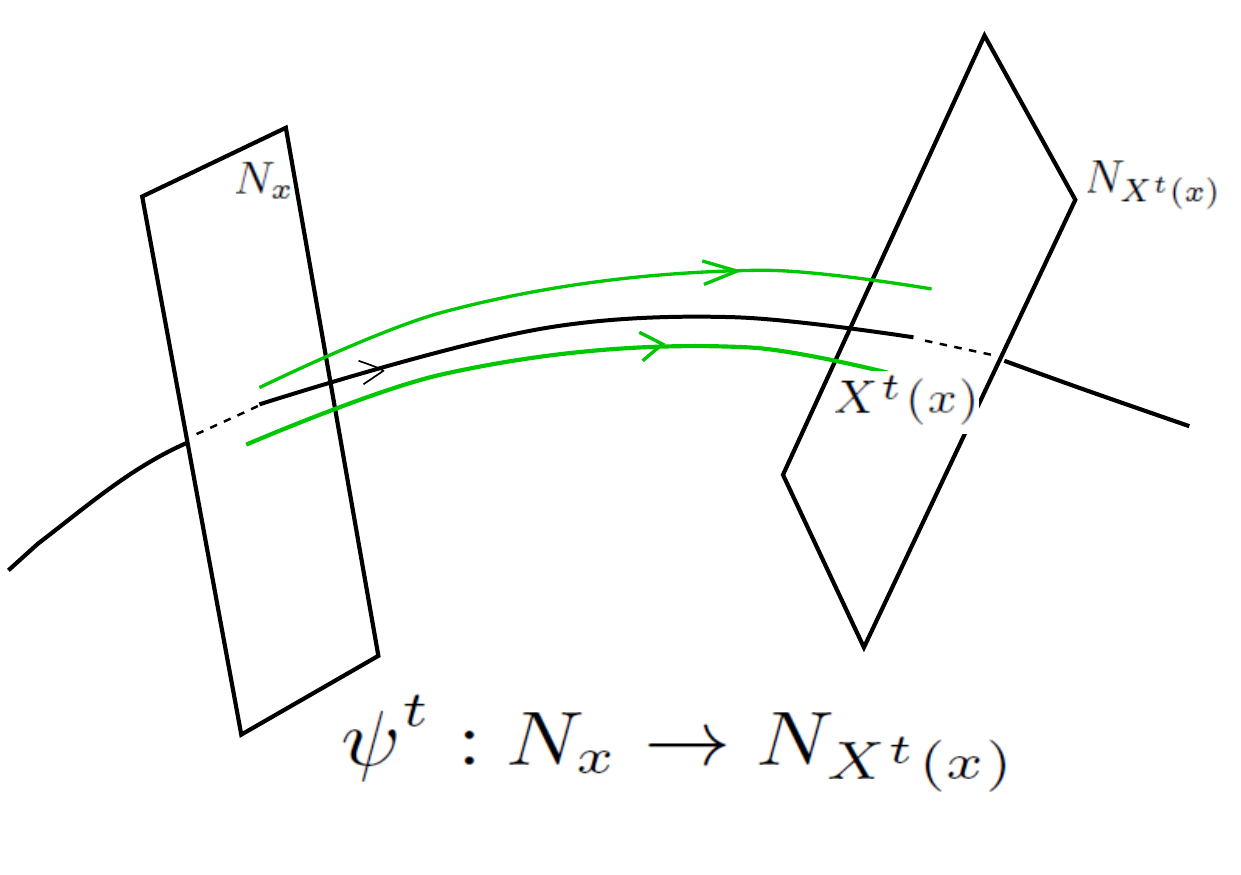}
\end{center}
\caption{\label{f.nonsingular}  $\psi^t$  is the differential of the holonomy or Poincar\'e map}
\end{figure}

 \begin{prop}\label{p.lpf} Let $X$ be a $C^1$ vector field on a manifold $M$, and $\La$ be a compact invariant set of $X$.  Assume that $\La$ does not contained any singularity  of $X$.
 Then $\La$ is hyperbolic if and only if the linear Poincar\'e flow over $\La$ is hyperbolic.
 \end{prop}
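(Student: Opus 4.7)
First I would handle the easy direction. Assume $\Lambda$ is hyperbolic, with invariant splitting $TM|_\Lambda = E^s \oplus \RR X \oplus E^u$. Since $\Lambda$ is compact and contains no singularity, $\|X\|$ and $\|X\|^{-1}$ are uniformly bounded on $\Lambda$, and continuity plus pairwise transversality of the three continuous bundles yields uniform lower bounds on the angles between them. Hence the canonical projection $\pi\colon T_xM \to N_X(x)$ restricts to bundle isomorphisms $E^s \to N^s := \pi(E^s)$ and $E^u \to N^u := \pi(E^u)$ with uniformly bounded distortion, and produces an invariant splitting $N_X|_\Lambda = N^s \oplus N^u$. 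The intertwining $\psi^t \circ \pi = \pi \circ D\phi^t$ combined with the bounded distortion transfers the uniform hyperbolic rates, so $\psi^t$ is hyperbolic over $\Lambda$.

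For the converse, the hard part is lifting the hyperbolic splitting $N_X|_\Lambda = N^s \oplus N^u$ to an invariant hyperbolic splitting on $TM|_\Lambda$. Using the Riemannian metric, I would identify $N_X(x)$ with $X(x)^\perp \subset T_xM$ so that $T_xM = \RR X(x) \oplus X(x)^\perp$. The line $\RR X$ is $D\phi^t$-invariant, so $D\phi^t$ becomes block upper triangular in this decomposition, with $\psi^t$ on the diagonal block over $X^\perp$ and an off-diagonal ``shear'' $\alpha^t(x)\colon X(x)^\perp \to \RR X(\phi^t x)$. A direct chain-rule computation gives the cocycle relation
\[
\alpha^{t+s}(v) = \alpha^t(v) + \alpha^s(\psi^t v).
\]

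For $v \in N^s_x$, the exponential contraction $\|\psi^t v\| \leq C e^{-\lambda t}\|v\|$ together with the uniform bound of $\|\alpha^s\|$ for $s \in [0,1]$ (continuous in $x$ on the compact set $\Lambda$) imply that $\alpha^t(v)$ is Cauchy as $t \to +\infty$, and its limit, written $-\sigma_x(v)$, satisfies $|\sigma_x(v)| \leq C' \|v\|$ uniformly on $\Lambda$. I would then define
\[
E^s_x := \{\, v + \sigma_x(v) : v \in N^s_x \,\} \subset T_xM.
\]
The cocycle relation yields $\sigma_{\phi^t x}(\psi^t v) = \sigma_x(v) + \alpha^t(v)$, which is precisely $D\phi^t$-invariance of the family $\{E^s_x\}$. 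From $\|v + \sigma_x(v)\| \geq \|v\|$ together with $\|D\phi^t(v + \sigma_x(v))\| \leq (1 + C')\|\psi^t v\|$, one obtains uniform exponential contraction on $E^s$. A symmetric construction for $N^u$ using $t \to -\infty$ produces the uniformly expanded bundle $E^u$. The resulting splitting $TM|_\Lambda = E^s \oplus \RR X \oplus E^u$ is $D\phi^t$-invariant and hyperbolic.

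The main technical obstacle is the construction of $\sigma$: establishing convergence of the limit, the linear bound $|\sigma_x(v)| \leq C'\|v\|$, and continuity in $x$. All three reduce to cocycle bookkeeping plus the two uniform inputs coming from the compactness of $\Lambda$ without singularities (bounding $\|X\|^{\pm 1}$ and $\|\alpha^s\|$) together with the uniform exponential rates supplied by hyperbolicity of $\psi^t$.
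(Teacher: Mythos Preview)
The paper does not prove this proposition; it is stated as a classical fact and used without justification. There is therefore nothing in the paper to compare your argument against.

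Your proof is correct. For the converse direction, the graph-transform construction of $E^s$ as $\{v + \sigma_x(v)\,X(x) : v \in N^s_x\}$ via the limit $\sigma_x(v) = -\lim_{t\to\infty}\alpha^t(v)$ is a standard and clean way to lift the normal hyperbolic splitting to $TM$. The cocycle identity you write for $\alpha$ is correct once one trivialises the line $\RR X$ by the vector field $X$ itself (so that $D\phi^s$ acts as the identity on that component), and the Cauchy estimate then follows exactly as you say from the exponential contraction on $N^s$ together with the uniform bound on $\|\alpha^s\|$ for $s\in[0,1]$ over the compact set $\Lambda$. Two small points you use implicitly but do not state: first, the three bundles $E^s$, $\RR X$, $E^u$ really span $T_xM$ as a direct sum, because $E^s$ and $E^u$ are graphs over $N^s$ and $N^u$ in the $\RR X$ direction and $N^s\oplus N^u = N_X$; second, continuity of $x\mapsto \sigma_x$ (hence of the bundles) follows from the uniform convergence of the defining series, which your estimates already give.
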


 Notice that the notion of dominated splitting for non-singular flow is sometimes better expressed in term of Linear Poincar\'e flow: for instance,
 the linear Poincar\'e flow of a robustly transitive vector field
 always admits a dominated splitting, when the flow by itself may not admit any dominated splitting.
 See for instance  the suspension of the example in \cite{BV}.

 %%%%%%%%%%%%%%%%%%%%%%%%%%%%%%%%%%%%%%%%%%%%%%%%%%%%
 \subsection{The extended linear Poincar\'e flow}\label{ss.extended}
 %%%%%%%%%%%%%%%%%%%%%%%%%%%%%%%%%%%%%%%%%%%%%%%%%%%%

 We are dealing with singular flows and the linear Poincar\'e flow is not defined on the singularity of the vector field $X$.
 However we can extend the linear Poincar\'e  to a
 flow, as  in \cite{GLW}, on a larger set, and for which the singularities of $X$ do not play a specific role. We call this the \emph{extended linear Poincar\'e flow}.

 This flow will be a linear co-cycle define on some linear bundle over a manifold, that we define now.

 \begin{defi}Let $M$ be a  $d$ dimensional manifold.
 \begin{itemize}
 \item We call \emph{the projective tangent bundle of $M$}, and denote by $\Pi_\PP\colon \PP M\to M$, the fiber bundle whose fiber $\PP_x$ is
 the projective space of the tangent space $T_xM$: in other words, a point $L_x\in \PP_x$ is a $1$-dimensional vector subspace of $T_xM$.
  \item We call \emph{the tautological bundle of $\PP M$}, and we  denote by $\Pi_\cT\colon \cT M\to \PP M$, the $1$-dimensional vector bundle over $\PP M$
  whose fiber $\cT_{L}$, $L\in \PP M$,  is the line $L$ itself.
  \item We call \emph{normal bundle of $\PP M $} and we denote by $\Pi_\cN\colon \cN \to \PP M$, the $(d-1)$-dimensional vector bundle over $\PP M$ whose fiber $\cN_{L}$ over
  $L\in \PP_x$
  is the quotient space $T_x M/L$.

  If we endow $M$ with riemannian metric, then $\cN_L$  is identified with the orthogonal hyperplane of $L$ in $T_xM$.
 \end{itemize}
 \end{defi}

 Let $X$ be a $C^r$ vector field on a compact manifold  $M$, and $\phi^t$ its flow. The natural actions of the derivative of $\phi^t$ on $\PP M$, $\cT M$ and $\cN$ define
 flows on these manifolds.  More precisely, for any $t\in \RR$,

 \begin{itemize}
  \item We denote by $\phi_{\PP}^t\colon\PP M\to \PP M$ the flow defined by $$\phi_{\PP}^t(L_x)= D\phi^t(L_x)\in \PP_{\phi^t(x)}.$$

  \item  We denote by $\phi_{\cT}^t\colon\cT M\to \cT M$ the topological flow  whose restriction to a fiber $\cT_L$ is the linear automorphisms onto
  $\cT_{\phi^t_{\PP}(L)}$ which is the restriction of $D\phi^t$ to the line $\cT_L$.

  \item We denote by $\psi_{\cN}^t\colon\cN\to \cN$ the flow whose restriction to a fiber $\cN_L$, $L\in \PP_x$,
  is the linear automorphisms onto
  $\cN_{\phi^t_{\PP}(L)}$ defined as follows: $D\phi^t(x)$ is a linear automorphism from $T_xM$ to $T_{\phi^t(x)}M$, which maps the line $\cT_L\subset T_xM$
  onto the line $\cT_{\phi^t_{\PP}(L)}$.  Therefore it passe to the quotient in the announced linear automorphism.
  $$\begin{array}[c]{ccc}
T_xM &\stackrel{D\phi^t}{\longrightarrow}&T_{\phi^t(x)}M\\
\downarrow&&\downarrow\\
\cN_L&\stackrel{\psi^t_{\cN}}{\longrightarrow}&\cN_{\phi^t_{\PP}(L)}
\end{array}$$

 \end{itemize}

 Note that $\phi^t_\PP$, $t\in \RR$ defines a flow on $\PP_M$  which is a cocycle over $\phi^t$ whose action on the fibers is by projective maps.

 The one parameter families  $\phi^t_\cT$ and $\psi^t_\cN$ define flows on $\cT M$ and $\cN$, respectively, are linear cocycles over $\phi^t_\PP$.
 We call $\phi^t_\cT$ the \emph{tautological flow}, and we call $\psi^t_\cN$ the \emph{extended linear Poncar\'e flow}.
 We can summarize  by the following diagrams: %\marginpar{hacer los}

  \begin{multicols}{2}

$$\begin{array}[c]{ccc}
\cN &\stackrel{\psi^t_{\cN}}{\longrightarrow}&\cN \\
\downarrow&&\downarrow\\
\PP M &\stackrel{\phi_{\PP}^t}{\longrightarrow}&\PP M\\
\downarrow&&\downarrow\\
M&\stackrel{\phi^t}{\longrightarrow}&M
\end{array}$$%\columnbreak

$$\begin{array}[c]{ccc}
\cT M&\stackrel{\phi^t_{\cT}}{\longrightarrow}&\cT M\\
\downarrow&&\downarrow\\
\PP M &\stackrel{\phi_{\PP}^t}{\longrightarrow}&\PP M\\
\downarrow&&\downarrow\\
M&\stackrel{\phi^t}{\longrightarrow}&M
\end{array}$$
\end{multicols}

\begin{rema} The extended linear Poincar\'e flow is really an extension of the linear Poincar\'e flow defined in the previous section; more precisely:

 Let $S_X\colon M\setminus Sing(X)\to \PP M$ be the section of the projective bundle defined as $S_X(x)$ is the line $\langle X(x)\rangle\in \PP_x$ generated by $X(x)$.Then

\begin{itemize}\item the fibers $N_X(x)$ and $\cN_{S_X(x)}$ are canonically identified and
\item the linear automorphisms $\psi^t\colon N_X(x)\to N_X(\phi^t(x))$ and $\psi^t_\cN\colon \cN_{S_X(x)}\to \cN_{S_X(\phi^t(x))}$ are equal (under the identification of the fibers)
\end{itemize}
\end{rema}

%%%%%%%%%%%%%%%%%%%%%%%%%%%%%%%%%%%%%%%%%%%%%%%%%%%%%%%%%%%%%%%%%%%
\subsection{Maximal invariant set and lifted maximal invariant set}
%%%%%%%%%%%%%%%%%%%%%%%%%%%%%%%%%%%%%%%%%%%%%%%%%%%%%%%%%%%%%%%%%%%%

Let $X$ be a vector field on a manifold $M$ and $U\subset M$ be a compact region.  The \emph{maximal invariant set} $\La=\La_U$ of $X$ in $U$ is the intersection
$$\La(X,U)=\bigcap_{y\in \RR} \phi^t(U).$$

 We say that a compact $X$-invariant set $K$ is \emph{locally maximal} if there exist an open neighborhood $U$ of $K$ so that $K=\La(X,U)$.

\begin{defi}We call \emph{lifted maximal invariant set in $U$}, and we denote by $\La_{\PP,U}\subset \PP M$
(or simply $\La_\PP$ if one may omit the dependence on $U$), the closure of the set of lines $\langle X(x)\rangle$ for regular
 $x\in \La_U $:

 $$\La_{\PP,U}=\overline{S_X(\La_U\setminus Sing(X)}\subset \PP M,$$
 where $S_X\colon M\setminus Sing(X) \to \PP M$ is the section defined by $X$.
 \end{defi}

 %Next remark is very useful for getting robust properties:

% \begin{rema} Let  $U$ be a compact subset of $M$.
% \begin{itemize}
%\item The maximal invariant set $\La_{X,U}$ depends upper semi-continuously on $X$:
 % if $Y$ is close to $X$ then $\La_{Y,U}$ is contained in a arbitrarily small neighborhood of
 % $\La_{X,U}$.
%\item If $X$ has no singularities in $U$ (that is $Sing(X)\cap U = \emptyset$), then the lifted maximal invariant set $\La_{U,\PP}(Y)$ depends upper-semi continuously on
%the vector field $Y$ in a small neighborhood of $X$: for $Y$ close to $X$ the intersection $Sing(Y)\cap U$ is empty and $\La_{U,\PP}(Y)$  is just the image
%$S_Y(\La_{Y,U})$, and $S_Y$ depends continuously on $Y$. Therefore, the semi-continuity of $\La_{U,\PP}(Y)$ is a straightforward consequence of the one of $\La_{Y,U}$.
%\item One difficulty that we need to deal with when one considers the lifted maximal invariant set for flows with singularities
 %is that it does no longer depends semi-continuously on the flow, in general.
 %\end{itemize}
 %\end{rema}

%%%%%%%%%%%%%%%%%%%%%%%%%%%%%%%%%%%%%%%%%%%%%%%%%%%%%%%%%%%%%%%%%%%%%%%%
\section{The extended maximal invariant set}
%%%%%%%%%%%%%%%%%%%%%%%%%%%%%%%%%%%%%%%%%%%%%%%%%%%%%%%%%%

%%%%%%%%%%%%%%%%%%%%%%%%%%%%%%%%%%%%%%%%%%%%%%%%%%%%%%%%%%%%%%%%%%%%%%%%%%%%%%%%%%%%%%%%%%%%%%%%%%%%%%%%%%%%%%
\subsection{Strong stable, strong unstable and center spaces associated to a hyperbolic singularity.}
%%%%%%%%%%%%%%%%%%%%%%%%%%%%%%%%%%%%%%%%%%%%%%%%%%%%%%%%%%%%%%%%%%%%%%%%%%%%%%%%%%%%%%%%%%%%%%%%%%%%%%%%%%%%%%%%%%%%%%%%%%%%%%
Let $X$ be a vector field and $\sigma\in Sing(X)$ be a hyperbolic singular point of $X$.
Let
 $\lambda^s_k\dots\lambda^s_2<\lambda^s_1<0<\lambda^u_1<\lambda^u_2\dots \lambda^u_l$ be the Lyapunov exponents of $\phi_t$ at $\sigma$ and let
 $E^ s_k\oplus_{_<}\cdots E^s_2\oplus_{_<}E^s_1\oplus_{_<}E^u_1\oplus_{_<}E^u_2\oplus_{_<}\cdots \oplus_{_<}E^s_l$ be the corresponding (finest) dominated
 splitting over $\sigma$.

 A subspace $F$ of $T_\sigma M$ is called a \emph{center subspace} if it is of one of the possible form below:
 \begin{itemize}
 \item Either $F=E^ s_i\oplus_{_<}\cdots E^s_2\oplus_{_<}E^s_1$
 \item Or $F=E^u_1\oplus_{_<}E^u_2\oplus_{_<}\cdots \oplus_{_<}E^s_j$
 \item Or else $F=E^ s_k\oplus_{_<}\cdots E^s_2\oplus_{_<}E^s_1\oplus_{_<}E^u_1\oplus_{_<}E^u_2\oplus_{_<}\cdots \oplus_{_<}E^s_l$
 \end{itemize}

 A subspace  of $T_\sigma M$ is called a \emph{strong stable space}, and we denote it  $E^{ss}_{i}(\sigma)$,  is there in $i\in\{1,\dots, k\}$ such that:
 $$E^{ss}_{i}(\sigma)=E^ s_k\oplus_{_<}\cdots E^s_{i+1}\oplus_{_<}E^s_i$$

 A classical result from hyperbolic dynamics asserts that for any $i$ there is a unique infectively immersed manifold $W^{ss}_i(\sigma)$, called a
 \emph{strong stable manifold}
 tangent at $E^{ss}_i(\sigma)$ and invariant by the flow of $X$.

 We define analogously the \emph{strong unstable spaces} $E^{uu}_j(\sigma)$ and the \emph{strong unstable manifolds} $W^{uu}_j(\sigma)$ for $j=1,\dots ,l$.

%We can also define the strong stable and unstable manifolds in an analogue way, for regular points $x$ in an invariant set $\La$
% me gustaria definir variedad centro inestable, osea la tangente a el central más el inestable pero no me quiero meter en problemas....

%%%%%%%%%%%%%%%%%%%%%%%%%%%%%%%%%%%%%%%%%%%%%%%%%%%%%%%%%%%%%%%%%%%%%%%
\subsection{The lifted maximal invariant set and the singular points}\label{ss.centerspace}
%%%%%%%%%%%%%%%%%%%%%%%%%%%%%%%%%%%%%%%%%%%%%%%%%%%%%%%%%%%%%%%%%%%%%
The aim of this section is to add  to
 the lifted maximal invariant set $\La_{\PP,U}$, some set over the singular
 points in order to recover some upper semi-continuity properties. As mentioned in section \ref{MIS} we want to define a set, that is as small as possible, but that can be defined without any information of the perturbations of our vector field.

We define the \emph{escaping stable space} $E^{ss}_{\sigma,U}$ as the biggest strong stable space $E^{ss}_j(\sigma)$ such that the corresponding strong stable manifold
$W^{ss}_j(\sigma)$  is \emph{escaping}, that is:  $$\Lambda_{X,U}\cap W^{ss}_j(\sigma)=\{\sigma\}.$$

We define the \emph{escaping  unstable space} analogously.

We define the \emph{central space $E^c_{\sigma,U}$ of $\sigma$ in $U$}  the center space such that
$$T_\sigma M=E^{ss}_{\sigma,U}\oplus E^ c_{\sigma,U}\oplus E^ {uu}_{\sigma,U}$$

 We denote by
 $\PP^i_{\sigma,U}$ the projective space of $E^ i(\sigma,U)$ where $i=\set{ss,uu,c}$.

 \begin{lemm}\label{l.escaping} Let $U$ be a compact region and $X$  a vector field whose singular points are hyperbolic.
 Then, for any $\sigma\in Sing(X)\cap U$, one has :
 $$\La_{\PP,U}\cap\PP^{ss}_{\sigma,U}=\La_{\PP,U}\cap\PP^{uu}_{\sigma,U}=\emptyset.$$

 \end{lemm}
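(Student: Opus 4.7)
I argue by contradiction, treating only $\La_{\PP,U}\cap\PP^{ss}_{\sigma,U}$; the case of $\PP^{uu}_{\sigma,U}$ follows by applying the same argument to $-X$, which exchanges the roles of $E^{ss}_{\sigma,U}$ and $E^{uu}_{\sigma,U}$. Suppose $L\in\La_{\PP,U}\cap\PP^{ss}_{\sigma,U}$; by definition of $\La_{\PP,U}$ there is a sequence $x_n\in\La_{X,U}\setminus Sing(X)$ with $x_n\to\sigma$ and $\langle X(x_n)\rangle\to L$. Choose a chart around $\sigma$ in which the splitting $T_\sigma M = E^{ss}_{\sigma,U}\oplus E^c_{\sigma,U}\oplus E^{uu}_{\sigma,U}$ extends to a parallel splitting on a neighborhood $V$. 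Writing $X(x)=A(x-\sigma)+o(\|x-\sigma\|)$ with $A=DX(\sigma)$ invertible and preserving the splitting, the assumption $\langle X(x_n)\rangle\to L\subset E^{ss}_{\sigma,U}$ forces, via projection onto $E^c_{\sigma,U}\oplus E^{uu}_{\sigma,U}$, the component of $x_n-\sigma$ there to be $o(\|x_n-\sigma\|)$. Hence, given a cone $C^{ss}$ as narrow as we wish around $E^{ss}_{\sigma,U}$ in $V$, one has $X(x_n)\in C^{ss}(x_n)$ for $n$ large; and the domination implies that $C^{ss}$ is strictly invariant and uniformly expanded by the backward flow as long as orbits remain in $V$.

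The next step uses the invariance of $\La_{X,U}$ to produce a point of $\La_{X,U}$ away from $\sigma$. Fix a small $r>0$ and let $t_n>0$ be the first time for which $\phi^{-t_n}(x_n)\in\partial B(\sigma,r)$: such a time exists for $n$ large by backward expansion along $C^{ss}$, and $t_n\to\infty$. Set $y_n=\phi^{-t_n}(x_n)\in\La_{X,U}$ and, by compactness, extract a subsequential limit $y\in\La_{X,U}\cap\partial B(\sigma,r)$, so $y\neq\sigma$. From $X(\phi^{-s}(x_n))=D\phi^{-s}(x_n)X(x_n)$ and backward invariance of $C^{ss}$ we get $X(y_n)\in C^{ss}(y_n)$, whence $X(y)\in C^{ss}(y)$ by continuity. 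Also by construction $\phi^t(y_n)=\phi^{t-t_n}(x_n)\in\bar B(\sigma,r)$ for $t\in[0,t_n]$; letting $n\to\infty$ gives $\phi^t(y)\in\bar B(\sigma,r)$ for every $t\geq 0$.

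Finally I show that $y$ lies in the strong stable manifold $W^{ss}_j(\sigma)$ corresponding to $E^{ss}_{\sigma,U}$, contradicting the escaping property $W^{ss}_j(\sigma)\cap\La_{X,U}=\{\sigma\}$. For $r$ small, $\sigma$ is the only compact $X$-invariant subset of $\bar B(\sigma,r)$, so $\phi^t(y)\to\sigma$. To control the rate, use $X(\phi^t(y))=D\phi^t(y)X(y)$: since $X(y)\in C^{ss}(y)$ and the forward orbit of $y$ remains in $\bar B(\sigma,r)$, for $r$ small enough the contraction of $D\phi^t$ on $C^{ss}$ along this orbit is arbitrarily close to that of $e^{tA|_{E^{ss}_{\sigma,U}}}$, giving $\|X(\phi^t(y))\|\leq Ce^{(\lambda^s_j+\varepsilon)t}$ with $\lambda^s_j$ the slowest (least negative) eigenvalue of $A$ on $E^{ss}_{\sigma,U}$. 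Since $\|X(z)\|$ and $d(z,\sigma)$ are comparable for $z$ near $\sigma$, this yields $d(\phi^t(y),\sigma)\leq C'e^{(\lambda^s_j+\varepsilon)t}$; choosing $\varepsilon<\lambda^s_{j-1}-\lambda^s_j$, the standard characterization of the strong stable manifold places $y$ in $W^{ss}_j(\sigma)$.

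The delicate step is this last one: the cone condition $X(y)\in C^{ss}(y)$ is only a tangency statement, while the escaping hypothesis sees $W^{ss}_j(\sigma)$ set-theoretically. Bridging this gap requires the quantitative rate estimate above, which in turn requires shrinking $r$ so that the forward dynamics of $y$ is well approximated by the linearization $e^{tA}$; once $C^{ss}$ is chosen narrow enough from the start, the error $\varepsilon$ can be made smaller than the spectral gap $\lambda^s_{j-1}-\lambda^s_j$.
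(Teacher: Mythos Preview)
Your argument follows the paper's overall strategy and is correct up to the final step (showing $y\in W^{ss}_j(\sigma)$), but the rate estimate you invoke there does not hold. The claim that ``the contraction of $D\phi^t$ on $C^{ss}$ along this orbit is arbitrarily close to that of $e^{tA|_{E^{ss}_{\sigma,U}}}$'' is false: the cone $C^{ss}$ around the strong stable space is \emph{backward} invariant, not forward, precisely because $E^{ss}_{\sigma,U}$ is the \emph{dominated} bundle. A vector $v\in C^{ss}(y)$ with a small component along the weak stable space will, under forward iteration by $D\phi^t$, have that component eventually dominate, and $\|D\phi^t v\|$ will then decay only at the weak-stable rate. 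Already in the linear model with eigenvalues $-2$ on $E^{ss}$ and $-1$ on $E^{ws}$, the point $y=(1,\epsilon)$ satisfies $X(y)=(-2,-\epsilon)\in C^{ss}$ for small $\epsilon$, yet $\|X(\phi^t(y))\|=\|(-2e^{-2t},-\epsilon e^{-t})\|\sim \epsilon\, e^{-t}$ and $y\notin W^{ss}$. So the condition $X(y)\in C^{ss}(y)$ cannot by itself yield the rate you need, and shrinking $r$ does not help: the obstruction is the spectral gap inside $E^s(\sigma)$, not the nonlinearity.

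The paper closes this step by arguing in the opposite direction, using forward invariance of the \emph{complement} of the cone together with the cone condition at the endpoint $x_n$ (rather than at $y$). If $y\in W^s(\sigma)\setminus W^{ss}_{\sigma,U}$, then $\phi^t(y)\to\sigma$ tangentially to the weaker stable directions, so there is $t_0>0$ with $\phi^{t_0}(y)$ --- and hence $\phi^{t_0}(y_n)$ for $n$ large --- outside the cone. Forward invariance of the complement then forces $x_n=\phi^{t_n}(y_n)$ (with $t_n>t_0$) outside the cone as well, contradicting the initial observation that $x_n$ lies in it. The paper phrases this with a positional cone in $V$ around the submanifold $W^{ss}_{\sigma,U}$, but via $X(z)\approx A(z-\sigma)$ that is equivalent to your tangent conefield, so your setup can be salvaged simply by replacing the rate argument with this complement-invariance contradiction.
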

\begin{proof}  Suppose (arguing by contradiction) that $L\in \La_{\PP,U}\cap\PP^{ss}_{\sigma,U}$. This means that there exist a sequence $x_n\in \La_{X,U}\setminus Sing(X)$
converging to $\sigma$,   such that $L_{x_n}$ converge to $ L$, where
$L_{x_n}$ is the line $\RR X(x_n)\in \PP_{x_n}$.

We fix a small neighborhood $V$ of $\sigma$ endowed with local coordinates so that the vector field is very close to its linear part in these coordinates:
in particular, there is a small cone $C^{ss}\subset V$ around $W^{ss}_{\sigma,U}$
so that the complement of
this cone   is strictly invariant in the following sense: the positive orbit of a point out of $C^{ss}$ remains out of $C^{ss}$ until it leaves $V$.

For $n$ large
enough the points $x_n$ belong to $V$.

As $\RR X(x_n)$ tend to $L$, this implies that the point $p_n$, for $n$ large, are contained in the cone $C^{ss}$.

In particular, the point $x_n$ cannot belong to
$W^u(\sigma)$. Therefore they admits negative iterates $y_n= \phi^{-t_n}(x_n)$ with the following property.
\begin{itemize}
 \item $\phi^{-t}(x_n)\in V$ for all $t\in[0,t_n]$,
 \item $\phi^{-t_n-1}(x_n)\notin V$,
 \item $t_n\to +\infty$.

\end{itemize}
Up to consider  a subsequence one may assume that the point $y_n$ converge to a point $y$,
and one easily check that the point $y$ belongs to $W^s(\sigma)\setminus\sigma$. Furthermore
all the points $y_n$ belong to $\La_{X,U}$ so that $y\in \La_{X,U}$.

We conclude the proof by showing that $y$ belongs to $W^{ss}_{\sigma,U}$, which is a contradiction with the definition of the escaping strong stable manifold
$W^{ss}_{\sigma,U}$. If $y\notin W^{ss}_{\sigma,U}$ then it positive orbit arrive to $\sigma$ tangentially to the weaker
stable spaces: in particular, there is $t>0$ so that $\phi^t(y)$ does not belong to the cone $C^{ss}$.

Consider $n$ large, in particular $t_n$ is larger than $t$ and $\phi^t(y_n)$ is so close to $y$ that $\phi^t(y_n)$
does not belong to $C^{ss}$: this contradict the fact that $x_n=\phi^{t_n}(y_n)$ belongs to $C^{ss}$.

We have proved  $\La_{\PP,U}\cap\PP^{ss}_{\sigma,U}=\emptyset$; the proof that  $\La_{\PP,U}\cap\PP^{uu}_{\sigma,U}$ is empty is analogous.

\end{proof}

As a  consequence we get the following characterization of the central space of $\sigma$ in $U$:
\begin{lemm} Let $U$ de a compact region and $X$ a vector field whose singular points are hyperbolic. Then for any $\sigma\in Sing(X)\cap U$
the central space  $E^c_{\sigma,U}$   is the smallest center space containing $\La_{\PP,U}\cup \PP_\sigma M$.
\end{lemm}
\begin{proof}
 The proof that $E^c_{\sigma,U}$ contains $\La_{\PP,U}\cap \PP_\sigma$ is very similar to the end of the proof of Lemma~\ref{l.escaping} and we just sketch it: by
 definition of the strong escaping manifolds, they admit  an neighborhood of a fundamental domain which is disjoint from the maximal invariant set.  This implies that
 any point in $\La_{X,U}$ close to $\sigma$ is contained out of arbitrarily large cones around the escaping strong direction. Therefore the vector $X$ at these points is almost
 tangent to $E^c_{\sigma,U}$.

 Assume now for instance  that:
 \begin{itemize}
 \item $E^c_{\sigma,U}=E^s_i\oplus E^s_{i-1}\oplus\dots\oplus E^s_1\oplus E^u_1\oplus\dots \oplus E^u_j$: in particular $W^{ss}_{i+1}(\sigma)$ is the escaping strong
 stable manifold, and
  \item $\La_{\PP,U}\cap \PP_\sigma$ is contained in
 the smaller center space
 $$ E^s_{i-1}\oplus\dots\oplus E^s_1\oplus E^u_1\oplus\dots \oplus E^u_j.$$
 \end{itemize}
  We will show that the strong stable manifold $W^{ss}_{i}(\sigma)$ is escaping, contradicting the maximality  of the escaping strong stable manifold
  $W^{ss}_{i+1}(\sigma)$.
  Otherwise, there is $x\in W^{ss}_i(\sigma)\setminus\{\sigma)\cap \La_{X,U}$.  The positive orbit of $x$ tends to $\sigma$ tangentially to
  $E^s_k\oplus\dots \oplus E^s_i$ and thus
  $X(\phi^t(x))$ for $t$ large is almost tangent to $E^s_k\oplus\dots\oplus E^s_i$: this implies that $\La_{\PP,U}\cap \PP_\sigma$ contains at least a direction in
  $E^s_k\oplus\dots \oplus E^s_i$ contradicting the hypothesis.
\end{proof}

\begin{lemm}\label{l.lower} Let $U$ be a compact region. Given $\sigma$  a hyperbolic singular point in $U$,  the point $\sigma$ has a continuation $\sigma_Y$ for vector fields
$Y$ in a $C^1$-neighborhood of $X$. Then both  escaping strong stable and unstable spaces $E^{ss}_{\sigma_Y,U}$ and $E^{uu}_{\sigma_Y,U}$ depend lower semi-continuously on
$Y$.

As a consequence the central space $E^c_{\sigma_Y,U}$  of $\sigma_Y$ in $U$ for $Y$
depends upper semi-continuously on $Y$,  and the same happens for its projective space
$\PP^{ss}_{\sigma_Y,U}$.
\end{lemm}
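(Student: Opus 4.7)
The plan is to prove lower semi-continuity of the escaping strong stable space via a fundamental domain argument, then to derive upper semi-continuity of the central space from the splitting $T_{\sigma_Y}M = E^{ss}_{\sigma_Y,U}\oplus E^c_{\sigma_Y,U}\oplus E^{uu}_{\sigma_Y,U}$ and the dimension complement.

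First I would fix $j_*$ minimal with the property that $W^{ss}_{j_*}(\sigma_X)$ is escaping, so $E^{ss}_{\sigma_X,U}=E^{ss}_{j_*}(\sigma_X)$. Since $\sigma$ is hyperbolic, the continuation $\sigma_Y$ exists in a $C^1$-neighborhood of $X$, and by the standard theory of invariant manifolds for hyperbolic singularities the spaces $E^{ss}_j(\sigma_Y)$ (in the Grassmannian over $\sigma_Y$) and the manifolds $W^{ss}_j(\sigma_Y)$ (on any prescribed compact piece, in the $C^1$ topology) depend continuously on $Y$. The core claim is then: for $Y$ close enough to $X$, $W^{ss}_{j_*}(\sigma_Y)$ is still escaping for $(Y,U)$. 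To see this, choose a fundamental domain $D\subset W^{ss}_{j_*}(\sigma_X)$, that is, a compact subset bounded away from $\sigma$ so that every positive orbit in $W^{ss}_{j_*}(\sigma_X)\setminus\{\sigma\}$ crosses $D$ at least once (for instance an annular region in local linearizing coordinates). Since $W^{ss}_{j_*}(\sigma_X)$ is escaping, $D\cap \La_{X,U}=\emptyset$, and by compactness $D$ has an open neighborhood $V$ with $V\cap \La_{X,U}=\emptyset$. By upper semi-continuity of the maximal invariant set, for $Y$ close to $X$ the set $\La_{Y,U}$ lies in any prescribed neighborhood of $\La_{X,U}$, hence $\La_{Y,U}\cap V'=\emptyset$ for some slightly smaller neighborhood $V'$ of $D$. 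Continuous dependence of $W^{ss}_{j_*}(\sigma_Y)$ gives a fundamental domain $D_Y\subset V'$, so $D_Y\cap \La_{Y,U}=\emptyset$. If $W^{ss}_{j_*}(\sigma_Y)$ were not escaping, there would exist $x\in(W^{ss}_{j_*}(\sigma_Y)\setminus\{\sigma_Y\})\cap\La_{Y,U}$; its forward orbit would remain in $\La_{Y,U}$ and meet $D_Y$, a contradiction.

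This claim yields $\dim E^{ss}_{\sigma_Y,U}\geq \dim E^{ss}_{j_*}(\sigma_Y)=\dim E^{ss}_{\sigma_X,U}$, and combined with the continuity of $E^{ss}_{j_*}(\sigma_Y)$ in the Grassmannian, it gives lower semi-continuity of the escaping stable space: any limit direction of $E^{ss}_{j_*}(\sigma_{Y_n})$ for $Y_n\to X$ lies in $E^{ss}_{\sigma_X,U}$ and for large $n$ the space $E^{ss}_{\sigma_{Y_n},U}$ contains $E^{ss}_{j_*}(\sigma_{Y_n})$. The argument for $E^{uu}_{\sigma_Y,U}$ is identical after reversing the flow. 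For the consequence, since $\dim E^c_{\sigma_Y,U}=\dim M-\dim E^{ss}_{\sigma_Y,U}-\dim E^{uu}_{\sigma_Y,U}$, the lower semi-continuity of $E^{ss}$ and $E^{uu}$ forces upper semi-continuity of $\dim E^c$. Moreover, $E^c_{\sigma_Y,U}$ is an invariant center block of the (continuously varying) dominated splitting at $\sigma_Y$, so any limit of $E^c_{\sigma_{Y_n},U}$ is a center subspace of $T_\sigma M$ transverse to the limits of $E^{ss}$ and $E^{uu}$, and hence is contained in $E^c_{\sigma_X,U}$. This gives upper semi-continuity of $E^c_{\sigma_Y,U}$ and, consequently, of the projective set $\PP^c_{\sigma_Y,U}$.

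The main obstacle is the semi-continuity step itself: the escaping property is defined using $\La_{Y,U}$, which depends only upper semi-continuously on $Y$, whereas we want a property that persists under perturbation. The fundamental domain allows us to convert the global, non-robust condition ``$W^{ss}_{j_*}\cap\La=\{\sigma\}$'' into a local, open condition ``$D\cap\La=\emptyset$'', which is robust precisely because it asks $\La$ to avoid an open set. Everything else is bookkeeping with dimensions and continuity of invariant manifolds.
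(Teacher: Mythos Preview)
Your argument is correct and follows essentially the same route as the paper: both reduce the escaping condition to a compact fundamental domain in the local strong stable manifold (the paper takes a transverse sphere $S_X\subset W^{ss}_\delta(\sigma)$, records explicit exit times from $U$ for each point of $S_X$, and passes to a finite cover; you invoke upper semi-continuity of $\La_{Y,U}$ directly) and then propagate to nearby $Y$ via the continuous dependence of the local strong stable manifold. One minor point: where you write that the \emph{forward} orbit of $x\in W^{ss}_{j_*}(\sigma_Y)\cap\La_{Y,U}$ meets $D_Y$, it should be the full orbit (since $x$ might already lie between $D_Y$ and $\sigma_Y$), but this is harmless because $\La_{Y,U}$ is invariant under the full flow.
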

\begin{proof} We will make only the proof for the escaping strong stable space, as the proof for the escaping strong unstable space is identical.

As $\sigma$ is contained in the interior of $U$, there is $\delta>0$ and a $C^1$-neighborhood $\cU$ of $X$  so that, for any $Y\in \cU$, one has:
\begin{itemize}
\item $\sigma$ has a hyperbolic continuation $\sigma_Y$ for $Y$;
 \item the finest dominated splitting of $\sigma_X$ for $X$ has a continuation for $\sigma_Y$ which is a dominated splitting (but maybe not the finest);
 \item the local stable manifold of size $\delta$ of $\sigma_Y$ in contained in $U$ and depends continuously on $Y$
 \item for any strong stable space $E^{ss}(\sigma)$ the corresponding local strong stable manifold $W^{ss}(\sigma_Y)$ varies continuously with $Y\in \cU$.
\end{itemize}

Let denote $E^{ss}$ denote the escaping strong stable space of $\sigma$ and $W^{ss}_\delta(\sigma)$ be the corresponding local strong stable manifold.
We fix a sphere $S_X$ embedded in the interior of  $W^{ss}_\delta(\sigma)$, transverse to $X$ and cutting every orbit in  $W^{ss}_\delta(\sigma)\setminus \sigma$.
By definition of escaping strong stable manifold, for every $x\in S_X$ there is $t(x)>0$ so that $\phi^{t(x)}(x)$ is not contained in $U$.

As $S_X$ is compact and the complement of $U$ is open, there is a finite family $t_i, i=0,\dots, k$,  an open covering $V_0,\dots, V_k$ and  a $C^1$-neighborhood
$\cU_1$ of $X$ so that, for every $x\in U_i$ and every $Y\in\cU_1$ the point $\phi^{t_i}_Y(x)$ does not belong to $U$.

For $Y$ in a smaller neighborhood $\cU_2$ of $X$, the union of the $V_i$ cover a sphere $S_Y\subset W^{ss}_\delta(\sigma_Y,Y)$ cutting every orbit in
$W^{ss}_\delta(\sigma_Y,Y)\setminus \sigma_Y$.

This shows that $W^{ss}_\delta(\sigma_Y,Y)$ is contained in the escaping strong stable manifold of $\sigma_Y$, proving the lower semi continuity.

\end{proof}

%%%%%%%%%%%%%%%%%%%%%%%%%%%%%%%%%%%%%%%%%%%%%%%%%%%%%%%%%%%%%%%%%%%%%%%%%%%
\subsection{The extended maximal invariant set}

We are now able to define the subset of $\PP M$ which extends the lifted maximal invariant set and depends upper-semicontinuously on $X$.

 \begin{defi}Let $U$ be a compact region and $X$  a vector field whose singular points are hyperbolic.
 Then the set
 $$B(X,U)=\La_{\PP,U}\cup\bigcup_{\sigma\in Sing(X)\cap U} \PP^c_{\sigma,U} \subset \PP M$$
 is called the \emph{extended maximal invariant set of $X$ in $U$}
 \end{defi}

 \begin{prop}\label{p.extended} Let $U$ be a compact region and $X$  a vector field whose singular points are hyperbolic.
Then the extended maximal invariant set $B(X,U)$ of $X$ in $U$ is a compact subset  of $ \PP M$, invariant under the flow $\phi_\PP^t$.
Furthermore, the map $X\mapsto B(X,U)$ depends upper semi-continuously on $X$.
 \end{prop}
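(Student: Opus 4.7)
\medskip

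\noindent\textbf{Proof proposal.} The plan has two independent parts: compactness/invariance (essentially formal) and upper semi-continuity (where the real work lies).

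For compactness and invariance, I would first observe that $\La_{\PP,U}$ is closed by definition (it is a closure), and that each $\PP^c_{\sigma,U}$ is compact as a projective space of a linear subspace. Since the singular points in $U$ are assumed hyperbolic, they are isolated in the compact set $U$, hence finite in number; the union is therefore a finite union of compact subsets of $\PP M$, which is closed. For invariance under $\phi^t_\PP$, note that $\phi^t_\PP(\langle X(x)\rangle)=\langle X(\phi^t(x))\rangle$, so $S_X(\La_U\setminus Sing(X))$ is invariant and its closure $\La_{\PP,U}$ is $\phi^t_\PP$-invariant. Each central space $E^c_{\sigma,U}$ is a sum of Oseledets subspaces at $\sigma$, hence $D\phi^t(\sigma)$-invariant, and because $\phi^t(\sigma)=\sigma$ the set $\PP^c_{\sigma,U}$ is globally invariant.

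For upper semi-continuity, suppose $Y_n\to X$ in the $C^1$ topology, $L_n\in B(Y_n,U)$, and $L_n\to L$ in $\PP M$; I want to show $L\in B(X,U)$. Let $x_n\in U$ be the basepoint of $L_n$ and, up to a subsequence, assume $x_n\to x\in U$. I would split into two cases according to whether $x$ is regular or singular for $X$. If $x$ is regular, then $X(x)\neq 0$, so $Y_n(y)\neq 0$ for $y$ near $x$ and $n$ large; in particular no $\sigma_{Y_n}$ accumulates on $x$, so for $n$ large $L_n=\langle Y_n(x_n)\rangle$ with $x_n\in\La_{Y_n,U}\setminus Sing(Y_n)$. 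The upper semi-continuity of $\La_{Y,U}$ yields $x\in\La_{X,U}$, and $C^1$-continuity of $Y$ gives $L=\langle X(x)\rangle\in\La_{\PP,U}\subset B(X,U)$.

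The main case, and the only real obstacle, is when $x=\sigma$ is a singular point of $X$. By hyperbolicity and the implicit function theorem, for $n$ large the only singularity of $Y_n$ near $\sigma$ is its continuation $\sigma_{Y_n}$, and $\sigma_{Y_n}\to\sigma$. If $L_n\in\PP^c_{\sigma_{Y_n},U}$, the upper semi-continuity of the central space provided by Lemma~\ref{l.lower} gives $L\subset E^c_{\sigma,U}$, hence $L\in\PP^c_{\sigma,U}$. If instead $L_n=\langle Y_n(x_n)\rangle$ with $x_n$ regular for $Y_n$ and $x_n\to\sigma$, I would adapt the cone argument of Lemma~\ref{l.escaping} uniformly in the perturbation: fix a small neighborhood $V$ of $\sigma$ with local coordinates in which $X$ is close to its linear part, and fix narrow cones $C^{ss}$, $C^{uu}$ around the escaping spaces $E^{ss}_{\sigma,U}$ and $E^{uu}_{\sigma,U}$. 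By lower semi-continuity of the escaping strong spaces (Lemma~\ref{l.lower}), for $n$ large the escaping spaces $E^{ss}_{\sigma_{Y_n},U}$ and $E^{uu}_{\sigma_{Y_n},U}$ sit inside these cones, and the cones remain strictly (forward/backward) invariant for $Y_n$ outside $V$. If $\langle Y_n(x_n)\rangle$ lay in $C^{ss}$, the backward $Y_n$-orbit of $x_n$ would escape $U$ before returning, contradicting $x_n\in\La_{Y_n,U}$; symmetrically for $C^{uu}$. Hence $L$ avoids $C^{ss}\cup C^{uu}$, and since the cones can be taken arbitrarily narrow around the escaping strong spaces of $\sigma$, we conclude $L\subset E^c_{\sigma,U}$, i.e.\ $L\in\PP^c_{\sigma,U}\subset B(X,U)$. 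The delicate point is to make the cone estimate uniform in $n$, which is possible because the linearization of $Y_n$ at $\sigma_{Y_n}$ and the escaping subspaces vary continuously (respectively, lower semi-continuously) with $Y_n$.
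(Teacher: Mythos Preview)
Your overall organization matches the paper's: compactness and invariance are routine, the regular limit point is easy, and the case where $x_n$ is singular for $Y_n$ is handled by Lemma~\ref{l.lower}. The problem is in the last case, $x_n$ regular for $Y_n$ with $x_n\to\sigma$.

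Your cone argument has a genuine logical gap. Even granting the implication ``$\langle Y_n(x_n)\rangle\in C^{ss}$ forces the backward $Y_n$-orbit to leave $U$'', what you deduce is that $L$ lies outside every narrow cone around $E^{ss}_{\sigma,U}$ and outside every narrow cone around $E^{uu}_{\sigma,U}$. This only says $L\not\subset E^{ss}_{\sigma,U}$ and $L\not\subset E^{uu}_{\sigma,U}$; it does \emph{not} force $L\subset E^c_{\sigma,U}$. For instance, a line at $45^\circ$ inside the plane $E^{ss}\oplus E^{uu}$ avoids arbitrarily thin cones around both axes yet is nowhere near $E^c$. The set-theoretic complement of two thin cones is not a thin cone around the complementary subspace.

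The paper closes this gap by proving the two \emph{inclusions} $L\subset E^{cu}:=E^c_{\sigma,U}\oplus E^{uu}_{\sigma,U}$ and $L\subset E^{cs}:=E^{ss}_{\sigma,U}\oplus E^c_{\sigma,U}$ separately, whose intersection is $E^c_{\sigma,U}$. For $L\subset E^{cu}$ it flows $x_n$ backward under $Y_n$ until the orbit first reaches a fixed cross-section $W$ near the boundary of the linearizing chart, obtaining $y_n=\phi^{-t_n}_{Y_n}(x_n)$ with $t_n\to\infty$ and (up to a subsequence) $y_n\to y\in W^s(\sigma)$. The escaping property of $W^{ss}_{\sigma,U}$ (robust under perturbation by Lemma~\ref{l.lower}) forces $y\notin W^{ss}_{\sigma,U}$, since otherwise the orbit of $y_n$ would leave $U$. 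Then the forward orbit of $y$ becomes tangent to $E^{cu}$; pushing this forward from $y_n$ for time $t_n\to\infty$, the domination $E^{ss}\oplus_{\prec}E^{cu}$ drives $\langle Y_n(x_n)\rangle$ into an arbitrarily thin cone around $E^{cu}$, giving $L\subset E^{cu}$. The symmetric argument (forward flow, using the escaping property of $W^{uu}_{\sigma,U}$) yields $L\subset E^{cs}$. This two-sided containment is the missing idea in your proposal.
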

 \begin{proof}First notice that the singular points of $Y$ in $U$ consists in finitely many hyperbolic singularities varying continuously with $Y$ in a  neighborhood of $X$.
The extended maximal invariant set is compact as being the union of finitely many compact sets.

Let $Y_n$ be a sequence of vector fields tending to $X$ in the $C^1$-topology, and let $(x_n,L_n)\in B(Y_n,U)$.  Up to considering a subsequence we may assume that
$(x_n,L_n)$ tends to a point $(x,L)\in\PP M$ and we need to prove that $(x,L)$ belongs to $B(X,U)$.

First assume that $x\notin Sing(X)$.  Then, for $n$ large, $x_n$ is not a singular point
for $Y_n$ so that $L_n= <Y_n(x_n)>$ and therefore $L=<X(x)>$ belongs to $B(X,U)$,
concluding.

Thus we may assume $x=\sigma\in Sing(X)$. First notice that, if for infinitely many $n$, $x_n$ is a  singularity of $Y_n$ then $L_n$ belongs to $\PP^c_{\sigma_{Y_n},U}$.  As
$\PP^c_{\sigma_{Y},U}$ varies upper semi-continuously  with $Y$, we conclude that $L$ belongs to $\PP^c_{\sigma_{X},U}$, concluding.

So we may assume that $x_n\notin Sing(Y_n)$.

We fix a neighborhood $V$ of $\sigma$ endowed with coordinates, so that $X$ (and therefore $Y_n$ for large $n$) is very close to its linear part in $V$.
Let $S_X\subset W^s_{loc}(\sigma)$ be a sphere transverse to $X$ and cutting every orbit in $W^s_{loc}(\sigma)\setminus\{\sigma\}$, and let $W$ be a small
neighborhood of $S_X$.

First assume that, for infinitely many $n$, the point $x_n$ does not belong  $W^u(\sigma_{Y_n})$.
There is a sequence $t_n>0$ with the following property:
\begin{itemize}
\item   $\phi^{-t}_{Y_n}(x_n)\in V$ for all $t\in [0,t_n]$
 \item $\phi^{-t_n}_{Y_n}(x_n)\in W$
 \item $t_n$ tends to $+\infty$ as $n\to \infty$.
\end{itemize}

Up to considering a subsequence, one may assume that the points $y_n=\phi^{-t_n}_{Y_n}(x_n)$ tend to a point $y\in W^s(\sigma)$.
\begin{clai*} The point $y$ does not belong to $W^{ss}_{\sigma,U}$.
\end{clai*}
\begin{proof}By definition of the escaping strong stable manyfold, for every $y\in W^{ss}_{\sigma,U}$ there is $t$ so that $\phi^t(y)\notin U$; thus $\phi^t_{Y_n}(y_n)$
do not belong to $U$ for $y_n$ close enough to $y$; in particular $y_n\notin \La_{Y_n,U}$.
\end{proof}

Thus $y$ do not belong to $W^{ss}_{\sigma, U}$.  Choosing $T>0$ large enough, one gets that the line $<X(z)>$, $z=\phi^T(y)$, is almost tangent to
$E^{cu}=E^{c}_{\sigma,U}\oplus E^{uu}_{\sigma, U}$.
As a consequence,
for $n$ large, one gets that $<Y_n(z_n)>$, where $z_n=\phi^T_{Y_n}(y_n)$, is almost tangent to the continuation $E^{cu}_n$ of $E^c$ for $\sigma_n, Y_n$.
As $x_n= \phi^{t_n-T}_{Y_n}(y_n)$ , and as $t_n-T\to +\infty$, the dominated splitting implies that $L_n=<Y_n(x_n)>$ is almost tangent  to $E^{cu}_n$.

This shows that  $L$ belongs to $E^{cu}$. Notice that this also holds if $x_n$ belong to the unstable manifold of $\sigma_{Y_n}$.

Arguing analogously we get that $L$ belongs to $E^{cs}=E^{c}_{\sigma,U}\oplus E^{ss}_{\sigma, U}$.  Thus $L$ belongs to $E^{c}_{\sigma,U}$ concluding.
 \end{proof}
\begin{rema}\label{r.inclusion}
The lower semi continuity of the strong escaping stable and unstable manifolds of a vector field $X $, and the upper semicontinuity of $E^c_{\sigma}$, implies that there is a $\cC^1$ neighborhood $\cU$ of $X$, such that for any $Y$ in $U$ there are no regular orbits approaching the singularity $\sigma$ tangent to the escaping spaces. In fact, domination implies that any regular orbit approaching $\sigma$ becomes tangent to $E^c_{\sigma}$. This implies that $$\widetilde{\La(X,U)}\subset B(X,U).$$
\end{rema}

%%%%%%%%%%%%%%%%%%%%%%%%%%%%%%%%%%%%%%%%%%%%%%%%%%%%%%%%%%%%%%%%%%%
\section{Reparametrizing cocycle associated to a singular point}\label{ss.reparametrization}
%%%%%%%%%%%%%%%%%%%%%%%%%%%%%%%%%%%%%%%%%%%%%%%%%%%%%%%%%%%%%%%%%%

Let $X$ be a $C^1$ vector field, $\phi^t$ its flows,  and $\sigma$ be a hyperbolic zero of $X$.
We denote by $\Lambda_X\subset\PP M$ the union
$$\Lambda_X = \overline{\{\RR X(x), x\notin Sing(X)\}}\cup\bigcup_{x\in Sing(X)} \PP_x M.$$

It can be shone easily that this set  is upper semi-continuous, as in the case of $B(X,U)$ (see \ref{p.extended})

\begin{lemm} $\La_X$ is a compact subset of $\PP M$ invariant under the flow $\phi_\PP^t$, and the map $X\mapsto \La_X$ is upper semi-continuous.
Finally, if the zeros of $X$ are hyperbolic then, for any compact regions one has $B(X,U)\subset \La_X$.
\end{lemm}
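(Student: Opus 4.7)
The plan is to break the statement into its four assertions and handle each in turn. The key observation is that $\La_X$ contains, at every zero $\sigma$ of $X$, the \emph{entire} projective fiber $\PP T_\sigma M$, which is much larger than the central projective set used in $B(X,U)$. This makes every argument strictly easier than the corresponding one in Proposition~\ref{p.extended}, because we never need to control which tangent directions show up at a singularity.

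For compactness, I would write $\La_X$ as the union of two closed sets: the closure $\overline{\{\RR X(x): x\notin \mathrm{Zero}(X)\}}$ is closed by definition, and $\bigcup_{x\in\mathrm{Zero}(X)}\PP T_xM=\Pi_\PP^{-1}(\mathrm{Zero}(X))$ is the preimage under the continuous projection $\Pi_\PP\colon\PP M\to M$ of the closed set $\mathrm{Zero}(X)$. Since $\PP M$ is compact, $\La_X$ is compact. For invariance, I note that for regular $x$ we have $\phi_\PP^t(\RR X(x))=\RR X(\phi^t(x))$, so the set $\{\RR X(x)\}$ is invariant, and taking closure preserves invariance because $\phi_\PP^t$ is a homeomorphism; on the other hand, for $\sigma\in\mathrm{Zero}(X)$ we have $\phi^t(\sigma)=\sigma$ and $D\phi^t(\sigma)$ is a linear automorphism of $T_\sigma M$, so $\phi_\PP^t$ restricts to a diffeomorphism of $\PP T_\sigma M$.

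For upper semi-continuity I would take a sequence $Y_n\to X$ in $C^1$ and $L_n\in\La_{Y_n}$ with $L_n\to L$, and aim to show $L\in\La_X$. A standard diagonal extraction lets me assume that each $L_n$ is either of the form $\RR Y_n(x_n)$ with $x_n\notin\mathrm{Zero}(Y_n)$, or lies in some $\PP T_{x_n}M$ with $Y_n(x_n)=0$. Up to a subsequence, let $x_n\to x$ in $M$. In the second case, $X(x)=\lim Y_n(x_n)=0$ and clearly $L\in\PP T_xM\subset\La_X$. In the first case, if $X(x)\neq 0$ then $Y_n(x_n)\to X(x)$ gives $L=\RR X(x)\in\La_X$; if $X(x)=0$ then $L$ lies in the fiber $\PP T_xM$, which is again contained in $\La_X$. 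This is where the simplification relative to Proposition~\ref{p.extended} is most visible: there is no need to rule out strong stable or strong unstable directions.

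For the last assertion, assume the zeros of $X$ are hyperbolic, and let $U\subset M$ be compact. By definition $B(X,U)=\La_{\PP,U}\cup\bigcup_{\sigma\in\mathrm{Sing}(X)\cap U}\PP^c_{\sigma,U}$. The first piece is $\overline{S_X(\La_U\setminus\mathrm{Sing}(X))}$, which is contained in $\overline{\{\RR X(x):x\notin\mathrm{Zero}(X)\}}\subset\La_X$. The second piece is a union of projective subspaces of the fibers $\PP T_\sigma M$ with $\sigma\in\mathrm{Zero}(X)$, hence trivially contained in $\bigcup_{x\in\mathrm{Zero}(X)}\PP T_xM\subset\La_X$. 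There is no real obstacle here; the only thing worth checking carefully is the diagonal extraction in the upper semi-continuity step, so that one does not need to know in advance whether each $L_n$ comes from a regular point or a singularity of $Y_n$.
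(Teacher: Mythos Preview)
Your proof is correct and follows exactly the route the paper indicates: the paper gives no proof of this lemma, merely remarking that ``it can be sho[w]n easily that this set is upper semi-continuous, as in the case of $B(X,U)$ (see \ref{p.extended})'', and you have supplied precisely that argument together with the (correct) observation that it simplifies because the full fiber $\PP T_\sigma M$ is included over every zero. One cosmetic point: what you call a ``diagonal extraction'' is really just passing to a subsequence according to whether infinitely many $L_n$ come from regular points or from singularities of $Y_n$; no diagonal argument is needed.
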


Let $U_\sigma$ be a compact neighborhood of $\sigma$ on which
$\sigma$ is the maximal invariant.

Let $V_\sigma$ be a compact neighborhood of $Sing(X)\setminus\{\sigma\}$ so that $V_\sigma\cap U_\sigma=\emptyset$.
We fix a ($C^1$) Riemmann metric $\|.\|$ on $M$ so that
$$\|X(x)\|=1 \mbox{ for all } x\in M\setminus (U_\sigma\cup V_\sigma).$$

Consider the map $h\colon \Lambda_X\times \RR\to (0,+\infty)$, $h(L,t)=h^t(L)$, defined as follows:
\begin{itemize}
 \item if $L\in \PP T_xM$ with   $x\notin U_\sigma$ and $\phi^t(x)\notin U_\sigma$, then $h^t(L)=1$;
 \item if $L\in \PP T_xM$ with $x\in U_\sigma$ and $\phi^t(x)\notin U_\sigma$ then $L= \RR X(x)$  and
 $h^t(L)= \frac 1{\|X(x)\|}$;
 \item if $L\in \PP T_xM$ with $x\notin U_\sigma$ and $\phi^t(x)\in U_\sigma$ then $L= \RR X(x)$  and
 $h^t(L)= \|X(\phi^t(x))\|$;
 \item  if $L\in \PP T_xM$ with $x\in U_\sigma$ and $\phi^t(x)\in U_\sigma$ but $x\neq \sigma$ then $L= \RR X(x)$  and
 $h^t(L)= \frac {\|X(\phi^t(x)\|}{\|X(x)\|}$;
 \item if $L\in \PP T_\sigma M$ then $h^t(L)= \frac {\|\phi_\PP^t(u)\|}{\|u\|}$ where $u$ is a vector in $L$.
\end{itemize}
Note that the case in which $x$ is not the singularity and  $x\in U_\sigma$  can be written as in the last item by taking $u=X(x)$.
\begin{figure}[htb]\label{f.h}
\begin{center}
\includegraphics[width=0.8\linewidth]{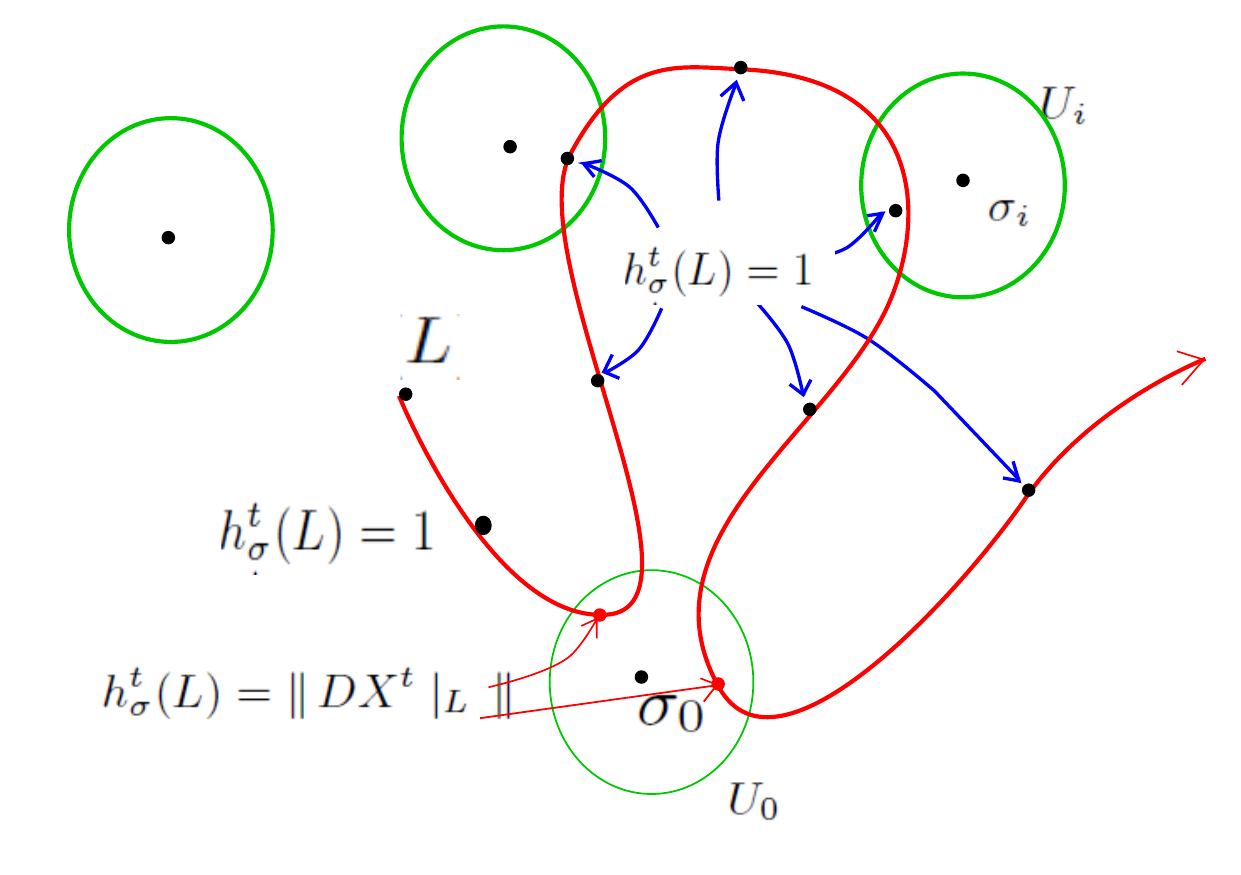}
\end{center}
\caption{the local cocycle $h^t_\sigma$ associated to the singularity $\sigma=\sigma_0$}
\end{figure}
\begin{lemm}\label{l.hcocycle}
With the notation above, the map $h$ is a (continuous) cocycle on $\La_(X,U)$.

\end{lemm}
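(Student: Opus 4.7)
The plan is to verify the two defining properties of a cocycle, namely the multiplicative relation $h^{t+s}(L)=h^t(\phi_\PP^s(L))\cdot h^s(L)$ and joint continuity in $(L,t)$, by first collapsing the four ``regular'' cases into a single clean formula and then checking that this formula glues continuously to the formula of case~5 at the fibre $\PP T_\sigma M$, as well as across $\partial U_\sigma$.

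The first step is to introduce the auxiliary function $a\colon M\to (0,+\infty)$ defined by $a(x)=\|X(x)\|$ if $x\in U_\sigma$ and $a(x)=1$ otherwise. Since $U_\sigma$ and $V_\sigma$ are disjoint compact sets, the boundary $\partial U_\sigma$ is at positive distance from $V_\sigma$ and hence sits in the closure of $M\setminus(U_\sigma\cup V_\sigma)$; by continuity of $\|X\|$ and the normalisation $\|X\|\equiv 1$ on $M\setminus(U_\sigma\cup V_\sigma)$, one has $\|X(x)\|=1$ on $\partial U_\sigma$, so $a$ is continuous on $M$. A case-by-case inspection then shows that for every regular $x$ and $L=\RR X(x)$ one has $h^t(L)=a(\phi^t(x))/a(x)$. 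From this formula the cocycle relation on the regular part is immediate by telescoping:
$$h^t(\phi_\PP^s(L))\cdot h^s(L)=\frac{a(\phi^{t+s}(x))}{a(\phi^s(x))}\cdot\frac{a(\phi^s(x))}{a(x)}=\frac{a(\phi^{t+s}(x))}{a(x)}=h^{t+s}(L).$$
For $L\in\PP T_\sigma M$ the definition reads $h^t(L)=\|D\phi^t(\sigma)u\|/\|u\|$ (independent of the choice of $u\in L\setminus\{0\}$); since $\phi^s$ fixes $\sigma$, the orbit of $L$ stays in $\PP T_\sigma M$ and the cocycle relation follows from the chain rule $D\phi^{t+s}(\sigma)=D\phi^t(\sigma)D\phi^s(\sigma)$. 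For $L$ over another zero $\sigma'$, case~1 applies for every $t$ (since $\sigma'\in V_\sigma$ is disjoint from $U_\sigma$), giving $h^t\equiv 1$, trivially a cocycle.

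For continuity, the formula $h^t(L)=a(\phi^t(\Pi_\PP L))/a(\Pi_\PP L)$ is manifestly continuous on the regular part of $\La_X$, with the passage through $\partial U_\sigma$ handled by the normalisation of the metric. The substantive point is continuity at a point $(L_0,t)$ with $L_0\in\PP T_\sigma M$, which is where the ``regular'' and ``singular'' formulas must be reconciled. Take $L_n\to L_0$ with $L_n=\RR X(x_n)$, $x_n\to\sigma$, $x_n\neq\sigma$; after replacing $X(x_n)/\|X(x_n)\|$ by $\pm X(x_n)/\|X(x_n)\|$ in a sign-coherent way, these unit vectors converge to a unit vector $u_0\in L_0$. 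For large $n$ both $x_n$ and $\phi^t(x_n)$ lie in $U_\sigma$ (since $\sigma$ is in the interior and $\phi^t(x_n)\to\sigma$), so we are in case~4, and using the fundamental identity $X(\phi^t(x))=D\phi^t(x)X(x)$,
$$h^t(L_n)=\frac{\|X(\phi^t(x_n))\|}{\|X(x_n)\|}=\left\|D\phi^t(x_n)\,\frac{X(x_n)}{\|X(x_n)\|}\right\|\xrightarrow[n\to\infty]{}\|D\phi^t(\sigma)u_0\|=h^t(L_0).$$
The main obstacle is precisely this matching at the singular fibre; the identity $X(\phi^t(x))=D\phi^t(x)X(x)$ together with the continuity of $x\mapsto D\phi^t(x)$ is what makes the derivative cocycle on $T_\sigma M$ the canonical limit of the scalar cocycle $a(\phi^t(x))/a(x)$ along regular orbits approaching $\sigma$. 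Joint continuity in $t$ is then automatic from the $C^1$ dependence of the flow on time.
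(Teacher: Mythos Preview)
Your proof is correct and in fact cleaner than the paper's. The paper establishes the cocycle relation by a brute-force verification: it enumerates all configurations of whether $x$, $\phi^s(x)$, and $\phi^{t+s}(x)$ lie inside or outside $U_\sigma$ (eight cases, plus the singular case $L\in\PP T_\sigma M$), and for each one writes out both sides and checks they match. Continuity is dispatched in a single sentence (``comes from the continuity of the norm and the fact that the neighborhoods $U$ and $V$ do not intersect'') without any discussion of what happens at the fibre over $\sigma$.

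Your approach replaces the eight-case check by a single observation: on the regular part $h^t$ is a coboundary $h^t(L)=a(\phi^t(x))/a(x)$ for the auxiliary $a$, so the multiplicative relation is a one-line telescoping. This is exactly the structural reason the case analysis \emph{had} to work, so what you have done is identify that reason rather than verify its consequences. The singular case is handled the same way in both proofs (chain rule for $D\phi^t(\sigma)$). Where your proof genuinely adds content is the continuity argument at $\PP T_\sigma M$: the paper never isolates the identity $X(\phi^t(x))=D\phi^t(x)X(x)$ as the glue between the two formulas, whereas you make explicit that this is precisely what forces the derivative cocycle on $T_\sigma M$ to be the limit of the regular formula.

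One minor slip: you declare $a\colon M\to(0,+\infty)$, but $a(\sigma)=\|X(\sigma)\|=0$. The correct codomain is $[0,+\infty)$; this is harmless because you only divide by $a(x)$ at regular $x$, but it is worth fixing.
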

\proof
The continuity of $h$ comes from the continuity of the norm and the fact that the neighborhoods $U$ and $V$ do not intersect. Now we aim to show that $h$ verifies the
cocycle relation : $$h^{t}(\phi_{\mathbb{P}}^s(L))h^s(L)=h^{t+s}(L)$$
\begin{itemize}
 \item if $L\in \PP T_xM$ with   $x\notin U_\sigma$ , $\phi^{s}(x)\notin U_\sigma$  $\phi^{s+t}(x)\notin U_\sigma$, then $h^{t+s}(L)=h^{t}(\phi_{\mathbb{P}}^s(L))h^s(L)=1$;
\item Let $L\in \PP T_xM$ with   $x\notin U_\sigma$ , $\phi^{s}(x)\notin U_\sigma$  $\phi^{s+t}(x)\in U_\sigma$. Since $ \|X(\phi^s(u))\|=1$ then $h^s(L)=1$ and,
\begin{eqnarray*}
h^{t}(\phi_{\mathbb{P}}^s(L))h^s(L)&=& \|X(\phi^t(\phi^s(u)))\|\\
&=& \|X(\phi^{t+s}(u))\|\\
%&=& \|D\phi^{t+s}(u))\| \|D\phi^s(u)\|\\
&=& h^{t+s}(L),
\end{eqnarray*}

\item if $L\in \PP T_xM$ with $x\notin U_\sigma$, $\phi^s(x)\in U_\sigma$ and $\phi^{t+s}(x)\notin U_\sigma$  then $L= \RR X(x)$, $h^s(L)=\|X(\phi^s(x))\|$ and
\begin{eqnarray*}
h^{t}(\phi_{\mathbb{P}}^s(L))h^s(L)&=& \frac 1{\|X(\phi^s(x))\|}\|X(\phi^s(x))\|\\
&=& 1\\
&=& h^{t+s}(L)
\end{eqnarray*}
\item if $L\in \PP T_xM$ with $x\notin U_\sigma$, $\phi^s(x)\in U_\sigma$ and $\phi^{t+s}(x)\in U_\sigma$  then $L= \RR X(x)$, $h^s(L)=\|X(\phi^s(x))\|$ and
\begin{eqnarray*}
h^{t}(\phi_{\mathbb{P}}^s(L))h^s(L)&=&  \frac {\|X(\phi^t(\phi^s(x)))\|}{\|X(\phi^s(x)\|}\|X(\phi^s(x))\|;\\
&=&\|X(\phi^t(\phi^s(x)))\|\\
&=&\|X(\phi^{t+s}(x))\|\\
&=& h^{t+s}(L)
\end{eqnarray*}

 \item if $L\in \PP T_xM$ with   $x\in U_\sigma$ , $\phi^{s}(x)\notin U_\sigma$  $\phi^{s+t}(x)\notin U_\sigma$, then $ h^t(\phi_{\mathbb{P}}^s(L))^s(L))=1$
  \begin{eqnarray*}
h^t(\phi_{\mathbb{P}}^s(L))h^s(L)&=& \frac 1{\|X(x)\|}\\
&=& h^{t+s}(L),
\end{eqnarray*}

\item Let $L\in \PP T_xM$ with   $x\in U_\sigma$ , $\phi^{s}(x)\notin U_\sigma$  $\phi^{s+t}(x)\in U_\sigma$. Since $h^s(L)=\frac 1{\|X(x)\|}$ then,
\begin{eqnarray*}
h^t(\phi_{\mathbb{P}}^s(L))h^s(L)&=& \|X(\phi^t(\phi^s(x)))\|\frac 1{\|X(x)\|}\\
&=& \frac {\|X(\phi^{t+s}(x))\|}{\|X(x)\|}\\
%&=& \|D\phi^{t+s}(u))\| \|D\phi^s(u)\|\\
&=& h^{t+s}(L),
\end{eqnarray*}
\item if $L\in \PP T_xM$  with $x\in U_\sigma$, $\phi^s(x)\in U_\sigma$ and $\phi^{t+s}(x)\notin U_\sigma$  then $L= \RR X(x)$, $h^t(\phi_{\mathbb{P}}^s(L))=\frac 1{\|X(\phi^s(x))\|}$ and
\begin{eqnarray*}
h^t(\phi_{\mathbb{P}}^s(L))h^s(L)&=& \frac {\|X(\phi^s(x))\|}{\|X(x)\|}\frac 1{\|X(\phi^s(x))\|}\\
&=&\frac 1{\|X(x)\|}\\
&=& h^{t+s}(L).
\end{eqnarray*}
\item if $L\in \PP T_xM$ with $x\in U_\sigma$, $\phi^s(x)\in U_\sigma$ and $\phi^{t+s}(x)\in U_\sigma$  then $L= \RR X(x)$,
\begin{eqnarray*}
h^t(\phi_{\mathbb{P}}^s(L))h^s(L)&=&  \frac {\|X(\phi^t(\phi^s(x)))\|}{\|X(\phi^{s}(x))\|}\frac{\|X(\phi^{s}(x))\|}{\|X(x)\|} ;\\
&=& \frac {\|X(\phi^t(\phi^s(x)))\|}{\|X(x)\|}\\
&=& \frac {\|X(\phi^{t+s}(x))\|}{\|X(x)\|}\\
&=& h^{t+s}(L)
\end{eqnarray*}

 \item if $L\in \PP T_\sigma M$, let  $u$ be a  vector in $L$, then
 \begin{eqnarray*}
 h^{t+s}(L)=\frac{ \|D\phi_\PP^{t+s}(u)\|}{\|u\|} ;\\
 &=&\frac{\|D\phi_\PP^{t+s}(u)\|}{\|D\phi_\PP^s(u)\|}\frac{\|D\phi_\PP^s(u)\|}{\|u\|}\\
&=&\frac{\|D\phi_\PP^t(D\phi_\PP^s(u))\|}{\|D\phi_\PP^s(u)\|}\frac{\|D\phi_\PP^s(u)\|}{\|u\|}\\
&=&h^t(\phi_{\mathbb{P}}^s(L)) h^{s}(L)
\end{eqnarray*}

\end{itemize}
\endproof

\begin{lemm}\label{l.hsigma}
The cohomology class of a cocycle $h$ defined as above,  is independent from the choice of the metric $\|.\|$ and of the neighborhoods
 $U_\sigma$ and $V_\sigma$.
\end{lemm}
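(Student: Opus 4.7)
The plan is to produce, for any two cocycles $h_1^t$ and $h_2^t$ built from data $(\|\cdot\|_i, U_\sigma^i, V_\sigma^i)$ ($i=1,2$) as in the definition preceding the lemma, an explicit continuous function $\tilde g\colon\La_X\to(0,+\infty)$ realizing the coboundary $h_1^t(L)/h_2^t(L)=\tilde g(\phi_\PP^t L)/\tilde g(L)$. The key preliminary observation is that the four cases defining $h_i^t$ on regular fibers collapse into a single formula. Setting
\begin{equation*}
A_i(x)=\|X(x)\|_i \text{ if } x\in U_\sigma^i,\qquad A_i(x)=1 \text{ otherwise},
\end{equation*}
a case by case check shows $h_i^t(\RR X(x))=A_i(\phi^t(x))/A_i(x)$ for every regular $x$. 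The normalization $\|X\|_i\equiv 1$ on $M\setminus(U_\sigma^i\cup V_\sigma^i)$, together with the continuity of the Riemann metric and the disjointness $U_\sigma^i\cap V_\sigma^i=\emptyset$, forces $\|X\|_i=1$ on $\partial U_\sigma^i$, so $A_i$ is continuous on $M\setminus \mathrm{Zero}(X)$.

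Next I define $\tilde g$ on $\La_X$ piecewise. On lines $L=\RR X(x)$ with $x$ regular, set $\tilde g(L)=A_1(x)/A_2(x)$. On the fiber $\PP T_\sigma M$, set $\tilde g(L)=\|u\|_1/\|u\|_2$ for any nonzero $u\in L$. On any fiber $\PP T_{\sigma'}M$ with $\sigma'\in \mathrm{Zero}(X)\setminus\{\sigma\}$, set $\tilde g\equiv 1$. Continuity at the singular fibers is the only nontrivial point. As $x\to\sigma$, eventually $x\in U_\sigma^1\cap U_\sigma^2$, hence $\tilde g(\RR X(x))=\|X(x)\|_1/\|X(x)\|_2\to \|u\|_1/\|u\|_2$ for $u$ in the limit direction, matching the value on $\PP T_\sigma M$. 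As $x\to\sigma'\neq\sigma$, eventually $x\in V_\sigma^1\cap V_\sigma^2$, so $x\notin U_\sigma^i$ for both $i$ and $A_i(x)=1$; hence $\tilde g\to 1$, matching the value on $\PP T_{\sigma'}M$.

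It remains to verify the coboundary identity on the three types of points of $\La_X$. On $L=\RR X(x)$ with $x$ regular it is immediate from the unified formula above for $h_i^t$. On $L\in\PP T_\sigma M$ one applies the definition $h_i^t(L)=\|\phi_\PP^t(u)\|_i/\|u\|_i$ and observes that $\phi_\PP^t L$ is the line through $D\phi^t(\sigma)u$, so both sides of the identity equal $(\|D\phi^t(\sigma)u\|_1/\|u\|_1)/(\|D\phi^t(\sigma)u\|_2/\|u\|_2)$. On $L\in \PP T_{\sigma'}M$ with $\sigma'\neq\sigma$, both $x=\sigma'$ and $\phi^t(x)=\sigma'$ lie outside $U_\sigma^i$, so Case~1 of the definition gives $h_i^t(L)=1$, while $\tilde g\equiv 1$ on this fiber. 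The main obstacle in the argument is precisely the continuity of $\tilde g$ at the singular fibers: without the normalization $\|X\|_i\equiv 1$ outside $U_\sigma^i\cup V_\sigma^i$, the naive candidate $L\mapsto \|u\|_1/\|u\|_2$ would introduce spurious variation along orbits passing near other zeros and would fail to realize the coboundary; the piecewise definition, together with the normalization, is what reconciles the two cocycles and yields $h_1^t/h_2^t$ as a coboundary, i.e.\ $h_1^t$ and $h_2^t$ cohomological.
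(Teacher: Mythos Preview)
Your proof is correct and follows the same route as the paper: both build an explicit continuous transfer function on $\La_X$ realizing $h_1^t/h_2^t$ as a coboundary, with the same piecewise description (ratio of norms near $\sigma$, constant equal to $1$ near the other zeros). Your unified formula $h_i^t(\RR X(x))=A_i(\phi^t x)/A_i(x)$ is a clean repackaging that handles all relative positions of the neighborhoods directly, whereas the paper first imposes the extra disjointness hypotheses $V'_\sigma\cap U_\sigma=\emptyset$ and $V_\sigma\cap U'_\sigma=\emptyset$ and then removes them by a final transitivity step through an auxiliary third choice of data.
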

\begin{proof}
Let $\|.\|$ and $\|.\|'$ be two different metrics  and 2 different sets of neighborhoods of $\sigma$ and $Sing(X)\setminus\{\sigma\}$ such that:
\begin{itemize}
\item $V_\sigma\cap U_\sigma=\emptyset$.
\item $V'_\sigma\cap U'_\sigma=\emptyset$.
\item $V'_\sigma\cap U_\sigma=\emptyset$ and  $V_\sigma\cap U'_\sigma=\emptyset$ .
\item $\|X(x)\|=1 \mbox{ for all } x\in M\setminus (U_\sigma\cup V_\sigma),$
\item $\|X(x)\|'=1 \mbox{ for all } x\in M\setminus (U'_\sigma\cup V'_\sigma).$
\end{itemize}
 We define  $h$ as above for the metric $\|.\|$ and   $h'$  as above for the metric  $\|.\|'$.
 %Moreover, since the metrics must be equivalent then $$\|.\|\leq M\|.\|' \,.$$
 We define a function $g\colon B(X,U)\to (0,+\infty)$
 \begin{itemize}
\item If $L\in \PP T_xM$ with $x\notin V'_\sigma\cup V_\sigma$ then  $g(L)=\frac{\|u\|'}{\|u\|} $ with $u$ a non vanishing vector in $L$,
\item  and if $L\in \PP T_xM$ with  $x\in V'_\sigma\cup V_\sigma$, then $g(L)=1 $  .
\end{itemize}
\begin{clai*}
The function $g\colon B(X,U)\to (0,+\infty)$ defined above is continuous
\end{clai*}

\proof
Since $V'_\sigma\cap U_\sigma=\emptyset$ and  $V_\sigma\cap U'_\sigma=\emptyset$, the continuity in the boundary of $V_\sigma\cup V_\sigma'$ comes from the fact that $\|.\|$ and $\|.\|'$ are 1 out of $U_\sigma\cup U_\sigma'$.
Also since $V'_\sigma\cap U_\sigma=\emptyset$ and  $V_\sigma\cap U'_\sigma=\emptyset$,
 The continuity of the norms  $\|.\|$ and $\|.\|'$, and the fact that they are 1 out of $V_\sigma\cup V_\sigma'$ , gives us the continuity in the boundary of $U_\sigma\cup U_\sigma'$.
\endproof
The following claim  will show us that the functions $h$ and $h'$ differ in a coboundary defined as
 $g^t(L)=\frac {g(D\phi_\PP^t(u))}{g(u)}$.
\begin{clai*}
The functions $h$ and $h'$ are such that

 $h'^t(u)=h^t(u)\frac {g(D\phi_\PP^t(u))}{g(u)}\,.$
 \end{clai*}
\proof
\begin{itemize}
 \item For the metric  $\|.\|'$  and $L\in \PP T_xM$ with   $x\notin U_\sigma\cup U'_\sigma $ and $\phi^t(x)\notin  U_\sigma\cup U'_\sigma $, $g^t(L)=1 $. On the other side $h'^t(L)=1$ as desired.
\item If $L\in \PP T_xM$ with $x\in U_\sigma\cup U'_\sigma$ and $\phi^t(x)\notin  U_\sigma\cup U'_\sigma $ then   $g^t(L)=\frac{\|u\|}{\|u\|'} $. Take $u=X(x)$
$h^t(L)= \frac 1{\|X(x)\|}$; $$h'^t(L)=h^t(L)\frac{\|X(x)\|}{\|X(x)\|'}\,.$$

 \item If $L\in \PP T_xM$ with $x\notin  U_\sigma\cup U'_\sigma$ and $\phi^t(x)\in  U_\sigma\cup U'_\sigma$ then $L= \RR X(x)$  and  take $u=X(x)$.
 $g^t(L)=\frac{\|D\phi_\PP^t(u)\|'}{\|D\phi_\PP^t(u)\|} $.Then since $h^t(L)=\|D\phi_\PP^t(u)\|$,then
 $$h'^t(L)=h^t(L)\frac{\|D\phi_\PP^t(u)\|'}{\|D\phi_\PP^t(u)\|} \,.$$.
 \item  If $L\in \PP T_xM\cap B(X,U)$ with $x\in U_\sigma$ and $\phi^t(x)\in U_\sigma$  .  Take $u=X(x)$, then
  $$g^t(L)\frac {\|D\phi_\PP^t(u)\|'\|u\|}{\|D\phi_\PP^t(u)\|\|u\|'}\,,$$ and
$h^t(L)= \frac {D\phi_\PP^t(u)}{\|u\|}$. So  $h'^t(L)=h^t(L)g^t(L)$
 %Therefore $\frac{h'^t(L)}{h^t(L)}$
% \item if $L\in \PP T_\sigma M$ then $h^t(L)= \|\phi_\PP^t(u)\|$ where $u$ is a unit vector in $L$.
\end{itemize}

\endproof

Now in order to finish the proof we need to show that assuming the condition that the norms where such that $V'_\sigma\cap U_\sigma=\emptyset$ and  $V_\sigma\cap U'_\sigma=\emptyset$ does not make us loose generality.
For this, suppose we started with any  other norm $\|.\|''$ and that there exist 2 neighborhood such that.
\begin{itemize}
\item $V''_\sigma\cap U''_\sigma=\emptyset$.
\item $\|X(x)\|''=1 \mbox{ for all } x\in M\setminus (U''_\sigma\cup V''_\sigma).$
\end{itemize}
Let us choose a smaller neighborhood $V'_\sigma\subset V_\sigma''$. This satisfies  $V'_\sigma\cap U''_\sigma=\emptyset$.
Analogously   $U'_\sigma \subset U_\sigma''$ will satisfy  $V''_\sigma\cap U'_\sigma=\emptyset$.
Now if we choose this neighborhoods $V'$ and $U'$ as small as we want, and a norm
 norm $\|.\|'$ such that
 $\|X(x)\|'=1 \mbox{ for all } x\in M\setminus (U''_\sigma\cup V''_\sigma).$
the claims above  implies that the corresponding $h''$ and $h'$ differ in a coboundary.
Therefore $h'$ can be chosen so that  $h''$ and $h$ differ in a coboundary.
\end{proof}
We denote by $[h(X,\sigma)]$ the cohomology class of any cocycle defined as $h$ above.

\begin{lemm}\label{l.representativecocycle} Consider a vector field $X$ and a hyperbolic zero $\sigma$ of $X$. Then there is a $C^1$-neighborhood $\cU$ of $X$ so that $\sigma$ has a well defined hyperbolic
continuation $\sigma_Y$ for $Y$ in $\cU$ and for any $Y\in\cU$ there is a map $h_Y\colon \La_Y\times \RR\to (0,+\infty)$ so that
\begin{itemize}
 \item for any $Y$, $h_Y$ is a cocycle belonging to the cohomology class $[h(Y,\sigma_Y)]$
 \item $h_Y$ depends continuously on $Y$:  if $Y_n\in \cU$ converge to $Z\in \cU$ for the $C^1$-topology and if $L_n\in \La_{Y_n}$ converge to $L\in \La_Z$ then
 $h^t_{Y_n}(L_n)$ tends to $h^t_Z(L)$ for every $t\in \RR$; furthermore this convergence is uniform in $t\in[-1,1]$.
\end{itemize}

\end{lemm}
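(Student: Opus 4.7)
The plan is to take the explicit representative
\[
h^t_Y(L) := \frac{\|D\phi^t_Y(u)\|_0}{\|u\|_0},
\]
where $u$ is any nonzero vector in the line $L$ and $\|\cdot\|_0$ is a fixed, $Y$-independent $C^1$ Riemannian metric on $M$. The ratio is invariant under rescaling $u$, so $h^t_Y$ is well-defined on $\La_Y$; the cocycle relation follows from the chain rule $D\phi^{s+t}_Y(u) = D\phi^t_Y\bigl(D\phi^s_Y(u)\bigr)$ after telescoping the norms. Continuity in $Y$ is built in: if $Y_n \to Z$ in the $C^1$-topology and $L_n \in \La_{Y_n}$ tends to $L \in \La_Z$, then any choice of nonzero $u_n \in L_n$ converging to $u \in L$ gives $D\phi^t_{Y_n}(u_n) \to D\phi^t_Z(u)$ uniformly in $t \in [-1,1]$ by standard $C^1$-dependence of the flow, and therefore $h^t_{Y_n}(L_n) \to h^t_Z(L)$ uniformly in $t \in [-1,1]$.

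The main task is to show that $h^t_Y$ lies in the cohomology class $[h(Y,\sigma_Y)]$. Fix a representative $h^t_{rep,Y}$ as in Lemma~\ref{l.hcocycle}, built from disjoint compact neighborhoods $U_{\sigma_Y}$ and $V_{\sigma_Y}$ of $\sigma_Y$ and $Sing(Y)\setminus\{\sigma_Y\}$ and an auxiliary metric $\|\cdot\|_Y$ satisfying $\|Y(x)\|_Y = 1$ on $M \setminus (U_{\sigma_Y}\cup V_{\sigma_Y})$. Introduce the auxiliary tautological cocycle $\tilde h^t_Y(L):= \|D\phi^t_Y(u)\|_Y/\|u\|_Y$ using the \emph{same} metric $\|\cdot\|_Y$. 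A direct case analysis matching the four regimes of Lemma~\ref{l.hcocycle} (according to whether each of $x$ and $\phi^t(x)$ lies in $U_{\sigma_Y}$), together with the singular fiber over $\sigma_Y$, shows that
\[
\frac{\tilde h^t_Y(L)}{h^t_{rep,Y}(L)} = \frac{\beta_Y(\phi^t_\PP(L))}{\beta_Y(L)},
\]
where $\beta_Y(L):=\|Y(x)\|_Y$ if $L$ is based at $x\notin U_{\sigma_Y}$ and $\beta_Y(L):=1$ if $L$ is based at $x\in U_{\sigma_Y}$. The function $\beta_Y$ is continuous on $\La_Y$: its two pieces agree on $\partial U_{\sigma_Y}$, which can be chosen disjoint from $V_{\sigma_Y}$ so that $\|Y\|_Y\equiv 1$ there, and it is trivially continuous across $\sigma_Y$ since it is constantly $1$ on $U_{\sigma_Y}$.

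Finally, changing from $\|\cdot\|_Y$ to $\|\cdot\|_0$ in the tautological cocycle introduces the further coboundary $\gamma_Y\circ\phi^t_\PP/\gamma_Y$, where $\gamma_Y(L):=\|u\|_0/\|u\|_Y$ for any nonzero $u\in L$; this is homogeneous of degree zero in $u$, hence well-defined and continuous on $\PP M$. Composing the two coboundaries shows $h^t_Y$ and $h^t_{rep,Y}$ are cohomologous, so $h^t_Y\in [h(Y,\sigma_Y)]$ by Remark~\ref{r.cohomological} and Lemma~\ref{l.hsigma}. The main obstacle is the case-by-case bookkeeping required in the second step to identify $\beta_Y$ correctly and verify its continuity at $\partial U_{\sigma_Y}$ and across the singularity; once that is done, both claimed properties of $h_Y$ follow from the explicit formula and standard $C^1$-dependence of the flow.
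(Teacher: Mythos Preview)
Your proposed representative $h^t_Y(L)=\|D\phi^t_Y(u)\|_0/\|u\|_0$ does \emph{not} lie in the cohomology class $[h(Y,\sigma_Y)]$ whenever $Y$ has other singularities besides $\sigma_Y$. The obstruction is exactly at the fibers over $Zero(Y)\setminus\{\sigma_Y\}\subset V_{\sigma_Y}$, which belong to $\La_Y$ by definition. Over such a singularity $\tau$ one has, on the one hand, $h^t_{rep,Y}\equiv 1$ (case ``$x\notin U_{\sigma_Y}$ and $\phi^t(x)\notin U_{\sigma_Y}$'' in Lemma~\ref{l.hcocycle}); on the other hand your tautological cocycle gives $\|D\phi^t_Y(u)\|/\|u\|$, which along an eigendirection $L$ of $D_\tau Y$ with eigenvalue $\lambda\neq 0$ equals $e^{\lambda t}$. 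Since $\phi^t_\PP(L)=L$ there, a coboundary $g(\phi^t_\PP L)/g(L)$ would have to equal $1$, so the ratio is not a coboundary. Concretely, your function $\beta_Y(L)=\|Y(x)\|_Y$ vanishes at $\tau$, hence is not a positive continuous function on $\La_Y$; the continuity discussion you give only treats $\partial U_{\sigma_Y}$ and $\sigma_Y$, and misses this case entirely. In effect, your cocycle is (cohomologous to) the product $\prod_{\tau\in Zero(Y)} h_\tau$, not to the single $h_{\sigma_Y}$.

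The paper's construction avoids this by building a $Y$-dependent metric $\|\cdot\|_Y=\eta_Y\|\cdot\|$, with $\eta_Y$ obtained from a fixed cutoff $\xi$ supported outside neighborhoods $\tilde U_\sigma\cup\tilde V_\sigma$ (both \emph{fixed}, $Y$-independent). This forces $\|Y(x)\|_Y=1$ on $M\setminus(U_\sigma\cup V_\sigma)$ simultaneously for all $Y$ near $X$, and then feeds this metric directly into the piecewise definition of Lemma~\ref{l.hcocycle}. That representative is, by construction, identically $1$ on the fibers over the other singularities, so it is genuinely in $[h(Y,\sigma_Y)]$; continuous dependence on $Y$ follows from the continuity of $\eta_Y$ in $Y$. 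Your approach could be salvaged by multiplying your tautological cocycle by a correction that kills the expansion over $V_{\sigma_Y}$, but doing this in a $Y$-continuous way amounts to exactly what the paper does.
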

\begin{proof} The manifold $M$ is endowed with a Riemmann metric $\|.\|$.  We fix the neighborhoods  $U_\sigma$ and $V_\sigma$ for $X$ and $\cU$ is a $C^1$-neighborhood of $X$ so that
$\sigma_Y$ is the maximal invariant set for $Y$ in $U_\sigma$ and  $Zero(Y)\setminus\{\sigma_Y\}$ is contained in the interior of $V_\sigma$.
Up to shrink $\cU$ if necessary, we also assume that there are compact neighborhoods $\tilde U_\sigma$ of $\sigma_Y$ contained in the interior of $U_\sigma$ and $\tilde V_\sigma$ of
$Zero(Y)\setminus \{\sigma_Y\}$ contained in the interior of $V_\sigma$.

We fix a continuous function $\xi\colon M\to [0,1]$ so that $\xi(x)= 1$ for $x\in M\setminus (U_\sigma\cup V_\sigma)$ and $\xi(x)=0$ for $x\in\tilde U_\sigma\cup \tilde V_\sigma$.

For any $Y\in \cU$ we  consider the map $\eta_Y\colon M\to (0,+\infty)$ defined by
$$\eta_Y(x)=\frac{\xi(x)}{\|X(x)\|}+ 1-\xi(x).$$
This map is a priori not defined on $Zero(Y)$ but extends by continuity on $y\in Zero(Y)$ by $\eta_Y(y)=1$, and is continuous.

This maps depends continuously on $Y$.   Now we consider the metric $\|.\|_Y= \eta_Y\|.\|$.
Note that $\|Y(x)\|_Y=1$ for $x\in M\setminus (U_\sigma\cup  V_\sigma$).

Now $h_Y$ is the cocycle built at lemma~\ref{l.hsigma} for $U_\sigma, V_\sigma$ and $\|.\|_Y$.
\end{proof}

Notice that, according to Remark~\ref{r.cohomological}, if $\sigma_1,\dots,\sigma_k$ are hyperbolic zeros of $X$ the homology class of the
product cocycle
$h_{\sigma_1}^t\cdots h_{\sigma_k}^t$ is well defined, and admits representatives varying continuously with the flow.
%%%%%%%%%%%%%%%%%%%%%%%%%%%%%%%%%%%%%%%%%%%%%%%%%%%%%%%%%%%%%%%%%%%%%%%%%%%%%%%%%%%%%%%%%%%%%%%%%%%%%%%%%%%%%%%%%%%%%%%%%%%%%%%%%%%%%%%%%%%%%%%%%%%%%%%%%%%%%%%%%%%%%%%%%%%%%%%%%%%%5
%%%%%%%%%%%%%%%%%%%%%%%%%%%%%%%%%%%%%%%%%%%%%%%%%%%%%%%%%%%%%%%%%%%%%%%%%%%%%%%%%%%%%%%%%%%%%%%%%%%%%%%%%%%%%%%%%%%%%%%%%%%%%%%%%%%%%%%%%%%%%%%%%%%%%%%%%%%%%%%%%%%%%%%%%%%%%%%%%%%%%%%%%%%%%%
%%%%%%%%%%%%%%%%%%%%%%%%%%%%%%%%%%%%%%%%%%%%%%%%%%%%%%%%%%%%%%%%%%%%%%%%%%%%%%%%%%%%%%%%%%%%%%%%%%%%%%%%%%%%%%%%%%%%%%%%%%%%%%%%%%%%%%%%%%%%%%%%%%%%%%%%%%%%%%%%%%
\section{Extension of the Dominated splitting}
\subsection{The  dominated splittings over the singularities }

The aim of this section is to prove Theorem \ref{t.nioki}

.% that we restate here as follows:

%\begin{theo}\label{main1}
%Let $X$ be a $C^1$-vector field on a compact manifold $M$ and let $U\subset M$ be a compact region. We denote $S= Sing(X)\cap U$..  Assume that  $X$ has dominated splitting  for the extended linear Poincar\'{e} flow, restricted to  the pre-extended maximal invariant set $\widetilde{\La}$. Then $X$ has dominated splitting for the extended maximal invariant set $B(X,U)$.  Furthermore
%\begin{itemize}
 %           \item suppose that there is a bundle of the domination over   $\widetilde{\La}$ that is contracting (or expanding) uniformly or in volume for the extended linear Poincar\'{e} flow, then the same is true for the domination over $B(X,U)$.
  %          \item suppose that there is a bundle of the domination over   $\widetilde{\La}$ that is contracting (or expanding) uniformly or in volume, then the same is true for the domination over $B(X,U)$.
  %          \item  suppose thee is a subset $S_F\subset S$ so that the reparametrized cocycle $h_F^t\psi_\cN^t$ is uniformly (in volume ) contracted (expanded) in restriction to the
% a bundle of the domination over  $\widetilde{\La}$ where $h_F$ denotes
 %$$h_F=\Pi_{\sigma\in S_F} h_\sigma.$$ Then the same is true for that bundle in the domination over $B(X,U)$.
  %        \end{itemize}
 %\end{theo}

\begin{rema}\label{r.proy}
Let us suppose that the finest Lyapunov decomposition of the singularity is $$T_{\sigma}M=E_{1}\oplus\dots\oplus E_i\oplus \dots \oplus E_j \oplus \dots \oplus  E_{l}\,.$$
If we pick a direction $L\in\PP_\sigma M$ such that the closure of its orbit under $\phi^t_{\PP}$, $\overline{O(L)}$ is contained in $ E_i\oplus \dots \oplus E_j$, then the angle between $\phi^t_{\PP}(L)$ and  any space $$\sum_{h<i} E_h \text{ and } \sum_{h>j}E_h\,,$$  is uniformly away from zero.

We can then identify $\pi_L(E_{h})$ with $E_{h}$ for all $h< i$ and $h>j$, and also we can identify
$$\phi^t_{\PP}(\sum_{h<k} E_h)\text{ with } \sum_{h<k} E_h$$ for any $k<i$ and also  $$\phi^t_{\PP}(\sum_{k<h} E_h)\text{ with } \sum_{k<h}E_h$$ for $k>j$.
\end{rema}

 \begin{lemm}\label{l.refdom}
We consider $X$ a vector field in a $d$ dimensional manifold $M$, with a  hyperbolic singularity $\sigma$ where the finest splitting of the tangent space is $$T_{\sigma}M=E_{1}\oplus\dots\oplus  E_{l}\,.$$
Then, we   consider a metric such that $E_i\perp E_j$ for all $i\neq j$.
 Let $L $ be  such that $L=<u>$ where $u$ belongs to some $E_{i}$.
 %We identify $\pi_L(E_{j})$ with $E_{j}$ for all $j\neq i$.
Then $$\cN_L= E_{1}\oplus\dots\oplus \pi_L(E_{i})\oplus \dots\oplus E_{l}\,.$$ is the finest dominated splitting over the closure of the orbit of $L$.
 \end{lemm}
 \proof
Suppose that $\lambda<0$, the other case is analogous.
Let us consider a vector $w$ in $\pi_L (E_{i})$
 Since $D\phi^t(u)$ and $\psi^t_{\cN}(w)$ are perpendicular, then $$J(D\phi^t)\mid_{E_i}=J(\psi^t_{\cN})\mid_{\pi_L (E_{i}) }\norm{D\phi^t(u)}\,.$$

We have that $J(D\phi^t)=t\lambda^d$  and the biggest Lyapunov exponent of  $\psi^t_{\cN}$, cannot be bigger than $\lambda$. Therefore there are two possibilities:
\begin{enumerate}
  \item there exist a constant $C$ such that for $t$ big enough $J(\psi^t_{\cN})= C t\lambda^{(d-1)}$
  \item or the ratio of contraction of  $u$ is bigger than $\lambda$.
\end{enumerate}
since $D\phi^t\mid_{E_i}$ has all the  Lyapunov exponent equal to $\lambda$ then for $t $ big enough $$\norm{D\phi^t(u)}\leq C_1\norm{u}e^{\lambda t}\,,$$ reaching a contradiction.

 \endproof

The following lemma together with Lemma \ref{l.central} are very similar to lemma 4.3 in \cite{GLW}. Since the context is slightly different and the statement is splitted in two parts, we add the proof anyway.
 \begin{lemm}\label{l.domalg}
We consider $X$ a flow with a  hyperbolic singularity $\sigma$ where the finest hyperbolic decomposition of the tangent space is
$$T_{\sigma}M=E_{1}\oplus\dots E_i\oplus \dots \oplus E_j\oplus \dots \oplus  E_{l}\,.$$

Let us consider a stable space $E_i$,  and an unstable space $E_j$.    Let denote $k_i=\sum_1^{i-1} dim (E_k)$ and $h_j= \sum_{j+1}^l dim(E_k)$.

Consider a direction $L=<u>$ where $u$ is a vector in  $\left[\left( E_i\oplus E_j \right)\setminus \left( E_i\cup E_j\right) \right]$.
Assume $E\oplus_{_<} F$ is a dominated splitting for  $\psi_\cN$ over the closure  $\overline{O(L)}$ of $L$ for $\phi_\PP$. Then:
\begin{itemize}
 \item either $dim E\leq k_i$.  In that case, there is $1\leq i'<i$ so that, for any $L'\in \overline{O(L)}$, one has
 $$E=\pi_{L'}\left(\sum_1^{i'}E_k\right)\simeq  \sum_1^{i'}E_k,\mbox{  and } F= \pi_{L'}\left(\sum_{i'+1}^l E_k\right).$$
 In particular $F$ contains the projection of the sum of the $E_k$ for $k\geq i$.
 \item or  $dim F\leq h_j$.  In that case, there is $j<j'\leq l$ so that, for any $L'\in \overline{O(L)}$, one has
 $$F=\pi_{L'}\left(\sum_{j'}^l E_k\right)\simeq  \sum_{j'}^l E_k,\mbox{ and } E= \pi_{L'}\left(\sum_1^{j'-1} E_k\right).$$
 In particular $E$ contains the projection of the sum of the $E_k$ for $k\leq j$.
\end{itemize}

% then:
%There is no dominated splitting over $\overline{O(L)}$ of  $\cN_L=E\oplus F,$  such that $\pi_L(E_i\oplus L)\in E$ and $\pi_L(E_j\oplus L)\in F$.
 \end{lemm}
 \proof
 First note that, as $L$ is contained in $E_i\oplus E_j$ but not in $E_i$ nor $E_j$, then the $\omega$-limit of $L$ for $\phi_\PP$ is contained in
 $\PP E_j$ and its $\alpha$-limit is contained in $\PP E_i$.

 We argue by contradiction, assuming that there is a dominated splitting $E\oplus F$ over $\overline{O(L)}$ so that $dim E>k_i$ and $dim F>h_j$.

 According to Lemma~\ref{l.refdom}, for any  $L_\omega\in \overline{O(L)}\cap \PP E_j$ the finest dominated splitting of $\overline{O(L_\omega)}$ is
 obtained from $E_{1}\oplus\dots E_i\oplus \dots \oplus E_j\oplus \dots \oplus  E_{l}$  by substituting the space $E_j$ by its projection and keeping all the other unchanged
 (modulo their identification with their projection).  Thus the splitting $E\oplus F$ is just given by the dimension. So there is $i\leq r<j$ so that for any such $L_\omega$ one has:
 $$E(L_\omega)= E_1\oplus\cdots\oplus E_r \mbox{ and } F(L_\omega)= E_{r+1}\oplus\cdots E_{j-1}\oplus \pi_{L_{\omega}}(E_j)\oplus\cdots \oplus E_l.$$

 The same argument show that there is $i<s\leq j$ so that for any $L_\alpha$ in $\PP E_i\cap \overline{O(L)}$ ($\alpha$-limit of $L$) one has

 $$E(L_\alpha)= E_{1}\oplus\cdots E_{i-1}\oplus \pi_{L_{\alpha}}(E_i)\oplus\cdots \oplus E_s \mbox{ and } F(L_\alpha)= E_{s+1}\oplus\cdots\oplus E_l.$$

 This allows us to know the spaces $E(L)$ and $F(L)$. For that, consider an unstable cone around the space $F(L_\omega)$ and
 extend it by continuity in a small neighborhood of the $\omega$-limit of $L$. Then $E(L)$ is exactly the set of vectors which do not enter in the unstable cone for large positive iterates by the
 extended linear Poincar\'e flow. One deduces that

 $$E(L)= E_1\oplus \cdots \oplus E_{i-1}\oplus \pi_L(E_i\oplus L)\oplus E_{i+1}\oplus \cdots \oplus E_r.$$
 In the same way, $F(L)$ consist in the vectors which do not enter in the stable cone defined on the $\alpha$-limit set of $L$ by large negative iterates of the extended linear Poincar\'e flow.
 One deduces that
 $$F(L)= E_s\oplus \cdots \oplus E_{j-1}\oplus \pi_L(E_j\oplus L)\oplus E_{j+1}\oplus \cdots \oplus E_l.$$

Consider the positive iterates $\psi^t_\cN(F(L))= F(\phi^t_\PP(L))$  of $F(L)$. Denote $L_t=\phi^t_\PP(L)$.  Then $F(L_t)$ contains $\pi_{L_t}(E_j\oplus L_t)$ which has the same dimension as $E_j$.
Recall that $L_t$ is contained, by hypothesis, in $E_i\oplus E_j$.  Thus $\pi_{L_t}(E_j\oplus L_t)$ converges in $\cN(L_\omega)$  to some subspace of
$\pi_{L_\omega}(E_i\oplus E_j)\simeq E_i\oplus \pi_{L_\omega}(E_j)$ containing $\pi_{L_\omega}(E_j)$ and having the same dimension has $E_j$. This implies that the limit of $\pi_{L_t}(E_j\oplus L_t)$
for $t\to +\infty$ contains vectors in $E_i$ but is contained in $F(L_\omega)$. This contradicts the fact that $E_i$ is contained in $E(L_\omega)$.

This contradiction implies that $dim E\leq k_i$ or  $dim F\leq h_j$. We now conclude the proof in the first case, the other case being similar.

Assume $dim E \leq k_i$. Then looking at the finest dominated splitting at $L_\omega$ one deduces that there is $1\leq i'<i$ so that $dim E= \sum_1^{i'}dim E_k$.
Then the splitting  $\cN(L')= \tilde E(L')\oplus \tilde F(L')$ defined as
$$\tilde E(L')=\pi_{L'}\left(\sum_1^{i'}E_k\right)\simeq  \sum_1^{i'}E_k,\mbox{  and } \tilde F (L')= \pi_{L'}\left(\sum_{i'+1}^l E_k\right)$$
is invariant, has constant dimension for $L'\in \overline{O(L)}$ and coincides with a dominated splitting over the $\omega$-limit set and over the $\alpha$-limit set.
Therefore this splitting is a dominated splitting so that $dim \tilde E= dim E$ and so $\tilde E=E$ and $\tilde F= F$ concluding.

\endproof

\subsection{Relating the central space of the singularities with the dominated splitting  on $\widetilde{\La}$}

Now let us go back to our dynamical context. Let us consider a vector field with a  chain class $C$ and a singularity $\sigma\in C$. We consider the following splitting of its tangent space:
$$E^{ss}\oplus E^c\oplus E^{uu}\,,$$ noting the stable escaping, the unstable escaping and the central spaces.
We can suppose that the singularities are hyperbolic and that the dimension of the central space is locally constant. This are open and dense conditions. %Once we obtain a dominated splitting in this setting, we can conclude later for  the non hyperbolic case by using corollary \ref{c.cocont}.
 Let us consider the hyperbolic eigen values of the hyperbolic splitting  restricted to the central space:
$$\lambda_{1}<\dots<\lambda_{l}\,$$
and the spaces associated to them $$E^c=E_{1}\oplus\dots\oplus E_{l}\,.$$

Note that it follows from remark \ref{r.inclusion} we know that $\widetilde{\La}\subset B(X,U)$, and from Theorem \ref{ConConLem} we have that $\La_{\PP}(X,U)\subset\widetilde{\La}$.

 \begin{rema}
By lema \ref{l.lower} we have that there is a $C^1$ open and dense set such that the dimension of the central space is locally constant.
By definition of central space there is always a direction  $L_1$  in $\widetilde{\La}\cap\PP_{\sigma}M$,  such that $L_1=<u>$ where $u$ belongs to $E^{1}$ and  $L_l$  in $\widetilde{\La}\cap\PP_{\sigma}M$,  such that $L_l=<v>$ where $v$ belongs to $E^{l}$.
 \end{rema}

\begin{lemm}\label{l.conect}
We consider $X$ a vector field such that :

\begin{itemize}
\item There is a hyperbolic singularity $\sigma$ and the splitting over the tangent space of the singularity into escaping spaces and central space is:
$$T_{\sigma}M=E^{ss}\oplus E^c\oplus E^{uu}\,.$$
\item The central space splits into $E^c=E_{1}\oplus\dots\oplus E_{l}\,.$
\item The class of $\sigma$, $C_{\sigma}$, is not trivial.
\item  The dimension of $E^c$ is locally constant.(i.e. the dimension of $E^c(\sigma,Y)$ is constant for $Y$ in a $C^1$ open neighborhood of $X$)

\end{itemize}

Then for any $ C^1$ open set $ \cU$ of $X$, there is $ Y$ in $\cU$  such that there is a homoclinic orbit $\gamma\subset C(\sigma)$, that approaches the singularity tangent to the $E_{1}$ direction and for the past, tangent to  $E_l$.
 \end{lemm}
 \proof

 %Let us observe that if there exist an open set $U$ of $C_{\sigma}$  such that an orbit contained in the stable or unstable manifold of the singularity, escapes $U$, then the orbit also escapes a basis of open sets around $C_{\sigma}$,  the orbits that do not escape for any $U$ are in $C_{\sigma}$.
  Let us consider the fines hyperbolic decomposition of the central space of $\sigma$ for this vector field:
 $$E^c=E_{1}\oplus\dots\oplus  E_{l}\,.$$
 By definition, there is an orbit  in the stable manifold tangent to  $E^{ss}\oplus E_{1}$ that is contained in $C_{\sigma}$ and there is an orbit  in the unstable manifold tangent to $E_{l}\oplus E^{uu}$ that is contained in $C_{\sigma}$.
 In the open set around $X$ such that the dimension of the central space is constant, we choose $Y$ such that all periodic orbits of $Y$ are hyperbolic, and the orbit in the stable manifold tangent to $E^{ss}\oplus E_{1}$ approaches the singularity in the direction of $E_{1}$ and the orbit in the unstable manifold tangent to $E^{l}\oplus E_{uu}$ approaches the singularity in the direction of $E_{l}$.
 By theorem \ref{l.contecting} we can get another vector field $Y_1$ arbitrarily close to $Y$ that has an homoclinic  orbit $\Gamma$ such that it approaches the singularity in the direction of $E_{1}$ for the future and in the direction of  $E_{l}$ for the past (observe that for $Y_1$ the dimension of the central space is the same as for $X$).

 \endproof

 \begin{coro}\label{c.dir}
 We consider $X$ a vector field such that :

\begin{itemize}
\item There is a singularity $\sigma$ and the tangent space of the singularity splits into escaping spaces and central space as follows:
$$T_{\sigma}M=E^{ss}\oplus E^c\oplus E^{uu}\,.$$
\item The central space splits into $E^c=E_{1}\oplus\dots\oplus E_{l}\,.$
\item The class of $\sigma$, $C_{\sigma}$, is not trivial.
\item  The dimension of $E^c$ is locally constant.(i.e. the dimension of $E^c(\sigma,Y)$ is constant for $Y$ in a $C^1$ open neighborhood of $X$)

\end{itemize}

Then
\begin{itemize}
  \item  There is $L_1\in \widetilde{\La}(C(\sigma))\cap \pi_{\PP} (E_{1})$ and $L_l\in \widetilde{\La}(C(\sigma))\cap  \pi_{\PP}(E_{l})$.
  \item  There is $L\in \widetilde{\La}(C(\sigma))$ such that $L=<u>$ where $u$ is a vector in  $$\left[\left( E_1\oplus E_l \right)\setminus \left( E_1\cup E_l\right) \right]$$.
\end{itemize}

 \end{coro}
\proof
The first item is a direct consequence of Lemma \ref{l.conect}.

For the second item,
 from lema \ref{l.conect}, we can find a vector field $Y$ having a homoclinic orbit $\gamma$ such that it approaches the singularity $\sigma$ tangent to $L_1$ and it approaches the singularity for the past, tangent to a direction $L_l$ in $E_{l}$.

We consider now a linearized neighborhood of the singularity that we call $U_{\sigma}$, and choose two regular points $x$, $y$ such
that $x\in W^s_{loc}\sigma\cap\gamma$ and  $y\in W^u_{loc}\sigma\cap\gamma$
 Then we can choose $x_n \to x$ and $y_n \to y$, such
that $\phi_{t_n}(x_n) = y_n$ and $\set{\phi_{t}(x_n) \text{ for every } 0\leq t\leq t_n}$ is tangent to $E_1\oplus E_l$, note that for $n$ big enough we can  suppose that the segment of orbit from $x_n$ to $y_n$ is in $U_{\sigma}$ and in the linearized neighborhood so actually $\set{\phi_{t}(x_n) \text{ for every } 0\leq t\leq t_n}\subset E_1\oplus E_l$.

\begin{figure}[htb]
\begin{center}
\includegraphics[width=0.55\linewidth]{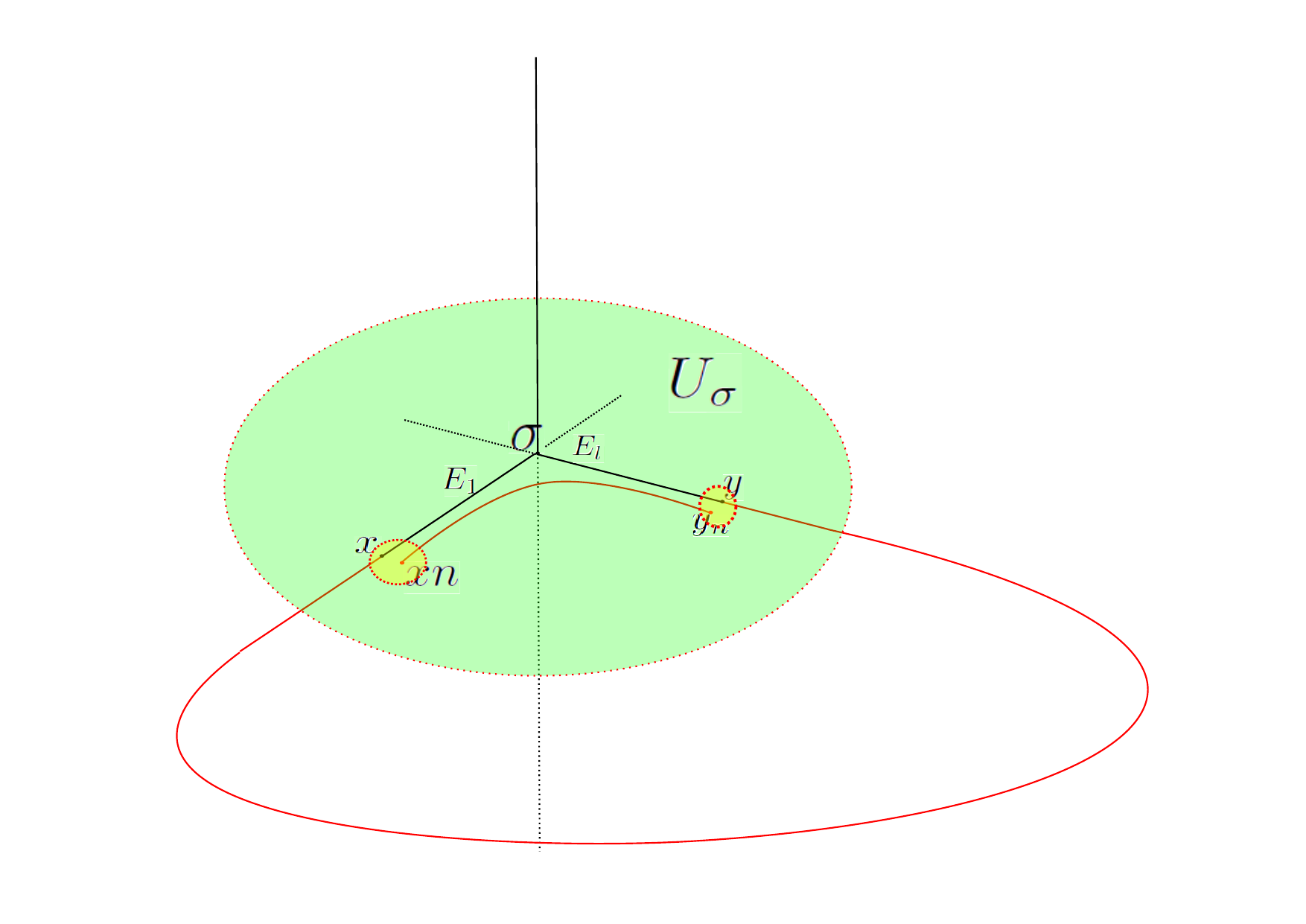}
\end{center}
\label{f.loop}
\caption{perturbation to get $\gamma_n$  }
\end{figure}

We now perturb our vector field $X$ to a new vector field  $X_n\to X$  so that there is a closed orbit $\gamma_n$ formed by the segment of orbit between $x_n$ and $y_n$ in $U_{\sigma}$, and the segment of orbit of $\gamma$ outside $U_{\sigma}$ ( see figure \ref{f.loop})

We can now find a $p_n$ in  $\gamma_n$  satisfying $p_n \to \sigma$ and if $L_n$ is such that $L_n=<X_n(p_n)>$, then
 the upper limit of $L_n$ is a subset of $\widetilde{\Lambda}(C(\sigma))$, ( i.e. all
limit points of $L_n$ are in $\widetilde{\Lambda}(C(\sigma))$). In fact the limit points of $L_n$ are a one dimensional subspace of $ E_1\oplus E_l$.

Taking subsequence when necessary, we may assume that $L_n\to L \in \left[\left( E_1\oplus E_l \right)\setminus \left( E_1\cup E_l\right) \right]$.

\endproof

 In this section we  suppose that  the extended linear Poincar\'{e} flow over $\widetilde{\La}(C(\sigma))$ has a  dominated splitting,
$$\cN_L=\cN^E\oplus\cN^F\,,$$
where $L$ is a direction in $\widetilde{\La}(C(\sigma))$.

We call $\pi_L:T_xM\to \cN_L$ where $L\in\mathbb{P}_xM$ the projection over the normal space at a given direction $L$.

\begin{lemm}\label{l.central}
Let $X$ be a vector field having  a singular chain class $C_{\sigma}$.  We denote $S= Sing(X)\cap C_{\sigma}$ and we suppose that\begin{itemize}
                                                           \item  every $\sigma\in S$ that is hyperbolic,
                                                           \item the dimension of the central space of $\sigma\in S$ is locally constant,

                                                           \item the extended linear Poincar\'{e} flow over $\widetilde{\La}(C(\sigma))$ has a  dominated splitting,
$$\cN_L=\cN^E\oplus\cN^F\,,$$
where $L$ is any direction in $\widetilde{\La}(C(\sigma))$.
                                                         \end{itemize}

Let $L $ be a direction in $\widetilde{\La}\cap\PP_{\sigma}M$, %and such that $L=<u>$ where $u$ belongs to some $E^{si}$
Then, $$\pi_L ( E_{\sigma}^c)\subset\cN_L^E\,,$$ or
$$\pi_L (E_{\sigma}^c)\subset\cN_L^F\,.$$
 \end{lemm}
 \proof
 Since $\sigma$ is hyperbolic we can suppose that the tangent space of $\sigma$  splits into $$T_{\sigma}M=E^{ss}\oplus E^c\oplus E^{uu}\,,$$ the escaping spaces and the central space. We consider as well the finest hyperbolic splitting over the singularity of $E^c=E_{1}\oplus\dots\oplus  E_{l}$.

 Let us suppose that $dim(\cN^E_L)=n$.
 If the $dim(E^{ss})\geq n$, then since $dim(E^{ss}) =dim(\pi_L (E^{ss}))$ we have that $$\cN^E_L\subset \pi_L (E^{ss})\,.$$
 This implies that $$\pi_L ( E_{\sigma}^c)\subset\cN_L^F\,.$$

 Let us suppose that $dim(\cN^F_L)=m$.
 If the $dim(E^{uu})\geq m$, then since $dim(E^{uu}) =dim(\pi_L (E^{uu}))$ we have that $$\cN^F_L\subset \pi_L (E^{uu})\,.$$
 This implies that $$\pi_L ( E_{\sigma}^c)\subset\cN_L^E\,.$$

Suppose now that $dim(E^{ss})< dim(\cN^E_{L})$ from Corollary \ref{c.dir} for every  $L$  we have that $\pi_{L}(E_1)\subset\cN^E_{L}$.

We suppose by contradiction that there exist a direction $L_u=<u>$ such that $\cN^E_{L_u}$ contains some vector $v\in\pi_{L_u}( E_{1}\oplus\dots\oplus E_{l})$ such that $v\notin\cN^E_{L_u}$ and that if $L_v=<v>$ then  $L_u\in\cN^E_{L_v}$.

 We can assume without loss of generality that $v\in E_{l}$ and that $u\in E_1$. Then Lema \ref{l.refdom} gives us that $\cN^E_{L_u}\cap E_{l}=\emptyset$.

This implies that the dimension of $\cN^E_{L}$ is lesser than $dim(\pi_L(E^{ss}\oplus E^c))$ for any $L$ in $\widetilde{\Lambda}(C(\sigma))$.
Since the singularity is not isolated then $E_1$ is a contracting space and $E_l$ expanding.
We can now apply Corollary \ref{c.dir} to get a direction
 $L$  in the conditions of Lemma \ref{l.domalg} and this allows us to conclude.
 We can do this for all singularities in $S$
\endproof

\begin{coro}\label{c.dom}
Let $X$ be a vector field having a singular chain class $C(\sigma)$.  We denote $S= Sing(X)\cap C(\sigma)$, and let us suppose that \begin{itemize}
                                                           \item  all $\sigma\in S$ are hyperbolic
                                                           \item the dimension of the central space of any singularity $\sigma$ is locally constant.

                                                         \end{itemize}
Then the extended linear Poincar\'{e} flow  has a dominated splitting over $\widetilde{\Lambda}(C(\sigma))$ if and only if  the extended linear Poincar\'{e} flow  has a dominated splitting over $B(C(\sigma))$  of the same dimension.

 \end{coro}

 \proof
 Suppose that $B(C(\sigma))$ has a dominated splitting, then $\widetilde{\Lambda}(C(\sigma))$ has a dominated splitting of the same dimension since it is a compact invariant subset.
 Suppose that there is a dominated splitting of the normal bundle in  $\widetilde{\Lambda}(C(\sigma))$ $$\cN_L=\cN^E\oplus\cN^F\,,$$
 Then according to the previous lemma  we have 2 possibilities
  $$\pi_L ( E_{\sigma}^c)\subset\cN_L^E\,,$$ or
$$\pi_L (E_{\sigma}^c)\subset\cN_L^F\,.$$

%We will prove the result for when\begin{itemize}
 %                                  \item all singularities in $U $ are hyperbolic,
  %                                 \item The finest dominated splitting over the singularities is into one and two dimensional spaces. And the second case is in the case of complex eigenvalues.
   %                                \item The dimension of all central spaces over the singularities are locally constant.
    %                             \end{itemize}
 %This conditions are all generic. Then lema \ref{c.cocont} gives us that the dominated splitting also extends to all $X$.
The tangent space of $\sigma$  splits into $$T_{\sigma}M=E^{ss}\oplus E^c\oplus E^{uu}\,,$$ the escaping spaces and the central space. We consider as well the finest hyperbolic splitting over the singularity of \begin{itemize}
                                                                                                           \item$E^c=E_{1}\oplus\dots\oplus  E_{l}$
                                                                                                           \item$E^{ss}=E_{s1}\oplus\dots\oplus  E_{sk}$
                                                                                                           \item $E^{uu}=E_{u1}\oplus\dots\oplus  E_{ur}$
                                                                                                           \end{itemize}
So if we are in the case where $\pi_L ( E_{\sigma}^c)\subset\cN_L^E\,,$  lemma \ref{l.refdom} implies that there exist an $i$  such that $$\cN_L^F= E_{ui}\oplus\dots\oplus  E_{ur}$$ and
$$\cN_L^E= E_{ss}\oplus\pi_L( E^c)\oplus E_{u1}\dots\oplus  E_{ui-1}$$. This same dominated splitting can be defined for any $L\in B(C(\sigma))$. The other case is analogous.

 We can do the same for every singularity in the class.
 \endproof

\begin{lemm}\label{propinc}
Let us consider  a chain recurrence class $C(\sigma)$ of a vector field $X$, having a singularity $\sigma$ where the tangent space splits into $$T_{\sigma}M=E^{ss}\oplus E^c\oplus E^{uu}\,,$$ the escaping spaces and the central space. We consider as well the finest lyapunov splitting over the singularity of the central space is
$E^c=E_{1}\oplus\dots\oplus  E_{l}$.
If the dimension of the  central space is locally constant  then
$$\pi_{\PP}(E_i)\cap\widetilde{\La}(C(\sigma))\neq\emptyset$$ for all the $E_i$,  Lyapunov spaces of the hyperbolic splitting.
More over  if  the spaces $E_i$ of the central space $E^c$ are only one or two dimensional, then $$\pi_{\PP}(E_i)\subset\widetilde{\La}(C(\sigma))$$ for all the $E_i$, Lyapunov spaces of the hyperbolic splitting.
\end{lemm}
\proof
Let us consider $\pi_{\PP}(E_{1})$ in $\PP^c_{\sigma}$. By definition of central space, there is an orbit $\gamma_1$ tangent to $E^{ss}\oplus E_{1}$, that is not tangent to $E^{ss}$. This implies that $\pi_{\PP}(E_{1})\cap\widetilde{\La}(C(\sigma))\neq\emptyset $

 Consider a small filtrating neighborhood of $C(\sigma)$, $U$

  First we perturb $X$ to a vector field $Y'$ that is Kupka-Smale. We can make the perturbation small enough so that the vector field $Y'$  is in the hypothesis of the lemma as well, since our assumptions are robust.

  By \ref{l.contecting} we perturb  $Y'$  to $Y$  so that  $\gamma_1$ is a homoclinic connection of the singularity and without changing the fact that $\gamma_1$ becomes tangent to $E^{ss}\oplus E_{1}$ as it approaches the singularity.
 Now we perturb $Y$  to $Y_1$ braking the homoclinic connection in the direction of $E_2$ so that is no longer tangent to  $E^{ss}\oplus E_{1}$ but is tangent to  $E^{ss}\oplus E_{1}\oplus E_{2}$.
 The domination implies that the orbit will become tangent to $E_{2}$ as it approaches $\sigma$.
 We can do this perturbation so that $\gamma_1$ remains the same out of the linear neighborhood of the singularity and so that the $\alpha$-limit also remains the same ($\sigma$).
 Therefore, $\gamma_1$ still belongs to $C(\sigma)_{Y_1}$ . Thus there is a direction $L_2\in E^2$ so that  $L^2\in\La_{\PP,U}(Y_1)$ for any $U$.%By Lema \ref{ConConLem} there is a sequence of vector fields $Y_n$ and periodic orbits $\gamma_n$ having $\gamma_1$ in their limit.
 %Therefore $\pi_{\PP}(E_{2})\cap\widetilde{\La}\neq\emptyset $.
 We can continue this process for all $ 1\leq i\leq l.$

 We conclude that in any small enough $C^1$ neighborhood of $X$ there are a vector fields $Y_{i-1}$ such that $$\La_{\PP,U}(Y_{i-1})\cap E_i\neq\emptyset$$.
 Since the  $C^1$ neighborhood of $X$ can be taken arbitrarily small then, $$\widetilde{\La}(X,U)\cap E_i\neq\emptyset.$$

 Since this is true for any small enough filtrating neighborhood, then $$\widetilde{\La}(C(\sigma))\cap E_i\neq\emptyset.$$

 If the central space splits into only one or two dimensional spaces let us take $L\in\pi_{\PP}(E_i)\cap\widetilde{\La}$ where $E_i$ is two dimensional with complex lyapunov exponents . Since $\widetilde{\La}(C(\sigma))$ then the orbit of $L$ under $\phi_{\PP}^t$, that we note $O(L)$, is such that $O(L)\subset \widetilde{\La}(C(\sigma))$. Since $E_i$ has  complex lyapunov exponents, the direction $L $ is not invariant and $O(L)$ covers all directions of  $E_i$  and therefore $\pi_{\PP}(E_i)\subset\La_{\PP}(X,U)$.

 %If the central space of $X$ does not split into only one or two dimensional spaces but the dimension of the central space is locally constant in  $\cU$, a $C^1$ neighborhood of $X$, there is $Y\in\cU$ such that central space splits into only one or two dimensional spaces and with the same dimension of the central space.
 %Therefore  $\pi_{\PP}(E_i)\subset\widetilde{\La}(C(\sigma))$.
 \endproof

The next corollary implies Theorem  \ref{t.nioki}
\begin{coro}
Let $X$ be a vector field having  a singular chain class $C_{\sigma}$.  We denote $S= Sing(X)\cap C_{\sigma}$ and we suppose that\begin{itemize}
                                                           \item  every $\sigma\in S$  is hyperbolic,
                                                           \item the dimension of the central space of $\sigma\in S$ is locally constant, and the finest Lyapunov  splitting is  into one or two dimensional spaces,
 \end{itemize}

 Then if $\widetilde{\La}(C(\sigma))$ has a hyperbolic structure on the  normal bundle $\cN_L=\cN_1\oplus\dots \oplus\cN_i\oplus\dots\oplus\cN_r$  for the extended linear Poincar\'{e} flow. Then  $B(C(\sigma))$ has the same hyperbolic structure.

 \end{coro}
 \proof
From corollary \ref{c.dom} the dominated splitting in $\widetilde{ \Lambda}$, extends to $B(X,U)$.
So let us consider 	the space $\cN_i$ and the number $d_i$ so that  $$Det (J(h_i^t\cdot (\psi^t_\cN|_{D_i}))$$  presents  a uniform contraction or expansion for any subspace $D_i\subset E_i$ of dimension $d_i$ over the orbits in $\widetilde{ \Lambda}$. We suppose, without loss of generality, that it is a contraction.

Since  $B(C(\sigma))$ and $\widetilde{ \Lambda}$ coincide on the directions that  are not over the singularities, we only                         need to check that for every $\sigma$ and every orbit of an  $L\in B(C(\sigma))\cap \PP_{\sigma}M$ the $Det (J(h_i^t\cdot (\psi^t_\cN|_{D_i}))$ expands uniformly   for any subspace $D_i\subset E_i$ of dimension $d_i$.

 The tangent space of $\sigma$  splits into $$T_{\sigma}M=E^{ss}\oplus E^c\oplus E^{uu}\,,$$ the escaping spaces and the central space. We consider as well the finest Lyapunov splitting over the singularity of $E^c=E_{1}\oplus\dots\oplus  E_{l}$, from lemma \ref{propinc} we have that for every $i\in\set{1\dots l}$, $\pi_{\PP}(E_i)\subset\widetilde{ \Lambda} $.
%Then let us suppose that $\cN^E$ contracts volume the extended linear Poincar\'{e} flow over $\widetilde{ \Lambda}$.
%If $L_1\in\pi_{\PP}(E_i)$ is such that $\cN^E_{L_1}$ contracts volume for the extended linear Poincar\'{e} flow, then $\cN^E_{L}$ contracts volume for the extended linear Poincar\'{e} flow. This is because all eigenvalues in $E_i$ are of the same modulus. Since $\pi_{\PP}(E_i)\cap\widetilde{ \Lambda}\neq \emptyset $, then $\cN^E_{L_1}$ contracts volume for the orbit of  $L\in B(X,U)\cap\pi_{\PP}(E_i)$.

Then we consider $L\in B(X,U)$ and $u$ a vector in the direction of $L$.
We write $u$ in coordinates of the central space $u=(u_1,\dots,u_i,\dots,u_j\dots u_l)$. We suppose as well that $u_h$ is the first non zero coordinate of $u$ and $u_j$ is the last.
Domination implies that for $t$ sufficiently negatively large $\phi_{\PP}^t(L)$ is in a small cone around $\pi_{\PP}(E_h)$ and remains there, there after.
 For the future $\phi_{\PP}^t(L)$ is in a small cone around $\pi_{\PP}(E_j)$ and remains there, there after.
  Since the contraction and expansion rates extend to the cones around  $\pi_{\PP}(E_j)$  and  $\pi_{\PP}(E_h)$ , and the orbit is outside of this cones only finite time, we get our conclusion.
  %The same is true for uniform contraction and it is analogous for expansions.
%\endproof

 %With this last corollary we complete the proof of theorem \ref{main1}
 %\begin{coro} There is a subset $S_F\subset S$ so that the reparametrized cocycle $h_F^t\psi_\cN^t$ is uniformly (in volume ) contracted (expanded) in restriction to the
% a bundle of the domination over  $\widetilde{\La}$ where $h_F$ denotes
% $$h_F=\Pi_{\sigma\in S_F} h_\sigma.$$ if and only if the same is true for that bundle in the domination over $B(X,U)$.
 %\end{coro}
 %The proof is analogous to the previous one.

We are now ready for defining our notion of multisingular hyperbolicity.
%%%%%%%%%%%%%%%%%%%%%%%%%%%%%%%%%%%%
%%%%%%%%%%%%%%%%%%%%%%%%%%%%%%%%%%%%%%
\section{Multisingular hyperbolicity}
%%%%%%%%%%%%%%%%%%%%%%%%%%%%%%%%%%%%%%%%%%

%%%%%%%%%%%%%%%%%%%%%%%%%%%%%%%%%%%%%%%%%
%%%%%%%%%%%%%%%%%%%%%%%%%%%%%%%%%%%%%%%%%%%%%%%%%%%%%%%%%%
\subsection{Definition of multisingular hyperbolicity}
%%%%%%%%%%%%%%%%%%%%%%%%%%%%%%%%%%%%%%%%%%%%%%%%%%%%%%%%%%%%

\begin{defi}Let $X$ be a $C^1$-vector field on a compact manifold and let $U$ be a compact region.
One says that $X$ is \emph{multisingular hyperbolic} on $U$ if
\begin{enumerate}
 \item Every zero of $X$ in $U$ is hyperbolic. We denote $S= Sing(X)\cap U$.
 \item The restriction of the extended linear Poincar\'e flow $\{\psi_\cN^t\}$ to the extended maximal invariant set $B(X,U)$ admits
 a dominated splitting $N_L=E_L\oplus F_L$.
 \item There is a subset $S_E\subset S$ so that the reparametrized cocycle $h_E^t\psi_\cN^t$ is uniformly contracted in restriction to the
 bundles $E$ over  $B(X,U)$ where $h_E$ denotes
 $$h_E=\Pi_{\sigma\in S_E} h_\sigma.$$
 \item There is a subset $S_F\subset S$ so that the reparametrized cocycle $h_F^t\psi_\cN^t$ is uniformly expanded in restriction to the
 bundles $F$ over  $B(X,U)$ where $h_F$ denotes
 $$h_F=\Pi_{\sigma\in S_F} h_\sigma.$$
\end{enumerate}
\end{defi}

\begin{rema}
 The subsets $S_E$ and $S_F$ are not necessarily uniquely defined, leading to several notions of multisingular hyperbolicity.
 We can also modify slightly this definition allowing to consider the product of power of the $h_\sigma$.  In that case
 $\tilde h_E$ would be on the form
 $$h^t_E=\Pi_{\sigma\in S_E} (h^t_\sigma)^{\alpha_E(\sigma)}$$
 where $\alpha_E(\sigma)\in\RR$.
\end{rema}

\begin{theo}Let $X$ be a $C^1$-vector field on a compact manifold $M$ and let $U\subset M$ be a compact region.  Assume that  $X$ is multisingular
hyperbolic on $U$. Then $X$ is a star flow on $U$, that is, there is a $C^1$-neighborhood $\cU$ of $X$ so that every
periodic orbit contained in $U$ of a vector field  $Y\in \cU$ is hyperbolic.  Furthermore $Y\in \cU$ is multisingular hyperbolic in $U$.
\end{theo}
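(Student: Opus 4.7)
The plan is to decompose the proof into two claims: (i) multisingular hyperbolicity is $C^1$-robust (this gives the ``furthermore'' assertion), and (ii) robust multisingular hyperbolicity forces every periodic orbit contained in $U$ to be hyperbolic.

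For (i), I would first note that each hyperbolic zero $\sigma\in S$ admits a hyperbolic continuation $\sigma_Y$ of the same index for $Y$ in a $C^1$-neighborhood of $X$, so the partition $S=S_E\sqcup S_F$ transfers unambiguously to $Y$. Proposition~\ref{p.extended} yields upper semi-continuity of the extended maximal invariant set, and Lemma~\ref{l.representativecocycle} provides representatives of the cocycles $h_{\sigma_Y}^t$ depending continuously on $Y$, whence the products $h_{E,Y}^t$ and $h_{F,Y}^t$ also vary continuously in $Y$. Packaging this via Definition~\ref{d.robust}, the three cocycles $\psi_\cN^t$, $h_E^t\psi_\cN^t$, and $h_F^t\psi_\cN^t$ over $B(Y,U)$ converge to their $X$-counterparts over $B(X,U)$. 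Corollary~\ref{c.robust} then propagates the dominated splitting $E\oplus F$ together with the uniform contraction and expansion on the respective bundles to all nearby $Y$, with the dimensions of $E$ and $F$ unchanged.

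For (ii), let $\gamma\subset U$ be any periodic orbit of a $Y$ in this neighborhood. Then $\gamma$ is compact, regular, and its lift $S_Y(\gamma)$ lies in $\La_{\PP,Y,U}\subset B(Y,U)$, where $\psi_\cN^t$ coincides with the classical linear Poincar\'e flow. The crucial point, extracted from the explicit construction in Section~\ref{ss.reparametrization}, is that along any regular orbit one has $h_\sigma^t(\RR Y(x))=\tilde Y(\phi_Y^t(x))/\tilde Y(x)$, where $\tilde Y$ (depending on $\sigma$, the metric, and the neighborhoods $U_\sigma, V_\sigma$) is continuous and strictly positive off $Sing(Y)$. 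Since $\gamma$ is compact and disjoint from $Sing(Y)$, such a $\tilde Y$ is bounded between two positive constants on $\gamma$, so each $h_\sigma^t$, and hence $h_E^t$ and $h_F^t$, restricted to $\gamma$ is uniformly bounded above and below in $t\in\RR$. Consequently the uniform contraction of $h_E^t\psi_\cN^t$ on $E$ over $B(Y,U)$ implies uniform exponential contraction of $\psi_\cN^t$ on $E$ over $S_Y(\gamma)$, and symmetrically $\psi_\cN^t$ is uniformly expanded on $F$ over $S_Y(\gamma)$. Proposition~\ref{p.lpf} then yields that $\gamma$ is a hyperbolic periodic orbit.

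The main obstacle I anticipate is the careful verification of the convergence hypothesis of Corollary~\ref{c.robust} in step (i): the base spaces $B(Y,U)\subset\PP M$ vary with $Y$ and the reparametrizing cocycles $h_{\sigma_Y}^t$ are built from metric-dependent data, so one must assemble Proposition~\ref{p.extended} and Lemma~\ref{l.representativecocycle} into exactly the form required by Definition~\ref{d.robust} before invoking Corollary~\ref{c.robust}. Once this packaging is done, step (ii) is essentially a coboundary computation on the compact, regular set $\gamma$.
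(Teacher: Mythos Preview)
Your proposal is correct and follows essentially the same route as the paper: robustness via Proposition~\ref{p.extended}, Lemma~\ref{l.representativecocycle}, and Corollary~\ref{c.robust}, then hyperbolicity of periodic orbits from the observation that the reparametrizing cocycles are coboundaries on regular orbits. The only cosmetic difference is in step~(ii): the paper simply notes $h_E^\pi(\gamma(0))=h_F^\pi(\gamma(0))=1$ at the period (an immediate consequence of your coboundary formula), whereas you deduce uniform boundedness of $h_E^t,h_F^t$ along $\gamma$ for all $t$; both yield hyperbolicity of the linear Poincar\'e flow on $\gamma$.
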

\begin{proof}
Recall that the extended maximal invariant set $B(Y,U)$ varies upper semicontinuously with $Y$  for the $C^1$-topology .
Therefore, according to Proposition~\ref{p.robustcocycle} there is a $C^1$-neighborhood $\cU_0$ of $X$ so that, for every $Y\in\cU_0$
the extended linear Poincr\'e flow $\psi^t_{\cN,Y}$ admits a dominated splitting $E\oplus_< F$ over $B(Y,U)$, whose dimensions are independent of $Y$ and whose bundles
vary continuously with $Y$.

Now let $S_E$ and $S_F$ be the sets of singular point of $X$ in the definition of singular hyperbolicity.  Now Lemma~\ref{l.representativecocycle} allows us to choose two
 families of cocycles $h_{E,Y}^t$ and $h_{F,Y}^t$ depending continuously on $Y$ in a small neighborhood $\cU_1$ of $X$ and which belongs to the product of the cohomology class
 of cocycles associated to the singularities in $S_E$ and $S_F$, respectively.  Thus the linear cocycles
 $$h_{E,Y}^t\cdot \psi_{\cN,Y}^t|_{E,Y}, \quad \mbox{over } B(Y,U)$$
 varies continuously with $Y$ in $\cU_1$, and is uniformly contracted for $X$.  Thus, it is uniformly contracted for $Y$ in a $C^1$-neighborhood of $X$.

 One shows in the same way that $$h_{F,Y}^t\cdot \psi_{\cN,Y}^t|_{F,Y}, \quad \mbox{over } B(Y,U)$$
 is uniformly expanded for $Y$ in a small neighborhood of $X$.

 We just prove that there is a neighborhood $\cU$ of $X$ so that $Y\in\cU$ is multisingular hyperbolic in $U$.

 Consider a (regular) periodic orbit $\gamma$ of $Y$ ant let $\pi$ be its period.  Just by construction of the cocycles $h_E$ and $h_F$, one check that
 $$h_E^\pi(\gamma(0))=h_F^\pi(\gamma(0))=1.$$
 One deduces that the linear Poincar\'e flow is uniformly hyperbolic along $\gamma$ so that $\gamma$ is hyperbolic, ending the proof.

\end{proof}

\subsection{The multisingular hyperbolic structures over a singular point}

The aim of this subsection is next proposition
\begin{prop}\label{p.singularites} Let $X$ be a $C^1$-vector field on a compact manifold and $U\subset M$ a compact region. Assume that
$X$ is multisingular hyperbolic in $U$ and let $i$ denote the dimension of the stable bundle of the reparametrized extended linear Poincar\'e flow.

let $\sigma$  be a zero of $X$.
Then

\begin{itemize}
 \item either at least one entire invariant (stable or unstable) manifold of $\sigma$ is escaping from $U$.
 \item or $\sigma$ is Lorenz like, more precisely
 \begin{itemize}
 \item either the stable index is $i+1$, the central space $E^c_{\sigma,U}$ contains exactly one stable direction $E^s_1$ (dim $E^s_1=1$)
 and $E^s_1\oplus E^u(\sigma)$ is sectionally dissipative;  in this case $\sigma\in S_F$.
\item or the stable index is $i$, the central space $E^c_{\sigma,U}$ contains exactly one unstable direction $E^u_1$ (dim $E^u_1=1$)
 and $E^s(\sigma)\oplus E^u_1$ is sectionally contracting; then $\sigma\in S_E$.
 \end{itemize}
\end{itemize}
\end{prop}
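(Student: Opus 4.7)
The proof plan is to analyze the restriction of the dominated splitting $E\oplus F$ to the projective fiber $\PP^c_{\sigma,U}\subset B(X,U)$ over $\sigma$, and to translate the contraction of $h^t_E\psi^t_\cN$ on $E$ and the expansion of $h^t_F\psi^t_\cN$ on $F$ into explicit constraints on the Lyapunov spectrum of $A=DX(\sigma)$. If some invariant manifold of $\sigma$ escapes $U$ there is nothing to prove; otherwise, by definition of the escaping strong stable and strong unstable spaces, both $E^c_{\sigma,U}\cap E^s(\sigma)$ and $E^c_{\sigma,U}\cap E^u(\sigma)$ are non-trivial. The projective space $\PP^c_{\sigma,U}$ is $\phi_\PP^t$-invariant, so the multisingular hyperbolic structure restricts meaningfully there.

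Over $\PP^c_{\sigma,U}$ the flow $\phi_\PP^t$ is the projectivization of $e^{tA}$, and the cocycle $\psi^t_\cN$ on $\cN_L=T_\sigma M/L$ is the quotient action of $e^{tA}$ by the line $L$. By the construction of Section~\ref{ss.reparametrization}, the cocycle $h_\sigma^t(L)$ equals $\|e^{tA}|_L\|$ on these fibers (in the distinguished metric), while for any other singularity $\sigma'\neq\sigma$ the cocycle $h_{\sigma'}^t$ is a coboundary there, since these orbits never enter $U_{\sigma'}$. Hence $h^t_E$ and $h^t_F$ reduce over $\sigma$, up to cohomologically bounded factors, to $h_\sigma^t$ (with the factor dropped precisely when $\sigma\notin S_E$ or $\sigma\notin S_F$, respectively). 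Evaluating on an eigenline $L\subset E^c_{\sigma,U}$ with eigenvalue $\lambda$, and on a vector $v$ whose image in $\cN_L$ is a Lyapunov direction with exponent $\mu$ for the quotient action, one sees that $h_\sigma^t(L)\,\|\psi^t_\cN(v)\|$ grows essentially as $e^{t(\lambda+\mu)}$, i.e.\ as the $2$-dimensional area expansion on the plane $L\oplus\langle v\rangle$.

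From the hyperbolicity of the reparametrized cocycles one then obtains the following spectral inequalities on $\PP^c_{\sigma,U}$: if $\sigma\in S_E$, then $\lambda(L)+\mu<-\alpha<0$ for every eigenline $L\subset E^c_{\sigma,U}$ and every quotient-exponent $\mu$ associated with $E_L$; if $\sigma\in S_F$, the symmetric inequality $\lambda(L)+\mu>\alpha>0$ holds for $\mu$ associated with $F_L$. The dimensional identities $\dim E=i$, $\dim F=d-1-i$, together with the defining property of $S_E$ and $S_F$, pin down the stable index. Assuming $\sigma\in S_F$, so that $\dim E^u(\sigma)=\dim F$ and $\dim E^s(\sigma)=i+1$, one tests the expansion inequality with $L$ ranging over stable eigendirections inside $E^c_{\sigma,U}$ (which exist by the non-escape hypothesis) and $\mu$ ranging over the unstable exponents: this forces at most one stable Lyapunov direction to survive in $E^c_{\sigma,U}$ and to be $1$-dimensional, since otherwise a strong stable eigenline would contribute too much contraction and violate the sign. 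This yields exactly the Lorenz-like dimensional structure of Case~1; sectional dissipation of $E^s_1\oplus E^u(\sigma)$ follows by applying the inequality $\lambda+\mu<0$ to the pairs $(L,v)$ spanning the claimed $2$-planes. The symmetric argument for $\sigma\in S_E$ gives Case~2.

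The main technical obstacle is the passage from the abstract contraction/expansion of the reparametrized cocycle to the clean Lyapunov inequalities above. One must keep careful track of the fact that $h_\sigma$ is defined only up to cohomology, so one works modulo uniformly bounded factors; that generalized eigenspaces and Jordan blocks introduce polynomial corrections that must be absorbed into the uniform exponential gap $\alpha$; and that complex eigenvalues replace eigenlines by invariant $2$-planes whose projective images are circles rather than fixed points, requiring one to average the reparametrization cocycle over the induced rotation before extracting a Lyapunov exponent. A secondary, but non-trivial, subtlety is ruling out that $E^c_{\sigma,U}$ could contain two or more independent stable (resp.\ unstable) directions in the cases $\sigma\in S_F$ (resp.\ $\sigma\in S_E$); this uses the domination $E\prec F$ together with the sign analysis on eigenlines in distinct center eigenspaces.
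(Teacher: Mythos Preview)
Your spectral analysis over $\PP^c_{\sigma,U}$ is correct in spirit, and the reduction of $h_E$, $h_F$ to $h_\sigma$ over that fiber is right. The gap is at the sentence ``the defining property of $S_E$ and $S_F$ pin down the stable index.'' In the formal definition of multisingular hyperbolicity used here, $S_E$ and $S_F$ are \emph{arbitrary} subsets of $Sing(X)\cap U$ for which the reparametrized cocycles happen to be hyperbolic; no dimensional constraint relating the index of $\sigma$ to its membership in $S_E$ or $S_F$ is assumed. The index identity $\dim E^s(\sigma)=i+1$ (resp.\ $i$) and the membership $\sigma\in S_F$ (resp.\ $S_E$) are \emph{conclusions} of the proposition, not hypotheses you may invoke. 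Your argument proceeds by assuming precisely what must be proved, and the later sign analysis (``$\mu$ ranging over the unstable exponents'') presupposes that you already know which quotient exponents sit in $E_L$ and which in $F_L$.

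The paper supplies the missing step by first using only the domination $E\oplus_\prec F$, which is insensitive to any reparametrization. The key is Lemma~\ref{l.iab}: taking a line $L\subset E^s_a\oplus E^u_b$ whose $\phi_\PP$-orbit accumulates on $\PP E^s_a$ backward and on $\PP E^u_b$ forward, one manufactures vectors violating the domination unless either $\dim E\le \dim E^{ss}_{\sigma,U}$ or $\dim F\le \dim E^{uu}_{\sigma,U}$. In the first alternative it follows that for every $L\in\PP^c_{\sigma,U}$ the projection of the entire center space $E^c_{\sigma,U}$ to $\cN_L$ lies in $F_L$. Only now can one conclude $\sigma\in S_F$ (since $F$ visibly contains contracted directions and hence cannot be expanded by the unreparametrized $\psi^t_\cN$), and only now does your inequality $\lambda(L)+\mu>0$ apply to \emph{all} center directions $\mu$, not merely to those in an as-yet-unidentified $F_L$. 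Taking $L\in E^s_a$ then forces $a=1$, $\dim E^s_1=1$, the sectional area expansion of $E^s_1\oplus E^u(\sigma)$, and finally the index $i+1$. Without this domination lemma your eigenline analysis cannot, for instance, exclude two independent stable directions in $E^c_{\sigma,U}$, because you have no control over whether the second one projects to $E_L$ or to $F_L$.
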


Note that in the first case of this proposition  the class of the singularity must be trivial. If it was not, the regular orbits of the class that accumulate on  $\sigma$,  entering $U$, would accumulate on an orbit of the stable manifold. Therefore the stable manifold could not be completely escaping. The same reasoning holds for the unstable manifold.

Let $E^s_k\oplus_{<}\cdots \oplus_< E^s_1\oplus_{<}E^u_1\oplus_<\cdots \oplus_< E^u_\ell$ be the finest dominated splitting of the flow over
$\sigma$.
For the proof, we will assume, in the rest of the section that the class of $\sigma$ is not trivial, and therefore we are not in the first case of our previous proposition.
In other word, we assume that  there are $a>0,b>0$ so that
$$E^c_{\sigma, U} =E^s_a\oplus_{<}\cdots \oplus_< E^s_1\oplus_{<}E^u_1\oplus_<\cdots \oplus_< E^u_b.$$

We assume that $X$ is multisingular hyperbolic of $s$-index $i$ and we denote by $E\oplus_<F$ the corresponding
dominated splitting of the extended linear Poincar\'e flow over $B(X,U)$.

The following lemma is a direct consequence of \ref{l.central}
\begin{lemm}\label{l.iab} et $X$ be a $C^1$-vector field on a compact manifold and $U\subset M$ a compact region. Assume that
$X$ is multisingular hyperbolic in $U$ and let $i$ denote the dimension of the stable bundle of the reparametrized extended linear Poincar\'e flow.

let $\sigma$  be a zero of $X$.
Then with the notation above,
\begin{itemize}
 \item either $i=dim E\leq dim E^s_k \oplus \dots \oplus  dim E^s_{a+1}$ (i.e. the dimension of $E$ is smaller or equal than the dimension of the biggest stable escaping space).
 \item or $dim M-i-1= dim F\leq dim E^u_\ell\oplus \dots \oplus  dim E^u_{b+1}$ (i.e. the dimension of $F$ is smaller or equal than the dimension of the biggest unstable escaping space).
\end{itemize}
\end{lemm}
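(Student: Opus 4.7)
The plan is to probe the dominated splitting $E\oplus F$ of the extended linear Poincar\'e flow at two extreme directions inside the center space: a line $L_0\subset E^s_a$ (the weakest stable direction in $E^c_{\sigma,U}$, of Lyapunov exponent $\lambda^s_a$) and a line $L_1\subset E^u_b$ (the strongest unstable direction, of exponent $\lambda^u_b$). The preliminary local observation is that for $L\in\PP T_\sigma M$, the fiber $\cN_L=T_\sigma M/L$ inherits the finest dominated splitting $\pi_L(E^s_k),\ldots,\pi_L(E^u_\ell)$ of $\psi_\cN^t$, with $\lambda^s_k,\ldots,\lambda^u_\ell$ as Lyapunov exponents on the pieces (the piece containing $L$ having its dimension reduced by one). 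Furthermore, over $\PP T_\sigma M$ the cocycle $h_\sigma^t(L)$ equals $\|D\phi^t(u)\|/\|u\|$ for $u\in L$, while $h_{\sigma'}^t\equiv 1$ over $\PP T_\sigma M$ for every other singularity $\sigma'$; hence on $\PP^c_{\sigma,U}$ the cocycle $h_E^t$ coincides with $h_\sigma^t$ if $\sigma\in S_E$ and with the constant cocycle $1$ otherwise, and likewise for $h_F^t$.

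For such a line $L$ with exponent $\mu_L$, the uniform contraction of $h_E^t\psi_\cN^t$ on $E$ forces every piece $\pi_L(E^\star_j)\subset E_L$ to satisfy $\lambda^\star_j<-\mu_L$ when $\sigma\in S_E$ and $\lambda^\star_j<0$ otherwise; the uniform expansion of $h_F^t\psi_\cN^t$ on $F$ gives the dual constraint for pieces in $F_L$. Since $E_L\oplus F_L=\cN_L$, every piece must be allocated to exactly one of $E_L$ or $F_L$, and the cut in the dominated splitting is pinned by these threshold conditions; in particular, a piece with Lyapunov exponent strictly within the forbidden interval cannot be allocated.

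The proof proceeds by case analysis on the membership of $\sigma$ in $S_E$ and $S_F$. If $\sigma$ belongs to neither set, the reparametrizing cocycles trivialize over $\PP_\sigma$ and $\psi_\cN^t$ must itself be hyperbolic on $\cN_{L_0}$ and $\cN_{L_1}$ with stable dimensions $\operatorname{ind}(\sigma)-1$ and $\operatorname{ind}(\sigma)$, contradicting the constancy of $i=\dim E$ along the connected invariant set $\PP^c_{\sigma,U}$. If $\sigma\in S_E$, the $L_1$-analysis forces $|\lambda^s_j|>\lambda^u_b$ strictly for every $j$ and $i=\operatorname{ind}(\sigma)$, while the $L_0$-analysis combined with the constancy of $i$ forces $E_{L_0}$ to consist of the full stable part of $\cN_{L_0}$ together with a single one-dimensional weakest unstable piece; a Lyapunov-spectrum compatibility argument then collapses the configuration to the Lorenz-like form $b=1$ with $\dim E^u_1=1$, from which $\dim F=\dim E^u-1=\dim E^{uu}_{\sigma,U}$ is immediate. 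The symmetric argument when $\sigma\in S_F$ forces $a=1$ and $\dim E^s_1=1$, giving $i\leq\dim E^{ss}_{\sigma,U}$.

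The main obstacle is the Lyapunov-spectrum compatibility argument that collapses the configuration to $a=1$ or $b=1$. The constraints at $L_0$ and $L_1$ alone are consistent with wider configurations; closing the argument requires combining them with probing at intermediate invariant lines in $\PP(E^u_j)$ or $\PP(E^s_j)$ for $1\leq j\leq b$ or $a$, and exploiting the strict ordering $\lambda^s_k<\cdots<\lambda^s_1<0<\lambda^u_1<\cdots<\lambda^u_\ell$ of the finest dominated splitting at $\sigma$ together with the constancy of $\dim E$ along $\PP^c_{\sigma,U}$.
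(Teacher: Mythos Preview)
Your approach differs substantially from the paper's, and the gap you flag as ``the main obstacle'' is genuine and unresolved.

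The paper's proof uses only the domination $E\oplus_{\prec} F$ of the (unreparametrized) extended linear Poincar\'e flow; it never invokes $h_E, h_F$ or the sets $S_E, S_F$. Arguing by contradiction, it picks lines $L^s\in\PP(E^s_a)$ and $L^u\in\PP(E^u_b)$, observes that (under the contradiction assumption) $\pi_{L^s}(E^u_b)\subset F(L^s)$ while $\pi_{L^u}(E^s_a)\subset E(L^u)$, and then takes a generic line $L\in\PP(E^s_a\oplus E^u_b)$ whose $\phi_\PP$-orbit is heteroclinic from $L^s$ to $L^u$. Along this orbit one exhibits normal vectors that expand like $F$ near $L^s$ but whose forward images contract like $E$ near $L^u$, violating the domination. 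That is the whole argument.

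Your route instead exploits the reparametrized hyperbolicity and splits into cases on whether $\sigma$ belongs to $S_E$ or $S_F$; you are effectively aiming for the full Lorenz-like conclusion of Proposition~\ref{p.singularites} rather than just the dimension inequality in the lemma. In the case $\sigma\in S_E\setminus S_F$, your threshold constraints at $L_0$ and $L_1$ give only $i=\operatorname{ind}(\sigma)$ and $|\lambda^s_a|>\lambda^u_b$; neither alternative of the lemma follows from this without also showing $b=1$ and $\dim E^u_1=1$ (since then $i>\dim E^{ss}_{\sigma,U}$, and $\dim F=\dim E^u-1>\dim E^{uu}_{\sigma,U}$ as soon as $\sum_{j\leq b}\dim E^u_j\geq 2$). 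Probing further fixed lines $L\in\PP(E^s_j)$ or $\PP(E^u_j)$ yields only inequalities already implied by those at $L_0,L_1$, so the ``Lyapunov-spectrum compatibility'' you invoke does not close the gap. There is also a secondary imprecision: when $\sigma\in S_E\setminus S_F$, the contraction and expansion constraints at $L_0$ determine \emph{overlapping} collections of pieces (those with $0<\lambda^\star_j<|\lambda^s_a|$ satisfy neither strict inequality exclusively), so dimension counting no longer forces each $\pi_{L_0}(E^\star_j)$ to lie wholly in $E_{L_0}$ or in $F_{L_0}$; your ``allocation'' claim needs justification there. To pin down $E_L$ along the whole connected set $\PP^c_{\sigma,U}$ you would end up needing a heteroclinic-orbit argument anyway --- which is exactly what the paper uses directly, bypassing the case analysis entirely.
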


According to Lemma~\ref{l.iab} we now assume that $i\leq dim E^s_k+ \oplus +dim E^s_{a+1}$ (the other case is analogous, changing $X$ by $-X$).

\begin{lemm}
 With the hypothesis above, for every $L\in \PP^c_{\sigma,U}$ the projection of $E^c_{\sigma, U}$ on the normal space $N_L$ is contained in $F(L)$.
\end{lemm}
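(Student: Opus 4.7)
The plan is to build, over the compact invariant subset $\PP^c_{\sigma,U}$, a finer invariant splitting of $\psi^t_\cN$ arising from the Lyapunov decomposition of $D\phi^t$ at $\sigma$, show that it is dominated, and then locate $E$ inside it using uniqueness of dominated splittings together with Lemma~\ref{l.iab}.

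For $L\in\PP^c_{\sigma,U}$, the line $L$ lies in $E^c_{\sigma,U}$ and hence is in direct sum with each escaping bundle $E^s_j(\sigma)$ ($j>a$) and $E^u_j(\sigma)$ ($j>b$); the images under the canonical projection $\pi_L\colon T_\sigma M\to N_L$ give subspaces
$$\tilde E^s_j(L)=\pi_L(E^s_j(\sigma)),\qquad \tilde E^c(L)=\pi_L(E^c_{\sigma,U}),\qquad \tilde E^u_j(L)=\pi_L(E^u_j(\sigma))$$
whose direct sum is $N_L$. Because $D\phi^t$ preserves each summand $E^s_j(\sigma)$, $E^c_{\sigma,U}$, $E^u_j(\sigma)$ and $\phi^t_\PP$ preserves $\PP^c_{\sigma,U}$, the corresponding subbundles of $\cN$ are invariant under $\psi^t_\cN$. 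Using compactness of $\PP^c_{\sigma,U}$ to bound uniformly the norm of $\pi_L$ on each of these fixed subspaces of $T_\sigma M$, the growth rates of $\psi^t_\cN$ restricted to $\tilde E^s_j$, $\tilde E^u_j$ coincide (up to bounded multiplicative constants) with the Lyapunov exponents $\lambda^s_j$, $\lambda^u_j$ of $D\phi^t$ on $E^s_j(\sigma)$, $E^u_j(\sigma)$, while the rates on $\tilde E^c$ lie in the range of the Lyapunov exponents of $D\phi^t|_{E^c_{\sigma,U}}$. The strict separation $\lambda^s_{a+1}<\lambda^s_a$ and $\lambda^u_b<\lambda^u_{b+1}$ coming from the finest dominated splitting at $\sigma$ then yields a dominated splitting
$$\tilde E^s_k\oplus_{_\prec}\dots\oplus_{_\prec}\tilde E^s_{a+1}\oplus_{_\prec}\tilde E^c\oplus_{_\prec}\tilde E^u_{b+1}\oplus_{_\prec}\dots\oplus_{_\prec}\tilde E^u_\ell$$
of $\psi^t_\cN$ over $\PP^c_{\sigma,U}$.

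Restricted to $\PP^c_{\sigma,U}$, the multisingular splitting $E\oplus_{_\prec}F$ is a two-piece dominated splitting with $\dim E=i$. By the standard fact that any two-piece dominated splitting is obtained by grouping consecutive bundles of any finer dominated splitting (see e.g.\ \cite[Appendix~B]{BDV} together with uniqueness of the finest dominated splitting), $E$ equals the direct sum of an initial segment of the bundles displayed above. Lemma~\ref{l.iab} gives $\dim E=i\leq \dim \tilde E^{ss}=\dim E^s_k+\dots+\dim E^s_{a+1}$, so this initial segment is entirely contained in $\tilde E^{ss}$. Therefore $E(L)\subset\tilde E^{ss}(L)$ and correspondingly $F(L)\supset\tilde E^c(L)\oplus\tilde E^{uu}(L)\supset\pi_L(E^c_{\sigma,U})$, which is the announced inclusion.

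The main obstacle is to verify the domination of the finer splitting uniformly over $\PP^c_{\sigma,U}$: the strict separation of Lyapunov exponents at the fixed point $\sigma$ must be transferred to the cocycle $\psi^t_\cN$ after passing to the quotient by the moving line $L$, which requires uniform control of the projections $\pi_L$ and of the quotient action on $E^c_{\sigma,U}/L$. This is handled by compactness of $\PP^c_{\sigma,U}$ together with the transversality ensured by the direct-sum decomposition $T_\sigma M=E^{ss}_{\sigma,U}\oplus E^c_{\sigma,U}\oplus E^{uu}_{\sigma,U}$.
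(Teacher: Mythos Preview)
Your proof is correct and follows the same underlying idea as the paper: show that $E(L)$ sits inside the projection $\pi_L(E^{ss}_{\sigma,U})$ and deduce that $\pi_L(E^c_{\sigma,U})\subset F(L)$. The paper does this very tersely, essentially asserting ``$\dim\pi_L(E^{ss}_{\sigma,U})\geq i$ hence it contains $E(L)$'' and then invoking that a continuous invariant subbundle transverse to $E$ must lie in $F$. Your argument supplies what the paper's sentence leaves implicit: you verify that the escaping pieces together with the projected center give an honest dominated splitting of $\psi^t_\cN$ over $\PP^c_{\sigma,U}$, and then use the refinement/uniqueness theory of dominated splittings to place $E$ inside $\tilde E^{ss}$.

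One small wording issue: when you say ``any two-piece dominated splitting is obtained by grouping consecutive bundles of any finer dominated splitting'', you have not established that your displayed splitting is actually \emph{finer} than $E\oplus_{_\prec}F$. What you really use is the standard nesting fact: given two dominated splittings $G_1\oplus_{_\prec}G_2$ and $H_1\oplus_{_\prec}H_2$ over the same invariant set with $\dim G_1\leq\dim H_1$, one has $G_1\subset H_1$ and $H_2\subset G_2$ (both being initial, resp.\ final, segments of the common finest dominated splitting). Applying this to $E\oplus_{_\prec}F$ and $\tilde E^{ss}\oplus_{_\prec}(\tilde E^c\oplus\tilde E^{uu})$ with $\dim E\leq\dim\tilde E^{ss}$ from Lemma~\ref{l.iab} gives $E\subset\tilde E^{ss}$ and $F\supset\tilde E^c\oplus\tilde E^{uu}$ directly; you do not need the further decomposition of $\tilde E^{ss}$ and $\tilde E^{uu}$ into their individual Lyapunov pieces.
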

\begin{proof}It is because  the projection of $E^s_k\oplus\cdots \oplus E^s_{a+1}$ has dimension at least  the dimension $i$ of $E$ and hence
contains $E(L)$.  Thus the projection of $E^c_{\sigma, U}$ is transverse to $E$.  As the projection of $E^s_{\sigma,U}$ on $N_l$ defines a
$\psi^t_\cT$-invariant bundle over the $\phi^t_\cT$-invariant compact set $\PP^c_{\sigma,U}$, one concludes that the projection is contained in $F$.
\end{proof}

As a consequence the bundle $F$ is not uniformly expanded on $\PP^c_{\sigma, U}$ for the extended linear Poincar\'e flow. As it is expanded
by the reparametrized flow, this implies $\sigma\in S_F$.

Consider now $L\in E^s_a$. Then $\psi^t_{\cN}$ in restriction to the projection of $E^c_{\sigma,U}$ on $N_L$ consists in multiplying the
natural action of the derivative by the exponential contraction along $L$.  As it is included in $F$, the multisingular hyperbolicity
implies that it is a uniform expansion: this means that
\begin{itemize}
 \item $L$ is the unique contracting direction in $E^s_{\sigma,U}$: in other words, $a=1$ and $dim E^s-a=1$.
 \item the contraction along $a$ is less than the expansion in the $E^u_j$, $j>1$.  In other words $E^c_{\sigma,U}$ is sectionally expanding.
\end{itemize}

For ending the proof of the Proposition~\ref{p.singularites}, it remains to check the $s$-index of $\sigma$: at $L\in E^s_a$ one gets
that $F(L)$ is isomorphic to $E^u_1\oplus\cdots\oplus E^u_\ell$ so that the $s$-index of $\sigma$ is $i+1$, ending the proof.

%\input{test}
%%%%%%%%%%%%%%%%%%%%%%%%%%%%%%%%%%%%%%%%%%%%%%%%%%%%%%%%%%%%%%%%%%%%%%%%%%%%%%%%%
%%%%%%%%%%%%%%%%%%%%%%%%%%%%%%%%%%%%%%%%%%%%%%%%%%%%%%%%%%%%%%%%%%%%%%%%%%%%%%%%
\section{The multisingular hyperbolicity is a necessary condition for star flows: Proof of theorem 4}
%%%%%%%%%%%%%%%%%%%%%%%%%%%%%%%%%%%%%%%%%%%%%%%%%%%%%%%%%%%%%%%%%%%%%%%%%%%%%%%%%
%%%%%%%%%%%%%%%%%%%%%%%%%%%%%%%%%%%%%%%%%%%%%%%%%%%%%%%%%%%%%%%%%%%%%%%%%%%%%%%%
The aim of this section is to prove  Lemma~\ref{l.converse} below:
\begin{lemm}\label{l.converse}
Let $X$ be a generic star vector field on $M$. Consider a chain-recurrent class $C$ of $X$.
Then there is  a filtrating neighborhood  $U$ of  $C$ so that the extended maximal invariant set
$B(X,U)$ is multisingular hyperbolic.
 \end{lemm}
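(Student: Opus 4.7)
The plan is to adapt the Gan--Shi--Wen strategy of \cite{GSW} (which handles the case where all singularities in a class have the same $s$-index) to the multisingular setting allowed by Proposition~\ref{p.singularites}. First I would use the standard generic ingredients for star flows (Pugh's closing lemma, Hayashi's connecting lemma, Ma\~n\'e/Liao ergodic closing) to reduce to the case where the chain class $C$ is the Hausdorff limit of its hyperbolic periodic orbits, and where the star hypothesis propagates uniform hyperbolicity of these periodic orbits (uniform angles, uniform contraction/expansion rates of the linear Poincar\'e flow on periodic points). A Liao-type argument (extraction of quasi-hyperbolic strings and shadowing by periodic orbits) then gives a dominated splitting $N = E' \oplus F'$ of the linear Poincar\'e flow over the regular part of $C$; if this failed, a Franks-type perturbation at a well-chosen periodic orbit would produce a non-hyperbolic periodic orbit of a $C^1$-nearby flow, violating the star condition.

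Next, invoking Corollary~\ref{c.filtrating} I would pick a filtrating neighborhood $U$ of $C$ small enough that $B(X,U)$ sits in an arbitrarily small neighborhood (in $\PP M$) of $S_X(C \setminus Sing(X)) \cup \bigcup_{\sigma \in Sing(X) \cap C} \PP^c_{\sigma,U}$. The dominated splitting $E' \oplus F'$ of the linear Poincar\'e flow on the regular orbits extends uniquely to a dominated splitting $E \oplus F$ of the extended linear Poincar\'e flow $\psi^t_\cN$ over the compact $\phi^t_\PP$-invariant set $B(X,U)$: on each $\PP^c_{\sigma,U}$ the extension is forced by invariance and standard domination-extension (cf.\ Proposition~\ref{p.robustcocycle}), after checking the dimension compatibility coming from the finest dominated splitting of $D\phi^t$ at $\sigma$. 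With $i = \dim E$ fixed, I define $S_E$ as the set of $\sigma \in Sing(X) \cap U$ whose $s$-index is $i+1$ (Lorenz-like of stable type) and $S_F$ as those whose $s$-index is $i$ (Lorenz-like of unstable type). Generic Lorenz-likeness for star flows, together with Lemma~\ref{l.iab} read in the forward direction, forces every singularity accumulated by $C$ to fall in exactly one of $S_E$ or $S_F$; singularities in $U$ but not accumulated by regular orbits of $C$ can be forced into either set, their contribution to the cocycle being cohomologically trivial on the relevant part of $B(X,U)$.

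Given this partition, set $h_E^t = \prod_{\sigma \in S_E} h_\sigma^t$ and $h_F^t = \prod_{\sigma \in S_F} h_\sigma^t$ using the cocycles built in Section~\ref{ss.reparametrization}. The core task is to verify that $h_E^t \psi^t_\cN$ is uniformly contracted on $E$ and $h_F^t \psi^t_\cN$ is uniformly expanded on $F$. Outside small neighborhoods of $Sing(X)$ this is the content of the uniform hyperbolicity of the linear Poincar\'e flow on regular segments, combined with the fact that $h_\sigma^t \equiv 1$ there; here it reduces to the Liao/Gan--Wen estimate already used in \cite{GSW,GLW}.

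The main obstacle is the behavior on long orbit arcs that enter the neighborhoods $U_\sigma$. For $\sigma \in S_F$, the bundle $F$ over $\PP^c_{\sigma,U}$ contains the projection of the flow direction and is therefore \emph{not} expanded by $\psi^t_\cN$ alone; but the factor $\|X(\phi^t(x))\|/\|X(x)\|$ built into $h_\sigma^t$ is exactly the derivative of the flow along $\RR X$, and by Proposition~\ref{p.singularites} the center-unstable space at $\sigma$ is sectionally expanding, which translates into uniform expansion of $h_F^t \psi^t_\cN$ on $F$ once one crosses $U_\sigma$. The dual statement gives contraction of $h_E^t \psi^t_\cN$ on $E$ near $\sigma \in S_E$. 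To patch these local estimates with the global Liao estimates on regular parts I would use the cocycle relation of $h_\sigma$ together with the fact that each entry-exit excursion through $U_\sigma$ contributes a bounded multiplicative error (so finitely many excursions of bounded total duration are harmless) while long excursions are controlled by the sectional hyperbolicity at $\sigma$. Putting these pieces together, together with the already established dominated splitting, yields the four conditions in the definition of multisingular hyperbolicity and concludes the proof.
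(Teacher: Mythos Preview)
Your overall architecture matches the paper's, but the final step has a real gap. You assert that ``outside small neighborhoods of $Sing(X)$ this is the content of the uniform hyperbolicity of the linear Poincar\'e flow on regular segments,'' but the star condition does \emph{not} give uniform contraction of $E$ along arbitrary regular orbit segments: Lemma~\ref{l.uniformlyattheperiod} only yields uniform hyperbolicity on periodic orbits \emph{at the period}. On a non-periodic regular orbit of $C$ the bundle $E$ may behave neutrally for arbitrarily long stretches, so there is no pointwise ``Liao estimate on regular parts'' to patch with your singular-excursion estimates. The paper closes this with an ergodic argument (Lemma~\ref{l.contraction}): if uniform contraction of $h_E^t\psi^t_\cN$ on $E$ fails over every filtrating neighborhood, one extracts an ergodic measure $\mu$ supported on $\tilde C(\sigma)\cup\bigcup_i\PP^c_{\sigma_i}$ with $\int\log\|h_E^T\psi^T_\cN|_E\|\,d\mu\geq 0$. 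Either $\mu$ is supported on some $\PP^c_{\sigma_i}$, where Corollary~\ref{c.Lorenz-like} gives contraction directly, or it projects to a regular ergodic measure on $M$; in that case the generic ergodic closing lemma approximates it by periodic measures $\nu_n$, on which $\int\log h_E^T\,d\nu_n=0$ (the cocycle $h_E$ is bounded along any periodic orbit) and $\int\log\|\psi^T|_E\|\,d\nu_n\leq-\eta$ by the star estimate at the period, giving a contradiction. This measure-theoretic transfer from periodic orbits to arbitrary ergodic measures is the missing idea in your proposal; a direct orbit-segment decomposition cannot replace it.

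Two smaller issues. First, your assignment of $S_E$ and $S_F$ is inverted (compare Proposition~\ref{p.singularites} and Corollary~\ref{c.Lorenz-like}): a singularity of $s$-index $i+1$ has $sv(\sigma)>0$ and belongs to $S_F$, since it is on the bundle $F$ that the reparametrization by $h_\sigma$ is needed there. Second, you invoke Proposition~\ref{p.singularites} and Lemma~\ref{l.iab} to obtain the Lorenz-like structure and sectional expansion at the singularities, but both of those statements \emph{assume} multisingular hyperbolicity and so are circular here; the correct source is Lemma~\ref{4.2} (the saddle-value inequality) together with Lemma~\ref{4.5} and Lemma~\ref{4.7}, which are consequences of the star condition alone.
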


Notice that, as the multisingluar hyperbolicity of $B(X,U)$ is a robust property, Lemma~\ref{l.converse} implies Theorem~\ref{t.converse}.

As already mentioned, the proof of Lemma~\ref{l.converse} consists essentially in recovering  the results in \cite{GSW} and adjusting few of them to the new setting.
So we start by recalling several of the results from or used in \cite{GSW}.

To start we state the following properties of star flows:

\begin{lemm}[\cite{L}\cite{Ma2}]\label{l.uniformlyattheperiod}
For any star vector field $X$ on a closed manifold $M$, there is a $\cC^1$ neighborhood $\cU$ of $X$ and numbers $\eta> 0$
and $T > 0$ such that, for any periodic orbit $\gamma$ of a vector field $Y\in \cU$ and any integer $m>0$, let  $N= N_s\oplus N_u$ be
the stable- unstable splitting of the normal bundle $N$ for the linear Poincar\'e flow $\psi_t^Y$  then:
\begin{itemize}
\item \emph{Domination: }For every $x \in \gamma$ and $t \geq T$, one has $$\frac{\norm{\psi_t^Y\mid_{N_s}}}{\min(\psi_t^Y\mid_{N_u})}\leq e^{-2\eta t}$$
\item\emph{ Uniform hyperbolicity at the period:} at the period If the period $\pi(\gamma)$ is larger than  $T$ then, for every $x\in \gamma$,  one has:

\begin{eqnarray*}
    \Pi^{(m\pi(\gamma)/T)-1}_{i=0}&&\norm{\psi_t^Y\mid_{N_s}(\phi^Y_{iT}(x))}\leq e^{-m\eta \pi(\gamma)}
\end{eqnarray*}
and
\begin{eqnarray*}
    \Pi^{(m\pi(\gamma)/T)-1}_{i=0}&&\min(\psi_t^Y\mid_{N_u}(\phi^Y_{iT}(x)))\geq e^{m\eta \pi(\gamma)}\,.
\end{eqnarray*}
    Here $\min(A)$ is the mini-norm of $A$, i.e., $\min(A) = \norm{A^{-1}}^{-1}$.
\end{itemize}
\end{lemm}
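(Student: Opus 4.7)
The plan is to prove the lemma by contradiction, following the classical Liao--Mañé strategy that exploits the $C^1$-openness of the star property via perturbations of the linear Poincaré flow along periodic orbits. Since the statement is purely transverse (it concerns the linear Poincaré flow on the normal bundle), one works with the splitting $N = N_s \oplus N_u$ coming from the hyperbolicity of each individual periodic orbit (which is guaranteed by the star hypothesis).

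First I would establish uniform hyperbolicity at the period. Suppose the second conclusion fails: there exist sequences $Y_n \to X$ in $\mathcal U$, integers $m_n \geq 1$, periodic orbits $\gamma_n$ of $Y_n$ of period $\pi_n \geq T_n$ (where $T_n \to \infty$), and points $x_n \in \gamma_n$ for which the stable product (or, symmetrically, the unstable product) fails to be bounded by $e^{-m_n \eta_n \pi_n}$ for any prescribed $\eta_n \to 0$. Apply a Pliss-type lemma to the sequence of logarithmic norms $\log \|\psi_T^{Y_n}\!\mid_{N_s}(\phi^{Y_n}_{iT}(x_n))\|$ along the orbit: one extracts times at which the average contraction rate is arbitrarily close to $0$, hence a long sub-orbit along which the linear Poincaré flow on $N_s$ is very weakly contracting. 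Now invoke a Franks-type perturbation lemma for flows (in the form adapted to the linear Poincaré cocycle, as in Bonatti--Gourmelon--Vivier or its predecessors in Liao's work) to produce a $C^1$-small perturbation $Z_n$ of $Y_n$, supported in a thin tube around the sub-orbit, whose linear Poincaré flow along the (preserved) periodic orbit $\gamma_n$ has a non-hyperbolic eigenvalue on $N_s$. This contradicts the fact that $Z_n \in \mathcal U$ must still be a star vector field. The same argument applied to $-Y_n$ handles $N_u$.

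Next I would deduce the domination inequality. Assume for contradiction that there are $Y_n \to X$, periodic orbits $\gamma_n$, and points $x_n \in \gamma_n$ and times $t_n \geq n$ such that
\[
\frac{\|\psi_{t_n}^{Y_n}\!\mid_{N_s}(x_n)\|}{\min\bigl(\psi_{t_n}^{Y_n}\!\mid_{N_u}(x_n)\bigr)} > e^{-2n\, t_n}.
\]
Combined with the uniform hyperbolicity at the period already established (applied to $\gamma_n$, which gives uniform contraction on $N_s$ and uniform expansion on $N_u$ at the period), a further Pliss-type selection produces a long window along $\gamma_n$ where, at every intermediate time, the ratio of the weakest contraction on $N_s$ to the weakest expansion on $N_u$ stays close to $1$. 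A Franks-type perturbation of the linear Poincaré flow on this window allows one to rotate a vector of $N_s$ into $N_u$ (or conversely) without disturbing the overall hyperbolicity contributions elsewhere, thereby creating, after closing, a periodic orbit of a nearby vector field carrying an eigenvalue of modulus $1$. Again this contradicts the star property on $\mathcal U$.

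The main obstacle is the Franks-type perturbation lemma for flows: unlike the diffeomorphism case, one cannot freely prescribe the derivative along the orbit because the flow direction $X(x)$ must be preserved by $D\phi^t$, so the perturbations must act only on the transverse linear Poincaré cocycle and must be realizable by an actual vector field perturbation. Once this technical lemma is in place (and it is by now standard, see the Franks-lemma versions used in \cite{GSW}), the argument reduces to a fairly mechanical Pliss-plus-perturbation scheme. A final uniformity argument over all $Y \in \mathcal U$ (by compactness of $\mathcal U$ in the $C^1$-topology of a small ball, or by a diagonal extraction) allows one to choose a single pair $(\eta,T)$ that works simultaneously.
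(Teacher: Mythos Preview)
The paper does not prove this lemma: it is quoted verbatim from Liao \cite{L} and Ma\~n\'e \cite{Ma2} and used as a black box, with no argument supplied. So there is nothing in the paper to compare your proposal against.

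That said, your sketch is a faithful outline of the classical Liao--Ma\~n\'e strategy (Pliss selection plus a Franks-type perturbation of the linear Poincar\'e cocycle), and would be an appropriate way to reconstruct the proof. Two remarks. First, in your contradiction hypothesis for domination you wrote $> e^{-2n\,t_n}$; this is a slip, since that bound is vacuously satisfied. You mean a ratio larger than $e^{-2\eta_n t_n}$ with $\eta_n\to 0$, i.e.\ a ratio that is not uniformly exponentially small. Second, the usual order in the literature is the reverse of yours: one proves domination first (it gives uniform angles between $N_s$ and $N_u$, hence uniform control of the size of the Franks perturbations needed later), and only then the uniform contraction/expansion at the period. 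Your order can be made to work because on each individual hyperbolic periodic orbit the angle is bounded away from zero, but you should be explicit that the perturbation in the first step is allowed to depend on the orbit, and only the \emph{existence} of a non-hyperbolic orbit for some nearby flow is needed to contradict the star condition.
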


Now we need some
generic properties for flows:
\begin{lemm}[\cite{C}\cite{BGY}]\label{l.generic}
There is a $C^1$-dense $G_{\delta}$ subset $\cG$ in the $C^1$-open set of star flows of $M$ such that, for every $X\in\cG$, one has:
\begin{itemize}
\item Every critical element (zero or periodic orbit) of $X$ is hyperbolic and therefore admits a well defined continuation in a $C^1$-neighborhood of $X$.
\item For every critical element $p$ of $X$, the Chain Recurrent Class $C(p)$ is continuous at $X$ in the Hausdorff topology;
    \item If $p$ and $q$ are two critical elements of $X$, such that $C(p)=C(q)$ then there is a $C^1$ neighborhood $\cU$ of $X$
    such that the chain recurrent class of $p$ and $q$  still coincide for every $Y\in\cU$
        %\item %For any hyperbolic critical element $p$ of $X$, if $W^u(p)\in C(p)$, then there is a $C^1$ neighborhood  $\cU$ of $X$  such that for any $Y\in\cU$, $C(p)$ is Lyapunov stable.
            \item For any nontrivial chain recurrent class $C$ of $X$, there exists a sequence of periodic orbits $Q_n$ such that $Q_n$ tends to $C$ in the Hausdorff topology.
\end{itemize}
\end{lemm}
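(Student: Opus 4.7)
The strategy is Baire category: I will show that each of the four listed properties defines a residual (dense $G_\delta$) subset of the $C^1$-open set of star flows, then intersect them. This open set, being open in $\cX^1(M)$, is itself a Baire space, so countable intersections of residual sets remain residual.

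For Property (1), hyperbolicity of periodic orbits is part of the very definition of a star flow (and is robust), while hyperbolicity of singularities is the Kupka--Smale condition, which is $C^1$-open and dense in $\cX^1(M)$. Once all critical elements are hyperbolic, the implicit function theorem provides well-defined continuations in a $C^1$-neighborhood of $X$. For Property (2), I enumerate a countable family of ``potential continuations'' of critical elements by fixing a dense countable family of star flows satisfying (1) and listing their finitely many singularities together with their countable families of periodic orbits. For each such $p$, the map $Y\mapsto C(p_Y)$ into the compact metric space of compact subsets of $M$ under the Hausdorff metric is upper semi-continuous: by Corollary \ref{c.filtrating}, any filtrating neighborhood $U$ of $C(p_X)$ in $X$ remains a filtrating neighborhood of $p_Y$ for $Y$ close to $X$ (because attracting and repelling regions are robust and the maximal invariant set varies upper semi-continuously), so $C(p_Y)\subset U$. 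A classical theorem of Fort on upper semi-continuous set-valued maps into compact metric spaces guarantees continuity on a residual subset of parameters; intersecting over the countable family yields a residual set on which all continuations of critical elements have Hausdorff-continuous chain classes.

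Properties (3) and (4) rely on the $C^1$-connecting-lemma technology of Hayashi and Bonatti--Crovisier. For Property (3), assuming $X$ satisfies (2) with $C(p)=C(q)$, I would argue by contradiction: if there existed $Y_n\to X$ with $C(p_{Y_n})\neq C(q_{Y_n})$, one could apply Corollary \ref{c.filtrating} to produce filtrating regions $U_n$ of $Y_n$ containing $p_{Y_n}$ but disjoint from $q_{Y_n}$. A Hausdorff-limit argument, combined with the $C^1$-convergence of flows and the robustness of filtrating regions, would yield a filtrating region for $X$ separating $p$ from $q$, contradicting $C(p)=C(q)$; this is the standard generic-constancy argument of \cite{BGY}. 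Property (4), the Hausdorff density of periodic orbits in nontrivial chain classes, is Crovisier's theorem via Hayashi's $C^1$-connecting lemma, adapted to the flow setting in \cite{C, BGY}, and I would invoke it as a black box. The main technical obstacle is thus Property (4), whose proof depends on the delicate $C^1$-perturbation machinery; Properties (1)--(3) are essentially routine once that ingredient is accepted. The final $\cG$ is the intersection of the four residual sets produced above.
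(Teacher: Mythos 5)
The paper does not prove this lemma; it is stated as a pure citation of \cite{C} and \cite{BGY}, so there is no ``paper proof'' against which to compare. Your sketch is a reasonable reconstruction of the standard generic-dynamics arguments underlying those references, and you correctly single out item~(4) --- Hausdorff approximation of nontrivial chain classes by periodic orbits --- as the deep input resting on Crovisier's $C^1$ connecting-lemma machinery; the paper likewise treats it as a black box. Your treatment of (1) and (2) is in the right spirit: upper semi-continuity of $Y\mapsto C(p_Y)$ via robustness of filtrating regions, followed by Fort's theorem and an enumeration of anchors from a dense countable family (noting that every hyperbolic critical element of a generic $X$ is the continuation of an anchored one, so the intersection over anchors really covers all critical elements).

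There is, however, a genuine gap in how you close item~(3). If $Y_n\to X$ with $C(p_{Y_n})\neq C(q_{Y_n})$ while $C(p_X)=C(q_X)$, no ``Hausdorff-limit argument'' can produce a filtrating region for $X$ separating $p$ from $q$: such a region cannot exist, precisely because $C(p_X)=C(q_X)$. The filtrating regions $U_n$ separating $p_{Y_n}$ from $q_{Y_n}$ may well degenerate in the limit, and the Hausdorff limit of filtrating regions need not be filtrating. The correct route uses the same robustness fact you already invoke but in the other direction: for each anchored pair, the set $O_{p,q}=\{Y: C(p_Y)\neq C(q_Y)\}$ is \emph{open} (a filtrating region of $Y$ separating $p_Y$ from $q_Y$ persists for nearby $Y'$), hence its boundary is nowhere dense, so $O_{p,q}\cup\operatorname{int}\bigl(O_{p,q}^{\,c}\bigr)$ is open and dense. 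Intersecting over the countable family of anchored pairs gives a residual set; if $X$ lies in it and $C(p_X)=C(q_X)$, then $X\in\operatorname{int}\bigl(O_{p,q}^{\,c}\bigr)$, which is exactly the conclusion of (3). This is a direct argument, not a contradiction, and is what \cite{BGY} actually does.
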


\begin{lemm}[\label{4.2}lemma 4.2 in \cite{GSW}]
Let $X $ be a star flow in $M$ and $\sigma\in Sing(X)$. Assume that the Lyapunov exponents of
$\phi_t(\sigma)$ are $$\lambda_1\leq\dots\leq\lambda_{s-1}\leq\lambda_s <0<\lambda_{s+1}\leq\lambda_{s+1}\leq\dots\leq \lambda_d$$.
If the chain recurrence class$C(\sigma)$ of $\sigma$,
is nontrivial, then:
\begin{itemize}
\item either $\lambda_{s-1}\neq\lambda_s$ or $\lambda_{s+1}\neq\lambda_{s+2}$.
\item if  $\lambda_{s-1}=\lambda_s$, then $\lambda_s +\lambda_{s+1}<0$.
\item if $\lambda_{s+1}=\lambda_{s+2}$, $\lambda_s +\lambda_{s+1}>0$.
\item  if $\lambda_{s-1}\neq\lambda_s$ and $\lambda_{s+1}\neq\lambda_{s+2}$, then $\lambda_s +\lambda_{s+1}\neq0$.
\end{itemize}
\end{lemm}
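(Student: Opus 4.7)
I would argue by contradiction: assuming one of the four conclusions fails, I will construct a vector field $Y$ arbitrarily $C^1$-close to $X$ having a non-hyperbolic periodic orbit, contradicting the star hypothesis via Lemma~\ref{l.uniformlyattheperiod}. First I would use Lemma~\ref{l.generic} to replace $X$ by a $C^1$-approximation in the generic set $\cG$ (the conclusions of the lemma are open in the $C^1$-topology on the Lyapunov spectrum, so this is harmless). Non-triviality of $C(\sigma)$ combined with Lemma~\ref{l.generic} gives periodic orbits $Q_n \to C(\sigma)$ in the Hausdorff topology. Using Hayashi's connecting lemma along $W^s(\sigma)\cap W^u(\sigma)$-type connections within $C(\sigma)$, I can, after an arbitrarily small $C^1$-perturbation, arrange periodic orbits $\gamma_n$ of vector fields $Y_n \to X$ that enter a fixed linearizing neighborhood $U_\sigma$ of $\sigma$ along the weakest stable direction $E^s_s$, spend time $T_n\to\infty$ there, and exit along the weakest unstable direction $E^u_{s+1}$ (one uses the fact that in generic star flows regular orbits approach a singularity tangentially to the weakest direction of the invariant manifold, which is one of the directions $E^s_s$, $E^u_{s+1}$).

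\textbf{Structure of the Poincaré return map.} Inside $U_\sigma$ the flow is $C^1$-close to its linearization $e^{tA}$, where $A$ has the prescribed spectrum $\lambda_1,\dots,\lambda_d$. The return map of $\gamma_n$ decomposes as a global bounded contribution (outside $U_\sigma$) composed with the sojourn contribution, which in the normal bundle of $\gamma_n$ acts (after quotienting the flow line $\RR\cdot X$) by eigenvalues close to $\exp\!\bigl(T_n(\lambda_j-\lambda_{s+1})\bigr)$ for $j\ne s{+}1$. Lemma~\ref{l.uniformlyattheperiod} gives a $\psi^t_{Y_n}$-dominated splitting $N=N_s\oplus N_u$ along $\gamma_n$ with domination and hyperbolicity constants independent of $n$, and the stable/unstable indices must agree with the $s$-index of $\sigma$. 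Passing to the Hausdorff limit, this splitting extends to the extended maximal invariant set over $\sigma$; since on $\PP_\sigma$ the limit flow is linear, the induced splitting must respect the eigendecomposition of $A$, grouping only Lyapunov exponents with a definite gap between the two blocks.

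\textbf{Case analysis.} I now rule out each failure mode by an appropriate Franks-type perturbation supported in a flow box along $\gamma_n$, amplified by the sojourn time $T_n$.

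\emph{Both $\lambda_{s-1}=\lambda_s$ and $\lambda_{s+1}=\lambda_{s+2}$:} the dominated splitting on $N$ cannot separate the doubled eigenspaces, forcing either $\dim N_s\ge s{+}1$ or $\dim N_u\ge d{-}s$, which contradicts the fact that the $s$-index of $\gamma_n$ equals that of a regular stable/unstable index determined by $\sigma$.

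\emph{$\lambda_{s-1}=\lambda_s$ and $\lambda_s+\lambda_{s+1}\ge 0$:} since the two exponents coincide, an arbitrarily small $C^1$-perturbation of $X$ inside a small ball at $\sigma$ (a Jordan-block perturbation in the plane $E^s_{s-1}\oplus E^s_s$) produces a complex conjugate pair $\lambda_s\pm i\omega$. The return map of $\gamma_n$ on the corresponding $2$-plane of the normal bundle acquires eigenvalues of modulus $\exp(T_n(\lambda_s-\lambda_{s+1}))$ times a rotation; a further Franks-type perturbation along $\gamma_n$ shifts the real part exploiting $\lambda_s+\lambda_{s+1}\ge 0$ to bring the modulus to $1$, yielding a non-hyperbolic periodic orbit.

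\emph{$\lambda_{s+1}=\lambda_{s+2}$ and $\lambda_s+\lambda_{s+1}\le 0$:} the argument is symmetric, working with $-X$.

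\emph{$\lambda_{s-1}\ne\lambda_s$, $\lambda_{s+1}\ne\lambda_{s+2}$, and $\lambda_s+\lambda_{s+1}=0$:} the $2$-plane $E^s_s\oplus E^u_{s+1}$ is area preserving under $e^{tA}$; a Franks-type rotation perturbation along a long segment of $\gamma_n$ in $U_\sigma$ rotates the return map on the corresponding normal $2$-plane to produce conjugate eigenvalues of modulus $1$.

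\textbf{Main obstacle.} The most delicate step is the amplification argument: using the long sojourn time $T_n$ to convert an infinitesimal change of $A$ (or of a linear box along $\gamma_n$) into a perturbation of the Poincaré return map large enough to reach the unit circle, while keeping the perturbation $C^1$-small and preserving the periodic orbit. This requires a careful choice of the perturbation support (adapted to a linearizing chart of $\sigma$, as in Liao's and Hayashi's lemmas) and the quantitative domination estimate of Lemma~\ref{l.uniformlyattheperiod} to ensure that the produced eigenvalue on the unit circle is genuinely non-hyperbolic (and not absorbed into an existing dominated splitting).
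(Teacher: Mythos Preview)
The paper does not give its own proof of this lemma; it is quoted from \cite{GSW} (Lemma~4.2 there) and used as a black box. Your overall strategy---contradiction by manufacturing a non-hyperbolic periodic orbit for a $C^1$-nearby flow, via periodic orbits with long sojourn near $\sigma$ and a Franks/Liao perturbation exploiting the eigenvalue configuration---is indeed the approach of \cite{GSW}, so in spirit you are on the right track.

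There are, however, two genuine gaps.

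\emph{The generic reduction is illegitimate.} You propose to replace $X$ by a nearby $X'\in\cG$, arguing that ``the conclusions are open''. But you are arguing by contradiction: the \emph{negated} conclusions (for instance $\lambda_{s-1}=\lambda_s$ together with $\lambda_s+\lambda_{s+1}\ge 0$) involve equalities and are not open, and the hypothesis ``$C(\sigma)$ is nontrivial'' is not $C^1$-open either. A perturbation may destroy both simultaneously. The correct route keeps $X$ fixed: non-triviality of $C(\sigma)$ supplies regular chain-recurrent orbits accumulating on $\sigma$, and the connecting lemma then produces, for vector fields $Y$ arbitrarily $C^1$-close to $X$, genuine periodic orbits of $Y$ passing through any prescribed neighborhood of $\sigma_Y$. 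The star hypothesis (which \emph{is} open) then gives the contradiction.

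\emph{The eigenvalue bookkeeping in the case analysis is off.} First, item~1 is not an independent case: it follows immediately once items~2 and~3 are established, since they force $sv(\sigma)<0$ and $sv(\sigma)>0$ respectively. Second, and more seriously, the mechanism you describe in Case~2 would not work. The normal eigenvalues of the return map along $\gamma_n$ are, up to bounded factors, $e^{T_n\lambda_j}$ for $j\notin\{s,s{+}1\}$ together with a single ``transition'' eigenvalue $\asymp e^{T_n(\lambda_s+\lambda_{s+1})}$ coming from the Dulac map in the plane $E^s_s\oplus E^u_{s+1}$; the flow direction lives in that plane, so only one normal eigenvalue survives from it. In Case~4 this makes the contradiction immediate (the transition eigenvalue stays bounded and a small perturbation pins it to modulus~$1$). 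In Case~2, however, the complex pair you create in $E^s_{s-1}\oplus E^s_s$ has modulus $\asymp e^{T_n\lambda_s}\to 0$, not close to~$1$; no Franks shift of size $o(1)$ brings it to the unit circle. The actual contradiction in \cite{GSW} is a \emph{domination} obstruction: when $sv(\sigma)\ge 0$ the periodic orbits must have index $s-1$ with a uniformly dominated $(s{-}1,d{-}s)$ splitting of the normal bundle (Lemma~\ref{l.uniformlyattheperiod}), but after making $\lambda_{s-1},\lambda_s$ complex the rotating weak-stable $2$-plane at $\sigma$ mixes the $(s{-}1)$-st normal direction with the flow direction along the entering arc, obstructing any $(s{-}1)$-dimensional dominated subbundle near $\sigma$. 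Your ``main obstacle'' paragraph correctly flags that the amplification step is delicate, but the specific perturbation you propose in Case~2 targets the wrong eigenvalue.
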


We say that a singularity $\sigma$ in the conditions of the previous lema is \emph{Lorenz like} of index $s$
and we define the \emph{saddle value} of a singularity as the value
$$sv(\sigma)=\lambda_s +\lambda_{s+1}.$$

Consider a Lorenz like singularity $\sigma$, then:
\begin{itemize}
\item if $sv(\sigma)>0$, we consider the splitting $$T_\sigma M= G^{ss}_{\sigma}\oplus G^{cu}_{\sigma}$$
where (using the notations of Lemma~\ref{4.2}) the space $G^{ss}_\sigma$
corresponds to the Lyapunov exponents $\lambda_1$ to $\lambda_{s-1}$, and $G^{cu}_\sigma$ corresponds to the Lyapunov exponents $\lambda_s, \dots,\lambda_d$.

\item if $sv(\sigma)<0$, we consider the splitting $$T_\sigma M= G^{cs}_\sigma\oplus G^{uu}_\sigma$$
where  the space $G^{cs}_\sigma$
corresponds to the Lyapunov exponents $\lambda_1$ to $\lambda_{s+1}$, and $G^{uu}_\sigma$ corresponds to the Lyapunov exponents $\lambda_{s+2}, \dots,\lambda_d$.
\end{itemize}

\begin{coro}\label{c.Lorenz-like} Let $X$ be a vector field  and $\sigma$ be a Lorenz-like singularity of $X$ and let
$h_\sigma\colon \La_X\times \RR\to (0,+\infty)$ be a cocycle
in the cohomology class $[h_\sigma]$ defined in Section~\ref{ss.reparametrization}.
\begin{enumerate}\item  First assume that  $Ind(\sigma)=s+1$ and $sv(\sigma)>0$. Then the restriction of $\psi_\cN$ over $\PP G^{cu}_\sigma$
admits a dominated splitting $N_L=E_L\oplus F_L$, with $dim (E_L)=s$, for  $L\in \PP G^{cu}_\sigma$.  Furthermore,
\begin{itemize}
\item $E$ is uniformly contracting for $\psi_\cN$
\item $F$ is uniformly expanding for the reparametrized extended linear Poincar\'e flow $ h_\sigma\cdot \psi_\cN$.
\end{itemize}

\item Assume now that  $Ind(\sigma)=s$ and $sv(\sigma)<0$.
One gets a dominated splitting $N_L=E_L\oplus F_L$ for $\phi_\cN$  for $L\in \PP G^{cs}_\sigma$ so that $dim(E_L)= s$, the bundle
$F$ is uniformly expanded under $\psi_\cN$ and $E$ is uniformly contracted by $h_\sigma \cdot\psi_\cN$.
\end{enumerate}
\end{coro}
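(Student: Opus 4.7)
The plan is to write $N_L = E_L \oplus F_L$ directly in terms of the hyperbolic splitting of $T_\sigma M$ and then reduce contraction, expansion, and domination to linear-algebra estimates on $T_\sigma M$ that exploit the saddle-value condition. Fix notation as in Lemma~\ref{4.2}: in case~(1), $\lambda_1 \leq \cdots \leq \lambda_{s+1} < 0 < \lambda_{s+2} \leq \cdots \leq \lambda_d$ (so that $Ind(\sigma) = s+1$), hence $\dim G^{ss}_\sigma = s$, $\dim G^{cu}_\sigma = d-s$, and $sv(\sigma) = \lambda_{s+1} + \lambda_{s+2} > 0$. I would define $E_L$ to be the image of $G^{ss}_\sigma$ under the canonical projection $T_\sigma M \to N_L$ and $F_L$ to be the image of $G^{cu}_\sigma$ under the same projection. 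Since $L \subset G^{cu}_\sigma$ and $G^{ss}_\sigma \cap G^{cu}_\sigma = 0$, one gets $\dim E_L = s$, $\dim F_L = d - s - 1$ and $E_L \oplus F_L = N_L$; invariance under $\psi^t_\cN$ is immediate from the $D\phi^t$-invariance of $G^{ss}_\sigma$ and $G^{cu}_\sigma$.

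For the uniform contraction of $E$ under $\psi^t_\cN$, I would lift $v \in E_L$ to $\tilde v \in G^{ss}_\sigma$. The angle between $G^{ss}_\sigma$ and any line $L \subset G^{cu}_\sigma$ is bounded below, so $\|v\|_{N_L} \asymp \|\tilde v\|$ uniformly in $L$, and the invariance of $G^{ss}_\sigma$ yields $\|\psi^t_\cN v\| \leq C e^{(\lambda_s + \epsilon) t} \|v\|_{N_L}$, which decays since $\lambda_s < 0$. For the uniform expansion of $F$ under $h_\sigma^t \psi^t_\cN$, pick a unit $u \in L$ and a unit lift $\tilde w \in G^{cu}_\sigma$ of $w \in F_L$ with $\tilde w \perp u$. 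The elementary ``area equals base times height'' identity for the parallelogram with sides $D\phi^t u$, $D\phi^t \tilde w$ reads
\[
h_\sigma^t(L) \cdot \|\psi^t_\cN w\| \;=\; \|D\phi^t u \wedge D\phi^t \tilde w\|.
\]
Since $u \wedge \tilde w \in \Lambda^2 G^{cu}_\sigma$, a direct Lyapunov-basis computation bounds the right-hand side below by $C e^{(sv - \epsilon) t}$, because the smallest sum of two exponents available inside $G^{cu}_\sigma$ is precisely $\lambda_{s+1} + \lambda_{s+2} = sv > 0$.

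Domination $E \oplus_{\prec} F$ for $\psi^t_\cN$ itself then follows by combining the contraction bound on $E$ with the lower bound $\|\psi^t_\cN w\| \geq C e^{(\lambda_{s+1} - \epsilon) t} \|w\|_{N_L}$, which in turn holds because $\|\psi^t_\cN w\|/\|w\|_{N_L}$ is at least the second singular value of $D\phi^t$ restricted to the $2$-plane $P = L + \RR \tilde w \subset G^{cu}_\sigma$, and any unit vector in $G^{cu}_\sigma$ is expanded by $D\phi^t$ at rate at least $e^{(\lambda_{s+1} - \epsilon) t}$. The resulting ratio is $\leq C e^{(\lambda_s - \lambda_{s+1} + 2\epsilon) t}$, which decays because Lemma~\ref{4.2} forces $\lambda_s < \lambda_{s+1}$ strictly under $sv > 0$. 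Case~(2) I would deduce as the dual statement by applying case~(1) to the reversed flow $-X$: time reversal flips the sign of $sv$, swaps $G^{cs}$ with $G^{cu}$ and the stable with the unstable bundles, and interchanges the bundle on which the cocycle reparametrization is applied. The main technical obstacle I anticipate is the book-keeping between the quotient norm on $N_L$ and the ambient norm on $T_\sigma M$ when turning $h_\sigma^t \cdot \psi^t_\cN$ into a $2$-form expansion; once the parallelogram identity above is in place, everything reduces to standard estimates on the linear action $D\phi^t|_{T_\sigma M}$.
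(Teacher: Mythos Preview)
Your proposal is correct and follows essentially the same route as the paper: you define $E_L$ and $F_L$ as the projections of $G^{ss}_\sigma$ and $G^{cu}_\sigma$ to $N_L$, obtain contraction of $E$ directly from the strong-stable exponents, derive the domination from the gap $\lambda_s<\lambda_{s+1}$ guaranteed by Lemma~\ref{4.2}, and get expansion of $F$ under $h_\sigma\cdot\psi_\cN$ via the area identity $h_\sigma^t(L)\,\|\psi^t_\cN w\|=\|D\phi^t u\wedge D\phi^t\tilde w\|$ together with sectional area expansion on $G^{cu}_\sigma$; case~(2) is handled by time reversal in both accounts. The paper phrases the lower bound on $\|\psi^t_\cN w\|$ via $\|D\phi^{-t}\|$ on $G^{cu}_\sigma$ rather than your second-singular-value formulation, but these are equivalent, and your treatment of the quotient-norm bookkeeping is if anything more explicit than the paper's.
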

\begin{proof}We only consider the first case  $Ind(\sigma)=s+1$ and $sv(\sigma)>0$, the other is analogous and can be deduced by reversing the time.

We consider the restriction of $\psi_\cN$ over $\PP G^{cu}_\sigma$, that is, for point $L\in\tilde \La_X$ corresponding to lines contained in $G^{cu}_\sigma$.
Therefore the normal space
$N_L$ can be identified, up to a projection which is uniformly bounded, to the direct sum of $G_\sigma^{ss}$
with the normal space of $L$ in $G^{cu}_\sigma$.

Now we fix $E_L=G_\sigma^{ss}$ and $F_L$ is the normal space of $L$ in $G^{cu}_\sigma$.  As $G_\sigma^{ss}$ and $G^{cu}_\sigma$ are invariant
under the derivative of the flow $\phi_t$, one gets that the
splitting $N_L=E_L\oplus F_L$ is invariant under the extended linear Poincar\'e flow over $\PP G^{cu}_\sigma$.

Let us first prove that this splitting is dominated:

By Lemma \ref{4.2}
if we choose a unit vector $v$ in $E_L$ we know that for any $t>0$  one has
$$\norm{\psi^t_{\cN}(v)}\leq Ke^{t\lambda_{s-1}}\,.$$

Now let us choose a unit vector $u$ in $F_L$, and consider $w_t=\psi_\cN^t(u)\in F_{\phi_\PP^t(L)}$.
Then for any $t>0$, one has
$${\norm{D\phi^{-t}(w_t)}}\leq K'e^{t(-\lambda_s)}\norm{w_t}\,.$$
The extended linear Poincar\'e flow is obtained by projecting the image by the derivative of the flow on the normal bundle.
Since the projection on the normal space does not increase the norm of the vectors, one gets
$${\norm{\psi^{-t}_{\cN}(w_t)}}\leq K'e^{t(-\lambda_s)}\norm{w_t}\,,$$
is other words
$$\frac1{\norm{\psi_\cN^t(u)}}\leq  K'e^{t(-\lambda_s)}$$

%$$\frac1{ K'}e^{t(\lambda_s)}\leq \norm{\psi_\cN^t(u)}\,,$$

Putting together these inequalities one gets:

$$\frac{\norm{\psi^t_{\cN}(v)}}{\norm{\psi^t_{\cN}(u)}}\leq KK'e^{t(\lambda_{s-1}-\lambda_s)}\,.$$
This provides the domination as $\lambda_{s-1}-\lambda_s<0$.

Notice that $E_L= G_\sigma^{ss}$ is uniformly contracted by the extended linear Poincar\'e flow $\psi_\cN$,
because it coincides, on $G_\sigma^{ss}$  and for  $L\in\PP G^{cu}_\sigma$,
with the   differential of the flow $\phi^t$.
For concluding the proof, it remains to show that the reparametrized extended
linear Poincar\'e flow $h_\sigma \cdot \psi_\cN$ expands uniformly the vectors in $F_L$, for $L\in \PP G^{cu}_\sigma$.

Notice that, over the whole  projective space  $\PP_\sigma$, the cocycle $h_{\sigma,t}(L)$ is the rate of expansion of the derivative of $\phi_t$
in the direction of $L$. Therefore
$h_\sigma\cdot \psi_\cN$ is defined as follows: consider a line $D\subset N_L$.  Then the expansion rate of the restriction of
$h_\sigma\cdot \psi_\cN$ to $D$ is the expansion rate of
the area on the plane spanned by $L$ and $D$ by the derivative of $\phi_t$.

The hypothesis $\lambda^s+\lambda^{s+1}>0$ implies that the derivative of $\phi_t$ expands uniformly the area on the planes
contained in $G^{cu}_\sigma$, concluding.

\end{proof}

\begin{lemm}[\label{4.5} Lemma 4.5 and Theorem 5.7 in \cite{GSW}]
Let $X$ be a $C^1$ generic star vector field and let $\sigma \in Sing(X)$.
Then there is a filtrating neighborhood $U$ of $C(\sigma)$ so that,
for every two  periodic points $p,q \subset U$,

$$Ind(p) = Ind(q),$$
Furthermore, for any singularity $\sigma'$ in $U$,
$$Ind(\sigma') = Ind(q) \hspace{0.5cm}\text{ if } sv(\sigma)<0\,, $$
or
$$Ind(\sigma') = Ind(q)+1 \hspace{0.5cm}\text{ if } sv(\sigma)>0\,. $$
\end{lemm}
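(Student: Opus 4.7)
The plan is to prove the two assertions in order: first the constancy of the periodic index in a small filtrating neighborhood $U$, then the precise relation $Ind(\sigma')=Ind(q)$ or $Ind(q)+1$ given by the sign of $sv(\sigma)$.

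First I would use Corollary~\ref{c.filtrating} to obtain a basis of filtrating neighborhoods of $C(\sigma)$, and shrink $U$ inside this basis so that every periodic orbit contained in $U$ is arbitrarily close to $C(\sigma)$ in the Hausdorff topology (using the generic continuity of the class from Lemma~\ref{l.generic}). To prove that all periodic orbits in $U$ share a common index, I would invoke Lemma~\ref{l.uniformlyattheperiod}: in a $C^1$-neighborhood $\cU$ of $X$, every periodic orbit $\gamma$ carries a $T$-dominated hyperbolic splitting $N=N_s\oplus N_u$ of its normal bundle with constants uniform in $\gamma$ and in $Y\in\cU$. If two periodic orbits $p,q\subset U$ had different indices, then by genericity (Lemma~\ref{l.generic}) they would lie robustly in the same chain recurrent class $C(\sigma)$, so Hayashi's connecting lemma would produce arbitrarily $C^1$-small perturbations with a heteroclinic cycle between (continuations of) $p$ and $q$. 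A Franks-type perturbation along this cycle would then create a periodic orbit of intermediate index and arbitrarily weak hyperbolicity, eventually non-hyperbolic, contradicting the star assumption. Hence every periodic orbit in $U$ has a common index $s'$.

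For the second claim I would combine Lemma~\ref{4.2} (every singularity in $C(\sigma)$ is Lorenz-like with nonzero saddle value) with the density of periodic approximations $Q_n\to C(\sigma)$ furnished by Lemma~\ref{l.generic}. Assume $sv(\sigma)>0$ and recall that $T_\sigma M=G^{ss}_\sigma\oplus G^{cu}_\sigma$ with $\dim G^{ss}_\sigma=s-1$ and $G^{cu}_\sigma$ area-expanded. By Corollary~\ref{c.Lorenz-like}, the extended linear Poincar\'e flow over $\PP G^{cu}_\sigma$ admits a dominated splitting $N_L=E_L\oplus F_L$ with $\dim E_L=s-1$, uniformly contracted on $E$ and, after reparametrization by $h_\sigma$, uniformly expanded on $F$. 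For large $n$, points of $Q_n$ close to $\sigma$ lift into an arbitrarily small neighborhood of $\PP G^{cu}_\sigma$ inside $B(X,U)$, so by Proposition~\ref{p.robustcocycle} the $E\oplus F$ splitting extends over the lift of the whole orbit $Q_n$. Since $h_\sigma$ is a cocycle whose integral over a period of a regular periodic orbit equals $1$ by construction, hyperbolicity of the reparametrized flow along $Q_n$ is equivalent to hyperbolicity of the linear Poincar\'e flow itself, forcing $Ind(Q_n)=s-1$ and therefore $Ind(\sigma)=Ind(q)+1$. The case $sv(\sigma)<0$ is symmetric, using the splitting $G^{cs}_\sigma\oplus G^{uu}_\sigma$ of dimensions $s+1$ and $d-s-1$, and yields $Ind(Q_n)=s=Ind(\sigma)$. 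Applying the same analysis to any other singularity $\sigma'\in U$ and comparing with the constant value $s'=Ind(q)$ just computed then both fixes $Ind(\sigma')$ and forces $sv(\sigma')$ to have the sign of $sv(\sigma)$, giving the stated formula.

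The hardest step, in my view, is the transfer of the Lorenz-like splitting from $\PP G^{cu}_\sigma$ (or $\PP G^{cs}_\sigma$) to a genuine dominated hyperbolic splitting along the approximating periodic orbits $Q_n$. This relies on two independent ingredients which must be glued together: the upper semi-continuity of the extended maximal invariant set (Proposition~\ref{p.extended}), which guarantees that long pieces of $Q_n$ near $\sigma$ remain inside an arbitrarily small neighborhood of $\PP G^{cu}_\sigma$ in $B(X,U)$, and Lemma~\ref{l.uniformlyattheperiod}, which supplies a uniformly dominated hyperbolic splitting of the linear Poincar\'e flow on the short complementary pieces of $Q_n$ that stay far from $\sigma$. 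Matching the two splittings to a single splitting of constant dimensions along the whole orbit, and checking that the reparametrization by $h_\sigma$ does not alter the conclusion at the period, is where the bookkeeping has to be done most carefully; it is also the place where the generic hypotheses really intervene, through the continuity of the chain recurrence class at $X$.
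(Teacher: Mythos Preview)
The paper does not prove this lemma: it is quoted from \cite{GSW} (Lemma~4.5 and Theorem~5.7 there) and used as a black-box input to the rest of Section~6. There is therefore no proof in the present paper to compare your proposal against.

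On the substance of your attempt: the outline for the first assertion (constancy of the periodic index) is the standard one and is reasonable, though you should be careful that a periodic orbit in a filtrating neighborhood $U$ of $C(\sigma)$ need not lie in $C(\sigma)$, so the step ``robustly in the same class, hence a heteroclinic cycle'' needs an extra argument. The second assertion, however, contains a circularity that you flag in your last paragraph but do not resolve. The splitting of Corollary~\ref{c.Lorenz-like} lives only over $\PP G^{cu}_\sigma$, a compact set in the fiber over the single point $\sigma$; Proposition~\ref{p.robustcocycle} extends it to a small neighborhood in $\PP M$, but the lifted orbit $S_X(Q_n)$ only meets that neighborhood during its passages near $\sigma$. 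Away from $\sigma$ the only splitting available is the hyperbolic splitting of $Q_n$ from Lemma~\ref{l.uniformlyattheperiod}, whose stable dimension is $Ind(Q_n)$. For the two splittings to glue into a single dominated splitting along all of $Q_n$ they must have the \emph{same} stable dimension, i.e.\ $Ind(Q_n)=Ind(\sigma)-1$, which is precisely the statement you are trying to prove. So the ``matching'' you call bookkeeping is in fact the entire content of the lemma, and the argument as written assumes its own conclusion. In \cite{GSW} the relation between $Ind(\sigma')$ and $Ind(q)$ is obtained by a direct local analysis of how periodic orbits cross a neighborhood of a Lorenz-like singularity together with Liao-type selecting arguments, not via the extended linear Poincar\'e flow machinery of the present paper.
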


\begin{lemm}\label{l.lifted-dominated}
There is a dense $G_{\delta}$ set $\cG$ in the set of star flows of $M$ with the following properties:
Let $X$ be in $\cG$, let $C$ be a chain recurrent class of $X$.
Then there is a (small) filtrating neighborhood $U$ of $C$ so that the lifted maximal invariant set $\widetilde{\Lambda}(X,U)$ of $X$ in $U$ has a dominated splitting $\cN=E\oplus_\prec F$ for the
extended linear Poincar\'e flow, so that $E$ extends the stable bundle for every periodic orbit $\gamma$ contained in $U$.
\end{lemm}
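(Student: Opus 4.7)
The plan is to build the dominated splitting in two stages: first over the lifted periodic orbits using the uniform Liao--Ma\~n\'e bounds of Lemma~\ref{l.uniformlyattheperiod}, and then extend by closure, handling the extra points sitting above the singularities via the Lorenz-like structure from Corollary~\ref{c.Lorenz-like}.

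First I would dispose of trivial classes $C$: if $C$ is a single hyperbolic periodic orbit or a hyperbolic singularity with $C(\sigma)=\{\sigma\}$, then any sufficiently small filtrating neighborhood $U$ has $\widetilde\La(X,U)$ reducing either to the lift of a single hyperbolic periodic orbit (where the stable/unstable splitting of the linear Poincar\'e flow provides the splitting directly) or to a set containing no line over the singularity (by Lemma~\ref{l.escaping} combined with triviality of the class). In the nontrivial case, Lemma~\ref{l.generic} furnishes a sequence of periodic orbits $Q_n\subset U$ with $Q_n\to C$ in Hausdorff topology, and Lemma~\ref{4.5} allows me to choose $U$ small enough that every periodic orbit in $U$ has a common index $s$, every singularity in $U$ is Lorenz-like with the same sign of its saddle value, and its index matches that of the periodic orbits in the way dictated by $sv(\sigma)$.

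Next I would apply Lemma~\ref{l.uniformlyattheperiod} to obtain uniform constants $\eta,T$ so that the stable/unstable splitting $N=N_s^\gamma\oplus N_u^\gamma$ of the linear Poincar\'e flow on every periodic orbit $\gamma\subset U$ is $T$-dominated with the same exponent $\eta$ and with fixed dimensions. Lifting $\gamma$ to $\PP M$ via $x\mapsto \langle X(x)\rangle$ gives a $\phi^t_\PP$-invariant compact subset of $\widetilde\La(X,U)$, on which the extended linear Poincar\'e flow $\psi^t_\cN$ coincides with the usual linear Poincar\'e flow, hence inherits the dominated splitting. Let $\Pi\subset\widetilde\La(X,U)$ be the closure of the union of these lifted periodic orbits. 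The uniform bounds, together with Corollary~\ref{c.robust}, extend the splitting to a dominated splitting $\cN=E\oplus_\prec F$ of $\psi^t_\cN$ over $\Pi$ with $\dim E=s-1$.

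The main obstacle is to extend the splitting past $\Pi$ to the whole of $\widetilde\La(X,U)$, and this is where the singularities enter. A point $L\in\widetilde\La(X,U)\cap\PP_\sigma$ arises as a limit $\langle X(x_n)\rangle\to L$ with $x_n\in\La(X,U)\setminus\mathrm{Sing}(X)$ converging to $\sigma$. By Lemma~\ref{l.escaping}, $L$ lies in the central projective space $\PP^c_{\sigma,U}$, and since $C(\sigma)$ is nontrivial and $\sigma$ is Lorenz-like, $L$ actually lies in $\PP G^{cu}_\sigma$ (if $sv(\sigma)>0$) or $\PP G^{cs}_\sigma$ (if $sv(\sigma)<0$). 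Corollary~\ref{c.Lorenz-like} provides a dominated splitting of $\psi^t_\cN$ over these projective sets with the same dimension $s-1$ for the stable summand. The two splittings match on overlaps thanks to the uniqueness of dominated splittings of prescribed dimension: approximating $L$ by lifts of periodic orbits $Q_n\to C$ and using the uniform constants $\eta,T$, the limit bundle $E_L$ must coincide with the $(s-1)$-dimensional stable summand produced by Corollary~\ref{c.Lorenz-like}. I would then verify that applying Proposition~\ref{p.robustcocycle} (in the form of Corollary~\ref{c.robust}) to the compact invariant set $\Pi\cup\bigcup_{\sigma\in\mathrm{Sing}(X)\cap U}(\widetilde\La(X,U)\cap\PP^c_{\sigma,U})$ glues the pieces into a global $2T$-dominated splitting of $\psi^t_\cN$ over $\widetilde\La(X,U)$. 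By construction $E$ extends the stable bundle of every periodic orbit in $U$, as required. This adapts the strategy of \cite{GSW} by replacing the use of the linear Poincar\'e flow on a singular-hyperbolic block with the extended linear Poincar\'e flow, the bookkeeping at singularities being now handled by Corollary~\ref{c.Lorenz-like} rather than by the singular-hyperbolic splitting.
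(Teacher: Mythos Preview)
Your proposal follows the paper's core strategy (uniform domination on periodic orbits via Lemma~\ref{l.uniformlyattheperiod}, fix the index via Lemma~\ref{4.5}, then pass to the closure), but you over-engineer the second half. The paper's proof is considerably shorter: once the dominated splitting is obtained on the union of lifted periodic orbits, one simply takes the closure and observes that this closure \emph{is} all of $\widetilde{\La}(X,U)$. The reason is that, generically, periodic orbits accumulate every chain recurrence class in $U$ (Lemma~\ref{l.generic}), and the section $S_X\colon x\mapsto\langle X(x)\rangle$ is continuous at every regular point; hence the lifts of periodic points are dense in $S_X(\La_U\setminus\mathrm{Sing})$, and their closure is $\widetilde{\La}(X,U)$ --- including whatever lines sit over the singularities. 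No separate analysis of $\PP_\sigma$ is needed here.

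Your invocation of Corollary~\ref{c.Lorenz-like} and the gluing-by-uniqueness argument is not wrong, but it is the content of the \emph{next} lemma in the paper (Lemma~\ref{domination}), where one must pass from $\widetilde{\La}(X,U)$ to the strictly larger set $B(X,U)=\widetilde{\La}(X,U)\cup\bigcup_\sigma\PP^c_{\sigma,U}$. There the added projective center spaces $\PP^c_{\sigma,U}$ are genuinely not covered by the closure of periodic orbits, and Corollary~\ref{c.Lorenz-like} is exactly what supplies the splitting on them. So you have effectively merged two lemmas into one; that is fine mathematically, but for the present statement the singular analysis is redundant.

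One small slip: the stable bundle of the linear Poincar\'e flow over a periodic orbit of index $s$ has dimension $s$, not $s-1$ (and Corollary~\ref{c.Lorenz-like} likewise gives $\dim E_L=s$), so your $\dim E=s-1$ should read $\dim E=s$ throughout.
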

\begin{proof}
According to Lemma~\ref{4.5}, the class $C$ admits a filtrating neighborhood $U$ in which the periodic orbits are hyperbolic and with the same index.
On the other hand, according to  Lemma~\ref{l.generic}, every
chain recurrence class in $U$ is accumulated by periodic orbits.
Since $X$ is a star flow, Lemma~\ref{l.uniformlyattheperiod} asserts that the normal bundle over the union of these periodic orbits
admits a dominated splitting for the linear Poincar\'e flow, corresponding to
their stable/unstable splitting. It follows that the union of the corresponding orbits in
the lifted maximal invariant set have a dominated splitting for $\cN$.  Since any dominated splitting defined on an invariant set extends to the
closure of this set,   we have a dominated splitting
on the closure of the lifted periodic orbits, and hence on the whole $\widetilde{\Lambda}(X,U)$.
\end{proof}

Lemma~\ref{l.lifted-dominated} asserts that the lifted maximal invariant set $\tilde\La(X,U)$ admits a dominated splitting.  What we need now is extend this
dominated splitting to the extended maximal invariant  set $$B(X,U)= \tilde\La(X,U)\cup \bigcup_{\sigma_i\in Zero(X)\cap U} \PP^c_{\sigma_i,U}.$$

Now we need the following theorem to have more information on the projective center spaces $\mathbb{P}^c_{\sigma_i,U}$.
\begin{lemm}[\label{4.7} lemma 4.7 in \cite{GSW}]
Let $X$ be a star flow in $M$ and $\sigma$ be a singularity of $X$ such that $C(\sigma)$ is nontrivial.
Then :
\begin{itemize} \item if $sv(\sigma)>0$,  one has:
$$W^{ss}(\sigma)\cap C(\sigma)= \{\sigma\}\,,$$
where $W^{ss}(\sigma)$ is the strong stable manifold associated to the space $G_\sigma^{ss}$.

\item if $sv(\sigma)<0$, then:
$$W^{uu}(\sigma)\cap C(\sigma)=\{\sigma\}\,,$$
where $W^{uu}(\sigma)$ is the strong unstable manifold associated to the space $G_\sigma^{uu}$.
\end{itemize}
\end{lemm}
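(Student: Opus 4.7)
\medskip

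\noindent\textbf{Proof proposal.} I will prove only the first bullet; the second follows by reversing time (since $-X$ is also a star flow and exchanges the roles of strong stable and strong unstable). So assume $sv(\sigma)>0$, and suppose for contradiction that there exists $x\in W^{ss}(\sigma)\cap C(\sigma)$ with $x\neq \sigma$. Note that from Lemma~\ref{4.2} combined with $sv(\sigma)>0$ we automatically have the strict inequality $\lambda_{s-1}<\lambda_s$, so the splitting $T_\sigma M=G^{ss}_\sigma\oplus G^{cu}_\sigma$ is genuinely dominated and the weakest contracting direction in $G^{ss}_\sigma$ has Lyapunov exponent $\lambda_{s-1}$. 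The plan is: (i) produce, by $C^1$ perturbation of $X$, an actual orbit from $W^u(\sigma)$ into $W^{ss}(\sigma)$; (ii) unfold this singular cycle to create a sequence of periodic orbits $\gamma_n$ spending time $\tau_n\to +\infty$ near $\sigma$; (iii) use the star property, Lemma~\ref{l.uniformlyattheperiod} and Lemma~\ref{4.5} to derive a contradiction from the eigenvalue structure of the linear Poincar\'e flow along $\gamma_n$.

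\medskip

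\noindent For step (i): since $x$ is chain-related to $\sigma$ but lies in $W^{ss}(\sigma)$, pseudo-orbits from $W^u(\sigma)$ to a point arbitrarily close to $x$ exist. By Hayashi's connecting lemma (applied at a regular point of the negative orbit of $x$, which is bounded away from $\sigma$), we produce a $C^1$-small perturbation $Y_0$ of $X$ having an orbit $\mathcal{O}$ starting on $W^u(\sigma_{Y_0})$ and limiting onto $\sigma_{Y_0}$ through a point near $x$, hence tangent to $W^{ss}(\sigma_{Y_0})$. For step (ii), a further arbitrarily small perturbation $Y_n$ breaks this strong-stable connection by an amount $\varepsilon_n\to 0$; analyzing the local linear dynamics near $\sigma_{Y_n}$ (close to the linearization of $X$ at $\sigma$) one finds periodic orbits $\gamma_n$ of $Y_n$ which shadow $\mathcal{O}$, spend time $\tau_n\to\infty$ in a small neighborhood of $\sigma_{Y_n}$, approach $\sigma_{Y_n}$ tangent to a line in $G^{ss}_{\sigma}$ and leave along a line in $G^u_\sigma$.

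\medskip

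\noindent For step (iii), the key eigenvalue computation. Since $Y_n$ remains a star flow (the star condition is $C^1$-open), Lemma~\ref{4.5} forces $\mathrm{Ind}(\gamma_n)=\mathrm{Ind}(q)=s-1$ where $q$ is any periodic point near $C(\sigma)$; this is consistent with the geometry of the shadowing. The linearized return map along $\gamma_n$ is, up to a bounded transition factor, $e^{\tau_n\Lambda}$ restricted to the appropriate normal slice, where $\Lambda=\mathrm{diag}(\lambda_1,\dots,\lambda_d)$. Because the entering tangent direction of $\gamma_n$ lies in $G^{ss}_\sigma$, the linear Poincar\'e flow of $\gamma_n$ along the entering segment, viewed on the normal bundle, has eigenvalues $\{\lambda_1,\dots,\lambda_{s-2},\lambda_s,\lambda_{s+1},\dots,\lambda_d\}$ (the removed direction being the flow direction, which is in $G^{ss}$), so its weakest stable eigenvalue is $\lambda_s$ (weaker than any of $\lambda_1,\dots,\lambda_{s-2}$) and its weakest unstable is $\lambda_{s+1}$. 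Apply the uniform hyperbolicity at the period given by Lemma~\ref{l.uniformlyattheperiod}: taking a unit vector in $N_s$ at a point where $\gamma_n$ is entering the neighborhood of $\sigma$ along the weakest stable direction $\lambda_s$, and a unit vector in $N_u$ along the weakest unstable direction $\lambda_{s+1}$, the uniform estimate forces, for large $n$,
$$
\bigl(e^{\lambda_s\tau_n}\bigr)\cdot\bigl(e^{\lambda_{s+1}\tau_n}\bigr)^{-1}\;\leq\;e^{-2\eta\tau_n}\cdot C,
$$
for a uniform constant $C$ coming from the bounded transition part of the period. This gives $\lambda_s-\lambda_{s+1}\leq -2\eta$, hence $\lambda_{s+1}-\lambda_s\geq 2\eta>0$, which is automatic; but applying the sectional/area version of the same lemma (the product of stable norm with the inverse of the mini-norm of the unstable on a single two-plane), the corresponding inequality becomes $e^{(\lambda_s+\lambda_{s+1})\tau_n}\leq C e^{-2\eta\tau_n}$, forcing $\lambda_s+\lambda_{s+1}\leq -2\eta<0$, contradicting $sv(\sigma)>0$.

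\medskip

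\noindent The step I expect to be most delicate is (ii), the unfolding: producing periodic orbits $\gamma_n$ with the correct tangency behavior (entering tangent to $G^{ss}_\sigma$, exiting along $G^u_\sigma$) after breaking a non-generic strong-stable connection, and controlling that they shadow the cycle uniformly enough to apply the linear estimate globally. This requires a careful use of the linear model near $\sigma_{Y_n}$ together with the version of the connecting lemma preserving a prescribed direction of approach, as in the arguments of \cite{GSW}. Once these periodic orbits are produced with long $\tau_n$ and the correct tangent directions, the hyperbolicity contradiction via Lemma~\ref{l.uniformlyattheperiod} is a routine exponent comparison.
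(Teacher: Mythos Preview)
The paper does not prove this lemma at all: it is quoted verbatim from \cite{GSW} (as indicated in the label ``lemma~4.7 in \cite{GSW}'') and used as a black box, so there is no ``paper's own proof'' to compare against. What follows is therefore an assessment of your proposal on its own merits.

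Your overall strategy---connecting lemma to produce a strong homoclinic connection, unfolding to get periodic orbits $\gamma_n$ with passage time $\tau_n\to\infty$ near $\sigma$, then a contradiction with the uniform estimates of Lemma~\ref{l.uniformlyattheperiod}---is indeed the correct one and is the route taken in \cite{GSW}. However, your step~(iii) contains a genuine gap. The ``sectional/area version'' of Lemma~\ref{l.uniformlyattheperiod} you invoke does not exist as stated: that lemma gives a domination \emph{ratio} $\|\psi^t|_{N_s}\|/\min(\psi^t|_{N_u})\le e^{-2\eta t}$ and a product-along-the-period estimate, neither of which bounds an area on a mixed $2$-plane. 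From the domination estimate alone one only gets $e^{(\lambda_s-\lambda_{s+1})\tau_n}\le Ce^{-2\eta\tau_n}$, which is automatic and says nothing about $\lambda_s+\lambda_{s+1}$. Your jump to $e^{(\lambda_s+\lambda_{s+1})\tau_n}\le Ce^{-2\eta\tau_n}$ is unjustified, so the contradiction with $sv(\sigma)>0$ is not established.

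There is also a potential circularity: you invoke Lemma~\ref{4.5} to pin down $\mathrm{Ind}(\gamma_n)=s-1$, but (i) Lemma~\ref{4.5} is stated for \emph{generic} star flows while Lemma~\ref{4.7} is for arbitrary ones, and (ii) the index statement you use is attributed in this paper to ``Lemma~4.5 and Theorem~5.7 in \cite{GSW}''; Theorem~5.7 comes \emph{after} Lemma~4.7 in \cite{GSW} and may well rely on it.

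The missing idea in your eigenvalue analysis is that when $\gamma_n$ enters tangent to $G^{ss}_\sigma$ and exits tangent to $E^u_\sigma$, the flow direction rotates across the dominated splitting $G^{ss}_\sigma\oplus G^{cu}_\sigma$, and this rotation distorts the linear Poincar\'e flow in a way not captured by simply listing the eigenvalues of $D\phi^t$ at $\sigma$ with one removed. Concretely, the direction $E^s_1$ (exponent $\lambda_s$) stays transverse to the flow direction throughout the passage and contracts at rate $e^{\lambda_s\tau_n}$ in the normal bundle; but because the flow direction moves from a contracting eigendirection ($e_{s-1}$) to an expanding one ($e_{s+1}$), the number of contracting \emph{normal} directions changes from $s-1$ at the entry to $s$ at the exit. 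Whatever the index of $\gamma_n$, this forces either $N_s$ to contain a normal direction with positive exponent near $\sigma$ or $N_u$ to contain one with negative exponent, and the uniform-at-the-period bounds of Lemma~\ref{l.uniformlyattheperiod} then fail for large $\tau_n$. Getting this right---in particular, understanding how the saddle value distinguishes the $W^{ss}$-connection case from an ordinary homoclinic (where no contradiction occurs)---is exactly what the detailed argument in \cite{GSW} does, and it is not a routine exponent comparison.
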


\begin{rema}\label{r.escapeclass}Consider a vector field $X$ and a hyperbolic singularity $\sigma$ of $X$.
Assume that $W^{ss}(\sigma)\cap C(\sigma)= \{\sigma\}\,,$ for a strong stable manifold $W^{ss}(\sigma)$,
where $C(\sigma)$ is the chain recurrence class of $\sigma$.

Then there is a filtrating neighborhood $U$ of $C(\sigma)$
on which the strong stable manifold $W^{ss}(\sigma)$ is escaping from $U$ (see the definition in Section~\ref{ss.centerspace}).
\end{rema}
\begin{proof} Each orbit in $W^{ss}(\sigma)\setminus \{\sigma\}$ goes out some filtrating neighborhood of $C(\sigma)$ and
the nearby orbits go out of the same filtrating neighborhood. Notice that the space of orbits in $W^{ss}(\sigma)\setminus \{\sigma\}$ is compact, so that we can consider
a finite cover of it by open sets for which the corresponding
orbits go out a same filtrating neighborhood of $C(\sigma)$.
The announced filtrating neighborhood is the intersection of these finitely many filtrating neighborhoods.
\end{proof}

Remark~\ref{r.escapeclass} allows us to consider the \emph{escaping strong stable} and \emph{strong unstable manifold}
of a singularity $\sigma$ without refereing to a specific filtrating neighborhood $U$ of the
class $C(\sigma)$: these notions do not depend on $U$ small enough.
Thus the notion of the \emph{center space $E^c_\sigma=E^c(\sigma,U)$} is also independent of $U$ for $U$ small enough.
Thus we will denote
$$\PP^c_\sigma=\PP^c_{\sigma,U}$$
for $U$ sufficiently small neighborhood of the chain recurrence class $C(\sigma)$.

\begin{rema}\label{r.central}
Lemma~\ref{4.7} together with Remark~\ref{r.escapeclass} implies that:
\begin{itemize}\item if  $sv(\sigma)>0$, then the center space $E^c_\sigma$ is contained in $G^{cu}$
\item if $sv(\sigma)<0$, then $E^c_\sigma\subset G^{cs}$.
\end{itemize}

%Note that \cite{BPV} provides  examples of star flows with a
%singularity $\sigma$ satisfying $sv(\sigma>0$  in which the center space is $3$ dimensional, and contains a unstable manifold of dimension $2$.
\end{rema}

\begin{lemm}\label{domination}
Let $X$ be a generic star vector field on $M$. Consider a chain recurrent class $C$ of $X$.
Then there is a neighborhood   $U$ of $C$ so that the extended maximal invariant set $B(X,U)$ has a
 dominated splitting for the extended linear Poincar\'e flow $$\cN_{B(X,U)}=E\oplus_{\prec}F$$
which extends the stable-unstable bundle defined on the lifted maximal invariant set $\widetilde{\La}(X,U)$.
\end{lemm}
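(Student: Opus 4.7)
The plan is to construct the dominated splitting on $B(X,U)$ in pieces and then glue them: first use the splitting of Lemma~\ref{l.lifted-dominated} on $\widetilde\Lambda(X,U)$, then use Corollary~\ref{c.Lorenz-like} on the projective center bundle $\PP^c_{\sigma'}$ of each singularity $\sigma'$ in $U$, and finally argue that these combine into a continuous, uniformly dominated splitting over the whole extended maximal invariant set.

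First I would shrink $U$ to a filtrating neighborhood of $C$ so small that every singularity of $X$ contained in $U$ belongs to $C$ and that the conclusions of Lemmas~\ref{l.generic}, \ref{4.5} and~\ref{l.lifted-dominated} all apply. Lemma~\ref{l.lifted-dominated} then gives a dominated splitting $\cN=E\oplus_\prec F$ for $\psi^t_\cN$ over $\widetilde\Lambda(X,U)$, with $\dim E$ equal to the common index $i=Ind(p)$ of the periodic orbits in $U$. If $C$ contains no singularity there is nothing more to do, so I assume $C$ contains a singularity, and is thus nontrivial.

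Next I would analyze each singularity $\sigma'$ in $U$. By Lemma~\ref{4.2} and genericity, $\sigma'$ is Lorenz-like, and Lemma~\ref{4.5} pins down $Ind(\sigma')=i+1$ when $sv(\sigma')>0$ and $Ind(\sigma')=i$ when $sv(\sigma')<0$. By Remark~\ref{r.central}, $\PP^c_{\sigma'}\subset\PP G^{cu}_{\sigma'}$ in the first subcase and $\PP^c_{\sigma'}\subset\PP G^{cs}_{\sigma'}$ in the second. Applying Corollary~\ref{c.Lorenz-like} in the appropriate subcase and restricting to $\PP^c_{\sigma'}$ yields a dominated splitting $\cN_L=E_L\oplus F_L$ over this $\phi^t_\PP$-invariant compact set, with $\dim E_L=i$ in both subcases — precisely the stable dimension of the splitting already built on $\widetilde\Lambda(X,U)$.

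Then I would glue. The set $B(X,U)$ is compact and $\phi^t_\PP$-invariant by Proposition~\ref{p.extended}, and there are only finitely many singularities in $U$, so taking the maximum of the finitely many domination time scales produces a uniform constant. The subtle point is continuity at accumulation: if $L_n\in\widetilde\Lambda(X,U)$ converges to some $L\in\PP^c_{\sigma'}$, then up to subsequence $E_{L_n}\to \tilde E_L$ and $F_{L_n}\to \tilde F_L$ with $\dim\tilde E_L=i$ and the domination inequality inherited from the limit. On the $\phi^t_\PP$-orbit closure of $L$, which is contained in $\PP^c_{\sigma'}$ by invariance, the uniqueness of a dominated splitting with prescribed dimensions forces $(\tilde E_L,\tilde F_L)$ to coincide with the splitting supplied by Corollary~\ref{c.Lorenz-like}, so the two definitions match.

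The main obstacle is exactly this compatibility step: the splittings on $\widetilde\Lambda(X,U)$ and on each $\PP^c_{\sigma'}$ come from independent sources and would not patch without the precise index matching of Lemma~\ref{4.5}. Once continuity at the interface is verified, the standard extension of a dominated splitting from a dense invariant subset of a compact invariant set to its closure, applied inside $B(X,U)$, produces the dominated splitting claimed in the lemma and finishes the proof.
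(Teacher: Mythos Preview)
Your proposal is correct and follows essentially the same route as the paper: both build the splitting on $\widetilde\Lambda(X,U)$ via Lemma~\ref{l.lifted-dominated}, on the projective center spaces of the singularities via Corollary~\ref{c.Lorenz-like} (with the index match supplied by Lemma~\ref{4.5} and Remark~\ref{r.central}), and then invoke uniqueness of dominated splittings of prescribed dimension to glue. The only cosmetic difference is that the paper covers $B(X,U)$ by $\widetilde\Lambda(X,U)$ together with the larger sets $\PP G^{cu}_{\sigma_i}$ or $\PP G^{cs}_{\sigma_i}$ (on which Corollary~\ref{c.Lorenz-like} is stated) and appeals to uniqueness on the intersections in one line, whereas you restrict at once to $\PP^c_{\sigma'}$ and spell out the limit argument for continuity at the interface; this is the same uniqueness principle made explicit.
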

\begin{proof} The case where $C$ is not singular is already done.  According to Lemma~\ref{4.5}  there an integer $s$ and  a neighborhood $U$ of $C$ so that
every periodic orbit in $U$ has index $s$ and every singular point $\sigma$ in $U$ is Lorenz like, furthermore either its index is $s$ and $sv(\sigma)<0$
or $\sigma$ has index $s+1$ and $sv(\sigma)>0$.

According to Remark~\ref{r.central}, one has:
$$B(X,U)\subset \tilde \La(X,U)\cup\bigcup_{sv(\sigma_i)<0} \PP G^{cs}_{\sigma_i}\cup \bigcup_{sv(\sigma_i)>0}
\PP G^{cu}_{\sigma_i}$$
By  Corollary~\ref{c.Lorenz-like} and Lemma~\ref{l.lifted-dominated} each of this set admits a dominated splitting  $E\oplus F$ for the extended linear
Poincar\'e flow $\psi_\cN$ with $dim E=s$.

 The uniqueness of the dominated splittings for prescribed dimensions implies that these dominated splitting coincides  on the intersections  concluding.
\end{proof}

 We already proved the existence of a dominated splitting $E\oplus F$, with $dim(E)=s$, for the extended linear Poincar\'e flow over $B(X,U)$ for a small
filtrating neighborhood of $C$, where $s$ is the index of any periodic orbit in $U$.
It remains to show that the extended linear Poincar\'e flow admits a reparametrization which contracts uniformly the bundle $E$ and
a reparametrization which expands the bundle $F$.

Lemma~\ref{4.2} divides the set of singularities in $2$ kinds of singularities, the ones with positive saddle value and the ones with negative saddle value.
We denote  $$S_E:=\{x\in Zero(X)\cap U \text{ such that } sv(x)<0\}\hspace{0,2cm} \text{ and, }$$
$$S_F:=\{x\in Zero(X)\cap U \text{ such that } sv(x)>0\}\,.$$

Recall that Section~\ref{ss.reparametrization} associated  a cocycle $h_\sigma\colon\La_X\to\RR$, whose cohomology class is well defined, to every singular point
$\sigma$.

Now we define $$h_{E}=\Pi_{\sigma\in S_E} h_\sigma \mbox{ and  }h_{F}=\Pi_{\sigma\in S_F} h_\sigma.$$

Now Lemma~\ref{l.converse}  and therefore Theorem~\ref{t.converse} are a direct consequence of the next lemma:

 \begin{lemm}\label{l.contraction}
Let $X$ be a generic star vector field on $M$. Consider a chain recurrent class $C$ of $X$.
Then there is a neighborhood   $U$ of $C$ so that the extended maximal invariant set $B(X,U)$  is such that the normal space has a dominated  splitting $\cN_{B(X,U)}=E\oplus_{\prec}F$ such that the space $E$ (resp. $F$) is uniformly contracting (resp. expanding) for the reparametrized extended linear Poincar\'e flow  $h_E^t\cdot\psi^t_{\cN}$
(resp. $h_F^t\cdot\psi^t_{\cN}$).
\end{lemm}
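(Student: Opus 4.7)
The plan combines four ingredients from the preceding sections. The dominated splitting $\cN_{B(X,U)}=E\oplus_\prec F$ is already given by Lemma~\ref{domination}, and since any reparametrization preserves a dominated splitting, the only thing to verify is the uniform contraction of $h_E^t\psi_\cN^t$ on $E$ and the uniform expansion of $h_F^t\psi_\cN^t$ on $F$. These two statements are symmetric under the substitution $X\leftrightarrow -X$, $S_E\leftrightarrow S_F$, so I would only carry out the first. As a preliminary, fix the local data $(U_\sigma, V_\sigma, \|\cdot\|)$ entering the construction of each $h_\sigma$ in Section~\ref{ss.reparametrization} so that the $U_\sigma$'s are pairwise disjoint and $U_\sigma\subset V_{\sigma'}$ for every pair $\sigma\ne\sigma'$.

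\emph{Contraction on the singular fibers.} Fix $\sigma_0\in\mathrm{Zero}(X)\cap U$. For any line $L\in\PP^c_{\sigma_0}$ the whole orbit $\phi_\PP^t(L)$ sits over the fixed point $\sigma_0$, which lies in $V_\sigma$ for every $\sigma\ne\sigma_0$; the definition of $h_\sigma$ then yields $h_\sigma^t(L)=1$ for every $\sigma\ne\sigma_0$. On $\PP^c_{\sigma_0}$ the cocycle $h_E^t$ therefore reduces to $h_{\sigma_0}^t$ if $\sigma_0\in S_E$ and to the constant cocycle $1$ if $\sigma_0\in S_F$. Using Remark~\ref{r.central} to locate $\PP^c_{\sigma_0}$ inside $\PP G^{cs}_{\sigma_0}$ when $sv(\sigma_0)<0$ (resp.\ inside $\PP G^{cu}_{\sigma_0}$ when $sv(\sigma_0)>0$), Corollary~\ref{c.Lorenz-like} provides exactly the uniform contraction of $E$ required in each case, with rates uniform over the finite set of singular points in $U$.

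\emph{Hyperbolicity at the period and extension to $\widetilde{\La}(X,U)$.} On a periodic orbit $\gamma\subset U$ of period $\pi$, the cocycle relation combined with $\phi^\pi(\gamma(0))=\gamma(0)$ forces $h_\sigma^\pi(S_X(\gamma(0)))=1$ for every singular point $\sigma$: the factors $\|X(\cdot)\|^{\pm 1}$ introduced at each entrance/exit of $\gamma$ into $U_\sigma$ telescope to $1$ around one full period. Consequently $h_E^\pi\equiv 1$ on lifted periodic orbits, so $h_E^t\psi_\cN^t$ coincides with $\psi_\cN^t$ at each period; combined with the domination $E\oplus_\prec F$, Lemma~\ref{l.uniformlyattheperiod} then delivers uniform exponential contraction of $h_E^t\psi_\cN^t|_E$ at the period of every periodic orbit in $U$, with rate independent of the orbit. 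By the density of periodic orbits in $C$ (Lemma~\ref{l.generic}) and a standard Pliss/Liao-type argument as in \cite{GSW}, this upgrades to uniform contraction of $h_E^t\psi_\cN^t|_E$ on the closure of the lifted periodic orbits, and by genericity this closure is all of $\widetilde{\La}(X,U)$.

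\emph{Main obstacle: gluing the two hyperbolicities across $B(X,U)$.} The essential difficulty is to combine the uniform contraction on $\widetilde{\La}(X,U)$ with that obtained on each singular fiber $\PP^c_\sigma$ uniformly over orbits that may spend arbitrarily long windows of time inside a small neighborhood of a singularity before returning to the regular part. The cleanest route is an ergodic contradiction: if uniform contraction fails on $B(X,U)$, extract a $\phi_\PP^t$-invariant probability measure $\mu$ on $B(X,U)$ whose Lyapunov exponent of $h_E^t\psi_\cN^t$ on $E$ is nonnegative, and pass to an ergodic component. Components supported on a singular fiber are excluded by the first step, and components charging $\widetilde{\La}(X,U)$ are weak-$\ast$ approximated by empirical measures on long periodic orbits (Lemma~\ref{l.generic}), for which the hyperbolicity at the period of the second step forces all Lyapunov exponents on $E$ to be negative; the domination is used to control the limiting exponents. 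The quantitative book-keeping during excursions close to singularities---where the reparametrization $h_E^t$ precisely supplies the "missing" contraction of $\psi_\cN^t|_E$---is the principal technical work, and the argument follows closely the selecting-lemma/Liao machinery of \cite{GSW}, adapted to the reparametrized cocycles $h_E^t\psi_\cN^t$ and $h_F^t\psi_\cN^t$.
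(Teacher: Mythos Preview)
Your proposal identifies the same three ingredients as the paper's proof---contraction on the singular fibers via Corollary~\ref{c.Lorenz-like}, the vanishing $\int\log h_E^T\,d\nu_n=0$ on periodic measures together with Lemma~\ref{l.uniformlyattheperiod}, and an ergodic contradiction---and your Step~3 is exactly the paper's argument. Two points are worth sharpening.

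First, your Step~2 overstates what is achieved. Uniform contraction at the period plus Hausdorff density of periodic orbits does \emph{not} upgrade, via any standard Pliss/Liao argument, to uniform contraction on all of $\widetilde{\La}(X,U)$: the obstruction is precisely the orbits that linger arbitrarily long near singularities, and these are already in $\widetilde{\La}(X,U)$. If Step~2 really delivered uniform contraction on $\widetilde{\La}(X,U)$, then since $B(X,U)=\widetilde{\La}(X,U)\cup\bigcup_\sigma\PP^c_\sigma$ is a finite union of compact invariant sets, Step~3 would be trivial and there would be no ``main obstacle''. The paper does not attempt this intermediate step; it goes directly to the ergodic contradiction on $B(X,U)$, using the periodic-orbit estimate and the singular-fiber estimate only as the two ways to rule out the bad ergodic component. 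You should drop the claim of uniform contraction on $\widetilde{\La}(X,U)$ and keep from Step~2 only the inequality $\int\log\|h_E^T\psi_\cN^T|_E\|\,d\nu_n\le -\eta$ for periodic measures.

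Second, in Step~3 the weak-$*$ approximation of a non-atomic ergodic measure by measures on periodic orbits is not a consequence of Lemma~\ref{l.generic} (which gives only Hausdorff density); it requires the ergodic closing lemma, as the paper invokes explicitly. One also has to note that $\log\|h_E^T\psi^T|_E\|$, viewed on $M$, is bounded and discontinuous only at the singularities, so weak-$*$ convergence still passes through when the limit measure gives them zero mass. No selecting-lemma or Liao shadowing is used in the paper's proof; the ergodic-measure argument you describe is the whole story.
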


The proof uses the following theorem by Gan Shi and Wen, which describes the ergodic measures for a star flow.
Given a $C^1$ vector field $X$, an ergodic measure $\mu$ for the flow $\phi_t$,  is said to be \emph{hyperbolic} if either $\mu$ is
supported on a hyperbolic singularity or
$\mu$ has  exactly one zero Lyapunov exponent, whose invariant subspace is spanned by $X$.

\begin{theo}[\label{5.6}lemma  5.6 \cite{GSW}]
Let $X$ be a star flow. Any invariant ergodic measure $\mu$ of the flow $\phi_t$ is a hyperbolic measure. Moreover, if $\mu$ is not the atomic measure
on any singularity, then $supp(\mu) \cap H(P)\neq \emptyset$, where $P$ is a periodic orbit with the index of $\mu$, i.e., the number of negative Lyapunov
exponents of $\mu$(with multiplicity).
\end{theo}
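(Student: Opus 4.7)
The plan is to combine Liao's Ergodic Closing Lemma for vector fields with the uniform hyperbolicity at periodic orbits delivered by the star condition (Lemma~\ref{l.uniformlyattheperiod}). First, I would reduce to the non-atomic case. Every singularity of a star flow is necessarily hyperbolic (a non-hyperbolic singularity can be perturbed to produce a non-hyperbolic periodic orbit by a standard Hopf-type bifurcation argument), so if $\mu$ is atomic on a singularity it is trivially hyperbolic. Assume therefore that $\mu$ charges no singularity; by Oseledec's theorem $\mu$-a.e.\ point $x$ admits a Lyapunov splitting of $T_xM$, and since $X(x)\neq 0$ the invariant line $\RR X(x)$ contributes a zero Lyapunov exponent.

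Next, I would apply the Liao--Ma\~n\'e type Ergodic Closing Lemma for vector fields (Wen's version): for $\mu$-a.e.\ $x$ there exist vector fields $Y_n \to X$ in the $C^1$-topology and periodic orbits $P_n$ of $Y_n$, with periods $\pi(P_n)\to\infty$, whose normalized invariant measures converge weakly-$*$ to $\mu$. Since being a star flow is a $C^1$-open property, each $Y_n$ is itself star for $n$ large, hence Lemma~\ref{l.uniformlyattheperiod} applies uniformly along $P_n$: the normal bundle splits as $N = N_s^n \oplus N_u^n$ with hyperbolicity at the period controlled by $e^{-\eta \pi(P_n)}$ on $N_s^n$ and $e^{\eta \pi(P_n)}$ on $N_u^n$, where $\eta>0$ does not depend on $n$. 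Taking logarithms, dividing by $\pi(P_n)$ and passing to the weak-$*$ limit together with Oseledec's theorem, I would conclude that every non-zero Lyapunov exponent of $\mu$ on the normal direction has absolute value at least $\eta$; in particular $\mu$ is hyperbolic with exactly one zero exponent. The number of negative exponents equals the common index $s$ of the $P_n$, which is eventually constant by Lemma~\ref{4.5}.

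For the second conclusion, I would fix a hyperbolic periodic orbit $P$ of $X$ of index $s$ whose chain recurrence class meets $supp(\mu)$; such a $P$ exists by Lemma~\ref{l.generic} (non-trivial chain classes are accumulated by periodic orbits) together with the previous step (these accumulating periodic orbits have index $s$). Hayashi's Connecting Lemma for flows, combined with the generic coincidence of chain and homoclinic classes, shows that the approximating periodic orbits $P_n$ are eventually homoclinically related to $P$, hence contained in $H(P)$; passing to the Hausdorff limit yields $supp(\mu)\cap H(P)\neq\emptyset$. The main obstacle is the passage to the limit in the Lyapunov exponents when the orbits $P_n$ travel close to singularities: on such segments the instantaneous contraction/expansion rate of the linear Poincar\'e flow blows up and is far from uniform, and one must use that the estimate in Lemma~\ref{l.uniformlyattheperiod} is stated \emph{at the full period}, which is precisely what absorbs the contribution of near-singular excursions and gives clean integrated bounds in the limit.
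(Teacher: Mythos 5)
This theorem is quoted directly from \cite{GSW} (their Lemma 5.6) and the present paper gives no proof of its own, so there is nothing in-text to compare your argument against; I will therefore evaluate the argument on its own merits. Your outline --- the Ergodic Closing Lemma producing $Y_n\to X$ and periodic orbits $P_n$ with periodic measures $\nu_n\to\mu$, followed by Liao's uniform estimate at the period (Lemma~\ref{l.uniformlyattheperiod}) and a passage to the limit to bound the Lyapunov exponents of $\mu$ --- is indeed the strategy used by Gan--Shi--Wen.

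However, two points in your sketch are genuine gaps rather than routine details. First, the passage to the limit. You want to take $\frac{1}{\pi(\gamma_n)}\int \log\norm{\psi^T|_{N_s}}\,d\nu_n \leq -\eta$ and deduce the same inequality for $\mu$. Saying that the estimate being ``at the full period \emph{absorbs} near-singular excursions'' is not an argument: what is actually needed is that the integrand is \emph{uniformly bounded} (which follows because $\norm{\psi^{\pm T}}\leq\norm{D\phi^{\pm T}}$, so both $\norm{\psi^T|_{N_s}}$ and its inverse are bounded on all of $M\setminus Sing(X)$), that the splitting $N_s^n\oplus N_u^n$ has constant index and extends continuously over the closure of $\bigcup P_n$ off $Sing(X)$ (this uses the $T$-domination of Lemma~\ref{l.uniformlyattheperiod} together with Proposition~\ref{p.robustcocycle}), and that $\mu$ gives zero mass to the discontinuity set, so the Portmanteau theorem applies. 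You should also observe that converting a Lyapunov estimate for the linear Poincar\'e flow into one for $D\phi^t$ requires $\log\norm{X}$ to be $\mu$-integrable; this holds for ergodic $\mu$ of a $C^1$ flow charging no singularity, but it is a separate lemma, not a corollary of Oseledec.

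Second, and more seriously, the statement is for \emph{all} star flows, whereas your argument for both the index constancy and the conclusion $supp(\mu)\cap H(P)\neq\emptyset$ invokes Lemma~\ref{l.generic} and Lemma~\ref{4.5}, both of which hold only for a residual (hence in particular not open) subset of star flows, and ``generic coincidence of chain and homoclinic classes''. You cannot upgrade a residual-set conclusion to an all-star-flows conclusion by a limiting argument, since the measure $\mu$ and the vector field $X$ are fixed from the start. The argument in \cite{GSW} for the homoclinic-class part proceeds differently: it applies the connecting lemma directly to the approximating $Y_n$ (which need not be generic, only $C^1$-close to $X$) to relate $P_n$ to a fixed periodic orbit $P$ of $X$, rather than relying on a genericity dichotomy. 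As written, your second conclusion has a real gap.
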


\begin{proof}[ of lemma \ref{l.contraction}]

We argue by contradiction, assuming that the bundle $E$ is not uniformly contracting for $h_E\cdot \psi^t_{\cN}$ over
$B(X,U)$ for every filtrating neighborhood $U$ of the class $C$.

One deduces the following claim:
\begin{clai*} Let $\tilde C(\sigma)\subset \tilde \La(X)$ be the closure in $\PP M$ of the lift of $C(\sigma)\setminus Zero(X)$.
Then, for every $T>0$, there
exists an ergodic invariant measure $\mu_T$ whose support is contained in $\PP^c_\sigma\cup \tilde C(\sigma)$ such that
$$\int
\log\norm{h^T_E.\psi^T_{\cN} \mid_{E}} d\mu(x)\geq 0\,.$$
 \end{clai*}
\begin{proof}
For all $U$, there exist an ergodic measure  $\mu_T$ whose support is contained in $B(X,U)$ such that
$$\int\log\norm{h^T_E.\psi^T_{\cN} \mid_{E}} d\mu_T(x)\geq 0\,.$$
 But note that the class $C$, needs not to be a priori a maximal invariant set in a neighborhood $U$. We fix this by observing the fact that $$\PP^c_\sigma\cup \tilde C(\sigma)\subset B(X,U)$$ for any $U$ as small as we want and actually  we can choose a sequence of neighborhoods $\set{U_n}_{n\in\NN}$  such that $U_n \to C $ and therefore $$\PP^c_\sigma\cup \tilde C(\sigma)=\bigcap_{n\in\NN} B(X,U_n)\,.$$
 This defines a sequence of measures $\mu^n_T \to \mu^0_T $ such that
 $$\int\log\norm{h^T_E.\psi^T_{\cN} \mid_{E}} d\mu^n_T(x)\geq 0\,,$$
 and with supports converging to $\PP^c_\sigma\cup \tilde C(\sigma)$.
 The resulting limit measure $\mu^0_T$, whose support is contained in $\PP^c_\sigma\cup \tilde C(\sigma)$, might not be hyperbolic but it is invariant. We can decompose it in sum of ergodic measures, and so if
 $$\int\log\norm{h^T_E.\psi^T_{\cN} \mid_{E}} d\mu^0_T(x)\geq 0\,,$$
 There must exist an ergodic measure $\mu_T$, in the ergodic decomposition of $\mu^0_T$,
  $$\int\log\norm{h^T_E.\psi^T_{\cN} \mid_{E}} d\mu_T(x)\geq 0\,,$$
  and the support of  $\mu_T$ is contained in $\PP^c_\sigma\cup \tilde C(\sigma)$.
\end{proof}

Recall that for generic star flows, every chain recurrence class in $B(X,U)$ is Hausdorff limit of periodic orbits  of the same index and that
satisfy the conclusion of Lemma~\ref{l.uniformlyattheperiod}. Let $\eta>0$ and  $T_0>0$ be given by Lemma~\ref{l.uniformlyattheperiod}.  We consider
an ergodic measure
$\mu=\mu_T$ for some $T>T_0$.

\begin{clai*}Let  $\nu_n$ be a measure supported on a periodic orbits $\gamma_n$ with period $\pi \gamma_n$ bigger than $T$ , then $\int\log h^T_E d\nu_n(x)=0$.

\end{clai*}
\begin{proof}
By definition of $h^T_E$
$$\log h^T_Ed\nu_n(x)=\log\Pi_{\sigma_i\in S_E}\norm{h^T_{\sigma_i}}d\nu_n(x)\,,$$
so it suffices to prove the claim for a given $h^T_{\sigma_i}$.
For every $x$ in $\gamma$ by the cocycle condition in lemma \ref{l.hcocycle} we have that

\begin{eqnarray*}
 \Pi^{(m\pi(\gamma)/T)-1}_{i=0}&&h^T_{\sigma_i}(\phi^Y_{iT}(x))=h^{(m\pi(\gamma)/T)-1}_{\sigma_i}(x)
\end{eqnarray*}

The norm of the vector field restricted to $\gamma$ is bounded, and therefore $h^{(m\pi(\gamma)/T)-1}_{\sigma_i}(x)$ is bounded for $m\in\NN$ going to infinity.
Then this is also true for $h^T_E$.
Since $\nu_n$ is an ergodic measure, we have that
\begin{eqnarray*}
\int\log h^T_E d\nu_n(x)&=&\lim_{m\to\infty}\frac{1}{m}\sum^{(m\pi(\gamma)/T)-1}_{i=0}\log \left(h^T_E(\phi^Y_{iT}(x))\right)\\
   &=&\lim_{m\to\infty}\frac{1}{m}\log \left(\Pi^{(m\pi(\gamma)/T)-1}_{i=0} h^T_{E}(\phi^Y_{iT}(x))\right)\\
   &=&\lim_{m\to\infty}\frac{1}{m}\log\left(h^{(m\pi(\gamma)/T)-1}_E(x)\right)\\
   &=&0
\end{eqnarray*}
\end{proof}

\begin{clai*}There is a singular point $\sigma_i$ so that $\mu$ is supported on $\PP^c_{\sigma_i}$.
\end{clai*}
\begin{proof}

Suppose that  $\mu$ weights $0$ on $\bigcup_{\sigma_i\in Zero(X)}\PP^c_{\sigma_i}$.
then $\mu$ projects on $M$ on an ergodic measure $\nu$ supported on the class $C(\sigma)$ and such that ut weights $0$ in the singularities, for which
$$\int \log\norm{h_E.\psi^T_{\cN} \mid_{E}} d\nu(x)\geq 0 .$$

Recall that $\psi^T $ is the linear Poincar\'{e} flow, and $h^T_E$ can be defined as a function of $x\in M$ instead of as a function of $L\in\PP M$ outside of an arbitrarily small neighborhood of the singularities.

 However, as $X$ is generic, the ergodic closing lemma implies that  $\nu$ is the weak$*$-limit of measures $\nu_n$ supported on periodic orbits $\gamma_n$
 which converge for the Hausdorff distance to the support of $\nu$. Therefore, for $n$ large enough, the $\gamma_n$ are contained in
 a small filtrating neighborhood of $C(\sigma)$ therefore satisfy
 $$\int
\log\norm{h^T_E.\psi^T \mid_{E}} d\nu_n(x)\leq -\eta.$$

The map $\log\norm{h^T_E.\psi^T\mid_{E}}$ is not continuous. Nevertheless, it is uniformly bounded and the unique discontinuity points are
the singularities of $X$. These singularities have (by assumption) weight $0$ for $\nu$ and thus admit neighborhoods with arbitrarily small weight.
Out of such a neighborhood the map is continuous.  One deduces that
$$\int\log\norm{h^T_E.\psi^T \mid_{E}} d\nu(x)=\lim\int\log\norm{h_E.\psi^T\mid_{E}} d\nu_n(x)$$
and therefore is strictly negative, contradicting the assumption.
This contradiction proves the claim.

\end{proof}

On the other hand, Corollary~\ref{c.Lorenz-like} asserts that $h_E\cdot \psi_\cN$ contracts uniformly the bundle $E$
\begin{itemize}
\item over  the projective space $\PP G^{cs}_{\sigma_i}$,  for $\sigma_i$ with $sv(\sigma_i)<0$ : note that, in this case, $\sigma_i\in S_E$ so that
$h_E$ coincides with $h_{\sigma_i}$ on $\PP G^{cs}_{\sigma_i}$;
\item over $\PP G^{cu}_{\sigma_i}$ for $\sigma_i$ with $sv(\sigma_i)>0$:
note that, in this  case $\sigma_i\notin S_E$ so that  $h^t_E$ is constant equal to $1$ on
$\PP G^{cu}_{\sigma_i}\times \RR$.
\end{itemize}

Recall that $\PP^c_{\sigma_i}$ is contained in $\PP G^{cs}_{\sigma_i}$ (resp. $\PP G^{cu}_{\sigma_i}$)  if $sv(\sigma_i)<0$ (resp. $sv(\sigma_i)>0)$.  One deduces
 that there is $T_1>0$  and $\varepsilon>0$ so that
$$
\log\norm{h_E.\psi^T_{\cN} \mid_{E_L}}\leq -\varepsilon, \quad \forall L\in \PP^c_{\sigma_i} \mbox{ and } T>T_1 .$$

Therefore the measures $\mu_T$, for $T>\sup{T_0,T_1}$ cannot be supported on $\PP^c_{\sigma_i}$, leading to a contradiction.

The expansion for $F$ is proved analogously.

And this finishes the proof of Lemma \ref{l.contraction} and therefore the proof of Lemma~\ref{l.converse} and Theorem~\ref{t.converse}.
\end{proof}

\section{A multisingular hyperbolic set in $\RR^3$}\label{s.example}
This section will build of a chain recurrence class  in $M^3$ containing two singularities of different indexes, that will be multisingular hyperbolic.  However this will not be a robust class, and the singularities will not be robustly related.
Other examples of this kind are exhibited in \cite{BaMo}.
 A robust example is announced by the second author on a 5-dimensional manifold,

 %Since the results in \cite{MPP} and \cite{GLW} imply that in dimension 3 and 4 the star flows are, open and densely, singular hyperbolic in the usual sense. This structure forbids the  coexistence of singularities of different indexes in the same class.

We add this example since it illustrates de situation in the simplest way we could.

\begin{theo}\label{t.wexample}
There exists a vector field $X$  in $S^2\times S^1$ with an isolated chain recurrent class  $\La$ such that :
\begin{itemize}

\item There are 2 singularities in  $\La$. They are  Lorenz like and of different index.
\item  There is cycle between the singularities. The cycle and the singularities  are the only orbits in $\La$.
\item The set $\La$ is multisingular hyperbolic.

\end{itemize}
\end{theo}

To begin with the proof of the theorem, let us start with the construction of a vector field $X$, that we will later show that it has the properties of the Theorem \ref{t.wexample}.

\begin{figure}[htb]
\begin{center}
\includegraphics[width=0.35\linewidth]{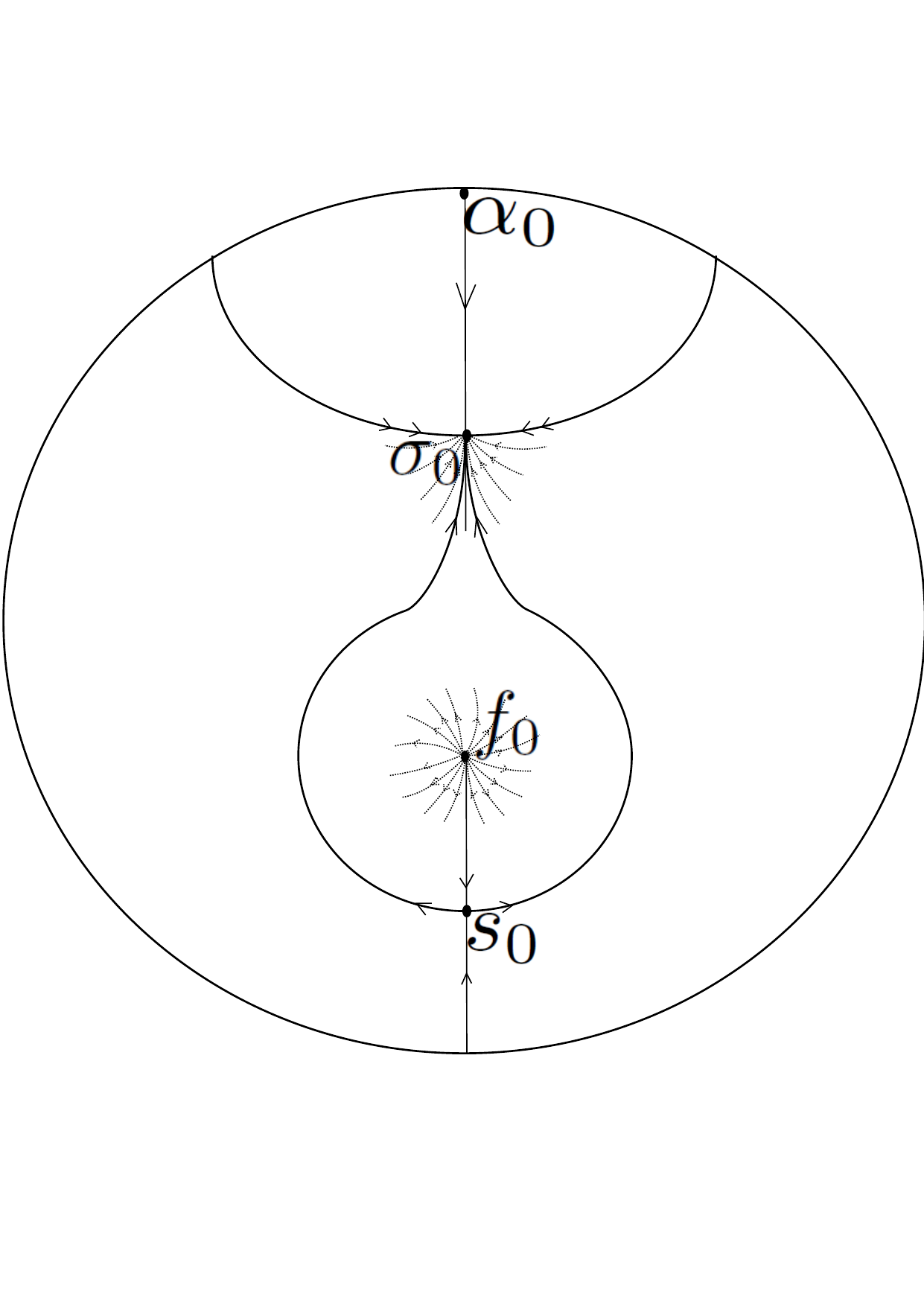}
\end{center}
\caption{The vector field in $S^2$ }
\label{s}
\end{figure}

We consider a vector field  in $S^2$ having:
 \begin{itemize}
 \item A source $f_0$
such that the basins of  repulsion  of $f_0$ is a disc bounded by a cycle $\Gamma$ formed by the  unstable manifold of a saddle $s_0$ and a sink $\sigma_0$.
 \item A source $\alpha_0$ in the other component limited by $\Gamma$.
  %\item the basins of  repulsion  of $\alpha_0$ is a disc bounded by $\Gamma$,
 \item We require that the tangent at  $\sigma_0$ splits into 2 spaces, one having a stronger contraction than the other. The strongest direction is tangent to $\Gamma$ at $\sigma_0$.

\end{itemize}

Note that the unstable manifold of $s_0$, is formed by two orbits. This two orbits have their $\omega$-limit in $\sigma_0$, and as they approach  $\sigma_0$, they become tangent to the weak stable direction,
(see figure \ref{s}).

Now we consider $S^2$ embedded in  $S^3$, and we define a vectorfield $X_0$ in $S^3$ that is normally hyperbolic at  $S^2$, in fact we have $S^2$  times a strong contraction, and 2 extra sinks that we call $\omega_0$ and $P_0$ completing the dynamics (see figure  \ref{hola}).

\begin{figure}[htb]
\begin{center}
\includegraphics[width=0.44\linewidth]{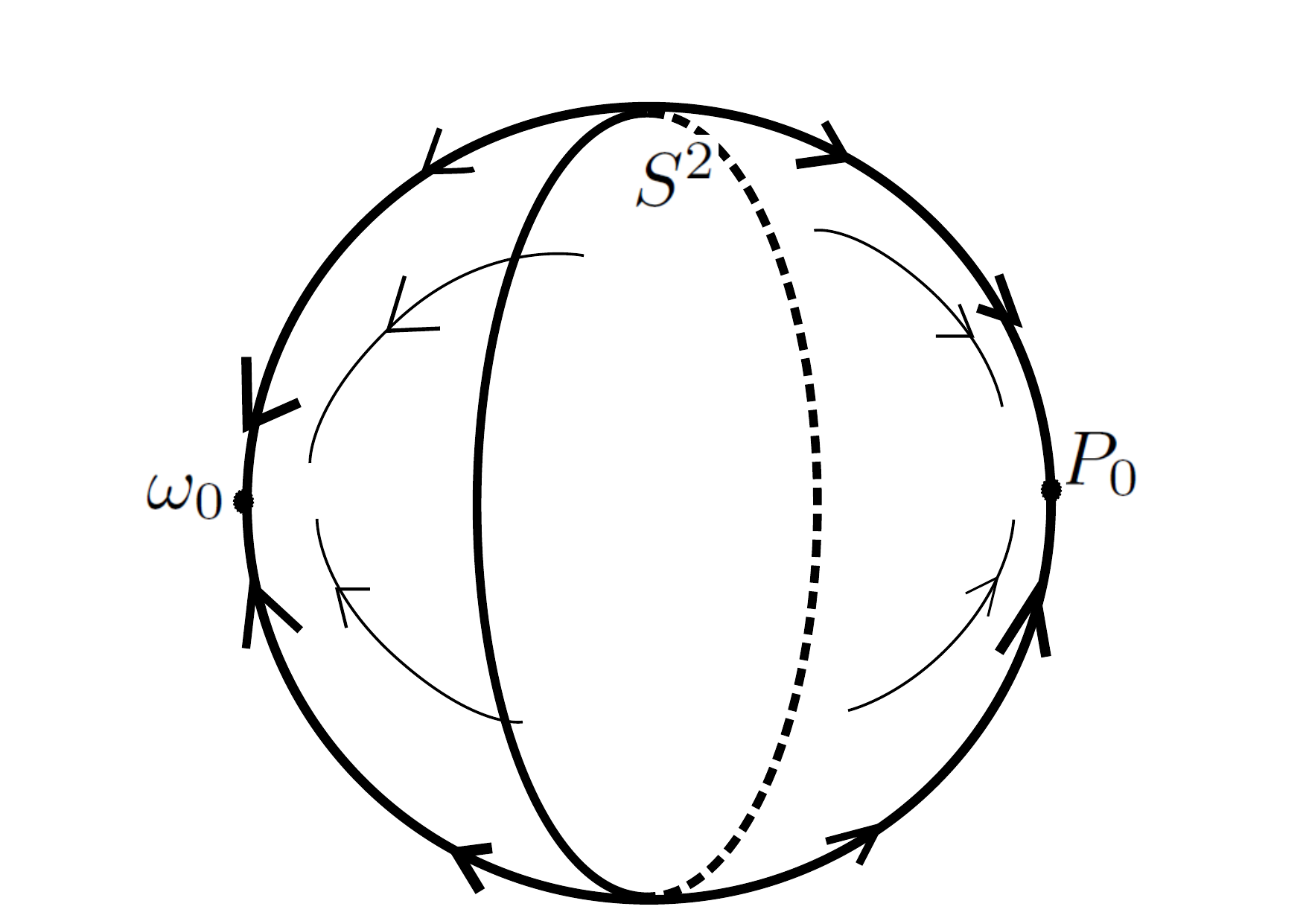}
\end{center}
\caption{ $S^2$ normally repelling  in $S^3$ }
\label{hola}
\end{figure}
%\vspace{2cm}

Note that  $\sigma_0$  is now a saddle. We require that the weaker contraction at $\sigma_0$ is weaker than the expansion. So $\sigma_0$  is Lorenz like.

Now we remove a neighborhood of $f_0$ and $P_0$. The resulting manifold is diffeomorphic  to  $S^2\times [-1,1]$ and the vector field $X_0$ will be  pointing out at $A_{0}=S^2\times \set{1}$ (corresponding to removing a neighborhood of $P_0$) and entering at $B_{0}=S^2\times\set{-1}$ (corresponding to removing a neighborhood of $f_0$)(see figure \ref{X0}).

\begin{figure}[htb]
\begin{center}
\includegraphics[width=0.6\linewidth]{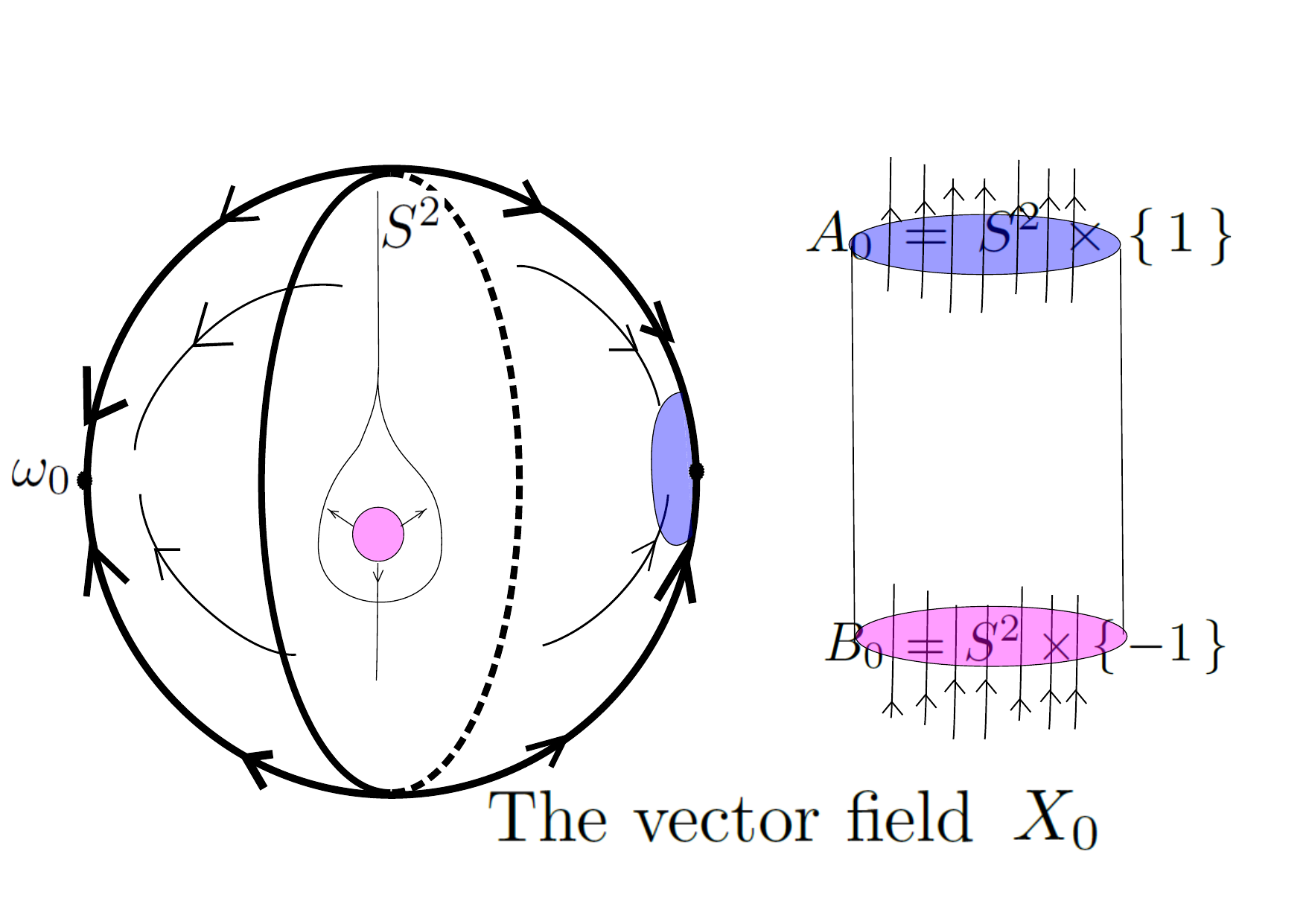}
\end{center}
\caption{ Removing  a neighborhood of $f_0$ and $P_0$  }
\label{X0}
\end{figure}

Now we consider an other copy of   $S^2\times [-1,1]$  with a vector field  $X_1$ that is the reverse time of  $X_0$.
Therefore  $X_1$ has a saddle called   $\sigma_1$  that has a strong expansion, a weaker expansion and a contraction, and   is Lorenz like.
It also has a sink called $\alpha_1$ a source called $\omega_1$ and saddle called $s_1$.

The vector field $X_1$ is entering at $A_{1}=S^2\times \set{1}$ and pointing out at $B_{1}=S^2\times\set{-1}$.

We can now paste $X_1$ and $X_0$  along their boundaries ($A_{0}$ with $A_{1}$ and the other two). Since both vector fields are transversal to the boundaries we can  obtain a $C^1$ vector field $X$ in the resulting manifold that is diffeomorphic to $S^2\times S^1$.

 %Let us consider $\gamma_0$ the unstable manifold of $s_0$ intersected with $S^2\times\set{-1}$. This curve divides $S^2\times\set{-1}$ into 2 components,
 We do not  paste any of the boundaries  with the identity.
  We describe first the gluing map of  $A_{0}$ with $A_{1}$. We paste them by a  rotation so that $$\left(\overline{W^u(\alpha_0)}\cap A_{0}\right)^c \text{ and } W^u( s_0)\cap A_{0}$$ are mapped to $$W^s(\alpha_1)\cap \left(A_{1}\right)\,.$$

 %Note that in the dynamics of $S^2$, the orbits of the unstable manifold of $s_0$ where almost tangent to the same direction when approaching $\sigma_0$.
 We require as well that $W^u(\sigma_0)\cap A_{0}$ is mapped to $W^s(\sigma_1)\cap A_{1}\,,$
We will later require an extra condition on this gluing map, which is a generic condition, and that will guarantee the multisingular hyperbolicity.

\begin{figure}[htb]
\begin{center}
\includegraphics[width=0.55\linewidth]{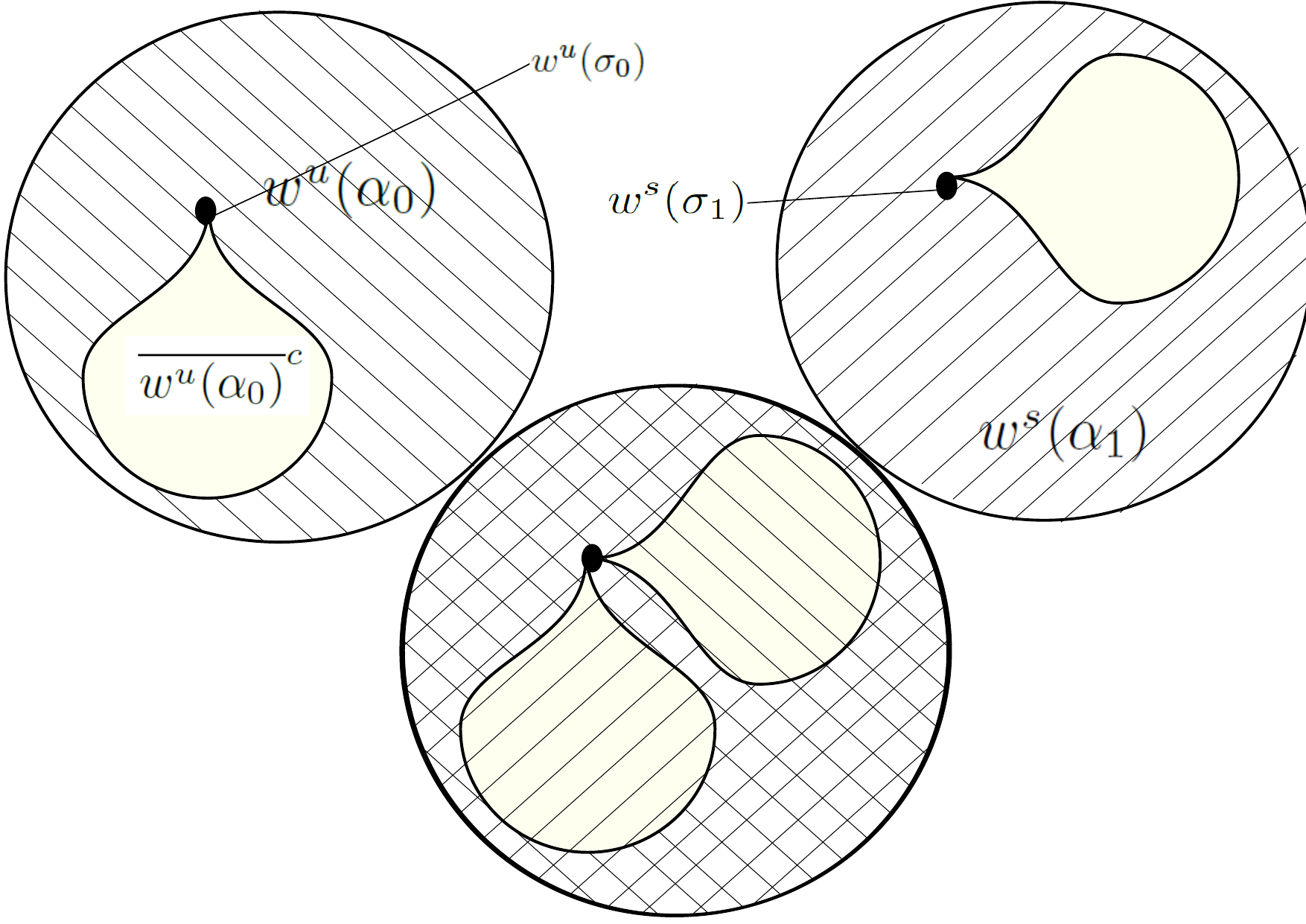}
\end{center}
\caption{ Pasting $S^2\times\set{1}$ with $S^2\times\set{1}$.}
\end{figure}

 To glue $B_{0}$ with $B_{1}$, let us first observe  that this boundaries were formed by removing a neighborhood of $f_0$ and $f_1$. Then by construction $\overline{W^s(\sigma_0)}\cap B_{0}\,$ is a circle that we will call $\cC_0$.  We can also define the corresponding $\cC_1$.
  Note that all points in $\cC_0$, except for one, are in $W^s(\sigma_0)$ while there is one point $l$ in $\cC_0$  that is in $W^s(s_0)$. We paste $B_{0}$ with $B_{1}$, mapping $\cC_0$ to cut transversally $\cC_1$ in points of  $W^s(\sigma_0)$ and  $W^u(\sigma_1)$.

Note that the resulting vector field $X$ has a cycle between two Lorenz like singularities $\sigma_0$ and $\sigma_1$.

\begin{figure}[htb]
\begin{center}
\includegraphics[width=0.50\linewidth]{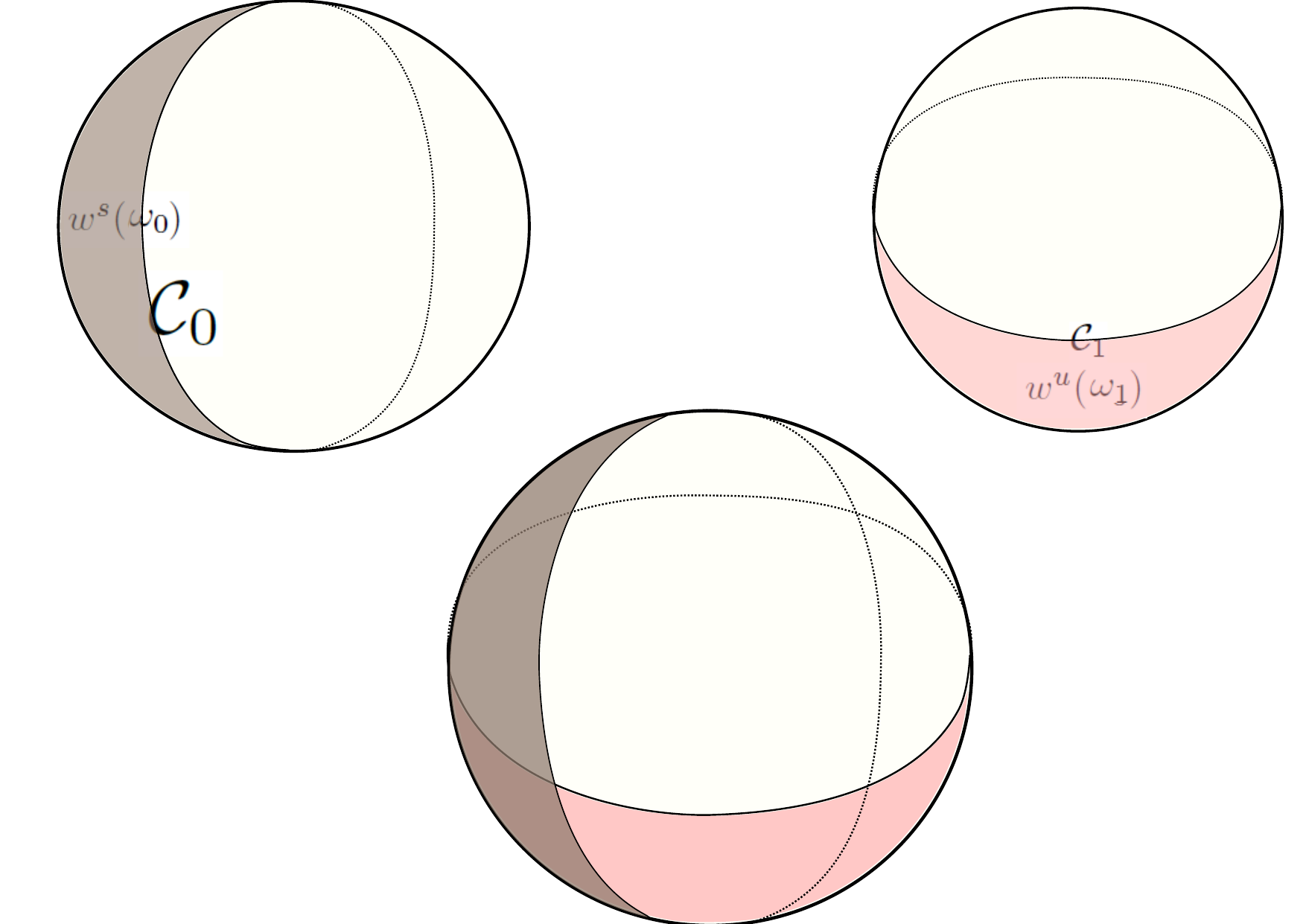}
\end{center}
\caption{ Pasting $S^2\times\set{-1}$ with $S^2\times\set{-1}$.}
\end{figure}
%$$L_{3}=\set{(x,y,1)\,\,\, \text{ such that }\, -\epsilon\leq i\leq\epsilon\,\,\, \text{ where } i=x,z}\,,$$

\pagebreak

%%%%%%%%%%%%%%%%%%%%%%%%%%%%%%%%%%%%%%%%%%%%%%%%%%%%%%%%%%%%%%%%%%%%%%%%%%%%%%%%%%%%%%%%%%%%%%%%%%%%%%%%%%%%%%%%%%%%%%%%%%%%%%%%%%%%%%%%%%%%%%%%%%%%%%%%%%%%%%%%%%%

\begin{lemm} The vector field $X$ defined above is such that the cycle and the singularities are the only chain recurrent points.
\end{lemm}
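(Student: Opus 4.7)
The plan is to exhibit a Morse decomposition of $X$ into seven pairwise disjoint isolated invariant sets — the six hyperbolic singularities $\alpha_0$, $\omega_0$, $s_0$, $\alpha_1$, $\omega_1$, $s_1$ and the set $\Lambda=\{\sigma_0,\sigma_1\}\cup\Gamma$, where $\Gamma$ is the union of the two heteroclinic orbits forming the cycle — and to show that each of them is the full maximal invariant set of a suitable filtrating neighborhood. Since any chain-recurrent point is contained in the maximal invariant set of every filtrating neighborhood that contains it, the lemma will follow at once from Corollary~\ref{c.filtrating}.

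I would first handle the five singularities that are not of ``cycle type''. Each sink $\omega_0,\alpha_1$ admits a small closed trapping ball in which it is the unique invariant point, and dually for the two sources $\alpha_0,\omega_1$. For the saddle $s_0$ (and analogously $s_1$) I would track the invariant manifolds through the construction: on the $X_0$-block every orbit of $W^u(s_0)\setminus\{s_0\}$ either has its $\omega$-limit in $\{\sigma_0,\omega_0\}$ or else crosses $S^2\times\{-1\}$ and is absorbed inside the $X_1$-block by $\{\sigma_1,\alpha_1,s_1\}$; none of them returns to the one-dimensional stable manifold $W^s(s_0)$. Hence $\overline{W^u(s_0)}\cap W^s(s_0)=\{s_0\}$, so $s_0$ admits a small filtrating neighborhood in which it is the only chain-recurrent point, and the same argument with time reversed works for $s_1$.

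For $\Lambda$ internal chain-transitivity is immediate: starting from any $x\in\Lambda$ I follow the flow forward along $\Gamma$ to approach $\sigma_0$ or $\sigma_1$, perform an arbitrarily small jump onto the next heteroclinic piece of $\Gamma$, and close the chain back to $x$. The main obstacle is the construction of a filtrating neighborhood $U$ of $\Lambda$ whose maximal invariant set is exactly $\Lambda$: I would build $U$ as the union of small adapted Lyapunov neighborhoods of $\sigma_0$ and $\sigma_1$ together with thin tubular neighborhoods of the two orbits of $\Gamma$, trimmed along the strong stable leaves of $\sigma_1$ and the strong unstable leaves of $\sigma_0$ so that $U=A\cap R$ for an attracting region $A$ and a repelling region $R$. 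The transversality of the gluing at $S^2\times\{1\}$ between the circles $\cC_0$ and $\cC_1$ is essential here: it guarantees that $W^u(\sigma_1)\cap W^s(\sigma_0)$ consists exactly of the expected transverse heteroclinic orbits, so no unexpected orbit gets trapped inside $U$.

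To conclude, one verifies that every regular orbit of $X$ has both its $\omega$- and its $\alpha$-limit inside the union of the seven candidate classes. This is a straightforward block-by-block computation: in the $X_0$-block the two-dimensional flow on $S^2$ pushes every orbit towards $\{\sigma_0,s_0,\alpha_0\}$, while the normal hyperbolic direction either sends it to the sink $\omega_0$ or ejects it through $S^2\times\{-1\}$ into the $X_1$-block, and the dual picture holds on the $X_1$-side. Combined with the filtrating isolation above, this shows that the chain-recurrent set is exactly the eight singularities together with the orbits of $\Gamma$, as claimed.
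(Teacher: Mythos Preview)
Your approach is correct and is a Conley-theoretic reformulation of the paper's direct argument. The paper proceeds by a cross-section analysis: it examines every point of $S^2\times\{-1\}$ and of $S^2\times\{1\}$ in turn, shows case by case that its orbit either falls into the basin of one of the sinks $\omega_0,\alpha_1$ or comes from one of the sources $\alpha_0,\omega_1$ (hence cannot be chain recurrent), and is left with exactly the cycle orbits plus the singularities. You instead name the seven candidate Morse pieces up front, isolate each with a filtrating neighborhood, and then verify that every orbit has its $\alpha$- and $\omega$-limit in one of the pieces. Both proofs rest on the same underlying computation---tracking orbit destinations through the two glued blocks---but your framing makes the logical skeleton (Morse decomposition $\Rightarrow$ chain recurrent set contained in the union of the pieces) explicit, while the paper's cross-section analysis is quicker and, in particular, sidesteps the explicit construction of a filtrating neighborhood around the cycle $\Lambda$, which in your outline remains only sketched.

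One small factual correction: in this particular construction the cycle $\Gamma$ consists of \emph{three} regular orbits, not two---one from $\sigma_0$ to $\sigma_1$ through $S^2\times\{-1\}$, and two from $\sigma_1$ to $\sigma_0$ through $S^2\times\{1\}$, since the transverse intersection $\cC_0\pitchfork\cC_1$ on a $2$-sphere produces at least two points (the paper labels the three orbits $a,b,c$ in Proposition~\ref{p.unaorbitamas}). This does not affect your strategy, but your description of $\Gamma$ and of the tubular-neighborhood construction should be amended accordingly.
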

\proof
\begin{itemize}
\item All the recurrent orbits by $X_0$ in $S^3$ are the singularities. Once we remove the neighborhoods of the 2 singularities we obtain the manifold with boundary  $S^2\times [-1,1]$. The  the only other orbits of the vector field $X$ (that results from pasting $X_0$ and $X_1$) with hopes of being recurrent need to cut the boundaries.
\item The points in $A_0$ that are in  $\overline{W^u(\alpha_0)}^c\ W^u(\sigma_0)$ are wondering since they are mapped to the stable manifold of the sink $\alpha_1$
%\item  The points in $\overline{W^u(\alpha_0)}$ that are not in $ W^u(\alpha_0)$ are in $W^u(s_0)$ or  in $W^u(\sigma_0)$.
 \item The points in $B_0\cap W^u(\alpha_0)$ are wondering.
\end{itemize}
As a conclusion, the only point in $A_0$ whose orbit could be recurrent is the one in $$B_0\cap W^u(\sigma_0)$$.
Let us now look at the points in  $B_0$.
There is a circle $\cC_0$, corresponding to $\overline{W^s(\sigma_0)}\cap B_0\,$ that divides  $ B_0$ in 2 components.
One of this components is the basin of the sink $\omega_0$ and the other is what used to be the basin of  $P_0$.
So we have the following options:
\begin{itemize}
\item The points that are in the  basin of the sink $\omega_0$ are not chain recurrent.
\item The points that are in  what used to be the basin of  $P_0$ are either mapped into the basin of $\omega_1$ or are sent to  what used to be the basin of  $P_1$. Note that this points cross $A_0$ for the past, and since they are not in the stable manifold of $\sigma_1$ they are wondering.
\item  Some points in $\cC_0$ will be mapped to  the basin of $\omega_1$, others to   what used to be the basin of  $P_1$, and others to $\cC_1$.
In the two first cases those points are wondering
% \item the points in $S^2\times\set{-1}$ and in  $w^u(s_0)$ are mapped to the stable manifold of $\alpha_1$.
\end{itemize}

To sum up,
\begin{itemize}
\item The only recurrent orbits that cross $B_0$, are in the intersection of $\cC_0$ with$\cC_1$.
\item The only recurrent orbits that cross $A_0$, are in the intersection of $W^u(\sigma_0)$ with $W^s(\sigma_1)$.
\item The only recurrent orbits that do not cross the boundaries of $S^2\times [-1,1]$ are singularities.
\end{itemize}
This proves our lemma.
 \endproof

%%%%%%%%%%%%%%%%%%%%%%%%%%%%%%%%%%%%%%%%%%%%%%%%%%%%%%%%%%%%%%%%%%%%%%%%%%%%%%%%%%%%%%%%%%%%%%%%%%%%%%%%%%%%%%%%%%%%%%%%%%%%%%%%%%%%%%%%%%%%%%%%%%%%%%%%%%%%%%%%%%%%%%%%%%%%%%%%%%%%%%%%%%%%%%%%%%%%%%%%%%%%%%
For the Lorenz singularity $\sigma_0$ of $X$ which is of positive saddle value and such that $T_{\sigma_0}M=E^{ss}\oplus E^{s}\oplus E^{uu}$, we define $B_{\sigma_0}\subset \PP M$ as $$B_{\sigma_0}=\pi_{\PP}\left( E^{s}\oplus E^{uu}\right)\,.$$

For the Lorenz singularity $\sigma_1$ of $X$ which is of positive saddle value and such that $T_{\sigma_1}M=E^{ss}\oplus E^{u}\oplus E^{uu}$, we define $B_{\sigma_1}\subset \PP M$ as $$B_{\sigma_1}=\pi_{\PP}\left(  E^{ss}\oplus E^{u}\right)\,.$$
%%%%%%%%%%%%%%%%%%%%%%%%%%%%%%%%%%%%%%%%%%
Let $a$, $b$ and $c$ be points that are one in each of the 3 regular orbits forming the cycle between the two singularities of $X$. We call $a$ to the one such that the $\alpha$-limit of $a$ is $\sigma_0$. We define  $L_a=S_{X}(a)$ $L_b=S_{X}(b)$ and $L_c=S_{X}(c)$. We also note $O(L_a)$, $O(L_b)$ and $O(L_c)$ as the orbits of $L_a$, $L_b$ and $L_c$ by $\phi^t_{\PP}$.
\begin{prop}\label{p.unaorbitamas}
Suppose that $X$ is a vector field defined above.
Then there exist an open set $U$ containing the orbits of $a,b$ and $c$ and the saddles  $\sigma_0$  and $\sigma_1$, such that the extended maximal invariant set $B(U,X)$  is $$B(U,X)=B_{\sigma_0}\cup B_{\sigma_1}\cup O(L_a)\cup O(L_b)\cup O(L_c)\,.$$
\end{prop}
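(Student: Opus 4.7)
The plan is as follows. Since the chain recurrence class $C = \{\sigma_0,\sigma_1\} \cup O(a)\cup O(b)\cup O(c)$ identified in the previous lemma is isolated (the rest of the manifold is covered by wandering points and the basins of the sinks/sources $\alpha_i,\omega_i$), Corollary~\ref{c.filtrating} provides a basis of filtrating neighborhoods of $C$, and I choose $U$ from this basis small enough that the maximal invariant set $\Lambda(X,U)$ equals $C$ itself. By definition,
$$B(X,U) = \Lambda_{\PP,U} \cup \PP^c_{\sigma_0,U}\cup \PP^c_{\sigma_1,U},$$
so the proposition reduces to computing each of these three pieces.

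For the first piece, $\Lambda_{X,U}\setminus\mathrm{Sing}(X)$ is the union of the three cycle orbits, so $S_X(\Lambda_{X,U}\setminus\mathrm{Sing}(X)) = O(L_a)\cup O(L_b)\cup O(L_c)$, and $\Lambda_{\PP,U}$ is the closure of this set in $\PP M$. The only accumulation points not already in $O(L_a)\cup O(L_b)\cup O(L_c)$ lie in fibers over $\sigma_0$ and $\sigma_1$, and it suffices to check they lie in $B_{\sigma_0}\cup B_{\sigma_1}$. This is where the genericity of the gluing enters: the orbit issuing from $\sigma_0$ along its $1$-dimensional unstable space $E^{uu}$ arrives at $\sigma_1$ along the $1$-dimensional stable direction $E^{ss}$, and the two orbits issuing from $\sigma_1$ arrive at $\sigma_0$ tangent to the \emph{weak} stable direction $E^s$ (not the strong stable $E^{ss}$, which is a codimension-one condition avoided by the chosen generic gluing) while leaving $\sigma_1$ tangent to the \emph{weak} unstable $E^u$. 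Hence all four limiting directions lie in $E^s\oplus E^{uu}$ at $\sigma_0$ and in $E^{ss}\oplus E^u$ at $\sigma_1$, which is exactly $B_{\sigma_0}$ and $B_{\sigma_1}$.

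For the other two pieces, I have to identify the escaping strong stable/unstable spaces. At $\sigma_0$ the strong stable manifold $W^{ss}(\sigma_0)$ is tangent to the $1$-dimensional space $E^{ss}$; because no cycle orbit approaches $\sigma_0$ tangent to $E^{ss}$ (by the genericity discussed above) and because $U$ is filtrating, $W^{ss}(\sigma_0)\cap\Lambda_{X,U}=\{\sigma_0\}$, so $E^{ss}$ is the escaping strong stable space; there is no escaping strong unstable space at $\sigma_0$ since $E^u(\sigma_0)$ is $1$-dimensional. Therefore $E^c_{\sigma_0,U}=E^s\oplus E^{uu}$ and $\PP^c_{\sigma_0,U}=B_{\sigma_0}$. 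The analogous time-reversed argument at $\sigma_1$ (where $E^{uu}$ is the $1$-dimensional escaping strong unstable space and $E^s(\sigma_1)$ has no non-trivial strong substructure) gives $\PP^c_{\sigma_1,U}=B_{\sigma_1}$.

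Combining the three computations yields the equality in the statement. The main obstacle is precisely the verification of the generic transversality/position of the cycle orbits with respect to the strong invariant manifolds at $\sigma_0$ and $\sigma_1$: without this, the accumulation of the $O(L_\bullet)$ in $\PP M$ could hit $\pi_\PP(E^{ss}(\sigma_0))$ or $\pi_\PP(E^{uu}(\sigma_1))$, escaping the advertised $B_{\sigma_i}$ and invalidating the escaping-manifold descriptions. One therefore has to explicitly record this genericity as a property of the final step of the construction of $X$ (the extra condition on the gluing on $S^2\times\{-1\}$); once it is in hand, the rest of the proof is a direct unpacking of definitions.
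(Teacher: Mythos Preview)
Your decomposition $B(X,U)=\Lambda_{\PP,U}\cup\PP^c_{\sigma_0,U}\cup\PP^c_{\sigma_1,U}$ and the plan of computing each piece is exactly the paper's approach, but you justify the key step differently and in doing so make the argument more fragile than necessary.

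To show that $E^{ss}(\sigma_0)$ is escaping you appeal to a genericity condition on the gluing, arguing that the cycle orbits land in $W^s(\sigma_0)\setminus W^{ss}(\sigma_0)$. The paper does not need this: by the construction of the original $S^2$ dynamics, the strong stable manifold $W^{ss}(\sigma_0)$ lies in the invariant sphere and its two orbits converge in \emph{backward} time to the source $\alpha_0$, which is outside the class. Hence $W^{ss}(\sigma_0)$ escapes any small filtrating $U$, independently of how the boundary spheres are glued. The analogous fact holds at $\sigma_1$ by the time-reversed construction. So no genericity enters this proposition at all; the generic condition mentioned in the paper (on the $S^2\times\{-1\}$ gluing) is reserved for the later transversality of stable/unstable flags in the multisingular-hyperbolicity lemma, not for the computation of $B(X,U)$.

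Two smaller points. First, your separate verification that the accumulation directions of $O(L_a)\cup O(L_b)\cup O(L_c)$ fall into $B_{\sigma_0}\cup B_{\sigma_1}$ is redundant: once you have established $\PP^c_{\sigma_i,U}=B_{\sigma_i}$, the general lemma that $\Lambda_{\PP,U}\cap\PP_\sigma\subset\PP^c_{\sigma,U}$ takes care of it automatically. Second, the reason $E^{uu}(\sigma_0)$ is \emph{not} escaping is not that it is $1$-dimensional (a $1$-dimensional unstable space can perfectly well escape), but that the cycle orbit $a\subset W^u(\sigma_0)$ lies in $\Lambda_{X,U}$; this is what the paper means by ``the fact that there is a cycle tells us there are no other escaping directions.''
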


\begin{proof}
The  2 orbits of strong stable manifold  of $\sigma_0$ go by construction to $\alpha_0$ for the past. This implies that the strong stable manifold  is escaping.
The fact that there is a cycle tells us that there are no other escaping directions, therefore the center space is formed by the weak stable and the unstable spaces. By definition $B_{\sigma_0}=\PP^c_{\sigma_0}$.
Analogously we see that $B_{\sigma_1}=\PP^c_{\sigma_1}$.
Since the cycle formed by the orbits of $a,b$ and $c$ and the saddles  $\sigma_0$  and $\sigma_1$ is an isolated chain recurrence class, we can chose $U$ small enough so that this chain-class  is the maximal invariant set in $U$.
This proves our proposition.
\end{proof}

\begin{lemm} We can choose a  vector field $X$ defined above is multisingular hyperbolic in $U$.
\end{lemm}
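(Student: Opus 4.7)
The plan is to verify the five conditions of multisingular hyperbolicity one by one, using the explicit structure of $B(X,U)$ given by Proposition~\ref{p.unaorbitamas}. Hyperbolicity of the singular points is built into the construction: $\sigma_0$ is Lorenz-like of index $2$ with positive saddle value, $T_{\sigma_0}M=E^{ss}_0\oplus E^s_0\oplus E^{uu}_0$ and $E^c_{\sigma_0}=G^{cu}_{\sigma_0}=E^s_0\oplus E^{uu}_0$; symmetrically $\sigma_1$ is Lorenz-like of index $1$ with negative saddle value and $E^c_{\sigma_1}=G^{cs}_{\sigma_1}=E^{ss}_1\oplus E^u_1$. In accordance with Proposition~\ref{p.singularites} I set $S_F=\{\sigma_0\}$ and $S_E=\{\sigma_1\}$, taking $h_F=h_{\sigma_0}$ and $h_E=h_{\sigma_1}$ as the cocycles built in Section~\ref{ss.reparametrization}. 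Since the bundles of the required splitting are one-dimensional, the matching of dimensions between $S_E, S_F$ and $E, F$ is automatic.

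Applying Corollary~\ref{c.Lorenz-like} separately to each singularity gives a $1{+}1$ dominated splitting $N_L=E_L\oplus F_L$ over $B_{\sigma_0}=\PP G^{cu}_{\sigma_0}$ with $E$ uniformly $\psi^t_\cN$-contracted and $F$ uniformly $h_{\sigma_0}^t\psi^t_\cN$-expanded, and symmetrically over $B_{\sigma_1}=\PP G^{cs}_{\sigma_1}$ with $F$ uniformly $\psi^t_\cN$-expanded and $E$ uniformly $h_{\sigma_1}^t\psi^t_\cN$-contracted. Choose in the construction of the cocycles $h_{\sigma_0}, h_{\sigma_1}$ the neighborhoods $U_{\sigma_0}, U_{\sigma_1}$ disjoint. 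Then for any $L\in B_{\sigma_0}\subset\PP_{\sigma_0}$ the base point $\sigma_0$ and its whole orbit lie outside $U_{\sigma_1}$, so $h_E^t(L)=h_{\sigma_1}^t(L)\equiv 1$; hence $h_E^t\psi^t_\cN=\psi^t_\cN$ contracts $E$ on $B_{\sigma_0}$. On $B_{\sigma_1}$ the corollary directly gives contraction of $E$ by $h_{\sigma_1}^t\psi^t_\cN=h_E^t\psi^t_\cN$. A symmetric argument establishes uniform expansion of $F$ by $h_F^t\psi^t_\cN$ on $B_{\sigma_0}\cup B_{\sigma_1}$. Thus all five conditions hold over the singular part of $B(X,U)$.

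It remains to extend everything across the three heteroclinic orbits $O(L_a),O(L_b),O(L_c)$. For each such orbit $\gamma$, the bundles $E$ and $F$ at the $\alpha$- and $\omega$-limits in $B_{\sigma_0}\cup B_{\sigma_1}$ are already defined; I propagate them along $\gamma$ by $\psi^t_\cN$-invariance. The compatibility between the two ends is where the "extra condition" on the gluing rotation of the construction is used: the rotations are chosen so that, for each heteroclinic orbit, the strong stable direction at the $\omega$-limit singularity is transverse to the transported image of the escaping strong stable direction at the $\alpha$-limit (equivalently, the unique $1$-dimensional contracting subbundle $E$ at the two ends matches, and similarly for $F$). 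In this $3$-manifold with rank-$2$ normal bundle this transversality is codimension at most $1$ in the space of gluings and hence generic. Once $E\oplus F$ is a continuous dominated splitting over all of $B(X,U)$, uniform contraction of $h_E^t\psi^t_\cN$ on $E$ along $\gamma$ follows by decomposing any long segment $\{\phi^s_\PP(L):s\in[0,T]\}$ into two tails lying in small neighborhoods of $B_{\sigma_0}$ and $B_{\sigma_1}$ (on which the contraction rate is uniform by Corollary~\ref{c.Lorenz-like}) and a bounded-length middle on which the cocycle is bounded by compactness; a symmetric argument yields expansion of $F$. The main obstacle is exactly this verification of genericity of the gluing, which comes down to a transverse intersection count in the $2$-dimensional gluing surfaces and is essentially trivial once the statement is made precise.
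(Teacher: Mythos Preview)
Your proof is correct and follows essentially the same route as the paper's: hyperbolicity of the reparametrized extended linear Poincar\'e flow over $B_{\sigma_0}\cup B_{\sigma_1}$ via Corollary~\ref{c.Lorenz-like}, then extension along each heteroclinic orbit by propagating the stable flag from the $\alpha$-limit and the unstable flag from the $\omega$-limit and invoking the generic transversality of these flags as the ``extra condition'' on the gluing. Your write-up is in fact more explicit than the paper's (you spell out $S_E,S_F$, the vanishing of $h_{\sigma_1}$ on $B_{\sigma_0}$ and vice versa, and the tail/middle decomposition on the regular orbits); the only slip is in the phrasing of the transversality condition --- what must be transverse to the transported $E^{ss}_0$ at the $\omega$-limit is the strong \emph{unstable} space $E^{uu}_1$ (i.e.\ the $F$-bundle there), not the strong stable space --- but your parenthetical clarification already captures the correct statement.
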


\proof

The reparametrized linear Poincar\'e flow is Hyperbolic in restriction to the bundle over $B_{\sigma_0}\cup B_{\sigma_1}$ and of index one. We consider the set $B_{\sigma_0}\cup B_{\sigma_1}\cup O(L_a)$.

The strong stable space at $\sigma_0$ is the stable space for
 the reparametrized linear Poincar\'e flow.
 There is a well defined stable space in the linearized neighborhood of $\sigma_0$ and since the stable space is invariant for the future,
there is  a one dimensional stable flag  that extends along the orbit of $a$.
We can reason analogously with the strong unstable manifold of $\sigma_1$ and conclude that there is an unstable flag extending through the orbit of $a$. We ca choose the  gluing maps of  $S^2\times\set{-1}$ to $S^2\times\set{-1}\,,$ so that the stable and unstable flags in the orbit of $a $ intersect transversally.This is because this condition is open and dense in the possible gluing maps of  $S^2\times\set{-1}$ to $S^2\times\set{-1}\,,$  with the properties mentioned above.
 Therefore the set $B_{\sigma_0}\cup B_{\sigma_1}\cup O(L_a)$ is hyperbolic for the  reparametrized linear Poincar\'e flow.

Analogously we prove that $B_{\sigma_0}\cup B_{\sigma_1}\cup O(L_a)\cup O(L_b)\cup O(L_c)$  is hyperbolic for the  reparametrized linear Poincar\'e flow, and since from proposition \ref{p.unaorbitamas} there exist a $U$ such that,
 $$B_{\sigma1}\cup B_{\sigma2}\cup O(L_a)\cup O(L_b)\cup O(L_c)=B(U,X)\,.$$
 Then $X$ is multisingular hyperbolic in $U$.

\endproof

The example in \cite{BaMo} consists on two singular hyperbolic sets(negatively and positively)
  $H_{-}$ and $H_{+}$ of different indexes, and wandering orbits going from one to the other. Since they are singular hyperbolic $H_{-}$ and $H_{+}$  are multisingular hyperbolic sets of the same index. Moreover, the stable and unstable flags (for the reparametrized  linear Poincare flow ) along the orbits joining $H_{-}$ and $H_{+}$  intersect transversally. This is also true for $H_{-}$.

 With all this ingredients we can prove (in a similar way as we just did with the more simple example above ) that the chain  recurrence class containing  $H_{-}$ and $H_{+}$ in  \cite{BaMo} is multisingular hyperbolic, while it was shown by the authors that it is not singular hyperbolic.

 \section*{Acknowledgements}
 We wold like to thank  for their comments and suggestions, to Martin Sambario, Rafael Potrie and  Sylvain Crovisier for their comments and suggestions.
 We would also like to thank Lan Wen, Shaobo Gan, Yi Shi and the attendants of the dynamical system seminar at Peking University for their interest and encouragement.
 The second author was supported by the École doctorale Carnot Pasteur,  Centro de Matem\'aticas UdelaR, ANII FCE,and CAP UdelaR.

\end{document}